\newtheorem{thm}{Theorem}
\newtheorem{prop}{Proposition}
\newtheorem{lem}{Lemma}
\newtheorem{cor}{Corollary}
\theoremstyle{remark}
\newtheorem{rem}{Remark}
\newtheorem{ex}{Example}
\newtheorem{pr}{Problem}
\newcommand{\C}{\mathbb{C}}
\newcommand{\lra}{\longrightarrow}
\newcommand{\Q}{\mathbb{ Q}}
\newcommand{\A}{\mathbb{ A}}
\newcommand{\N}{\mathbb{ N}}
\newcommand{\R}{\mathbb{ R}}
\newcommand{\Z}{\mathbb{ Z}}
\title{Moduli space of weighted pointed stable curves and toric topology of Grassmann manifolds}
\author{Victor M.~Buchstaber and Svjetlana Terzi\'c}
\begin{document}

\maketitle
\begin{abstract}
We relate the theory of moduli spaces  $\overline{\mathcal{M}}_{0,\mathcal{A}}$ of stable weighted  curves of genus $0$  to the equivariant topology of complex Grassmann manifolds $G_{n,2}$, with  the canonical action  of the compact torus $T^n$.  We prove that all spaces $\overline{\mathcal{M}}_{0,\mathcal{A}}$  can be isomorphically or up to birational morphisms   embedded in $G_{n,2}/T^n$ . 
The crucial role for proving this result  play  the chamber decomposition of the hypersimplex $\Delta _{n,2}$ which corresponds to $(\C ^{\ast})^{n}$-stratification of $G_{n,2}$ and the spaces of parameters over the  chambers, which are subspaces in $G_{n,2}/T^n$. 
We  show that  the points  of   these moduli spaces  $\overline{\mathcal{M}}_{0, \mathcal{A}}$  have the geometric realization as the points of the spaces of parameters over  the  chambers. 
We single out  the characteristic   categories among  such moduli spaces.  The morphisms in  these  categories correspond to the natural projections between the universal space of parameters and  the spaces of parameters over the chambers.
As  a corollary, we obtain the  realization of the orbit space $G_{n,2}/T^n$  as an  universal object for the introduced categories. As one of our main results  we describe the structure of the canonical projection from the Deligne-Mumford compactification to the Losev-Manin compactification of $\mathcal{M}_{0,n}$,  using the  embedding of $\mathcal{M}_{0, n}\subset \bar{L}_{0, n, 2}$ in $(\C P^{1})^{N}$, $N=\binom{n-2}{2}$, the action of the algebraic torus $(\C ^{\ast})^{n-3}$ on $(\C P^{1})^{N}$ for which $\bar{L}_{0, n, 2}$ is invariant,   and the realization of  the Losev-Manin compactification as the corresponding   permutohedral toric variety.
\end{abstract}

\tableofcontents

\section{Introduction}

In his  seminal paper~\cite{K} Kapranov obtained the results on toric geometry of positive complexity by proving that the moduli space $\overline{\mathcal{M}}_{0.n}$ of stable $n$-pointed curve coincides with the Chow quotient $G_{n,2}\!/\!/(\C ^{\ast})^{n}$ for the canonical action of the algebraic torus $(\C ^{\ast})^{n}$ on a complex Grassmann manifold $G_{n,2}$ of the  two-dimensional complex subspaces in $\C ^{n}$. In our paper~\cite{BT2} we established a connection between the space $\overline{\mathcal{M}}_{0, n}$ and the results of   toric  topology of positive complexity  by proving that  $\overline{\mathcal{M}}_{0, n}$ coincides with the universal space of parameters $\mathcal{F}_{n}$ for the canonical  compact torus   $T^n$  - action on $G_{n,2}$.  In this paper we extend this result and establish a connection  between   the moduli spaces of $\mathcal{A}$ - weighted stable curves  $\overline{\mathcal{M}}_{0, \mathcal{A}}$ of genus $0$  and equivariant  topology of $T^n$-action on $G_{n,2}$.

The canonical action of the compact torus $T^n$ on  $G_{n,2}$  gives wonderful class of examples of a torus action of positive complexity and the image of the corresponding  standard moment map is the hypersimplex $\Delta _{n,2}$. This action can be extended to the canonical action of an algebraic torus $(\C ^{\ast})^{n}$ which induces the standard  well behaved stratification of $G_{n,2}$ for which the  strata consist of those $(\C ^{\ast})^{n}$-orbit which have the same image polytope by the moment map~\cite{GS},~\cite{BT1},~\cite{BT2}. These polytopes produce the chamber decomposition  of $\Delta _{n,2}$  which has important role in the description of the orbit space $G_{n,2}/T^n$. It is firstly observed in~\cite{GMP} that the preimage of a point from a chamber   by the induced moment map $\hat{\mu} : G_{n,2}/T^n \to \Delta _{n,2}$  does not depend on the choice of a point. This leads to the notion of the space of parameters of a chamber, it is a subspace in $G_{n,2}/T^n$ and  it is   a smooth manifold for a chamber of  the maximal dimension~\cite{BT}.

The purpose of this paper  is to relate the  theory of moduli spaces of stable weighted curves  of genus $0$ to $T^n$ - equivariant topology of the Grassmannians $G_{n,2}$. Precisely, we   show  that the spaces of parameters of the chambers  in $\Delta _{n,2}$ for the canonical $T^n$-action on $G_{n,2}$ can be realized, in general case up to a birational morphism,  by the moduli spaces of stable pointed weighted curves of genus $0$. Moreover, the spaces of parameters which correspond to the chambers of maximal dimension are isomorphic to such appropriate moduli spaces. We prove the vise versa as well,  the moduli spaces of weighted pointed genus zero curves can be realized by the spaces of parameters of    the chambers, up to   birational morphisms.

In this way we prove  that the orbit spaces $G_{n,2}/T^n$ provide a kind of universal  geometrical  object  for the theory of moduli spaces of stable weighted curves of genus $0$.  We elaborate  this in more detail.

\subsection{Moduli spaces} Problem of  moduli compactification of  the moduli spaces $\mathcal{M}_{g, S}$  of irreducible smooth $S$ - pointed curves    $\{(C, s_{i}| i\in S)\}$ of genus $g$, where $\{s_i\}$  is a family of closed points on $C$, which are not necessarily pairwise distinct, is widely known and of extreme importance in many areas of mathematics and mathematical physics.  The fundamental result in this direction  is that the moduli space of  the families of stable $S$-pointed curves of genus $g$ has the structure of the   proper smooth connected Deligne-Mumford stack $\overline{\mathcal{M}}_{g,S}$,  which contains $\mathcal{M}_{g,S}$ as an open dense substack and it  is its compactification,~\cite{DM}. 

The moduli space of stable pointed weighted curves  $\overline{\mathcal{M}}_{g, \mathcal{A} (S)}$  was introduced and studied in~\cite{H}. Hassett assigned to marked  points $\{s_i | i \in S\}$ additional parameters $a_{i}(s_i)\in \Q, 0 <  a_{i}(s_i) \leq 1$, that is weights,  which led to a  new family of stability conditions. He proved that the moduli spaces of  such weighted stable $S$-pointed curves give   new Deligne-Mumford stacks and that they  provide new  different compactifications of $\mathcal{M}_{g,S}$.  When $a_{i}(s_i) = 1$ for each $i\in S$ the moduli space   $\overline{\mathcal{M}}_{g, \mathcal{A} (S)}$ coincides with the classical Deligne-Mumford moduli space $\overline{\mathcal{M}}_{g,S}$.

Prior Hassett's paper the moduli spaces $\overline{L}_{g, S}$  and their moduli stack structure   were introduced by Losev  and Manin in~\cite{LM}. The stack $\overline{L}_{g,S}$  parametrizes algebraic
curves of genus $g$ endowed with a family of smooth, painted by black or white,  points,  labeled by S satisfying
a certain stability conditions, that is all white points are pairwise distinct and distinct from black ones, while black points may behave  arbitrarily.  Manin showed  in~\cite{M} that the stacks $\overline{L}_{g, S}$ are special case of Hassett's stacks  corresponding to the weight data given by $a_{i}(s_i) = 1$ for a white point $s_i$, while the sum of   weights corresponding to the black points is $\leq 1$.  The   spaces $\overline{L}_{0, n,2}$ determined by  genus zero curves with exactly two white points can  be as well exhibited as  the known toric varieties over permutohedron. Any of these  toric varieties is, by the result of Gelfand-Serganova~\cite{GS},  known to be   isomorphic to the closure
of a generic algebraic torus orbit  in the space of complete complex flags.  As a generalization of this result,  we mention  the result of~\cite{FJR},  which states   that the toric  variety obtained as the blow-up of a projective space along coordinate subspaces corresponding to the connected subgraphs of a graph $\Gamma$  is isomorphic to a Hassett space $\overline{\mathcal{M}}_{0, \mathcal{A} (S)}$ exactly in the case when $\Gamma$ is an  iterated cone over a discrete set.

After being introduced, the Hassett spaces have been intensively studied.  We mention the result   proved  in~\cite{MM}  that the automorphism groups of the spaces 
$\overline{\mathcal{M}}_{g,S}$ for $g\geq 1$ are isomorphic to  subgroups of the symmetric group $S_n$, $n=|S|$, while for $g=0$ it is described the automorphism group of Hassett spaces which appear  in  Kapranov's blow-up construction of $\overline{\mathcal{M}}_{0,n}$.  Then in~\cite{BC} it is considered and resolved the combinatorial relation between  the intersection theory of $\psi$-classes of the Hassett spaces   $\overline{\mathcal{M}}_{g, \mathcal{A} (S)}$  and the spaces $\overline{\mathcal{M}}_{g,S}$.  Hassett spaces were further studied from the point of view of their  relations to geometric invariant theory quotients. Namely, Hassett proved in~\cite{H} that for $g=0$  and for $\mathcal{A}(S)$ being in an appropriate neighborhood of a typical linearisation $\mathcal{L}$, the spaces   $\overline{\mathcal{M}}_{0,\mathcal{A}(S)}$  are isomorphic to GIT-quotient of  $(\C P ^{1})^{n}$ by the diagonal action of $PGL_{2}(\C)$, with respect to the linearisation    $\mathcal{L}$.  Further, it is proved  in~\cite{KM} that  for $a_{i}(s_i) = \varepsilon$,  $i\in S$ the space  $\overline{\mathcal{M}}_{0,\mathcal{A}(S)}$ can be obtained by a sequence of explicit blow-ups from the GIT quotient  $(\C P ^{1})^{n}$ by the diagonal action of $PGL_{2}(\C)$, with respect to the symmetric  linearisation    $\mathcal{L} = (1, \ldots ,1)$. In addition, complementing the work of Keel~\cite{K} on homology structure of $\overline{\mathcal{M}}_{0, n}$ , the homology groups, and thus  the Chow groups,  of the spaces  $\overline{\mathcal{M}}_{0, \mathcal{A} (S)}$ are described   by Ceyhan in~\cite{C}, although by the  different method.

Hassett theory of weighted pointed stable curves has been developed in many directions. We mention the paper by Bayer and Manin~\cite{BM} in which this theory is extended to the weighted stable maps, then the paper by Ulrisch~\cite{U} in which a tropical analogue  of Hassett spaces was  defined and studied, and the paper by Alexeev~\cite{A}  which gives common generalization of Hassett spaces and stable hyperplane arrangement defined in~\cite{HKT} to weighted stable hyperplane arrangements.

\subsection{$T^n$-equivariant topology of $G_{n,2}$} In our series of papers~\cite{BT0},~\cite{BT1},~\cite{BT2} we have studied the orbit spaces $G_{n,2}/T^n$ for the canonical action of the compact torus $T^n$ on the complex Grassmann manifolds $G_{n,2}$ of two-dimensional complex  subspaces in $\C ^{n}$. The complexity of this action is $d= 2(n-2) - (n-1) = n-3 $. For $n=4$, we proved in~\cite{BT} that $G_{4, 2}/T^4 \cong S^5$, while for $n=5$ and $n=6$, due to homology structure, it follows  from ~\cite{BT1} and ~\cite{IT}, respectively,  that  $G_{n,2}/T^n$ is not a manifold.  This is in the line with the fact that the study of  torus actions of  complexity $d\geq 2$ is considered in the literature  as a quite difficult problem.  

We approached this problem in~\cite{BT1} by constructing  the model $(U_n, G_n)$  for the orbit space $G_{n,2}/T^n$, where $U_{n}= \Delta _{n,2}\times \mathcal{F}_{n}$ for a hypersimplex $\Delta _{n,2}$ and a smooth compact manifold $\mathcal{F}_{n}$, and $G_{n} : U_n\to G_{n,2}/T^n$ is a  continuous projections . This projection h is a diffeomorphism between open, smooth, dense  submanifolds in $U_n$ and $G_{n,2}/T^n$, while for the manifold $\mathcal{F}_{n}$ we prove to be diffeomorphic to the Deligne-Mumford compactification $\overline{\mathcal{M}}_{0, n}$.  This model  incorporates several structural data arising from the  stratification of $G_{n, 2}$,  which corresponds to the canonical action of  the algebraic torus $(\C ^{\ast})^{n}$ on $G_{n,2}$.  
In particular,  in this paper an important role have the spaces of parameters of the chambers arising from the matroidal  decomposition of the hypersimplex $\Delta _{n,2}$ determined by the $(\C ^{\ast})^{n}$ on $G_{n,2}$. 

\subsection{The results}

 In this paper we prove that the  structural data  of our  model $(U_n, G_n)$ can be realized  in terms of the Hassett theory, that is in terms of the spaces  $\overline{\mathcal{M}}_{0, \mathcal{A} (S)}$ and morphisms between them.  Furthermore,   within the Hassett spaces, we  introduce the category, which we call the Hassett category, and prove that it can be isomorphically  modeled by the structural data of our model $(U_n, G_n)$.  In addition, we consider the category of Losev-Manin spaces for which we also prove that  can be modeled on $(U_n, G_n)$, up to birational morphisms.

As the  final result we relate all previously  metioned     to the orbit spaces $G_{n,2}/T^n$ and show that the objects from the Hassett category can be realized by  smooth manifolds in $G_{n,2}/T^n$. For the objects from the  Losev-Manin category, as well as for  any Hassett space, we prove that can  be realized as well   by smooth manifolds in $G_{n,2}/T^n$, but up to birational morphisms.  This proves that the orbit spaces $G_{n,2}/T^n$, although  having itself quite complicated, even topological structure, comprise a lot of important manifolds, among which are, up to a birational morphism,  some special graph associahedra toric varieties.

\subsubsection{ Deligne-Mumford and Losev-Manin compactification of $\mathcal{M}_{0,n}$} As an  application of the mentioned general results  we devote   special attention to the Losev-Manin spaces $\bar{L}_{0, n,2}$ due to  their importance in mathematical physics, but also due to the  fact that they can be exhibited as the permutohedral toric varieties, so  they   provide a  class of toric  moduli spaces which are  compactification of the moduli spaces $\mathcal{M}_{0,n}$.   We prove that the spaces $\bar{L}_{0, n, k}$ can be embedded in $G_{n,2}/T^n$ up to birational morphisms. In the case $k=2$ we describe an embedding  of $\mathcal{M}_{0, n}\subset \bar{L}_{0, n,2}$   in $(\C P^{1})^{N}$, $N = \binom{n-2}{2}$, define the action of the algebraic torus $(\C ^{\ast})^{n-3}$ on $(\C P^{1})^{N}$ for which $\bar{L}_{0, n 2}$ is invariant, describe an extension of the image of $\mathcal{M}_{0, n}\subset \bar{L}_{0, n,2}$ to the principal orbit of the defined $(\C ^{\ast})^{n-3}$- action, and after that we consider the  toric compactification of that orbit.  In this way we obtain explicit realization of a toric variety which coincides with the Losev-Manin compactification $\bar{L}_{0, n,2}$.  A such toric variety is  the closure  in $(\C P^{1})^{N}$ of the space  of parameters $F_n$ of the main stratum in $G_{n,2}$ and it serves   as the starting variety for the wonderful compactification leading to the Deligne -Mumford compactification $\overline{\mathcal{M}}_{0, n}$. Moreover, we show that the outgrows of the Losev-Manin compactification of $\mathcal{M}_{0, n}$ can be divided into two classes, such that the outgrows from the first class provide extension of $\mathcal{M}_{0, n}$ to the  principal $(\C ^{\ast})^{n-3}$-orbit, while the outgrows from the second class provide toric compactification of this principal orbit to $\bar{L}_{0, n, 2}$. The  outgrows from the two classes we explicitly characterize.  This leads to the  description of the  structure of the projection, that is the reduction morphism from the Deligne-Mumford compactification $\overline{\mathcal{M}}_{0, n}$  to the Losev-Manin  compactification $\bar{L}_{0, n,2}$.  In addition,  we relate 
the combinatorics of the toric outgrows in the Losev-Manin compactification  to the inversion formula problem for the power series, following the role the Deligne-Mumford outgrows  play  in this problem.

\section{Moduli spaces of  weighted pointed stable curves}

The seminal paper~\cite{H} is concerned with  the moduli space of stable pointed weighted curves. A moduli space $\mathcal{M}_{g}$  of a complete non singular curves of genus $g$ admits compactification $\mathcal{M}_{g}\subset \overline{\mathcal{M}}_{g}$ by stable curves.  The same holds for a moduli space $\mathcal{M}_{g,n}$ of a complete non singular curves with $n$ marked distinct points, it admits compactification $\overline{\mathcal{M}}_{g,n}$ by stable curves  with $n$ marked points. These are the curves $(C, S=\{s_1, \ldots ,s_n\})$ of genus $g$ such that their only singularities are nodes, marked points are distinct and non-singular and there are finitely many automorphisms of $(C, S)$. The third condition is equivalent to the requirement that $K_{C}+\sum _{i=1}^{n}s_i$ is an ample divisor, where $C$ is the canonical class of $C$. These moduli problems of moduli compactifications of moduli spaces,  are generalized in~\cite{H}  by  considering pointed curves as pairs $(X, D)$, where $X$ is a variety and $D=\sum _{i}^{n}a_iD_i$ is an effective $\Q$-divisor on $X$.  For $\dim X=1$  and $D$ being reduced, the resulting moduli compactification  is the Grotendieck-Knudsen-Deligne-Mumford moduli space of pointed stable curves.

We follow~\cite{H} for the formal definitions.   

For a  genus $g$ curve a  weight data $\mathcal{A}$  is given by an element $(a_1,\ldots , a_n)\in \Q ^{n}$ such that $0<a_i\leq 1$, $i=1, \ldots , n$ and  $2g-2+a_1+\ldots +a_n>0$.  

A connected $S$-pointed curve $(C, S)$ of genus $g$  is said to be
weighted stable  if the following conditions are satisfied: $K_{C} + \sum _{i=1}^{n}a_is_i$ is an ample divisor and for any  subset $I \subset  S$  such that $s_i$ pairwise coincide for $i\in I$ it holds $\sum _{i\in I}a_i \leq 1.$ This definition has a direct extension families.  
Let $n $ and $g$ be non-negative integers and $B$ a noetherian scheme. A family of nodal curves of genus $g$ with $n$ marked points is given by
\begin{itemize}
\item a flat proper morphism $\pi : C \to B$ whose geometric fibers are nodal connected curves of genus $g$;
\item sections $s_1,\ldots , s_n$ of $\pi$.
\end{itemize}


A family of nodal curves of genus $g$ with $n$ marked points is stable of type  $(g, \mathcal{A})$ if
\begin{itemize}
\item the sections $\{s_1, \ldots, s_n\}$  lie in the smooth locus of $\pi $ and for any subset $\{s_{i_1}, \ldots, s_{i_r}\}$ with non-empty intersections it holds $a_{i_1}+\ldots +a_{i_r}\leq 1$;
\item $K_{\pi} + a_1s_1+\ldots +a_ns_n$ is $\pi$-relatively ample divisor.
\end{itemize} 

For $a_1=\ldots =a_n=1$ this coincides with the traditional notion of pointed stable curves.  In~\cite{H} it is proved the following theorem which solves the mentioned moduli problem.

\begin{thm}
Let $(g, \mathcal{A})$ be a collection of input data, where $n\geq 3$.  There exists a connected Deligne-Mumford stack $\overline{\mathcal{M}}_{g, \mathcal{A}}$, smooth and proper over $\Z$, representing the moduli problem of pointed stable curves of type $(g, \mathcal{A})$.
\end{thm}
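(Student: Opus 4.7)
The plan is to reduce the construction to the classical Deligne-Mumford-Knudsen existence theorem for $\overline{\mathcal{M}}_{g,n}$ (the case $a_1=\cdots=a_n=1$) via a contraction morphism. First I set up the moduli pseudofunctor $\overline{\mathcal{M}}_{g,\mathcal{A}}$ sending a noetherian scheme $B$ to the groupoid of $(g,\mathcal{A})$-stable families $(\pi:C\to B, s_1,\ldots,s_n)$; it is a stack in the \'etale topology by standard descent for flat proper morphisms and for effective $\Q$-divisors. Next I establish boundedness: the $\pi$-ampleness of $K_\pi+\sum a_is_i$, together with $2g-2+\sum a_i>0$, produces by a uniform Castelnuovo-Mumford argument an integer $N$ such that the $N$-th power is very ample on every geometric fiber and defines a closed immersion into a fixed projective space. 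This exhibits $\overline{\mathcal{M}}_{g,\mathcal{A}}$ as a quotient of a locally closed subscheme of a Hilbert scheme by the action of $\operatorname{PGL}$, giving an algebraic stack of finite type.

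The heart of the argument is the construction of a reduction morphism $\rho_{\mathcal{A}}:\overline{\mathcal{M}}_{g,n}\to\overline{\mathcal{M}}_{g,\mathcal{A}}$. Given a $(g,n)$-stable curve $(C,s_1,\ldots,s_n)$, the line bundle $L=\omega_C(\sum a_is_i)$ is semi-ample; its Stein factorization $C\to C'$ contracts precisely the rational components $E$ on which $L|_E$ has non-positive degree, i.e.\ the components violating the $\mathcal{A}$-ampleness condition. The images of the sections define new sections on $C'$ which one checks to be $(g,\mathcal{A})$-stable; compatibility with base change follows from flatness of $\pi$ together with cohomology and base change. Essential surjectivity of $\rho_{\mathcal{A}}$ requires, given a $(g,\mathcal{A})$-stable curve, producing a canonical partial resolution that separates colliding marked points and inserts rational bridges until ordinary Deligne-Mumford stability is restored, yielding a preimage in $\overline{\mathcal{M}}_{g,n}$.

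From this presentation of $\overline{\mathcal{M}}_{g,\mathcal{A}}$ as the essential image of a proper surjective morphism from a Deligne-Mumford stack of finite type, the key properties are inherited. Properness is verified via the valuative criterion, using the semistable reduction theorem for nodal curves over a DVR together with $\rho_{\mathcal{A}}$, and separatedness follows similarly from uniqueness of the $(g,n)$-stable limit and injectivity of $\rho_{\mathcal{A}}$ on geometric generic fibers. Finiteness of automorphism groups, which gives the Deligne-Mumford property, is a direct consequence of the ampleness of $K_C+\sum a_is_i$: every automorphism of the pointed curve preserves this polarization, so $\operatorname{Aut}(C,s_1,\ldots,s_n)$ embeds into the finite group of automorphisms of the polarized variety. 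Smoothness over $\Z$ is a deformation-theoretic computation: for nodal curves with marked points in the smooth locus, the relevant obstruction space vanishes and the dimension is the expected one.

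The main obstacle is the second step. One must precisely control the Stein factorization when the weights $\mathcal{A}$ force marked points to collide: the contraction may identify sections $s_i,s_j$ with $a_i+a_j\leq 1$, which is allowed in the target but not in the source. The technical heart is a local analysis at the nodes and at collision loci, proving that the contraction is flat in families and that its composition with the canonical partial blow-up separating collided sections realizes every $(g,\mathcal{A})$-stable curve. Carrying this out uniformly, so that the construction commutes with arbitrary base change and the resulting morphism of stacks is of finite type, is precisely the content of Hassett's analysis of the universal curve over $\overline{\mathcal{M}}_{g,\mathcal{A}}$.
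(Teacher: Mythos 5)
This theorem is not proved in the paper at all: it is Hassett's existence theorem, quoted verbatim from~\cite{H} as background, so there is no in-paper argument to compare against. Judged on its own terms, your sketch follows the right general outline (stack via Hilbert-scheme quotient, boundedness from ampleness of a fixed power of $K_\pi+\sum a_is_i$, finite automorphisms from the polarization, unobstructed deformation theory for nodal curves), but it diverges from Hassett's actual strategy at the one point where the divergence matters, and there it has a genuine gap.

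The gap is in properness. You verify the valuative criterion by lifting a family over the generic point of a DVR to $\overline{\mathcal{M}}_{g,n}$, taking the Deligne--Mumford stable limit, and pushing back down along $\rho_{\mathcal{A}}$. But a $(g,\mathcal{A})$-stable family over the generic point may already have colliding sections (that is the whole point of allowing $a_i+a_j\leq 1$), and such a family admits no lift to $\overline{\mathcal{M}}_{g,n}$ over the generic point; your ``canonical partial resolution inserting rational bridges'' is not canonical precisely because separating $k\geq 2$ collided sections on an inserted $\C P^1$ requires choosing cross-ratios, so it does not produce a well-defined object over a non-closed generic point, and in any case it is a construction on geometric points, not on families over $\operatorname{Spec}K$. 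To salvage your route you would need to invoke the standard refinement of the valuative criterion that allows one to test only DVRs whose generic point lands in a fixed dense open substack (here, the locus of distinct sections), which you do not do. Hassett avoids the issue entirely: he proves properness directly by semistable reduction followed by the relative log minimal model program for the surface $C\to\operatorname{Spec}R$ with boundary $\sum a_is_i$, which produces and uniquely characterizes the $(g,\mathcal{A})$-stable limit without ever passing through $\overline{\mathcal{M}}_{g,n}$. The reduction morphisms $\rho_{\mathcal{A},\mathcal{B}}$ are then a consequence of the existence theorem (Theorem~\ref{reduction} above), not an ingredient in its proof. Your separatedness argument inherits the same defect, since ``injectivity of $\rho_{\mathcal{A}}$ on geometric generic fibers'' is false on the collision locus, where the fibers of $\rho_{\mathcal{A}}$ are positive-dimensional.
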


The construction of these moduli stacks carries some natural morphisms between them. More precisely, in~\cite{H} the following statements on existence  of reduction and forgetting morphisms are proved respectively:

\begin{thm}\label{reduction}
Fix $g$ and $n$ and let $\mathcal{A} = (a_1, \ldots , a_n)$ and $\mathcal{B} = (b_1, \ldots , b_n)$ be collections of weight data such that $b_i\leq a_i$ for $i=1,\ldots n$. Then there exists birational reduction morphism 
\[
\rho _{\mathcal{A}, \mathcal{B}} : \overline{\mathcal{M}}_{g, \mathcal{A}} \to   \overline{\mathcal{M}}_{g, \mathcal{B}}.
\]
For an element $(C, s_1,\ldots s_n)\in  \overline{\mathcal{M}}_{g, \mathcal{A}}$, $\rho _{\mathcal{A}, \mathcal{B}}(C, s_1,\ldots ,s_n)$ is obtained by successively collapsing components of $C$ along which $K_{C}+b_1s_1+\ldots +b_ns_n$ fails to be ample.
\end{thm}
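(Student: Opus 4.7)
The plan is to construct $\rho_{\mathcal{A},\mathcal{B}}$ functorially in families and then invoke the moduli property of $\overline{\mathcal{M}}_{g,\mathcal{B}}$ supplied by the previous theorem. Given a family $\pi:C\to B$ of $\mathcal{A}$-stable curves with sections $s_i$, write $L_{\mathcal{A}}=K_\pi+\sum a_i s_i$ (relatively ample by hypothesis) and $L_{\mathcal{B}}=K_\pi+\sum b_i s_i$. Because $b_i\le a_i$ the difference $L_{\mathcal{A}}-L_{\mathcal{B}}=\sum(a_i-b_i)s_i$ is effective, so the obstruction to ampleness of $L_{\mathcal{B}}$ is confined to a very controlled locus, and the task is to produce a relative contraction $\phi:C\to C'$ collapsing precisely that locus.

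First I would carry out the fiberwise analysis. An irreducible component $E$ of a geometric fiber $C_b$ with $\deg(L_{\mathcal{B}}|_E)\le 0$ must, by adjunction together with the ampleness of $L_{\mathcal{A}}$, be smooth rational meeting the rest of $C_b$ in at most two nodes and carrying a subset $I_E$ of sections whose $\mathcal{B}$-weights satisfy $\sum_{i\in I_E}b_i\le 1$. Iterating this, the maximal non-ample locus $Z_b\subset C_b$ is a disjoint union of trees of rational curves meeting the stable part at a single node each, and the sections landing on each tree satisfy exactly the $\mathcal{B}$-weight compatibility condition $\sum b_i\le 1$ that must hold on the crushed fiber. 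In particular, the combinatorics is such that contracting every tree to a point produces a nodal curve of the same arithmetic genus on which the images of the $s_i$ satisfy the $\mathcal{B}$-stability inequalities.

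Next I would globalize the contraction. For $N$ sufficiently large and divisible, the graded $\OO_B$-algebra $\mathcal{R}:=\bigoplus_{k\ge 0}\pi_* L_{\mathcal{B}}^{\otimes Nk}$ is finitely generated, and I set
\[
C':=\mathrm{Proj}_B\mathcal{R},\qquad \phi:C\lra C',\qquad s_i':=\phi\circ s_i.
\]
The essential cohomological input is the vanishing $R^1\pi_* L_{\mathcal{B}}^{\otimes Nk}=0$ for $N\gg 0$, which both yields compatibility of $\mathcal{R}$ with arbitrary base change and produces flatness of $C'\to B$; combined with the fiberwise picture of the previous paragraph, it identifies $\phi$ fiberwise as the contraction of $Z_b$ and shows that the geometric fibers of $C'\to B$ are nodal curves of genus $g$. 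Relative ampleness of the induced divisor $K_{\pi'}+\sum b_i s_i'$ on $C'$ is built into the Proj construction, so $(C'\to B, s_1',\ldots,s_n')$ is a family of $\mathcal{B}$-stable curves of genus $g$.

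Because the formation of $C'$ commutes with base change, this assignment is functorial in $B$ and defines the desired morphism of stacks $\rho_{\mathcal{A},\mathcal{B}}:\overline{\mathcal{M}}_{g,\mathcal{A}}\to\overline{\mathcal{M}}_{g,\mathcal{B}}$. Birationality is then immediate on the open substack $\mathcal{M}_{g,n}$ of smooth irreducible curves with pairwise distinct marked points: there $\deg L_{\mathcal{B}}=2g-2+\sum b_i>0$, so $L_{\mathcal{B}}$ is already ample, no component is collapsed, and $\rho_{\mathcal{A},\mathcal{B}}$ restricts to the identity. I expect the main obstacle to lie in the relative step: establishing the higher direct image vanishing uniformly enough to work across the whole stack, and verifying that the fiberwise collapsing locus $Z$ varies in families so that $C'\to B$ is genuinely a flat family of nodal curves rather than merely a set-theoretic contraction. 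The combinatorial fiber analysis is what ultimately makes this uniform control possible.
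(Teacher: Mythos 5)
This theorem is imported by the paper from Hassett~\cite{H} without proof, so there is no in-paper argument to compare against; your sketch is essentially a faithful reconstruction of Hassett's own proof (fiberwise classification of the components on which $K_\pi+\sum b_is_i$ has non-positive degree, the relative $\mathrm{Proj}$ of $\bigoplus_{k\ge 0}\pi_*\bigl(K_\pi+\sum b_is_i\bigr)^{\otimes Nk}$ made to commute with base change via vanishing of $R^1\pi_*$, and birationality read off from the fact that the map is the identity on the locus of smooth curves with distinct markings, where $2g-2+\sum b_i>0$ already gives ampleness). The one imprecision is in the fiberwise step: $\mathcal{A}$-stability forces a component of non-positive $\mathcal{B}$-degree to be rational and to meet the closure of its complement in exactly one node rather than ``at most two'' (a two-node rational component with no markings would already violate $\mathcal{A}$-stability, and one with markings has positive $\mathcal{B}$-degree), which is precisely what makes the collapsed loci tails carrying sections of total $\mathcal{B}$-weight at most $1$ and keeps the arithmetic genus unchanged.
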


\begin{thm}
Fix $g$ and $n$ and let $\mathcal{A}$ be a collection of weight data and $\mathcal{A}^{'} = \{a_{i_1}, \ldots , a_{i_r}\}$ a subset such that $2g-2+a_{i_1}+\ldots +a_{i_r}>0$.  Then there exists natural forgetting   morphism 
\[
\Phi _{\mathcal{A}, \mathcal{A}^{'}} : \overline{\mathcal{M}}_{g, \mathcal{A}} \to   \overline{\mathcal{M}}_{g, \mathcal{A}^{'}}.
\]
For an element $(C, s_1,\ldots s_n)\in  \overline{\mathcal{M}}_{g, \mathcal{A}}$, $\Phi _{\mathcal{A}, \mathcal{A}^{'}}(C, s_1,\ldots ,s_n)$ is obtained by successively collapsing components of $C$ along which $K_{C}+a_{i_1}s_{i_1}+\ldots +a_{i_r}s_{i_r}$ fails to be ample.
\end{thm}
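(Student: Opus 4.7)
The plan is to construct $\Phi_{\MA,\MA'}$ by explicitly producing, for every family $\pi\colon C\to B$ of $\MA$-stable curves with sections $s_1,\ldots,s_n$, a new family $\pi'\colon C'\to B$ carrying the chosen subfamily of sections $s_{i_1},\ldots,s_{i_r}$ which is stable of type $(g,\MA')$, and then showing that this construction is functorial in the base $B$. The central object is the $\Q$-line bundle
\begin{equation*}
L \;=\; \omega_{\pi} + a_{i_1}s_{i_1} + \ldots + a_{i_r}s_{i_r}
\end{equation*}
on $C$. First I would pick a positive integer $N$ divisible enough that $NL$ is an honest line bundle on $C$ and set
\begin{equation*}
C' \;=\; \mathrm{Proj}_{B}\bigoplus_{m\geq 0}\pi_{\ast}\OO_{C}(mNL).
\end{equation*}
The tautological morphism $C\to C'$ is an isomorphism over the open subcurve where $L$ has positive degree and contracts to a point each maximal connected subcurve on whose components $L$ has degree $\leq 0$. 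Because the sections $s_{i_j}$ lie in the locus where $L$ is positive, their images give disjoint sections $s_{i_j}'$ of $\pi'$ lying in the smooth locus.

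Second, I would verify that $(\pi'\colon C'\to B,\, s_{i_1}',\ldots,s_{i_r}')$ is a stable family of type $(g,\MA')$. The inequalities $\sum_{j\in J}a_{i_j}\leq 1$ for subsets $J$ of coinciding retained sections are inherited from the corresponding inequalities for the original $\MA$-stable family, since restricting the set of markings can only decrease the multiplicity of coincidences. Relative ampleness of $K_{\pi'}+\sum_{j} a_{i_j}s_{i_j}'$ is built into the $\mathrm{Proj}$ construction, while the hypothesis $2g-2+\sum_{j}a_{i_j}>0$ ensures that this divisor has positive total degree on each fiber. Functoriality in $B$ then reduces to the statement that, for $N$ sufficiently divisible, the formation of $\pi_{\ast}\OO_{C}(mNL)$ commutes with arbitrary base change, which yields a morphism of Deligne-Mumford stacks $\Phi_{\MA,\MA'}$.

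The hard part will be to verify that each geometric fiber of $\pi'$ is a nodal curve of arithmetic genus $g$. The local question is what happens at a point obtained by $\mathrm{Proj}$-contraction of a maximal connected subcurve $E\subset C_b$ on whose components $L$ has non-positive degree. Using the original $\MA$-stability together with the coincidence bounds $\sum_{i\in I}a_{i}\leq 1$, I expect to show that such an $E$ must be a tree of smooth rational components meeting the rest of $C_b$ in at most two nodes, so that the contraction produces either a smooth point or an ordinary nodal singularity, and that the arithmetic genus is preserved by a standard Euler-characteristic computation on the normalization. Granting this local description, the construction parallels the proof of the reduction morphism in \thmref{reduction}, of which $\Phi_{\MA, \MA'}$ can be viewed as a limit: formally setting the forgotten weights $a_i$ to zero reduces $\Phi_{\MA,\MA'}$ to a reduction morphism followed by the literal omission of the sections whose weight has vanished.
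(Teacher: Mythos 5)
First, a point of reference: the paper does not prove this statement at all --- it is quoted verbatim from Hassett's paper \cite{H} (his forgetting-morphism theorem), so there is no internal proof to compare against. Measured against Hassett's actual argument, your overall strategy is the right one: contract via $\mathrm{Proj}_{B}$ of the section ring of (a sufficiently divisible multiple of) $\omega_{\pi}+\sum_{j}a_{i_j}s_{i_j}$, check the fibers of the result, and get functoriality from cohomology and base change. That is essentially how the construction goes in the literature.

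There is, however, a concrete false step. You claim that the retained sections $s_{i_1},\ldots,s_{i_r}$ lie in the locus where $L$ is positive and therefore descend to \emph{disjoint} sections of $\pi'$. Neither part is true. Take a fiber containing a tail $E\cong\C P^{1}$ attached at one node, carrying retained markings of total weight $\leq 1$ together with enough forgotten markings to make $K_{C}+\sum_{i}a_{i}s_{i}$ ample on $E$; then $\deg L|_{E}=-1+\sum_{s_{i_j}\in E}a_{i_j}\leq 0$, so $E$ is contracted even though it carries retained markings, and those markings are forced to coincide at the image point. Consequently the $\mathcal{A}'$-coincidence bounds are \emph{not} simply ``inherited'' from the $\mathcal{A}$-stable family: for markings newly brought together by the contraction the inequality $\sum_{j}a_{i_j}\leq 1$ must be deduced from $\deg L|_{E}\leq 0$ together with $\deg K_{C}|_{E}\geq -1$, and one must separately rule out retained markings landing on a node of $C'$ by showing that any contracted connected subcurve carrying retained markings meets the rest of the fiber in exactly one point (a subcurve meeting it in two points is a chain whose every component has $\deg K_{C}=0$, hence can carry no retained marking if it is to be contracted). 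Beyond this, the two items you defer --- that $L$ restricted to each fiber is semiample with the asserted contraction as its image, giving nodal fibers of genus $g$, and that $\pi_{\ast}\OO_{C}(mNL)$ is locally free and commutes with base change --- are precisely the technical content of Hassett's proof, not routine afterthoughts; note in particular that $L$ is not nef on the fibers, so the existence of the morphism $C\to C'$ away from the contracted locus is itself something to prove. Your closing heuristic (forgetting as a degenerate limit of reduction) is a fair description of the intuition but cannot be made into a proof, since weights are required to be strictly positive.
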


In addition, it is described in~\cite{H} the exceptional locus of the reduction morphism by the following statement.

\begin{prop}\label{reductionboundarydivisors}
The reduction morphism $\rho _{\mathcal{A}, \mathcal{B}} : \overline{\mathcal{M}}_{g, \mathcal{A}} \to   \overline{\mathcal{M}}_{g, \mathcal{B}}$ contracts the boundary divisors 
\begin{equation}\label{decomprule}
D_{I, J} = \overline{\mathcal{M}}_{0, \mathcal{A}_{I}^{'}} \times \overline{\mathcal{M}}_{g, \mathcal{A}_{J}^{'}},
\end{equation}
where
\[ \mathcal{A}_{I}^{'} = (a_{i_1},\ldots , a_{i_r}, 1), \;\; \mathcal{A}_{J}^{'} =(a_{j_1},\ldots , a_{j_{n-r}},1),
\]
which correspond to the partitions 
\[
I\cup J = \{1,\ldots ,n\}, \;\; I=\{i_1,\ldots , i_r\}, \;\; J=\{j_1,\ldots , j_{n-r}\}
\]
determined by $b_{I} = b_{i_1}+\ldots + b_{i_r}\leq 1$ and $2<r\leq n$. There is a  factorization of $\rho _{\mathcal{A}, \mathcal{B}} | D_{I,J}$ as
\[
\overline{\mathcal{M}}_{0, \mathcal{A}_{I}^{'}}\times \overline{\mathcal{M}}_{\mathcal{A}_{J}^{'}} \stackrel{\pi}{\to}\overline{\mathcal{M}}_{g, \mathcal{A}_{J}^{'}}\stackrel{\rho}{\to} \overline{\mathcal{M}}_{g, \mathcal{B}_{J}^{'}},
\]
where $\mathcal{B}^{'}_{J}= (b_{j_1},\ldots , b_{j_{n-r}}, b_{I})$,  $\rho = \rho _{\mathcal{A}^{'}_{I}, \mathcal{B}^{'}_{J}}$ and $\pi$ is the projection.
\end{prop}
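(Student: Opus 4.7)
The plan is to combine Theorem~\ref{reduction} with an explicit degree computation on the irreducible components of an $\mathcal{A}$-stable curve in order to pin down the exceptional locus of $\rho_{\mathcal{A},\mathcal{B}}$. By Theorem~\ref{reduction}, $\rho_{\mathcal{A},\mathcal{B}}$ acts by successively collapsing those components of $(C,s_1,\ldots,s_n)$ along which $K_{C}+\sum b_i s_i$ fails to be ample. Since $(C,\mathcal{A})$ is already stable, $\omega_{C}(\sum a_i s_i)$ is ample on every component, so lowering the weights can spoil ampleness only on components whose $\omega_{C}$-degree is already minimal, namely on rational tails.

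First I would carry out the degree computation on such a tail. If $E\cong \C P^{1}$ is a rational tail meeting the rest of $C$ at a single node $p$ and carrying exactly the marks indexed by $I=\{i_1,\ldots,i_r\}$, then $\deg(\omega_{C}|_E)=-1$, whence
\[
\deg\bigl((\omega_{C}+\textstyle\sum_{i\in I}b_i s_i)|_E\bigr)=-1+b_I,
\]
which is non-positive precisely when $b_I\leq 1$. I would then rule out the alternative cases (non-rational components, rational bridges, higher rational trees with several nodes) by analogous degree inequalities, using $\mathcal{A}$-stability to show that such components remain ample for $\mathcal{B}$ unless they themselves fall into the rational-tail pattern. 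The bound $r>2$ is forced by the genus-zero stability requirement that $E$ together with the node carry at least three special points, so that the rational tail contributes a positive-dimensional stratum.

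Next I would identify the contracted locus as a product. Clutching at $p$ (treated as an extra marked point of weight $1$ on each side) decomposes any configuration of the above form as a pair: the rational tail $E$ with marks $\{s_i\}_{i\in I}$ and node-mark $p$, giving a point of $\overline{\mathcal{M}}_{0,\mathcal{A}'_I}$; and the complement $\overline{C\setminus E}$ with marks $\{s_j\}_{j\in J}$ and node-mark $p$, giving a point of $\overline{\mathcal{M}}_{g,\mathcal{A}'_J}$. This produces the identification of $D_{I,J}$ recorded in the proposition. Applying Theorem~\ref{reduction} once more, the action of $\rho_{\mathcal{A},\mathcal{B}}$ on $D_{I,J}$ is to contract $E$, which fuses the marks $\{s_i\}_{i\in I}$ into a single coincident mark at the image of $p$ of total weight $b_I\leq 1$, leaving the curve $\overline{C\setminus E}$ with the new weight datum $\mathcal{B}'_J$. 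The first of these operations is exactly the projection $\pi$ onto the second factor, and what remains is the further admissible reduction from $\mathcal{A}'_J$ to $\mathcal{B}'_J$, yielding the claimed factorization.

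The hard part will be the enumeration in the first step: one must check that \emph{every} component on which $\mathcal{B}$-ampleness fails has this clean rational-tail shape, ruling out the combinatorially tricky situations of chains and trees of rational components that could in principle appear and force $D_{I,J}$ to acquire a more intricate amalgamated form rather than the straight product $\overline{\mathcal{M}}_{0,\mathcal{A}'_I}\times\overline{\mathcal{M}}_{g,\mathcal{A}'_J}$. Once that analysis is in place, the factorization and the boundary-divisor description follow immediately from the characterization of $\rho_{\mathcal{A},\mathcal{B}}$ in Theorem~\ref{reduction}.
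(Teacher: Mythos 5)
The paper does not actually prove this proposition: it is recalled verbatim from Hassett's paper \cite{H} (the text introduces it with ``it is described in \cite{H} the exceptional locus of the reduction morphism''), so there is no in-paper argument to compare yours against. Your sketch reproduces the standard proof from the source, and its core is sound: the degree computation $\deg\bigl((K_C+\sum_{i\in I}b_is_i)|_E\bigr)=-1+b_I$ on a rational tail, the clutching identification $D_{I,J}\cong\overline{\mathcal{M}}_{0,\mathcal{A}'_I}\times\overline{\mathcal{M}}_{g,\mathcal{A}'_J}$, the observation that $r>2$ is exactly what makes the contracted fiber $\overline{\mathcal{M}}_{0,\mathcal{A}'_I}$ positive-dimensional, and the factorization through $\pi$ followed by $\rho_{\mathcal{A}'_J,\mathcal{B}'_J}$ (you correctly use the subscript $\mathcal{A}'_J$ here; the proposition's ``$\rho_{\mathcal{A}'_I,\mathcal{B}'_J}$'' is a typo). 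The one step you defer as ``the hard part'' is in fact routine and you should just do it: for any irreducible component $E$ of an $\mathcal{A}$-stable curve, of geometric genus $g_E$, meeting the rest of $C$ in $k_E$ nodes and carrying marks of total $\mathcal{B}$-weight $b_E>0$, one has $\deg\bigl((K_C+\sum b_is_i)|_E\bigr)=2g_E-2+k_E+b_E$, which is automatically positive unless $g_E=0$ and $k_E\le 1$; the case $k_E=0$ is the irreducible curve itself, so the only components on which $\mathcal{B}$-ampleness can fail are rational tails, and no chains or trees need separate treatment at the level of divisorial (one-node) strata. Two minor points worth making explicit if you write this up: the divisor $D_{I,J}$ is nonempty in $\overline{\mathcal{M}}_{g,\mathcal{A}}$ only when $a_I=\sum_{i\in I}a_i>1$ (otherwise that stratum does not exist for the weight datum $\mathcal{A}$), and the deeper strata produced by iterated collapsing are handled by the recursion recorded in the remark following the proposition, not by the single-divisor analysis.
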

 
\begin{rem}
Note that Propositon~\ref{reductionboundarydivisors} recursively describes  the boundary divisors by
\[
 \overline{\mathcal{M}}_{0, \mathcal{A}_{I}^{'}} \times  \overline{\mathcal{M}}_{0, \mathcal{A}_{J}^{'}} \to \overline{\mathcal{M}}_{0, \mathcal{A}_{I}^{'}}\times \overline{\mathcal{M}}_{0, \mathcal{A}_{I}^{"}}\times \overline{\mathcal{M}}_{0, \mathcal{A}_{J}^{"}}
\to \cdots,
\]
where $\overline{\mathcal{M}}_{0, \mathcal{A}_{I}^{"}}\times \overline{\mathcal{M}}_{0, \mathcal{A}_{J}^{"}}$ comes from $ \overline{\mathcal{M}}_{0, \mathcal{A}_{J}^{'}}$ following the decomposition rule~\eqref{decomprule}.
\end{rem}

It follows from the definition of weighted pointed stable curves of genus $g$ that  the domain $\mathcal{D}_{g,n}$ of admissible weight data is
\[
\mathcal{D}_{g,n} = \{(a_1,\ldots , a_n)\in \R^{n} | 0<a_j\leq 1 \;  \text{and}\; a_1+\ldots + a_n > 2-2g\}.
\]
Let $\mathcal{W}$ be a hyperplane arrangement in  $\R ^{n}$,  which consists of a finite number of hyperplanes such that for each $w\in \mathcal{W}$ it holds   $w\cap \mathcal{D}_{g,n}\neq\emptyset$.  The chamber decomposition of $\R ^{n}$ defined by $\mathcal{W}$ induces the chamber decomposition of $\mathcal{D}_{g,n}$. 

The coarse chamber decomposition of $\mathcal{D}_{g,n}$  is  defined by the hyperplane arrangement
\begin{equation}\label{chcoarse}
\mathcal{W}_{c} = \big\{\sum\limits_{j\in S} a_j=1 | S\subset \{1,\ldots ,n\}, 2<  |S|< n-2\big\},
\end{equation}
while its fine chamber decomposition is defined  by the hyperplane arrangement
 
\begin{equation}\label{chcoarse}
\mathcal{W}_{f} = \big\{\sum\limits_{j\in S} a_j=1 | S\subset \{1,\ldots ,n\}, 2\leq  |S|\leq n-2\big\}.
\end{equation}
Obviously, the fine chambers are contained in the coarse chambers, so whatever holds for the second ones holds for the first  ones as well. 

The coarse chamber decomposition has the following important property~\cite{H}:
\begin{prop}\label{chambersweight}
The coarse chamber decomposition is the coarsest decomposition of $\mathcal{D}_{g,n}$ such that $\overline{\mathcal{M}}_{g,\mathcal{A}}$ is constant on each chamber.
\end{prop}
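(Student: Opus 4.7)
The proposition has two parts. First, $\overline{\mathcal{M}}_{g,\mathcal{A}}$ is to be shown constant on each chamber of $\mathcal{W}_c$; second, no coarser hyperplane arrangement has this constancy property. The plan is to derive both claims from the reduction morphisms of Theorem~\ref{reduction} and the description of their exceptional loci in Proposition~\ref{reductionboundarydivisors}.

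For the constancy part, suppose $\mathcal{A}$ and $\mathcal{B}$ lie in the same chamber of $\mathcal{W}_c$. Convexity of the chamber lets me pick a weight $\mathcal{C}$ with $c_i\le\min(a_i,b_i)$ still in the chamber, so that the reduction morphisms $\rho_{\mathcal{A},\mathcal{C}}$ and $\rho_{\mathcal{B},\mathcal{C}}$ exist; it then suffices to prove that each is an isomorphism. By Proposition~\ref{reductionboundarydivisors} the exceptional locus of $\rho_{\mathcal{A},\mathcal{C}}$ is the union of boundary divisors $D_{I,J}$ with $|I|>2$ and $c_I\le 1<a_I$. Since $\mathcal{A}$ and $\mathcal{C}$ lie in a common chamber of $\mathcal{W}_c$, the inequality $c_I\le 1<a_I$ can hold only for $|I|\ge n-2$. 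These endpoint values I would handle individually: for $|I|=n-1$ or $|I|=n$ the main-component factor $\overline{\mathcal{M}}_{g,\mathcal{A}_J^{'}}$ is empty because the ampleness condition $2g-2+\sum_{j\in J}a_j+1>0$ fails, and for $|I|=n-2$ the bubble factor $\overline{\mathcal{M}}_{0,\mathcal{A}_I^{'}}$ reduces to a point so that $\rho_{\mathcal{A},\mathcal{C}}$ restricts to an isomorphism on $D_{I,J}$. Hence $\rho_{\mathcal{A},\mathcal{C}}$ has empty exceptional locus and, being a birational morphism between smooth proper Deligne--Mumford stacks, is an isomorphism; the same argument applies to $\rho_{\mathcal{B},\mathcal{C}}$, giving $\overline{\mathcal{M}}_{g,\mathcal{A}}\cong\overline{\mathcal{M}}_{g,\mathcal{B}}$.

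For maximality I would fix a wall $\{a_S=1\}\in\mathcal{W}_c$, so $2<|S|<n-2$, and choose weights $\mathcal{A}$ and $\mathcal{B}$ just above and just below it with every other defining sum $a_T,b_T$ held on the same side of $1$. Proposition~\ref{reductionboundarydivisors} then identifies the exceptional locus of $\rho_{\mathcal{A},\mathcal{B}}$ as the single divisor $D_{S,S^c}=\overline{\mathcal{M}}_{0,\mathcal{A}_S^{'}}\times\overline{\mathcal{M}}_{g,\mathcal{A}_{S^c}^{'}}$; since $|S|\ge 3$, the bubble factor has positive dimension $|S|-2$, so the contraction collapses positive-dimensional fibers and $\overline{\mathcal{M}}_{g,\mathcal{A}}\not\cong\overline{\mathcal{M}}_{g,\mathcal{B}}$. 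Thus every wall of $\mathcal{W}_c$ is necessary, and any decomposition with the constancy property must refine $\mathcal{W}_c$.

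The main obstacle is the endpoint analysis $|I|\in\{n-2,n-1,n\}$ in the constancy step: one must translate the $g$-dependent ampleness inequalities on both the main and the bubble components into the combinatorial statement that these cases produce either empty divisors or trivial contractions. This endpoint bookkeeping is precisely what distinguishes the coarse arrangement $\mathcal{W}_c$ from the fine arrangement $\mathcal{W}_f$.
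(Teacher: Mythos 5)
First, a remark on the comparison you asked for: the paper offers no proof of Proposition~\ref{chambersweight} at all --- it is quoted from Hassett~\cite{H} (his Proposition 5.1) --- so your proposal can only be judged on its own terms. It has two genuine gaps, both in the constancy half. The first is the opening step: a chamber of $\mathcal{W}_c$ is convex, but convexity does not produce a weight $\mathcal{C}$ with $c_i\le\min(a_i,b_i)$ lying in the same chamber, and such a $\mathcal{C}$ need not exist. Take $g=0$, $n=6$, $\mathcal{A}=(0.6,0.6,0.05,0.95,0.95,0.95)$ and $\mathcal{B}=(0.05,0.6,0.6,0.95,0.95,0.95)$: every triple sum exceeds $1$ for both, so they lie in one coarse chamber, yet any $\mathcal{C}$ with $c_i\le\min(a_i,b_i)$ satisfies $c_1+c_2+c_3\le 0.7<1$ and lands on the other side of the wall $a_1+a_2+a_3=1$. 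To repair this one must either connect $\mathcal{A}$ to $\mathcal{B}$ by a zigzag of pairwise comparable weights inside the chamber, or argue directly that the stability conditions depend only on the signs of the quantities $a_S-1$, which are constant on chambers.

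The second gap is the endpoint case $|I|=n-2$. Here $\mathcal{A}'_I=(a_{i_1},\ldots,a_{i_{n-2}},1)$ has $n-1$ entries, so the bubble factor $\overline{\mathcal{M}}_{0,\mathcal{A}'_I}$ has dimension $n-4>0$ for $n\ge 5$; it is the complementary factor $\overline{\mathcal{M}}_{0,\mathcal{A}'_J}$, carrying three markings, that is a point. Since the factorization in Proposition~\ref{reductionboundarydivisors} contracts precisely the $\overline{\mathcal{M}}_{0,\mathcal{A}'_I}$ factor, $\rho_{\mathcal{A},\mathcal{C}}$ restricted to such a $D_{I,J}$ is \emph{not} an isomorphism, and crossing a wall $a_S=1$ with $|S|=n-2$ genuinely changes the moduli space --- for $n=5$ this is exactly the contraction $\overline{\mathcal{M}}_{0,5}\to \bar{L}_{0,5,2}$ of Lemma~\ref{lem:LM}. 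What your computation actually exposes is that the paper's displayed arrangement $\mathcal{W}_c$ misquotes Hassett: his coarse walls for $g=0$ are $2<|S|\le n-2$, and for $g\ge 1$ they include $|S|=n-1,n$ as well (there your claim that $2g-2+a_J+1>0$ fails is also wrong, since $2g-1+a_j>0$ once $g\ge 1$). With the corrected arrangement the problematic endpoint cases do not occur and only $|I|=2$ remains, which Proposition~\ref{reductionboundarydivisors} already excludes. Your maximality argument is essentially sound, though to conclude $\overline{\mathcal{M}}_{g,\mathcal{A}}\not\cong\overline{\mathcal{M}}_{g,\mathcal{B}}$, rather than merely that the reduction morphism is not an isomorphism, you should add a discriminating invariant, for instance the number of boundary divisors.
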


\section{Relation to  GIT quotients of $(\C P^{1})^{n}$}

\subsection{General facts about Geometric invariant theory}
We recall,   following mainly~\cite{dolg} and~\cite{Th},  some well known notions and results from  the geometric  invariant theory in order to make exposition more self contained.

\subsubsection{$G$-linearised line bundles}
Let $X$ be a variety with an  action of an algebraic group $G$. A line bundle $L$ on $X$     is said to be  $G$ - linearised if it  admits    $G$ - action $\sigma $  such that $\pi  \circ \sigma (g) = g\cdot \pi$ , where $\pi : L\to X$  is the projection.   Such a bundle on $G$-variety $X$  always exists, in fact, for any line bundle $L$ on $X$  the tensor power $L^{\otimes k}$ for some $k\in \N$ will be a  $G$-linearised line bundle. In addition if  $X$ is a  quasi - projective variety then it admits an  ample $G$-linearised line bundle.

\subsubsection{Picard and N\'eron-Severi groups, fractional linearisations}
Recall that in the Picard group  for $X$, denoted by $\text{Pic}(X)$, of isomorphism  classes of line bundles, the ampleness property of a bundle  depends only on the algebraic equivalence class of a bundle. The group of algebraic equivalence classes of line bundles in $\text{Pic}(X)$  is known as N\'eron-Severi group $\text{NS}(X)$, it is finitely  generated and contains an ample subset $A(X)$. It can be defined the ample cone by $A_{\Q}(X)=A(X)\otimes _{\N}\Q \subset \text{NS}_{\Q}(X)= \text{NS}(X)\otimes \Q$, where $\text{NS}_{\Q}(X)$ is a finite dimensional rational vector space.  The group of isomorphism classes of  $G$ - linearised line bundles on $X$ is denoted by  $\text{Pic}^{G}(X)$ and the kernel of the forgetful morphism $\text{Pic}^{G}(X)\to \text{Pic}(X)$ is given by the group of characters $\chi (G)$. There is a $G$-algebraic equivalence relation on $\text{Pic}^{G}(X)$ analogous to an algebraic equivalence on $\text{Pic}(X)$ and it produces the group $\text{NS}^{G}(X)$.  This group  is finitely generated as well  and $\text{NS}^{G}_{\Q}(X) = \text{NS}^{G}(X)\otimes \Q$ is a finite dimensional vector space. An element of $\text{NS}_{\Q}^{G}(X)$ is called a fractional linearisation. The map $f : NS ^{G}(X)\to NS(X)$ is not surjective in general, but $f_{\Q} : \text{NS}^{G}_{\Q}(X) \to \text{NS}_{\Q}(X)$ is an epimorphism whose kernel  
is $\chi (G)\otimes \Q$, the group of fractional characters.

For example,  given  $G$-linearised ample line bundles $L_1, \ldots , L_n$ on a  $G$-variety $X$, then for $(t_1, \ldots , t_n)\in \Q^{n}$ the tensor product $L(t_1, \ldots, t_n) = \otimes _{i}L_{i}^{t_i}$ will be  a fractional linearisation on $X$.  

An ample linearisation $L$  on $X$  is said to be $G$-effective if  $L^{n}$ has a $G$-invariant section for some $n>0$. This notion depends only on the $G$-algebraic equivalence class of the linearisation, which implies that it is well defined $G$-effective subset $E^{G}(X)\subset f^{-1}(A(X)) \subset  NS ^{G}(X)$ and $G$-effective cone $E_{\Q}^{G} = E_{G}\otimes _{\N}\Q_{\geq 0}\subset NS_{\Q}^{G}$.

\subsubsection{Categorical and GIT quotients}
It is the result of Hilbert following by Nagata that for a given vector space $V$ and a reductive linear algebraic group $G$  acting on it, the commutative algebra $A$  of invariant polynomials on $V$ is finitely generated and any set of generators $P_1, \ldots , P_{N}$  for $A$ defines invariant regular map  $f$ from $V$ to some affine algebraic variety $Y$ belonging to an  affine space $\A ^{N}$, whose ring of polynomials is isomorphic to $A$.   This map has universal property for all $G$-invariant maps of $V$ and it is called  the categorical quotient of $V$ for the action $G$. Passing to the projective space $P(V)$, one considers  the  induced image of the set of semi-stable points  $V^{ss}$, that is those points in $V$ which do not belong to the preimage by $f$ of the origin. It is  an invariant open subset $\mathbb{P}(V)^{ss}$ in $\mathbb{P}(V)$ and the above map induces the map from $\mathbb{P}(V)^{ss}$ to the projective algebraic variety denoted by $\mathbb{P}(V)^{ss}\!/\!/G$ whose projective coordinate algebra is isomorphic to $A$.  The fibers of the  map $\mathbb{P}(V)\to \mathbb{P}(V)^{ss}\!/\!/G$ generally  are not  orbits, but each fiber contains an unique closed orbit, which means  $\mathbb{P}(V)^{ss}\!/\!/G$ parametrizes closed orbits in $\mathbb{P}(V)^{ss}$. The stable points $\mathbb{P}(V)^{s}$  in $\mathbb{P}(V)^{ss}$ are those points whose orbits are closed and whose stabilizer subgroups are finite.   The restriction of the map $\mathbb{P}(V)^{ss}\to \mathbb{P}(V)^{ss}\!/\!/G$ to the stable points is an orbit map $\mathbb{P}(V)^{s} \to  \mathbb{P}(V)^{s}/G$   and it is called a  geometric quotient. 

The strategy for defining GIT quotient of a projective variety $X$  with a $G$-action is to cover it by open affine $G$-invariant subsets $V_i$ and to construct categorical quotient by gluing together the quotients $V_i\!/\!/G$. This can not be done on the whole  $X$, so this distinguish an open invariant subset $U$ of $X$ for which it can be done. This leads to consider semi-stable and stable points in $X$ related to a  $G$-linearised line bundle $L$.

A point $x\in X$ is called semi-stable  if there exists $n>0$ and $s\in \Gamma (X, L^{n})^{G}$ such that $X_{s}=\{y\in X : s(y)\neq 0\}$ is affine and contains $x$. A point $x\in X$ is called stable if it is semi-stable and additionally $G_x$ is finite and all orbits of $G$ in $X_s$ are closed. The set of semi-stable points is denoted by $X^{ss}(L)$ and of stable points by $X^{s}(L)$. If $X$ is projective and $L$ is ample any point is semi-stable.

Let $L$ be a $G$-linearised ample line bundle on a variety $X$.  This action restricted to   $X^{ss}\subset X$ can  be linearised, that is there exists an $G$-equivariant embedding $X^{ss}\to \C P^n$, where $G$ acts on $\C P^n$ by linear representation $G\to GL_{n+1}(\C)$.  More precisely, the group $G$ acts naturally and linearly on the vector space of sections $\Gamma (X, L)$ by $(g\cdot s)(x) = g\cdot s(g^{-1}\cdot x)$ and   there exists a finite set of invariant sections $\{s_1,\ldots, s_n\}\in \Gamma (X, L^{\otimes N})$ for a  sufficiently large $N$.   If  $V\subset \Gamma (X, L^{\otimes N})$ is a vector subspace spanned by $\{s_0, \ldots , s_n\}$,  then it is obtained a $G$-equivariant embedding $X^{ss}\to P(V)$ defined by $x\to (s_{1}(x), \ldots , s_{n}(x))$. 

In addition, sections $\{s_0, \ldots, s_n\}$ can be chosen such that  $X^{ss}$ is covered by $X_{s_i}$.  Since $X_{s_i}$ are affine, there are defined    categorical quotients $X_{s_i}\!/\!/G$, and ampleness of $L$ enables to prove that these categorical quotients  are glued together to give a variety. This  variety  is called GIT quotient of $X$ by $G$ and denoted by $X\!/\!/G$.

The set $X^{ss}(L)$ as well as the GIT quotient  $X\!/\!/G$ depend only on the $G$-algebraic equivalence class of $L$. Moreover, the dependence of  the quotient  $X\!/\!/G$ on the choice of $L\in NS^{G}$ is as follows: the cone $E_{\Q}^{G}(X)$ is  locally polyhedral in  $f_{\Q}^{-1}(A_{\Q}(X))$, it is divided by the finite number of  locally polyhedral walls of codimension one into convex chambers , such that as $t$ varies within a fixed chamber, the set $X^{ss}(t)$ and the quotient $X\!/\!/G(t)$ remain fixed, while for $t_0$ on a wall or walls or on the boundary of  $E_{\Q}^{G}(X)$, and $t_1$ on an    adjacent chamber there is an inclusion $X^{ss}(t_1)\subset X^{ss}(t_0)$, which induces a canonical projective morphism    $X\!/\!/G(t_1)\to   X\!/\!/G(t_0)$.

\subsection{ Diagonal $PGL_{2}(\C)$-action on  $(\C P^{1})^{n}$}
Let  $X=(\C P^{1})^{n}$ is  considered  with the diagonal action of $PGL_{2}(\C)$.  Let $L_i= \mathcal{O}(-1)$  be  the canonical line bundle on  the $i$-the factor $\C P^{1}$.  For a $(t_1, \ldots ,t_n)\in \Q ^{n}$,   the fractional linearisation $\mathcal{O}(t_1, \ldots , t_n) = \otimes _{i=1}^{n}L_{i}^{t_i}$ is an ample line bundle on $(\C P^{1})^{n}$.  

For this action, the  semi-stability and stability notion of a point related to a fractional linearisation $\mathcal{O}(t_1, \ldots , t_n)$  can be, following~\cite{Th}, reformulated as follows:

A point $(x_1,\ldots , x_n)\in (\C P^{1})^{n}$ is said to be stable (semi-stable) if for each $x\in \C P^{1}$  it holds
\[
\sum_{j=1}^{n}t_j\delta (x, x_j) < (\leq ) \frac{1}{2}\sum_{j=1}^{n}t_j,
\]
where $\delta (x, x_j)=1$ for $x=x_j$ and $0$ otherwise.

The stability condition says: a  point $(x_1, \ldots , x_n)$ is stable if  for any $\{i_1, \ldots i_r\}\subset \{1,\ldots , n\}$, $x_{i_1},\ldots , x_{i_r}$ may coincide only when $x_{i_1}+\ldots +x_{i_r} <1$.

Let $\mathcal{O}(\mathcal{L})$ for  $\mathcal{L}  = (t_1, \ldots , t_n)\in \Q ^{n}$ be a linearisation of the considered action and assume it is renormalized such that $t_1+\ldots +t_n=2$.

A linearisation $\mathcal{O} (\mathcal{L}) $ is said to be typical if all semi-stable points are stable and atypical otherwise. In other words, it  is typical exactly when $t_{i_1}+\ldots + t_{i_r} \neq 1$ for any $\{i_1, \ldots i_r\}\subset \{1, \ldots , n\}$. 

Let  $\mathcal{G}(\mathcal{L})$ denotes the  GIT - quotient of $(\C P^{1})^{n}$  by the diagonal $PGL_{2}(\C)$ - action defined by the linearisation  $\mathcal{O} (\mathcal{L})$.

\subsubsection{Domain of admissible weights and GIT-quotients}

Hassett in ~\cite{H}  related  GIT - quotients to  the moduli spaces  of weighted stable genus zero curves.

The domain of admissible weights for weighted stable  genus zero curves is  
\[
\mathcal{D}_{0,n}= \{(t_1,\ldots , t_n)\in \R ^{n} | 0<t_i\leq 1, \;\; t_1+\ldots +t_n>2\}.
\]
It is defined in~\cite{H}  the boundary, not topological,  of $\mathcal{D}_{0,n}$ by
\[
\partial \mathcal{D}_{0,n} = \{ (t_1,\ldots , t_n) | t_1+\ldots +t_n=2, \; 0<t_i<1 \; \text{for}\; i=1,\ldots n\}.
\]

The  description of the chamber structure of $E_{\Q}^{G}(X)\subset f^{-1}_{\Q}(A(X))$  applied to $X=(\C P^{1})^{n}$ and the diagonal action of the group $G=PGL_{2}(\C)$ together with the reformulation of stability conditions,  enabled Hassett to relate moduli spaces of weighted stable genus zero curves to GIT-quotients.

\begin{thm}\label{typical}
Let $\mathcal{L}$ be a typical linearisation in $\partial \mathcal {D}_{0,n}$. There exists an open neighborhood $U$ of $\mathcal{L}$ such that $U\cap \mathcal{D}_{0,n}$ is contained in an open fine chamber of $\mathcal{D}_{0,n}$. For each set of weight data $\mathcal{A}\in U \cap \mathcal{D}_{0,n}$, there is a natural isomorphism
\[
\overline{\mathcal{M}}_{0, \mathcal{A}} \stackrel{\cong}{\to} \mathcal{G}(\mathcal{L}).
\]
\end{thm}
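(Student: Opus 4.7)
My plan is to proceed in three main stages, starting from pure combinatorics, then building a morphism via moduli interpretation, and finally checking it is an isomorphism.

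First I would establish the combinatorial setup. By hypothesis, $\mathcal{L}=(t_1,\ldots,t_n)$ is typical, meaning $\sum_{i\in I} t_i \neq 1$ for every $I\subset\{1,\ldots,n\}$. Since there are only finitely many such partial sums and each is a continuous function of the coefficients, there exists an open neighborhood $U$ of $\mathcal{L}$ in $\R^n$ on which all these partial sums stay bounded away from $1$. Hence no fine-chamber wall (hyperplane $\sum_{i\in I} a_i = 1$ with $2\leq |I|\leq n-2$) meets $U$, so $U\cap \mathcal{D}_{0,n}$ is contained in a single open fine chamber. By Proposition~\ref{chambersweight} the isomorphism class of $\overline{\mathcal{M}}_{0,\mathcal{A}}$ is then independent of $\mathcal{A}\in U\cap \mathcal{D}_{0,n}$. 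The analogous statement on the GIT side is that $\mathcal{G}(\mathcal{L})$ depends only on the GIT-chamber of $\mathcal{L}$ in $E^{G}_{\Q}(X)$; since typicality is exactly the condition that $\mathcal{L}$ avoids the walls, $\mathcal{G}(\mathcal{L})$ is well-defined and locally constant near $\mathcal{L}$.

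Next I would construct the morphism $\overline{\mathcal{M}}_{0,\mathcal{A}}\to \mathcal{G}(\mathcal{L})$ moduli-theoretically. The key observation is the match between the two stability conditions in a neighborhood of a typical $\mathcal{L}$: an ordered configuration $(x_1,\ldots,x_n)\in(\C P^1)^n$ is $\mathcal{L}$-stable iff for every $x\in\C P^1$ one has $\sum_{i: x_i=x} t_i < 1$, and a weighted pointed curve $(C;s_1,\ldots,s_n)$ is $\mathcal{A}$-stable iff for every coinciding subset $I$ one has $\sum_{i\in I} a_i\leq 1$ together with the relative ampleness condition. For $\mathcal{A}\in U\cap\mathcal{D}_{0,n}$ the inequality $\sum_{I} a_i\leq 1$ is in fact strict because no partial sum hits $1$; and the ampleness condition together with $\sum a_i$ being only slightly larger than $2$ forces the underlying curve to be irreducible, i.e. $(C;s_1,\ldots,s_n)$ is an $n$-tuple on $\C P^1$ up to $PGL_2(\C)$. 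This produces the natural map: apply the universal family of $\overline{\mathcal{M}}_{0,\mathcal{A}}$, interpret each fiber as such a semi-stable configuration, and compose with $(\C P^1)^{n,ss}\to \mathcal{G}(\mathcal{L})$. Functoriality and the universal property of the GIT quotient give a morphism $\phi:\overline{\mathcal{M}}_{0,\mathcal{A}}\to \mathcal{G}(\mathcal{L})$.

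To show $\phi$ is an isomorphism, I would argue it is bijective on closed points and then invoke the normality of $\mathcal{G}(\mathcal{L})$ and smoothness of $\overline{\mathcal{M}}_{0,\mathcal{A}}$. On the open dense locus $\mathcal{M}_{0,n}$ of configurations of $n$ distinct points, both sides coincide canonically with $\{(x_1,\ldots,x_n)\in(\C P^1)^n : x_i\neq x_j\}/PGL_2(\C)$, and $\phi$ restricts to this identification. On the boundary, irreducibility of the underlying curve (from the paragraph above) gives an inverse construction at the level of closed points: an $\mathcal{L}$-stable configuration is the datum of $(\C P^1;x_1,\ldots,x_n)$ with $\sum_{i: x_i=x} t_i<1$, which is precisely an $\mathcal{A}$-stable weighted pointed curve. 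Bijectivity plus Zariski's main theorem (normal target, birational proper bijection) then upgrades $\phi$ to an isomorphism.

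The main obstacle I expect is the precise identification of the underlying curves in $\overline{\mathcal{M}}_{0,\mathcal{A}}$ as $\mathcal{A}$ runs over $U\cap\mathcal{D}_{0,n}$: one must rule out reducible nodal configurations. The delicate point is that although $\sum a_i$ only slightly exceeds $2$, bipartite splittings satisfying the component-wise bound $\sum_{i\in C_j} a_i > 1$ could in principle occur; showing these are excluded relies on the typicality of $\mathcal{L}$ (no partial sum equals $1$) together with openness of $U$, so that the weighted stability degenerates exactly as the GIT stability does on the boundary $\partial\mathcal{D}_{0,n}$. Once this component-analysis is in place, the rest of the argument is the standard universal-property and normality package.
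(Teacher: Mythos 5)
The paper does not actually prove this statement: it is quoted verbatim from Hassett's paper \cite{H} (his Theorem 8.2 / 8.3 circle of results), and the surrounding text only explains that it follows from the variation-of-GIT chamber structure together with the reformulation of stability for the diagonal $PGL_2(\C)$-action on $(\C P^1)^n$. So there is no in-paper proof to compare against; what you have written is an independent reconstruction, and it is essentially the standard (Hassett's) argument: match the two stability conditions near a typical $\mathcal{L}$, use typicality to make all the relevant partial sums avoid $1$, conclude that $\mathcal{A}$-stable curves are irreducible so both sides parametrize the same configurations, and finish with properness, normality of the GIT quotient and Zariski's main theorem.

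Your reconstruction is correct, and you correctly isolate the one genuinely delicate point, namely ruling out reducible $\mathcal{A}$-stable curves. To make that step airtight you should record the quantitative version: choose $\delta>0$ with $\lvert\sum_{i\in I}t_i-1\rvert>\delta$ for all $I$, and shrink $U$ so that every $\mathcal{A}\in U$ satisfies both $\lvert\sum_{i\in I}a_i-1\rvert>\delta$ and $\sum_i a_i<2+2\delta$; then a nodal curve would force two disjoint subsets $I_1,I_2$ with $\sum_{I_j}a_i>1+\delta$ (each end component of the dual tree needs its markings plus the node to make $K_C+\sum a_is_i$ ample, i.e.\ $\sum_{I_j}a_i>1$), giving $\sum_ia_i>2+2\delta$, a contradiction. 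The only other place needing a word is the construction of $\phi$ from the universal family: the family $\pi\colon\mathcal{C}\to B$ is a $\C P^1$-bundle with sections, hence the projectivization of a rank-two bundle and Zariski-locally trivial, so one gets local classifying maps to $(\C P^1)^{n,ss}$ well defined up to $PGL_2(\C)$, and these glue after composing with the quotient map; typicality guarantees all semistable points are stable, so $\mathcal{G}(\mathcal{L})$ is a geometric quotient and the point count in your last step is valid. With those two remarks filled in, the argument is complete.
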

 
For an atypical  linearisation $\mathcal{L}$ of the boundary  $\partial \mathcal {D}_{0,n}$ the description is more complicated. 

\begin{thm}\label{atypical}
Let $\mathcal{L} \in \partial \mathcal{D}_{0,n}$ be an atypical linearisation. Suppose that $\mathcal{L}$ is in the closure of the coarse chamber associated with the weighted data $\mathcal{A}$. Then,  there exists  a natural birational morphism
\[
\rho : \overline{\mathcal{M}}_{0, \mathcal{A}} \to \mathcal{G}(\mathcal{L}).
\] 
\end{thm}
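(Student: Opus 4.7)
The plan is to deduce Theorem~\ref{atypical} from Theorem~\ref{typical} and the wall-crossing description of GIT recalled above. Because $\mathcal{L}$ is atypical but lies in the closure of the coarse chamber associated with $\mathcal{A}$, I first perturb $\mathcal{L}$ into the interior of that coarse chamber to obtain a typical linearisation $\mathcal{L}'$; this is possible since the fine walls inside the coarse chamber are a finite hyperplane arrangement. Theorem~\ref{typical} then supplies a canonical isomorphism $\overline{\mathcal{M}}_{0,\mathcal{A}} \cong \mathcal{G}(\mathcal{L}')$. The pair $(\mathcal{L}', \mathcal{L})$ fits the chamber-versus-wall VGIT picture, so the inclusion $((\C P^{1})^{n})^{ss}(\mathcal{L}') \subset ((\C P^{1})^{n})^{ss}(\mathcal{L})$ induces a canonical projective morphism $\pi : \mathcal{G}(\mathcal{L}') \to \mathcal{G}(\mathcal{L})$. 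Setting $\rho$ to be the composition produces the desired morphism; naturality, meaning independence of the choice of typical approximation, follows from Proposition~\ref{chambersweight} together with the fact that any two typical linearisations in a common fine chamber produce the same semistable locus, so the composite does not depend on the particular $\mathcal{L}'$.

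To verify birationality I would restrict to the open locus $\mathcal{M}_{0,n} \subset \overline{\mathcal{M}}_{0,\mathcal{A}}$ of configurations with pairwise distinct marked points. Under Theorem~\ref{typical} this corresponds to the image of the open subset $U_0 \subset (\C P^{1})^{n}$ of configurations whose coordinates are pairwise distinct. Such configurations are stable for any linearisation in $\partial \mathcal{D}_{0,n}$ with $0 < t_i < 1$, since in the stability inequality $\sum_j t_j \delta(x, x_j) < \tfrac{1}{2}\sum_j t_j = 1$ the left-hand side is bounded above by a single $t_j < 1$. Consequently $\pi$ restricts to an isomorphism of the open dense $U_0 / PGL_{2}(\C)$ onto its image in $\mathcal{G}(\mathcal{L})$, and $\rho$ is a birational morphism of projective varieties.

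The main obstacle will be identifying exactly which loci of $\overline{\mathcal{M}}_{0,\mathcal{A}}$ are contracted by $\rho$ and matching this with VGIT $S$-equivalence on the atypical side. For each subset $I \subset \{1,\dots,n\}$ with $\sum_{i\in I} t_i = 1$ the strictly semistable configurations in which exactly the coordinates indexed by $I$ collide are all $S$-equivalent and therefore represent a single closed point of $\mathcal{G}(\mathcal{L})$, whereas on the Hassett side the same combinatorial collision data parametrises a positive-dimensional boundary stratum of the shape $D_{I,J} = \overline{\mathcal{M}}_{0,\mathcal{A}'_{I}} \times \overline{\mathcal{M}}_{0,\mathcal{A}'_{J}}$ from Proposition~\ref{reductionboundarydivisors}. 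The core technical step is matching the combinatorics of $S$-equivalence on $(\C P^{1})^{n}$ with the contraction of exactly these boundary divisors $D_{I,J}$, and verifying that $\rho$ collapses these strata and no others; once this correspondence is established, the projectivity and dominance of $\rho$ obtained above combine with the classification of contracted divisors to give the asserted natural birational morphism.
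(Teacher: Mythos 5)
The paper itself offers no proof of this statement---it is quoted from Hassett \cite{H} (his Theorem~8.3)---so there is no internal argument to compare against; your proposal reproduces essentially Hassett's original route: approximate the atypical $\mathcal{L}$ by a nearby typical linearisation $\mathcal{L}'$, identify $\overline{\mathcal{M}}_{0,\mathcal{A}}\cong\mathcal{G}(\mathcal{L}')$ via Theorem~\ref{typical} and Proposition~\ref{chambersweight}, and compose with the canonical VGIT wall-crossing morphism $\mathcal{G}(\mathcal{L}')\to\mathcal{G}(\mathcal{L})$, with birationality checked on the locus of pairwise distinct configurations. One imprecision to fix: $\mathcal{L}'$ cannot be taken ``in the interior of the coarse chamber,'' which lies in $\sum t_i>2$, but must be chosen inside the normalized slice $\partial\mathcal{D}_{0,n}$ (where Theorem~\ref{typical} applies), off the walls $\sum_{j\in S}t_j=1$ and on the side of each wall determined by the coarse chamber of $\mathcal{A}$, so that the fine chamber adjacent to $\mathcal{L}'$ lies in that coarse chamber. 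Your closing paragraph on matching $S$-equivalence classes with the contracted strata $D_{I,J}$ is a description of the exceptional locus and is not needed for the existence and birationality asserted in the statement, which already follow from the first two steps.
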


In what follows we denote by $\mathcal{A}(\mathcal{L})$ any weight data associated, according to Theorem~\ref{typical} or Theorem~\ref{atypical}, to $\mathcal{L}=(t_1, \ldots , t_n)\in \partial \mathcal {D}_{0,n}$.
\begin{ex}
In the paper~\cite{H} it is showed that the space 
$(\C P^{1})^{n}$ can be interpreted as $\overline{\mathcal{M}}_{0,\mathcal{A}}$, where $\mathcal{A} = (a_1,\ldots , a_n)$ such that
\[
a_{i_1}+a_{i_2}>1 \; \text{for}\; \{i_1,i_2\}\subset \{1,2,3\};
\]
\[
a_i+a_{j_1}+\ldots +a_{j_r}\leq 1\; \text{for}\; i=1, 2, 3\; \text{and} \; \{j_1, \ldots , j_r\}\subset \{4,\ldots , n\}, \; \text{where}\; r>0.
\]
In addition, if $Q$ is the moduli space of curves of genus zero  with  $n$ distinct points, then $Q$ can be embedded in $(\C P^{1})^{n-3}$ and the compactification of $Q$ by $(\C P^{1})^{n-3}$ is isomorphic to $\overline{\mathcal{M}}_{0, \mathcal{A}(n)}$  where $\mathcal{A}(n) = (a,a,a, \varepsilon, \ldots, \varepsilon)$ and 
$\frac{1}{2} < a\leq 1$, $0<(n-3)\varepsilon \leq 1-a$.   In~\cite{H}, it is also described   the Keel's approach to $\overline{\mathcal{M}}_{0,n}$ in terms of reduction morphisms, precisely it is described the map $\overline{\mathcal{M}}_{0,n} \to (\C P^{1})^{n}$ as a product of reduction morphisms.  
\end{ex}

\section{Equivariant model for the orbit space $G_{n,2}/T^n$}
In our recent papers we constructed  the model for the orbit space $G_{n,2}/T^n$ in terms of the chamber decomposition $\{C_{\omega}\}$ of $\Delta _{n,2}$  by the admissible polytopes, the spaces of parameters $\{F_{\omega}\}$ of the chambers, universal space of parameters $\mathcal{F}_{n}$ and the projections $p_{\omega} : \mathcal{F}_{n}\to F_{\omega}$.

We shortly describe it in more detail. The strata $\{W_{\sigma}\}$  on a Grassmann manifold $G_{n,2}$ are defined as  non-empty sets of the form  
\[
W_{\sigma} = \{ L\in G_{n,2} | P^{I}(L) \neq 0 \; \text {iff} \; I \in \sigma\}, 
\]
where $\sigma$ is a subset of the set of two-elements subsets of $\{1,\ldots , n\}$ and $P^{I}(L)$ are the standard Pl\"ucker coordinates for $L$.

Let $\mu : G_{n,2}\to \Delta _{n,2}$ be the standard moment map defined by
\[
\mu (L) = \frac{1}{\sum_{I} |P^{I}(L)|^2}\sum _{I}P^{I}(L)\Lambda _{I},
\]
where $\Lambda _{I}\in \Z ^{n}$ such that $\Lambda _{I}(i)=1$ iff $i\in I$ and otherwise it is zero.

The following is proved to hold~\cite{BT}:
\begin{itemize}
\item $W_{\sigma}$ is $(\C ^{\ast})^{n}$-invariant,
\item $\mu (W_{\sigma}) = \stackrel{\circ}{P}_{\sigma}$,  where $P_{\sigma}=  \text{convhull}(\Lambda_{I}, I\in \sigma)$,  we call $P_{\sigma}$ an  admissible polytope,
\item $W_{\sigma}/T^n\cong \stackrel{\circ}{P}_{\sigma}\times F_{\sigma}$, where $F_{\sigma} = W_{\sigma}/(\C ^{\ast})^{n}$  is an algebraic manifold.
\end{itemize}

In particular, for the  main stratum $W_n$  consisting  of  points whose all Pl\"ucker coordinates are non-zero, that is 
\[
W_n= \{L \in G_{n,2} | P^{I}(L)\neq 0 \; \text{for any} \; I \subset \{1,\ldots ,n\}, \; |I|=2\}, 
\]
it holds 
\begin{equation}\label{mainstr}
W_n/T^n\cong \stackrel{\circ}{\Delta}_{n,2}\times F_{n},
\end{equation}
 where $F_{n}\subset (\C P^{1})^{N}$, $N={n-2\choose 2}$ is given by
\begin{equation}\label{jednacineFn}
F_{n}= \{((c_{ij}:c_{ij}^{'}))_{3\leq i<j\leq n} \in (\C P^{1}_{A})^{N} | c_{ij}c_{ik}^{'}c_{jk} = c_{ij}^{'}c_{ik}c_{jk}^{'}\},
\end{equation}
and  $\C P^{1}_{A} = \C P^{1}\setminus \{ (1:0), (0:1), (1:1)\}$.

In addition, the main stratum is, in the local coordinates $z_3, \ldots, z_n$, $w_3, \ldots, w_n$ of a Pl\"ucker chart, given  by
\begin{equation}\label{eqmain}
c_{ij}z_iw_j = c_{ij}^{'}z_jw_i, \;\; 3\leq i<j\leq n.
\end{equation}
It is   an  everywhere dense set in $G_{n,2}$, which implies that $W_n/T^n$  is everywhere dense set in $G_{n,2}/T^n$, that is there exists a compactification of $W_n/T^n$, which gives the orbit space $G_{n,2}/T^n$. Together with~\eqref{mainstr}, it implies that there exists compactification of $\stackrel{\circ}{\Delta}_{n,2}\times F_n$ which gives $G_{n,2}/T^n$. We constructed the model for this compactification of the form $\Delta _{n,2}\times \mathcal{F}_{n}$ for the corresponding compactification $\mathcal{F}_{n}$ for $F_{n}$.  The compactification   $\mathcal{F}_{n}$ for $F_n$  is a smooth compact manifold.  Note that $W_{\sigma}/T^n$ belongs to the boundary of  $W_n/T^n\cong \stackrel{\circ}{\Delta}_{n,2}\times F_{n}$ in $G_{n,2}/T^n$. When considering  $W_{\sigma}/T^n$ as a part of boundary of $W_{n}/T^n$ in  $\Delta _{n,2}\times \mathcal{F}_{n}$  we obtain the space  $\tilde{F}_{\sigma}\subset \mathcal{F}_{n}$.  We call  $\tilde{F}_{\sigma}$   a  virtual space of parameters for a stratum $W_{\sigma}$. Then the following holds, see~\cite{BT0},~\cite{BT2nk}:
\begin{itemize}
\item $\tilde{F}_{n} = F_n$,\; $\mathcal{F}_{n}=  \cup _{\sigma} \tilde{F}_{\sigma}$
\item there exists a projection $p_{\sigma} : \tilde{F}_{\sigma}\to F_{\sigma}$ for any $\sigma$,
\item for a point $L\in W_{\sigma}$ we say to be singular if  the  real  space of parameters $F_{\sigma}$ is not homeomorphic to the virtual space of parameters $\tilde{F}_{\sigma}$ .
\end{itemize}

We recall the description  from~\cite{BT2} of the universal space of parameters $\mathcal{F}_{n}$. Let $\bar{F}_{n}$ be the compactification of $F_n$ in $(\C P^{1})^{N}$, that is 
\begin{equation}\label{jednacinebarFn}
\overline{F}_{n} = \{((c_{ij}:c_{ij}^{'}))_{3\leq i<j\leq n} \in (\C P^{1})^{N} | c_{ij}c_{ik}^{'}c_{jk} = c_{ij}^{'}c_{ik}c_{jk}^{'}\}.
\end{equation}
It is a smooth compact manifold.
Let
\[
\hat{F}_I = \{ \bar{F}_n\cap  \{(c_{ik} : c_{ik}^{'}) = (c_{il} : c_{il}^{'}) = (c_{kl} : c_{kl}^{'}) = (1 : 1)\},
\]
for $I = \{i, k, l\} \in \{I \subset \{1,\ldots  , n\}, |I| = 3\}$ and  $n \geq  5$.

Let the set  $\mathcal{G}_{n}$ be defined by  
\begin{itemize}
\item  $\mathcal{G}_{n} = \emptyset$  for $n = 4$,
\item  $\mathcal{G}_{n}$ is the set  of  all possible non-empty intersection of $\hat{F}_{I}$'s, that is $F_{I}=\hat{F}_{I_1}\cap\ldots \cap \hat{F}_{I_k}$, where $I=\{I_{i_1}, \ldots , I_{i_k}\}$,  for $n\geq 5$.
\end{itemize}

Recall that De Concini and Processi~\cite{DCP}, Fulton and MacPherson~\cite{FMP},  Li~\cite{LILI} introduced the notion of wonderful compactification of a smooth algebraic variety generating by a building set of its smooth subvarieties.   

The smooth, compact manifold  $\mathcal{F}_{n}$    which is the wonderful compactification of the smooth variety $\bar{F}_{n}$  with the building set $\mathcal{G}_{n}$ is proved in~\cite{BT2} to satisfy requirements for the description of the model for  $G_{n,2}/T^n$. We call it an universal space of parameters.

As far as the admissible polytopes are concerned, it  is proved in~\cite{BT1} that the admissible polytopes $P_{\sigma}$ such that $\stackrel{\circ}{P}_{\sigma}\subset \stackrel{\circ}{\Delta}_{n,2}$ may have the  dimension $n-2$ or $n-1$.  They are explicitly described in~\cite{BT1} as follows.

\begin{thm}
The admissible polytopes  $P_{\sigma}\subset \stackrel{\circ}{\Delta}_{n,2}$   for $G_{n,2}$ of dimension $n-2$ are  given by the intersection of $\Delta _{n,2}$ with the planes 
\[
\sum_{i\in S, |S|=p}x_i=1, \;  \text{where}\; S\subset \{1,\ldots ,n\}, \; 2\leq p\leq [\frac{n}{2}].
\] 
\end{thm}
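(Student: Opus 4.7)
The plan is to use the classification of admissible polytopes as matroid polytopes. By the Gelfand--Serganova theorem, for a nonempty stratum $W_\sigma$ the index set $\sigma$ must be the collection of bases of a $\C$-representable matroid of rank $2$ on $\{1,\ldots,n\}$; conversely every rank-$2$ matroid on $\{1,\ldots,n\}$ is $\C$-representable, so every such matroid arises as the matroid of a stratum. Rank-$2$ matroids on $\{1,\ldots,n\}$ are classified by partitions $\{1,\ldots,n\} = S_1 \sqcup \cdots \sqcup S_k$ into $k \geq 2$ parallel classes, with bases $\{\{i,j\} : i \in S_a,\ j \in S_b,\ a \neq b\}$. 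So the problem reduces to identifying which of these matroid polytopes are of dimension $n-2$ and have their interior inside $\stackrel{\circ}{\Delta}_{n,2}$.

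First I would handle the case $k=2$, i.e.\ a partition $\{S, S^c\}$ with $|S| = p$. Every vertex $\Lambda_{\{i,j\}}$ of the corresponding polytope $P_\sigma$ has exactly one index in $S$, hence satisfies $\sum_{i \in S} x_i = 1$; conversely any vertex of $\Delta_{n,2}$ lying on this hyperplane has exactly one index in $S$. Therefore
\[
P_\sigma \;=\; \Delta_{n,2} \cap \Big\{x : \sum_{i \in S} x_i = 1\Big\}.
\]
The vectors $e_i - e_{i'}$ for $i, i' \in S$ and $e_j - e_{j'}$ for $j, j' \in S^c$ all arise as differences of vertices of $P_\sigma$; they are linearly independent and number $(|S|-1)+(|S^c|-1) = n-2$. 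Hence $\dim P_\sigma = n-2$ provided $|S|, |S^c| \geq 2$; when $|S|=1$, $P_\sigma$ is contained in the face $\{x_i = 1\} \cap \Delta_{n,2}$ of the hypersimplex, so $\stackrel{\circ}{P}_\sigma \not\subset \stackrel{\circ}{\Delta}_{n,2}$.

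Next I would rule out $k \geq 3$ by showing $\dim P_\sigma = n-1$. For $a, b$ lying in two distinct classes, choose $c$ in a third class (which exists since $k \geq 3$); then $\Lambda_{\{a,c\}} - \Lambda_{\{b,c\}} = e_a - e_b$. For $a, b$ in a common class, choose $c$ in any other class; again $\Lambda_{\{a,c\}} - \Lambda_{\{b,c\}} = e_a - e_b$. Hence the linear span of the vertex differences contains every $e_a - e_b$ and equals the entire hyperplane $\{\sum v_i = 0\}$, so $\dim P_\sigma = n-1$ and such polytopes are excluded from the count.

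Finally, since the partitions $\{S, S^c\}$ and $\{S^c, S\}$ produce the same polytope, one may assume $|S| = p \leq [n/2]$; combined with $p \geq 2$ this yields exactly the range stated in the theorem. The most delicate point is the representability/nonemptiness step that identifies admissible polytopes with rank-$2$ matroid polytopes on $\{1,\ldots,n\}$; once this is in hand, the remaining argument is an elementary dimension count and case analysis on the number $k$ of parallel classes.
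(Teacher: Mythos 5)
The paper states this theorem without proof --- it is quoted from \cite{BT1} --- so there is no in-paper argument to compare against, and your proof must stand on its own. Your route (identify admissible polytopes whose interiors meet $\stackrel{\circ}{\Delta}_{n,2}$ with polytopes of loop-free rank-$2$ matroids, i.e.\ partitions of $\{1,\ldots,n\}$ into $k\geq 2$ parallel classes, all of which are $\C$-representable; then show $k=2$ gives dimension $n-2$ while $k\geq 3$ forces dimension $n-1$) is correct and is precisely the Gelfand--Serganova framework on which \cite{BT1} and the present paper's description of strata rest. The one step you elide is the identity $P_\sigma=\Delta_{n,2}\cap\{\sum_{i\in S}x_i=1\}$ in the $k=2$ case: knowing that the vertices of $P_\sigma$ are exactly the vertices of $\Delta_{n,2}$ lying on that hyperplane does not by itself give equality, since a hyperplane slice of a polytope generally acquires vertices that are not vertices of the ambient polytope. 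Here you should add that on the slice both $\sum_{i\in S}x_i=1$ and $\sum_{j\in S^c}x_j=1$ hold, so $\Delta_{n,2}\cap\{\sum_{i\in S}x_i=1\}$ is the product of the two standard simplices on $S$ and $S^c$, whose vertex set is exactly $\{\Lambda_{\{i,j\}}\,:\,i\in S,\ j\in S^c\}$; this closes the gap and also yields the dimension count $(|S|-1)+(|S^c|-1)=n-2$ immediately.
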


\begin{thm}\label{dimensionn-1}
The admissible polytopes  $P_{\sigma}\subset \stackrel{\circ}{\Delta}_{n,2}$   for $G_{n,2}$ of dimension $n-1$ are  given  by $\Delta _{n,2}$ and intersections with $\Delta _{n,2}$ of all collections of the half-spaces of the form
\[
H_{S} : \sum _{i\in S} x_i<1, \; S\subset \{1,\ldots ,n\}, \; |S|=k, \; 2\leq k\leq n-2,
\]
such that if $H_{S_1}, H_{S_2}$ belong to a collection then $S_1\cap S_2=\emptyset$.
\end{thm}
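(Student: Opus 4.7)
The plan is to identify every admissible polytope $P_{\sigma}$ of dimension $n-1$ with $\stackrel{\circ}{P}_{\sigma}\subset\stackrel{\circ}{\Delta}_{n,2}$ with the matroid polytope of a rank-$2$ matroid on $\{1,\ldots,n\}$, and to read off the claimed half-space description from its parallel-class structure. Fix a stratum $W_{\sigma}$ and a representative $L\in W_{\sigma}$, presented as the row-span of a $2\times n$ matrix $A$. Put $i\sim j$ iff the columns $A_{\cdot i}$ and $A_{\cdot j}$ are proportional, equivalently $P^{ij}(L)=0$, equivalently $\{i,j\}\notin\sigma$. The hypothesis $\stackrel{\circ}{P}_{\sigma}\subset\stackrel{\circ}{\Delta}_{n,2}$ forces every column to be non-zero (else $P_{\sigma}$ would sit on a facet $x_{i}=0$ of $\Delta_{n,2}$), hence $\sim$ is a genuine equivalence relation partitioning $\{1,\ldots,n\}$ into classes $P_{1},\ldots,P_{k}$, with $\sigma=\{\{i,j\}:i\in P_{a},\,j\in P_{b},\,a\ne b\}$ and $P_{\sigma}=\mathrm{conv}\{\Lambda_{\{i,j\}}:i\in P_{a},\,j\in P_{b},\,a\ne b\}$.

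I would next translate the dimension and interior hypotheses into combinatorial conditions on this partition. For $k=2$ every vertex of $P_{\sigma}$ satisfies $\sum_{l\in P_{1}}x_{l}=1$, so $\dim P_{\sigma}\leq n-2$; hence $\dim P_{\sigma}=n-1$ forces $k\geq 3$, with the trivial partition $k=n$ into singletons giving $P_{\sigma}=\Delta_{n,2}$, which is the first alternative of the theorem. The condition $\stackrel{\circ}{P}_{\sigma}\subset\stackrel{\circ}{\Delta}_{n,2}$ further forces $|P_{a}|\leq n-2$ for each $a$, since $|P_{a}|=n-1$ would pin $x_{i_{0}}=1$ on every vertex of $P_{\sigma}$ for the unique $i_{0}\notin P_{a}$, placing $P_{\sigma}$ on the facet $x_{i_{0}}=1$ of $\Delta_{n,2}$.

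The central claim is then the half-space description
\[
P_{\sigma}=\Delta_{n,2}\cap\bigcap_{a:\,|P_{a}|\geq 2}\Big\{x:\sum_{i\in P_{a}}x_{i}\leq 1\Big\}.
\]
The inclusion $\subseteq$ is immediate from evaluating the constraints at each vertex $\Lambda_{\{i,j\}}$. For the reverse, set $r_{a}=\sum_{i\in P_{a}}x_{i}$; then $(r_{1},\ldots,r_{k})\in\Delta_{k,2}$ decomposes as $\sum_{a<b}\lambda_{ab}(e_{a}+e_{b})$, and setting $\mu_{ij}^{ab}=\lambda_{ab}x_{i}x_{j}/(r_{a}r_{b})$ gives, after a direct marginal computation $\sum_{b\ne a,\,j\in P_{b}}\mu_{ij}^{ab}=x_{i}$, the convex-combination expression $x=\sum\mu_{ij}^{ab}\Lambda_{\{i,j\}}$. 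Singleton classes contribute only $x_{i}\leq 1$, already built into $\Delta_{n,2}$, and disjointness of the sets $S=P_{a}$ with $|P_{a}|\geq 2$ is automatic as they are blocks of a partition.

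Conversely, given any pairwise disjoint $S_{1},\ldots,S_{m}\subset\{1,\ldots,n\}$ with $2\leq|S_{j}|\leq n-2$ whose intersection with $\Delta_{n,2}$ is non-empty, extend to a partition of $\{1,\ldots,n\}$ with at least three blocks by appending singletons, and realise the associated rank-$2$ matroid by a $2\times n$ matrix whose columns within each block are parallel to a common non-zero vector and with distinct blocks assigned non-proportional directions; then $W_{\sigma}$ is non-empty and $P_{\sigma}$ coincides with the claimed intersection. The main obstacle will be the reverse inclusion of the half-space description: while the formula $\mu_{ij}^{ab}=\lambda_{ab}x_{i}x_{j}/(r_{a}r_{b})$ is clean, verifying that it realises $x$ as a genuine convex combination and handling the boundary cases $r_{a}=0$ requires careful bookkeeping, and this step is the rank-$2$ specialisation of the classical matroid-polytope theorem.
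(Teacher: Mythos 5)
The paper does not actually prove Theorem~\ref{dimensionn-1}: it is quoted as a result of~\cite{BT1} (where it is obtained from the Gelfand--Serganova matroid stratification and the hyperplane arrangement $\Pi_n$), so there is no internal proof to compare against line by line. Judged on its own, your argument is correct and is the natural self-contained route: you identify a stratum with $\stackrel{\circ}{P}_{\sigma}\subset\stackrel{\circ}{\Delta}_{n,2}$ with the partition of $\{1,\ldots,n\}$ into parallel classes of the (necessarily nonzero) columns of a $2\times n$ representative, observe that $k=2$ classes force $P_\sigma$ into a hyperplane $\sum_{l\in P_1}x_l=1$ while $k\geq 3$ gives dimension $n-1$, and then prove the half-space description of the rank-$2$ matroid polytope directly; the marginal computation with $\mu_{ij}^{ab}=\lambda_{ab}x_ix_j/(r_ar_b)$ does check out (the coefficient of $\Lambda_{\{i,j\}}$ sums in coordinate $i_0\in P_{a_0}$ to $(x_{i_0}/r_{a_0})\sum_{b\neq a_0}\lambda_{a_0b}=x_{i_0}$, and $r_a=0$ forces all relevant $\lambda_{ab}=0$). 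This buys an elementary, reference-free proof of a statement the paper only imports.

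Two small points deserve explicit mention. First, you use that for $k\geq 3$ the intersection $\Delta_{n,2}\cap\bigcap_a\{\sum_{i\in P_a}x_i\leq 1\}$ really is $(n-1)$-dimensional; this needs one line (take $r_a=2/k<1$ and spread each $r_a$ over $P_a$) and also yields $\stackrel{\circ}{P}_{\sigma}\subset\stackrel{\circ}{\Delta}_{n,2}$ for free, since a full-dimensional subpolytope of $\Delta_{n,2}$ cannot have interior points on $\partial\Delta_{n,2}$. Second, the theorem as literally stated admits the collection $\{H_{S_1},H_{S_2}\}$ with $S_1\sqcup S_2=\{1,\ldots,n\}$, whose intersection with $\Delta_{n,2}$ is empty (with strict inequalities) or $(n-2)$-dimensional (with $\leq$); your requirement that the blocks extend to a partition with at least three classes silently excludes this, and it would be worth saying explicitly that such collections are degenerate and do not occur among the $(n-1)$-dimensional admissible polytopes. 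Neither point is a genuine gap in the argument.
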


The admissible polytopes define the chamber decomposition of $\Delta _{n,2}$ where the chambers $C_{\omega}$ are given by
\begin{equation}\label{chdef}
C_{\omega} = \bigcap\limits_{\sigma\in \omega} \stackrel{\circ}{P}_{\sigma} \neq \emptyset \;\; 
 \text{and} \;\;  C_{\omega}\cap \stackrel{\circ}{P}_{\sigma}=\emptyset \; \text{for}\; \sigma \not \in \omega.
\end{equation}

According to~\cite{BT1} this chamber decomposition can be obtained by the hyperplane arrangement that consists of
the planes $\Pi _{n}$ given by the equations  $\sum _{i\in S, \|S\| =p}x_i=1$, where $S\subset \{1,\ldots, n\}$, $2\leq p\leq [ \frac{n}{2}]$.  Now, consider a hyperplane arrangement in $\R ^{n-1} =
\{(x_1,\ldots , x_n) \in \R ^n | x_1+\ldots +x_n=2\}$  given by
\begin{equation}\label{hyparrang}
\mathcal{R}_{n} = \Pi _{n} \cup  \{x_i = 0, 1 \leq i  \leq  n\} \cup  \{ x_i = 1, 1 \leq  i \leq  n\}
\end{equation}
and the face lattice $L(\mathcal{R}_{n})$ of this arrangement. The intersection $L(\mathcal{R}_{n,2}) = L(\mathcal{R}_{n}) \cap  \Delta _{n, 2}$
produces  the  decomposition of $\Delta _{n,2}$ , which by~\cite{BT1} coincides  with the decomposition~\eqref{chdef}.

Further,  let $\hat{C}_{\omega} = \hat{\mu}^{-1}(C_{\omega})\subset G_{n,2}/T^n$, where $\hat{\mu} : G_{n,2}/T^n\to \Delta _{n,2}$ is induced by the moment map. The following holds~\cite{BT1}:

\begin{prop}\label{ComegaF}
For any $C_{\omega}$ there exists the  canonical homeomorphism
\[
h_{\omega} : \hat{C}_{\omega} \to C_{\omega}\times F_{\omega},
\]
where $F_{\omega} = \cup _{\sigma\in \omega}F_{\sigma}$  is a compactification of  $F_{n}$ and $F_{\omega}$ is a smooth manifold  for $\dim C_{\omega}=n-1$.
\end{prop}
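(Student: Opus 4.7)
The plan is to construct $h_\omega$ stratum by stratum, using the product decomposition $W_\sigma/T^n\cong \stackrel{\circ}{P}_\sigma\times F_\sigma$ recorded earlier for the $(\C^\ast)^n$-strata of $G_{n,2}$. By the definition~\eqref{chdef} of the chamber decomposition, a point of $\Delta_{n,2}$ lies in $C_\omega$ precisely when it belongs to $\stackrel{\circ}{P}_\sigma$ for exactly those $\sigma\in\omega$. Consequently
\[
\hat{C}_\omega \;=\; \bigsqcup_{\sigma\in\omega}\bigl(\hat{\mu}^{-1}(C_\omega)\cap(W_\sigma/T^n)\bigr),
\]
and restricting the product decomposition over $C_\omega\subset \stackrel{\circ}{P}_\sigma$ for each $\sigma\in\omega$ furnishes a homeomorphism $h_\omega^\sigma:\hat{\mu}^{-1}(C_\omega)\cap(W_\sigma/T^n)\to C_\omega\times F_\sigma$.

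Setting $F_\omega:=\bigcup_{\sigma\in\omega}F_\sigma$, I assemble the $h_\omega^\sigma$ into a candidate bijection $h_\omega:\hat{C}_\omega\to C_\omega\times F_\omega$. The substantive step is to upgrade this to a homeomorphism, equivalently to show that $\hat{\mu}$ restricted to $\hat{C}_\omega$ is a trivial fibration. Here I would invoke the crucial observation of~\cite{GMP} recalled in the introduction, that the fiber $\hat{\mu}^{-1}(x)$ does not depend on $x\in C_\omega$; this ensures that the inclusions $F_\sigma\hookrightarrow F_\omega$ produced by the individual $h_\omega^\sigma$ do not twist as $x$ varies over $C_\omega$, so the $h_\omega^\sigma$ patch together into a single trivialization. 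Since the main stratum $W_n$ corresponds to an index $\sigma$ lying in every $\omega$ (because $C_\omega\subset \stackrel{\circ}{\Delta}_{n,2}$), we have $F_n\subset F_\omega$ and the remaining $F_\sigma$ glue on as boundary pieces, exhibiting $F_\omega$ as a compactification of $F_n$.

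For chambers $C_\omega$ of maximal dimension $n-1$, I would invoke Theorem~\ref{dimensionn-1}: such a chamber is carved out by half-spaces $H_S$ whose index sets $S$ are pairwise disjoint. The disjointness decouples the possible degenerations of the adjacent boundary strata, which produces local product coordinates on $F_\omega$ along each codimension-one face of $C_\omega$ and thereby endows $F_\omega$ with the structure of a smooth compact manifold, inherited from the smoothness of each $F_\sigma$ and of the main piece $F_n$.

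The main obstacle I anticipate is verifying the continuity of $h_\omega$ at points where two strata meet. The set-theoretic gluing of the $F_\sigma$ is forced by the construction, but checking that the topology on $\hat{C}_\omega$ inherited from $G_{n,2}/T^n$ coincides with the product topology on $C_\omega\times F_\omega$ requires a careful analysis of how Pl\"ucker coordinates degenerate when a point of $W_\sigma$ limits to a point of an adjacent $W_{\sigma'}$ with $\sigma'\in\omega$. The constancy of the chamber combinatorics along $C_\omega$, together with the explicit defining equations of $F_n$ in~\eqref{jednacineFn} and of its compactifications, is precisely what makes this verification tractable.
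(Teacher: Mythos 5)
Your construction of $h_{\omega}$ follows the same skeleton the paper indicates: decompose $\hat{C}_{\omega}$ into the pieces $\hat{\mu}^{-1}(C_{\omega})\cap (W_{\sigma}/T^n)$ for $\sigma\in\omega$, use $W_{\sigma}/T^n\cong \stackrel{\circ}{P}_{\sigma}\times F_{\sigma}$ on each piece, and invoke the observation of~\cite{GMP} that $\hat{\mu}^{-1}(x)$ is independent of $x\in C_{\omega}$ to obtain the trivialization. That part is aligned with the source, although you yourself flag (and do not close) the continuity of the gluing across adjacent strata; the paper does not re-prove this either, but simply cites~\cite{BT1} and~\cite{GMP}, so I will not count that against you.

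The genuine gap is the smoothness of $F_{\omega}$ when $\dim C_{\omega}=n-1$. Your argument --- that the pairwise disjointness of the index sets $S$ in Theorem~\ref{dimensionn-1} ``decouples the degenerations'' and ``produces local product coordinates on $F_{\omega}$ along each codimension-one face of $C_{\omega}$'' --- does not establish what is needed. Smoothness of $F_{\omega}$ is a statement about the fiber over a \emph{single} point $x\in C_{\omega}$, namely that the union $\cup_{\sigma\in\omega}F_{\sigma}$, with the topology it inherits inside $G_{n,2}/T^n$, is locally Euclidean at the points of the lower-dimensional boundary pieces $F_{\sigma}$; product structure transverse to faces of $C_{\omega}$ is a different (and here irrelevant) assertion. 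Knowing which admissible polytopes contain $C_{\omega}$ tells you the set-theoretic list of boundary pieces, not how they attach. The mechanism the paper actually uses is the identification, recalled in the Remark after Theorem~\ref{Keel-univ} and made precise in Theorem~\ref{omegagit}, of $F_{\omega}$ with the GIT quotient $\mathcal{G}(\mathcal{L})$ of $(\C P^{1})^{n}$ by the diagonal $PGL_{2}(\C)$-action for $\mathcal{L}\in C_{\omega}$: when $\dim C_{\omega}=n-1$ the linearisation is typical, all semistable points are stable, and by Theorem~\ref{typical} the quotient is isomorphic to a Hassett space $\overline{\mathcal{M}}_{0,\mathcal{A}}$, which is smooth and compact. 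Without this (or an equivalent explicit chart computation along the boundary strata), the smoothness claim in your write-up is unsupported.
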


The virtual spaces of parameters, which correspond to the  chambers are proved to behave nicely regarding an universal space of parameters~\cite{BT1}:

\begin{thm}\label{univ-virt}
For any $C_{\omega}\subset \stackrel{\circ}{\Delta}_{n,2}$ it holds
\[
\bigcup\limits_{C_{\omega}\subset \stackrel{\circ}{P}_{\sigma}}\tilde{F}_{\sigma} = \mathcal{F}_{n}.
\]
Moreover, this union is disjoint, which implies that for any chamber $C_{\omega}$ it is defined the projection $p_{\omega} : \mathcal{F}_{n}\to F_{\omega}$ by $p_{\omega}(y) = p_{\sigma}(y)$ where $y\in \tilde{F}_{\sigma}$.
\end{thm}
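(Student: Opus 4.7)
The plan is to use Proposition~\ref{ComegaF} as the main tool together with the definition of the model $(U_n, G_n)$ for $G_{n,2}/T^n$. I would fix a chamber $C_\omega$ and pair an arbitrary point $y \in \mathcal{F}_n$ with an arbitrary $x \in C_\omega$: both the covering $\bigcup_{\sigma \in \omega}\tilde{F}_\sigma = \mathcal{F}_n$ and the disjointness of this union will drop out from tracking $G_n(x,y) \in G_{n,2}/T^n$ through the homeomorphism $h_\omega$.

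For the covering, I would take arbitrary $y \in \mathcal{F}_n$ and $x \in C_\omega$. The image $G_n(x,y)$ lies in a unique orbit $W_\sigma/T^n$. Since $\hat\mu(G_n(x,y)) = x$ while $\hat\mu(W_\sigma/T^n) = \stackrel{\circ}{P}_\sigma$, this forces $x \in \stackrel{\circ}{P}_\sigma$. By the chamber definition~\eqref{chdef}, a chamber is either entirely contained in a given $\stackrel{\circ}{P}_\sigma$ or disjoint from it, so $C_\omega \subset \stackrel{\circ}{P}_\sigma$ and thus $\sigma \in \omega$. By the very definition of $\tilde{F}_\sigma$ as the $\mathcal{F}_n$-component of the preimage of $W_\sigma/T^n$ under $G_n$, we conclude $y \in \tilde{F}_\sigma$ for some $\sigma \in \omega$.

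For disjointness, assume $y \in \tilde{F}_{\sigma_1} \cap \tilde{F}_{\sigma_2}$ with $\sigma_1,\sigma_2 \in \omega$ and fix any $x \in C_\omega$. Proposition~\ref{ComegaF} sends $G_n(x,y)$ via $h_\omega$ to a unique point $(x,f) \in C_\omega \times F_\omega$, where $F_\omega = \bigcup_{\sigma \in \omega} F_\sigma$. Since the strata $W_\sigma$ are pairwise disjoint, so are their $(\C^{\ast})^n$-orbit spaces $F_\sigma = W_\sigma/(\C^\ast)^n$ inside $F_\omega$. Therefore $f$ belongs to a single $F_\sigma$, and $\sigma_1 = \sigma_2$. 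With disjointness in hand, the formula $p_\omega(y) = p_\sigma(y)$ for $y \in \tilde{F}_\sigma$ is unambiguous and coincides with $\pi_2 \circ h_\omega \circ G_n(x,\cdot)$; independence of the chosen $x \in C_\omega$ is guaranteed by the product structure of $h_\omega$.

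The main obstacle I anticipate is the bookkeeping that identifies the two incarnations of $\tilde{F}_\sigma$: on one hand as a subset of $\mathcal{F}_n$ arising from the wonderful compactification of $\bar{F}_n$ with building set $\mathcal{G}_n$, and on the other hand as the fiber of $p_\sigma$ coming from $F_\sigma = W_\sigma/(\C^\ast)^n$. Verifying that the $F_\sigma$-component of $h_\omega(G_n(x,y))$ coincides with $p_\sigma(y)$ requires carefully matching the model construction of~\cite{BT0} and~\cite{BT2nk} with the factorization in Proposition~\ref{ComegaF}. Once that compatibility is available, the rest is the clean combinatorial argument above about how chambers meet admissible polytopes.
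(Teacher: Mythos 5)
The paper does not actually prove Theorem~\ref{univ-virt}; it quotes it from~\cite{BT1}, so there is no in-text proof to compare against. Judged on its own terms, your argument has a genuine circularity: every step routes through the map $G_n$, but $G_n$ is defined by formula~\eqref{projection} as $G_n(x,y)=h_\omega^{-1}(x,p_\omega(y))$, i.e.\ its very definition presupposes the projection $p_\omega:\mathcal{F}_n\to F_\omega$ --- and the well-definedness of $p_\omega$ is exactly the content of the theorem you are proving (one needs the union $\bigcup_{\sigma\in\omega}\tilde F_\sigma$ to exhaust $\mathcal{F}_n$ and to be disjoint before $p_\omega(y)=p_\sigma(y)$ makes sense). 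In the paper's logical order the theorem comes strictly before the construction of $(U_n,G_n)$, and the assertion that $G_n$ is ``correctly defined'' is itself a later theorem. Your identification of $\tilde F_\sigma$ as ``the $\mathcal{F}_n$-component of the preimage of $W_\sigma/T^n$ under $G_n$'' is not the paper's definition: $\tilde F_\sigma$ is defined as the piece of the boundary of $W_n/T^n\cong\stackrel{\circ}{\Delta}_{n,2}\times F_n$ inside $\Delta_{n,2}\times\mathcal{F}_n$ corresponding to $W_\sigma/T^n$, prior to and independently of $G_n$, and it is precisely this substitution that hides the circle.

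A non-circular argument has to work directly with that closure description: fix $x\in C_\omega$, use the decomposition $\hat\mu^{-1}(x)=F_\omega=\bigsqcup_{\sigma\in\omega}F_\sigma$ (disjoint because the strata $W_\sigma$ are pairwise disjoint, and $x\in\stackrel{\circ}{P}_\sigma$ exactly when $\sigma\in\omega$ by~\eqref{chdef}), and then show that the fiber over $x$ of the closure of $W_n/T^n$ in $\Delta_{n,2}\times\mathcal{F}_n$ is all of $\{x\}\times\mathcal{F}_n$ and is partitioned by the pieces $\tilde F_\sigma$ lying over the respective $F_\sigma$ via $p_\sigma$. Your two combinatorial observations --- that a chamber is either contained in or disjoint from each $\stackrel{\circ}{P}_\sigma$, and that the $F_\sigma$ are pairwise disjoint --- are correct and are the right ingredients, but they must be fed into the compactification structure from~\cite{BT1} and Proposition~\ref{ComegaF} directly, not into $G_n$; the ``bookkeeping'' you defer at the end as a technicality is in fact the entire mathematical content of the statement.
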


The previous statements altogether  make us possible to  construct  a model for the orbit space $G_{n,2}/T^n$.  For that purpose  we first recall that the preimage
$\hat{\mu}^{-1}(\partial \Delta _{n,2})$ consists of $n$ copies of $G_{n-1,2}/T^{n-1}$ and $n$ copies of $\C P^{n-2}$. 

The universal space of parameters for $\C P^{n-2}$ is a point,  while for the universal space of parameters for $G_{n-1,2}(q)/T^{n-1}\subset G_{n,2}/T^n$, $1\leq q\leq n$ the following holds~\cite{BT1}:
\begin{prop}\label{restr}
The universal space of parameters $\mathcal{F}_{n-1,q}$ for $G_{n-1,2}(q)$, $1\leq q\leq n$ can be obtained from the universal space of parameters $\mathcal{F}_{n}$ for $G_{n,2}$  by the restriction 
\[
\mathcal{F}_{n-1,q} = \mathcal{F}_{n}|_{(c_{ij}:c_{ij}^{'}), i,j\neq q},
\]
which defines the projection $r_{q} : \mathcal{F}_{n} \to \mathcal{F}_{n-1,q}$.
\end{prop}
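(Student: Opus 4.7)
The plan is to realize $r_q$ first on the open main stratum $F_n \to F_{n-1,q}$ and then extend across the wonderful compactifications by comparing building sets. I assume $q\geq 3$; the cases $q\in\{1,2\}$ are handled analogously after a change of Pl\"ucker chart in~\eqref{jednacinebarFn}. Let
\[
\pi_q : (\C P^{1})^{N} \to (\C P^{1})^{N_q}, \qquad N_q=\binom{n-3}{2},
\]
be the coordinate projection that deletes every factor $(c_{ij}:c_{ij}')$ with $q\in\{i,j\}$. Among the defining equations~\eqref{jednacinebarFn} of $\overline{F}_n$, those indexed by triples $\{i,j,k\}\subset\{3,\ldots,n\}\setminus\{q\}$ are precisely the equations cutting out $\overline{F}_{n-1,q}$ in the target, while those indexed by triples containing $q$ constrain only factors killed by $\pi_q$. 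A direct lifting argument combined with properness of $\overline{F}_n$ shows $\pi_q(\overline{F}_n)=\overline{F}_{n-1,q}$, and the restriction to $F_n$ surjects onto $F_{n-1,q}$.

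Next I would match the building sets. For a triple $I$ with $q\notin I$, the three normalization conditions $(c_{ik}:c_{ik}')=(c_{il}:c_{il}')=(c_{kl}:c_{kl}')=(1:1)$ defining $\hat{F}_I$ involve only coordinates retained by $\pi_q$, so $\hat{F}_I=\pi_q^{-1}(\hat{F}_I^{\,n-1,q})\cap\overline{F}_n$, where $\hat{F}_I^{\,n-1,q}$ is the analogous element of the building set $\mathcal{G}_{n-1,q}$ for $G_{n-1,2}(q)$. For a triple $I$ with $q\in I$, at least one normalization condition touches a factor killed by $\pi_q$, so $\hat{F}_I$ is contained in proper fibers of $\pi_q$. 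Thus $\mathcal{G}_n$ splits into a horizontal part in bijection with $\mathcal{G}_{n-1,q}$ under pull-back and a vertical part supported on $\pi_q$-fibers, and the intersection pattern among horizontal elements projects precisely to the intersection pattern of $\mathcal{G}_{n-1,q}$.

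With this compatibility in hand, the functoriality of the iterated blow-up presentation of the wonderful compactification~\cite{LILI},~\cite{DCP} yields a lift $r_q:\mathcal{F}_n\to\mathcal{F}_{n-1,q}$ of $\pi_q$. On the dense open $F_n\subset\mathcal{F}_n$ this lift coincides with the coordinate-forgetting map to $F_{n-1,q}\subset\mathcal{F}_{n-1,q}$, so by density $r_q$ realizes the restriction $\mathcal{F}_n|_{(c_{ij}:c_{ij}'),\,i,j\neq q}$ appearing in the statement.

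The main obstacle is the compatibility step: one must show that blowing up the vertical part of $\mathcal{G}_n$ does not obstruct the projection onto the simultaneous wonderful compactification of $\overline{F}_{n-1,q}$ along $\mathcal{G}_{n-1,q}$. This reduces to transversality — a vertical $\hat{F}_I$ lies inside a fiber of $\pi_q$ and meets each horizontal building-set element cleanly because the horizontal normalizing conditions involve a disjoint set of coordinate factors — after which the standard naturality of the De~Concini--Procesi construction with respect to projections whose source and base building sets are compatibly split supplies the global lift of $\pi_q$ by induction on the blow-up sequence.
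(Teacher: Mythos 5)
First, a point of reference: the paper does not prove Proposition~\ref{restr} at all --- it is quoted verbatim from~\cite{BT1} ("the following holds~\cite{BT1}"), so there is no in-paper proof to compare against. Your overall strategy (realize $r_q$ as the coordinate projection $\pi_q$ on the ambient $(\C P^1)^N$, check that it carries $\overline{F}_n$ onto $\overline{F}_{n-1,q}$ because the defining cubic relations~\eqref{jednacinebarFn} split into those indexed by triples avoiding $q$ and the rest, and then lift through the two wonderful compactifications) is a sensible route and the first half of it is sound.

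The genuine gap is in the building-set comparison, and it is not merely a missing detail. Your dichotomy of $\mathcal{G}_n$ into a "horizontal" part pulled back from $\mathcal{G}_{n-1,q}$ and a "vertical" part "contained in proper fibers of $\pi_q$" is false: for $I=\{q,k,l\}$ the subvariety $\hat{F}_I$ is cut out by $(c_{qk}:c_{qk}')=(c_{ql}:c_{ql}')=(c_{kl}:c_{kl}')=(1:1)$, and the last of these conditions lives on a coordinate \emph{retained} by $\pi_q$; hence $\hat{F}_I$ is not fiberwise, and its image is a positive-dimensional proper subvariety of $\overline{F}_{n-1,q}$ that is not an element of $\mathcal{G}_{n-1,q}$. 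Moreover $\mathcal{G}_n$ is by definition closed under taking nonempty intersections, so it contains mixed elements $\hat{F}_{I_1}\cap\cdots\cap\hat{F}_{I_k}$ with some $I_j\ni q$ and some not, which fit into neither of your two classes. Your transversality justification also fails as stated: $\hat{F}_{\{q,k,l\}}$ and the horizontal $\hat{F}_{\{k,l,m\}}$ both constrain the factor $(c_{kl}:c_{kl}')$, so the normalizing conditions do \emph{not} involve disjoint sets of coordinates. Finally, the "standard naturality of the De~Concini--Procesi construction for compatibly split building sets" that you invoke to conclude is not a quotable theorem; what is actually required is the universal property of the iterated blow-up, namely that the composite $\mathcal{F}_n\to\overline{F}_n\stackrel{\pi_q}{\to}\overline{F}_{n-1,q}$ pulls back the ideal sheaf of every center in the blow-up sequence defining $\mathcal{F}_{n-1,q}$ to an invertible ideal. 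Verifying that is exactly the analysis of the $q$-containing and mixed centers that your splitting was supposed to dispose of, so as written the argument assumes the hard step rather than proving it.
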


Proposition~\ref{restr}  gives that Theorem~\ref{univ-virt} can be extended to an an arbitrary chamber $C_{\omega}$.

\begin{prop}\label{projanych}
For any chamber $C_{\omega}\subset \partial \Delta _{n,2}$ it is defined the  projection $p_{\omega} : \mathcal{F}_{n}\to F_{\omega}$. If  $C_{\omega}\subset \Delta ^{n-2}(q)$ this projection maps $\mathcal{F}_{n}$ to a point, while for $C_{\omega}\subset \Delta _{n-1,2}(q)$ it is defined by $p_{\omega} = p_{\omega}(q)\circ r_{q}$, where $p_{\omega}(q) : \mathcal{F}_{n-1,q}\to F_{\omega}$ and $1\leq q\leq n$.
\end{prop}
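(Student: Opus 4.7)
The plan is to extend Theorem~\ref{univ-virt}, which defines the projection $p_\omega$ only for interior chambers $C_\omega \subset \stackrel{\circ}{\Delta}_{n,2}$, to boundary chambers by inducting on the structure of $\partial \Delta_{n,2}$. The starting point is the decomposition of the boundary into $n$ faces of type $\Delta_{n-1,2}(q)$ together with $n$ vertex-like simplices $\Delta^{n-2}(q)$, matching the stated decomposition $\hat\mu^{-1}(\partial \Delta_{n,2}) = \bigcup_q G_{n-1,2}(q)/T^{n-1} \cup \bigcup_q \C P^{n-2}$. A boundary chamber $C_\omega$ must lie entirely in one such face (since the hyperplane arrangement $\mathcal{R}_n$ from~\eqref{hyparrang} contains all the coordinate hyperplanes $x_q=0$), so the two cases in the statement are exhaustive.

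First I would handle the case $C_\omega \subset \Delta^{n-2}(q)$. Here $\hat\mu^{-1}(\Delta^{n-2}(q))$ is a single copy of $\C P^{n-2}$ whose $T^n$-action factors through the standard $T^{n-1}$-action on $\C P^{n-2}$, for which the orbit space is $\Delta^{n-2}(q)$ itself with trivial space of parameters. Thus for every admissible $P_\sigma \subset \Delta^{n-2}(q)$, the space $F_\sigma$ is a point, and consequently $F_\omega = \bigcup_{\sigma \in \omega} F_\sigma$ is a point. The projection $p_\omega : \mathcal{F}_n \to F_\omega$ is then uniquely defined as the constant map, as claimed.

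The main content is the case $C_\omega \subset \Delta_{n-1,2}(q)$. Here I would invoke Proposition~\ref{restr}, which identifies $\mathcal{F}_{n-1,q}$ with the restriction of $\mathcal{F}_n$ obtained by forgetting the coordinates $(c_{ij}:c_{ij}')$ involving the index $q$, and supplies the canonical projection $r_q : \mathcal{F}_n \to \mathcal{F}_{n-1,q}$. Since $C_\omega$ lies in a face canonically identified with the hypersimplex $\Delta_{n-1,2}$ of $G_{n-1,2}(q)$, and since its interior chambers inside that face are precisely the interior chambers of the smaller hypersimplex, Theorem~\ref{univ-virt} applied to $G_{n-1,2}(q)/T^{n-1}$ produces a well-defined projection $p_\omega(q) : \mathcal{F}_{n-1,q} \to F_\omega$. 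Defining $p_\omega := p_\omega(q) \circ r_q$ then yields the desired map.

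The main delicate point will be verifying that this composition is the \emph{correct} one, i.e.\ that it sends a point $y \in \tilde{F}_\sigma \subset \mathcal{F}_n$ to $p_\sigma(y) \in F_\sigma \subset F_\omega$ in accordance with the recipe of Theorem~\ref{univ-virt}. For this one needs to match the virtual space of parameters $\tilde{F}_\sigma$ computed in $G_{n,2}$ with the virtual space $\tilde{F}_\sigma^{(q)}$ computed in $G_{n-1,2}(q)$, and check that $r_q$ intertwines the two, i.e.\ $r_q(\tilde{F}_\sigma) = \tilde{F}_\sigma^{(q)}$ and $p_\sigma = p_\sigma^{(q)} \circ r_q$. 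This follows from the explicit equations~\eqref{jednacineFn}--\eqref{jednacinebarFn} defining $F_n$ and the compatibility of the wonderful-compactification building set $\mathcal{G}_n$ under deletion of a coordinate, so the argument reduces to a coordinatewise check on the defining equations. Once this compatibility is established, Proposition~\ref{projanych} is immediate from Theorem~\ref{univ-virt}, Proposition~\ref{restr}, and the case distinction above.
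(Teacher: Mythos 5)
Your proposal is correct and follows essentially the same route the paper takes: the paper derives Proposition~\ref{projanych} directly from the decomposition of $\hat{\mu}^{-1}(\partial\Delta_{n,2})$ into copies of $\C P^{n-2}$ (whose universal space of parameters is a point) and of $G_{n-1,2}(q)/T^{n-1}$, combined with Proposition~\ref{restr} and Theorem~\ref{univ-virt} applied to the smaller Grassmannian. Your additional remark about checking that $r_q$ intertwines the virtual spaces of parameters is a reasonable elaboration of the compatibility the paper leaves implicit.
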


The model $(U_n, G_n)$  for the orbit space $G_{n,2}/T^n$ is then given by
\[
U_{n} = \Delta _{n,2}\times \mathcal{F}_{n},
\]
and the projection $G_{n} : U_{n}\to G_{n, 2}/ T^n = \cup \hat{C}_{\omega}$ is  defined by 
\begin{equation}\label{projection}
G_{n}(x, y) = h_{\omega}^{-1}(x, p_{\omega}(y) \; \text{for}\; x\in C_{\omega}.
\end{equation}
It is proved in~\cite{BT1}:
\begin{thm}
The map $G_{n}$ is correctly defined, it is a continuous surjection and the orbit space $G_{n,2}/T^n$ is homeomorphic to the quotient of the space $U_{n}$ by the map $G_n$.
\end{thm}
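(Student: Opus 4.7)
The plan is to verify the three assertions about $G_n$ in turn: well-definedness, continuous surjectivity, and the quotient identification. The first and last are essentially formal once the correct hypotheses from the preceding statements are in place, so the bulk of the work is the continuity.

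\textbf{Well-definedness and surjectivity.} The chambers $\{C_\omega\}$ partition $\Delta_{n,2}$ by the face lattice $L(\mathcal{R}_{n,2})$, so for each $x\in\Delta_{n,2}$ the chamber $C_\omega\ni x$ is unique. Theorem~\ref{univ-virt} and Proposition~\ref{projanych} provide a projection $p_\omega : \mathcal{F}_n\to F_\omega$ for every chamber, including those lying in $\partial\Delta_{n,2}$, and Proposition~\ref{ComegaF} provides the homeomorphism $h_\omega : \hat C_\omega\to C_\omega\times F_\omega$. Consequently the formula~\eqref{projection} unambiguously assigns a point of $\hat C_\omega\subset G_{n,2}/T^n$, and surjectivity is immediate from $G_{n,2}/T^n = \bigsqcup_\omega \hat C_\omega$ together with the surjectivity of each $h_\omega^{-1}$ and each $p_\omega$ (the latter being visible from the decomposition $\mathcal{F}_n = \bigsqcup_{C_\omega\subset \stackrel{\circ}{P}_\sigma}\tilde F_\sigma$ and the compatible projections $p_\sigma : \tilde F_\sigma\to F_\sigma$).

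\textbf{Continuity, the main obstacle.} On each locally closed piece $C_\omega\times \mathcal{F}_n$ the restriction $G_n = h_\omega^{-1}\circ(\mathrm{id}\times p_\omega)$ is continuous by composition. The difficulty is checking compatibility at chamber boundaries: for a sequence $(x_k,y_k)\to(x_0,y_0)$ with $x_k\in C_\omega$ and $x_0\in C_{\omega'}$ with $C_{\omega'}\subset \overline{C_\omega}$, I must show
\[
h_\omega^{-1}\bigl(x_k,p_\omega(y_k)\bigr) \;\longrightarrow\; h_{\omega'}^{-1}\bigl(x_0,p_{\omega'}(y_0)\bigr)
\]
in $G_{n,2}/T^n$. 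My approach is first to use compactness of $U_n$ and continuity of the induced moment map $\hat\mu$ to extract a convergent subsequence of images $z_k = G_n(x_k,y_k)$ with limit $z\in\hat C_{\omega'}$, so that $h_{\omega'}(z) = (x_0,f)$ for some $f\in F_{\omega'}$, and then to identify $f=p_{\omega'}(y_0)$. This identification is exactly the compatibility that the wonderful compactification $\mathcal{F}_n$ of $\overline{F}_n$ with building set $\mathcal{G}_n$ was engineered to supply: every virtual space $\tilde F_\sigma$ over the higher-dimensional chamber $C_\omega$ degenerates, as $x_k\to x_0$, to a virtual space over $C_{\omega'}$, and under this degeneration the projections $p_\omega$ and $p_{\omega'}$, both coming from the identity on $\mathcal{F}_n$ via the decomposition of $\mathcal{F}_n$ into virtual spaces of parameters, produce the same limit value on $y_0$. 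This step is the genuine content of the theorem; the subtlety is that it depends essentially on the definition of $\mathcal{F}_n$ via wonderful compactification, not merely on the existence of some compactification of $F_n$.

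\textbf{Quotient identification.} Once continuity is secured, $U_n = \Delta_{n,2}\times\mathcal{F}_n$ is compact (product of compact $\Delta_{n,2}$ and smooth compact $\mathcal{F}_n$), and $G_{n,2}/T^n$ is Hausdorff as the orbit space of a compact Lie group acting on a compact Hausdorff manifold. A continuous surjection from a compact space to a Hausdorff space is automatically a closed quotient map, so $G_{n,2}/T^n$ is canonically homeomorphic to the quotient of $U_n$ by the equivalence relation $(x,y)\sim(x',y')\Longleftrightarrow G_n(x,y)=G_n(x',y')$, which is precisely the statement of the theorem. The only step that uses anything beyond soft point-set arguments is the boundary continuity above, and that is where I would spend most of the effort.
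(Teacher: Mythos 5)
The paper does not actually prove this theorem: it is quoted verbatim from the earlier paper \cite{BT1} (``It is proved in~\cite{BT1}:\ldots''), so there is no in-text argument to compare against. Judged on its own terms, your proposal has the right architecture — well-definedness and surjectivity are indeed formal consequences of the chamber partition, Proposition~\ref{ComegaF}, Theorem~\ref{univ-virt} and Proposition~\ref{projanych}, and the quotient identification does follow from the standard fact that a continuous surjection from a compact space onto a Hausdorff space is a closed, hence quotient, map (with $U_n$ compact and $G_{n,2}/T^n$ Hausdorff as the orbit space of a compact group acting on a compact Hausdorff manifold).

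The genuine gap is exactly where you locate it: the boundary continuity. You reduce it to showing that for $x_k\in C_\omega$ converging to $x_0\in C_{\omega'}\subset\overline{C_\omega}$, the subsequential limit $z$ of $G_n(x_k,y_k)$ satisfies $h_{\omega'}(z)=(x_0,p_{\omega'}(y_0))$, and then you assert that this ``is exactly the compatibility that the wonderful compactification was engineered to supply.'' That sentence is a restatement of the claim, not an argument. Concretely, what is missing is this: the index sets $\{\sigma: C_\omega\subset\stackrel{\circ}{P}_\sigma\}$ and $\{\sigma': C_{\omega'}\subset\stackrel{\circ}{P}_{\sigma'}\}$ are different (more admissible polytopes contain the lower-dimensional chamber), so the two disjoint decompositions of $\mathcal{F}_n$ from Theorem~\ref{univ-virt} do not match piece by piece; a fixed $y_0\in\tilde F_{\sigma'}$ is sent by $p_{\omega}$ to $F_{\sigma}$ for the (generally different) stratum $\sigma\ni y_0$ in the $\omega$-decomposition, and one must verify that $\overline{W_\sigma}\supset W_{\sigma'}$ forces $p_{\sigma}(y_0)$ to converge to $p_{\sigma'}(y_0)$ inside $G_{n,2}/T^n$ as the base point moves from $C_\omega$ into $C_{\omega'}$. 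This requires the explicit coordinate description of the $\tilde F_\sigma$ (as in Proposition~\ref{virtual}) and of the closure relations among strata (as in Proposition~\ref{proj}), none of which your proposal invokes at the point where they are needed. Until that verification is carried out, the central claim of the theorem — continuity of $G_n$ across chamber walls — remains unproven.
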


\begin{rem}
The space $U_{n}$  functorially resolves the singular points of the  orbit space $G_{n,2}/T^n$ and it is  compatible with the combinatorial structure of  $G_{n,2}/T^n$ previously described .
This means the following. Let $Y_n = (G_{n,2}/T^n )\setminus  (\text{Sing}(G_{n,2}/T^n))$, where the singular points are defined as above,  and note that $Y_n$  is an open, dense set in $G_{n,2}/T^n$,   that is it  is a
manifold. Then,   there exists a projection $G_n : U_{n}\to G_{n,2}/T^n$  such that for an open, dense
manifold $V_n = G_n^{-1}(Y_n) \subset U_{n}$  the map $G_n : V_{n} \to Y_n$  is a diffeomorphism. This projection  $G_{n}: U_{n}\to G_{n,2}/T^n$ preserves the  stratification of $U_{n}\setminus V_n$ given by $C_{\omega}\times \tilde{F}_{\sigma}$, $\sigma \in \omega$  such that $\tilde{F}_{\sigma}\not\cong F_{\sigma}$ and the stratification of $(G_{n,2}/T^n) \setminus Y_n$ given by $W_{\sigma}/T^n$ such that  $\tilde{F}_{\sigma}\ncong F_{\sigma}$.
\end{rem}

\section{ The model for $G_{n,2}/T^n$  in terms of  moduli spaces of  weighted pointed stable  genus zero curves}

The structural data for our model $(U_n, G_n)$ for the orbit space $G_{n,2}/T^n$  can be summarized as follows:
\begin{itemize}
\item The universal space of parameters $\mathcal{F}_{n}$;
\item The virtual spaces of parameters $\tilde{F}_{\sigma}$;
\item The spaces of parameters $F_{\sigma}$ of the strata and  spaces of parameters  $F_{\omega}$ over the chambers;
\item The projections $p_{\sigma} : \tilde{F}_{\sigma}\to F_{\sigma}$, $p_{\omega} : \mathcal{F}_{n}\to F_{\omega}$ and $r_{q}: \mathcal{F}_{n}\to \mathcal{F}_{n-1, q}$, where  $1\leq q\leq n$.
\end{itemize}

We show that  these data  can be interpreted in terms of moduli spaces of weighted stable genus zero curves and the  morphisms between them.

We first recall  the well known fact that the space of parameters $F_{n}$ given by~\eqref{jednacineFn} is isomorphic to $\mathcal{M}_{0,n}$, which is the moduli space of $n$-pointed  genus zero curves.  More precisely, from~\eqref{jednacineFn} we see that 
\begin{equation}\label{CPAdiagon}
\mathcal{M}_{0,n} \cong F_{n} \cong (\C P^{1}_{A})^{n-3}\setminus \Delta _{n-3}, 
\end{equation}
for the complete diagonal $\Delta _{n-3}$, see also~\cite{BT0}.

We recall at this point,  that it is given in~\cite{lando} an inductive description of $\mathcal{M}_{0, n}$ using the forgetting bundle $\pi : \mathcal{M}_{0, n+1}\to \mathcal{M}_{0, n}$, whose fiber  is $\C P^1$ having removed $n$ points.  We reformulate this result in terms of representation~\eqref{CPAdiagon}.

\begin{lem}
The projection map $\pi :  (\C P^{1}_{A})^{n-2}\setminus \Delta _{n-2} \to  (\C P^{1}_{A})^{n-3}\setminus \Delta _{n-3}$ defined by  forgetting the last coordinate is a fiber bundle whose fiber is $\C P^{n}$ with removed $n$ points.
\end{lem}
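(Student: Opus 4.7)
The plan is to recognise $\pi$ as an instance of the Fadell--Neuwirth configuration-space fibration and read off the fiber from its definition. Set $X=\C P^{1}\setminus\{(1:0),(0:1),(1:1)\}$; then, by the very definition of $\C P^{1}_{A}$ and the complete diagonal $\Delta_{k}$, the space $(\C P^{1}_{A})^{k}\setminus\Delta_{k}$ is the ordered configuration space $F(X,k)$ of $k$ pairwise distinct points of the open surface $X$, and the map $\pi$ becomes the standard forget-the-last-point projection $F(X,n-2)\to F(X,n-3)$. Over a point $\mathbf{x}=(x_{1},\ldots,x_{n-3})\in F(X,n-3)$ the preimage is the set of $y\in X$ that differ from every $x_{i}$, namely $\C P^{1}\setminus\{(1:0),(0:1),(1:1),x_{1},\ldots,x_{n-3}\}$, which is $\C P^{1}$ with $n$ distinct points removed, matching the claimed model of the fiber.

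To exhibit a local trivialisation around $\mathbf{x}^{0}=(x_{1}^{0},\ldots,x_{n-3}^{0})$, I would choose pairwise disjoint open disks $D_{i}\subset X$ with $x_{i}^{0}\in D_{i}$, all avoiding the three punctures $(1:0),(0:1),(1:1)$, and set $U=\bigl(\prod_{i}D_{i}\bigr)\cap F(X,n-3)$. For each $\mathbf{x}\in U$ I would construct a self-homeomorphism $\phi_{\mathbf{x}}\colon X\to X$ which equals the identity outside $\bigcup_{i}D_{i}$ and satisfies $\phi_{\mathbf{x}}(x_{i})=x_{i}^{0}$, depending continuously on $\mathbf{x}$; concretely, on each disk $D_{i}$ one uses a compactly-supported smooth vector field whose time-one flow carries $x_{i}$ to $x_{i}^{0}$, and one checks continuity of the construction in $\mathbf{x}$. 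Because $\phi_{\mathbf{x}}$ is supported away from the three punctures, it preserves $X$, and the assignment $(\mathbf{x},y)\mapsto\bigl(\mathbf{x},\phi_{\mathbf{x}}(y)\bigr)$ then yields the required trivialisation $\pi^{-1}(U)\cong U\times\bigl(X\setminus\{x_{1}^{0},\ldots,x_{n-3}^{0}\}\bigr)$.

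The principal obstacle is precisely the continuous choice of the isotopies $\phi_{\mathbf{x}}$; once it is provided, the fiber-bundle structure follows formally from the set-theoretic description of the fiber. Since this is exactly the content of the Fadell--Neuwirth theorem for configuration spaces of a surface without boundary applied to the thrice-punctured sphere $X$, an equally clean route is to invoke that theorem directly and reduce the lemma to the elementary combinatorial identification of $\pi^{-1}(\mathbf{x})$ carried out in the first step.
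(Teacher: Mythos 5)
Your proposal is correct and follows essentially the same route as the paper: both identify the fiber over $(x_1,\ldots,x_{n-3})$ as $\C P^{1}$ with the three points $(1:0),(0:1),(1:1)$ and the $n-3$ points $x_i$ removed, i.e.\ $n$ points in total (the ``$\C P^{n}$'' in the statement is a typo for $\C P^{1}$, as the paper's own proof confirms). The only difference is that the paper stops at this set-theoretic identification and simply asserts local triviality, whereas you supply the missing justification via the Fadell--Neuwirth fibration for configuration spaces of the thrice-punctured sphere; this is a welcome strengthening rather than a departure in method.
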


\begin{proof}
The preimage  by $\pi$ of any  point $((c_1:c_1^{'}), \ldots , (c_{n-3}:c_{n-3}^{'}))\in (\C P^{1}_{A})^{n-3}\setminus \Delta _{n-3}$ is $((c_1:c_1^{'}), \ldots , (c_{n-3}:c_{n-3}^{'}), (c_{n-2}: c_{n-2}^{'}))$ such that $(c_{n-2}:c_{n-2}^{'}) \neq (1:0), (0:1), (1:1)$ and $(c_{n-2}: c_{n-2}^{'})\neq (c_i :c_{i}^{'})$ for $1\leq i\leq n-3$. Thus, $\pi$ is a fiber bundle with the fiber $\C P^{1}$ having removed $n$ points.
\end{proof}

\begin{lem}\label{paramspacestratum}
The space of parameters $F_{\sigma}$ of a stratum $W_{\sigma}\neq W_n$ is either a point either it is homeomorphic  to $F_{m}$, where  $\ m\leq n-1$.
\end{lem}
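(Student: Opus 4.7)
The plan is to classify the stratum $W_{\sigma}$ via the rank-two matroid of non-vanishing Pl\"ucker coordinates and reduce the computation of $F_{\sigma}$ to the space of parameters of the main stratum of a smaller Grassmannian. Recall that every rank-two matroid on $\{1,\ldots,n\}$ is determined by a partition $\{1,\ldots ,n\} = L_{\sigma}\sqcup A_{1}\sqcup \cdots \sqcup A_{k}$ into the set $L_{\sigma}$ of loops and the parallel classes $A_{s}$, with $\{i,j\}\in \sigma$ iff $i$ and $j$ lie in distinct classes. Since $W_{\sigma}\neq W_{n}$, either $L_{\sigma}\neq\emptyset$ or some $|A_{s}|\geq 2$, and in either case $k\leq n-1$.

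I would then unpack the geometric meaning of the matroid for a representative $L\in W_{\sigma}$: each loop $q\in L_{\sigma}$ forces $L\subset \{x_{q}=0\}$, so the corresponding factor $\C^{\ast}$ of $(\C^{\ast})^{n}$ acts trivially on $L$; for each parallel class $A_{s}$ the projection $L\to \C^{A_{s}}$ spans a fixed one-dimensional subspace $\ell_{s}$, so the subtorus $(\C^{\ast})^{A_{s}}$ acts on $L$ only through an overall rescaling of $\ell_{s}$. Choosing any representatives $q_{s}\in A_{s}$ for $s=1,\ldots,k$, the coordinate projection $\C^{n}\to \C^{\{q_{1},\ldots,q_{k}\}}$ induces a surjection $W_{\sigma}\to W_{k}$ onto the main stratum of $G_{k,2}$, whose fibers are single orbits of the rank $n-k$ complementary subtorus of $(\C^{\ast})^{n}$.

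Passing to the quotient by $(\C^{\ast})^{n}$, this complementary subtorus collapses and the residual $(\C^{\ast})^{k}$-action on $W_{k}$ identifies $F_{\sigma} = W_{\sigma}/(\C^{\ast})^{n}$ with $W_{k}/(\C^{\ast})^{k}$. When $k=2$, the main stratum of $G_{2,2}$ is a single $(\C^{\ast})^{2}$-orbit, so $F_{\sigma}$ is a point; when $k\geq 3$, one obtains $F_{\sigma}\cong F_{k}$ with $3\leq k\leq n-1$, which concludes the lemma with $m=k$.

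The main obstacle will be verifying that the identification $F_{\sigma}\cong F_{k}$ is canonical, that is, independent of the choice of representatives $q_{s}\in A_{s}$. A robust way is to work directly in a Pl\"ucker chart containing $W_{\sigma}$ and use the local coordinate presentation~\eqref{eqmain}: impose $z_{q}=w_{q}=0$ for every loop $q\in L_{\sigma}$ together with the rank-one conditions within each parallel class, eliminate the redundant variables, and check that the surviving equations coincide with the system~\eqref{jednacineFn} defining $F_{k}$ up to relabeling, the surviving $(\C P^{1}_{A})$-factors being indexed precisely by pairs of parallel classes.
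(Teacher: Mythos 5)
Your argument is correct, and it reaches the same reduction as the paper by a genuinely different mechanism. The paper works entirely inside the Pl\"ucker chart $P^{12}\neq 0$: it records which local coordinates $z_i=P^{2i}$, $w_i=P^{1i}$ vanish identically on $W_{\sigma}$, and then eliminates a redundant coordinate $w_j$ whenever $(c_{ij}:c_{ij}^{'})=(1:1)$ (i.e.\ whenever $P^{ij}=0$ with the relevant coordinates nonzero), until the surviving equations~\eqref{eqmain} subject to~\eqref{jednacineFn} are literally those defining some $F_m$. Your proof packages the same combinatorial input --- the rank-two matroid of $\sigma$, with its loops and parallel classes $A_1,\ldots ,A_k$ --- into a global, chart-free statement: the coordinate projection onto representatives $q_s\in A_s$ is a torus-equivariant surjection $W_{\sigma}\to W_k$ whose fibers are single orbits of the complementary $(\C ^{\ast})^{n-k}$, whence $F_{\sigma}=W_{\sigma}/(\C^{\ast})^{n}\cong W_k/(\C^{\ast})^{k}=F_k$. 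What your route buys is a clean identification of the integer $m$ in the lemma as the number of parallel classes (the paper leaves $m$ implicit), a uniform treatment of the degenerate cases $k=2,3$ where $F_k$ is a point, and independence from the choice of chart; one can check that the count of surviving $\C P^{1}_{A}$-parameters in the paper's proof, namely $\binom{k-2}{2}$ pairs of parallel classes disjoint from those of $1$ and $2$, matches your $F_k$ exactly. What the paper's route buys is that it stays in the explicit coordinate presentation~\eqref{eqmain}, which is reused almost verbatim in the later analysis of the virtual spaces of parameters $\tilde{F}_{\sigma}$, so the chart computation is not wasted effort there. The ``main obstacle'' you flag at the end is not a genuine one: for the lemma only the existence of a homeomorphism $F_{\sigma}\cong F_k$ is needed, and in any case two choices of representatives $q_s, q_s^{'}\in A_s$ yield maps $W_{\sigma}\to W_k$ differing by a continuously varying element of $(\C^{\ast})^{k}$ composed with the relabeling $q_s\leftrightarrow q_s^{'}$, so they induce the same map on $(\C^{\ast})^{k}$-quotients.
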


\begin{proof}

We can assume that the Pl\"ucker coordinate $P^{12}\neq 0$ on $W_{\sigma}$. Then $W_{\sigma}$ is in local coordinates given by the points $(z_{3}, \ldots, z_{n}, w_{3}, \ldots ,w_{n})$ where $z_{i} = P^{2i}(L)$ and $w_{i}=P^{1i}(l)$, $3\leq i\leq n$, for a point $L\in W_{\sigma}$, that is $W_{\sigma} \subset \C ^{n-2}_{z} \times \C ^{n-2}_{w}$.
From the definition of a stratum, it follows that  a coordinate of a  point of a stratum $W_{\sigma}$ is non-zero if and only if it is non-zero coordinate for all points of the stratum $W_{\sigma}$. Thus, let   $N_z =\{ k  | z_{k+2}= 0\}$ and  $N_w =\{ k  | w_{k+2}= 0\}$ Then,   $W_{\sigma}\subset \C^{L_z}\times C^{L_{w}} \subset \C^{n-2}_{z}\times \C ^{n-2}_{w}$ where $L_z= \{1,\ldots , n-2\}\setminus N_z$ and  $L_w= \{1,\ldots ,n-2\}\setminus N_w$,  The strata $W_{\sigma}$ is  given in   $\C ^{L_z}\times C^{L_{w}}$  by the system of equations
\begin{equation}\label{str}
c_{ij}z_{i}w_{j} = c_{ij}^{'}z_{j}w_{i}, \; \text{where}\;  z_i, w_j\neq 0
\end{equation}
and $(c_{ij} : c_{ij}^{'} )\in \C P^{1}\setminus \{(1:0), (0:1)\}$.  In addition the parameters $(c_{ij} : c_{ij}^{'})$ satisfy the system of equation 
\begin{equation}\label{param}
c_{ij}c_{ik}^{'}c_{jk}= c_{ij}^{'}c_{ik}c_{jk}^{'}.
\end{equation}
Note that if $(c_{ij}:c_{ij}^{'}) = (1:1)$ for some $i,j$ then the coordinate $w_{j}$ can be removed.  If we do that with all such $i,j$ we obtain the  new set $\tilde{L}_{w}\subset L_{w}$ and  $W_{\sigma}\subset \C^{L_z}\times  \C ^{\tilde{L}_{w}}$.Then $W_{\sigma}$  is given in  $\C^{L_z}\times  \C ^{\tilde{L}_{w}}$  by the system~\eqref{str}, where $(c_{ij}:c_{ij}^{'})\in \C P^{1}\setminus \{(1:0), (0:1), (1:1)\}$, subject to the equations~\eqref{param}.  It implies that $F_{\sigma} \cong F_{m}$ for some $4\leq m\leq n-1$.
\end{proof}

Using Keel's description of $\overline{\mathcal{M}}_{0,n}$  from~\cite{K},   it is proved in~\cite{BT2}:

\begin{thm}\label{Keel-univ}
The universal space of parameters $\mathcal{F}_{n}$   is diffeomorphic to $\overline{\mathcal{M}}_{0,n}$.
\end{thm}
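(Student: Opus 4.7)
The plan is to identify $\mathcal{F}_n$ with Keel's smooth compactification $\overline{\mathcal{M}}_{0,n}$ by matching the building-set data $(\bar F_n,\mathcal{G}_n)$ with the Deligne--Mumford boundary structure. By \eqref{CPAdiagon} I already have an identification $F_n \cong \mathcal{M}_{0,n}$, so the task reduces to extending this identification across the chosen compactifications.

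First, I would analyze the boundary $\bar F_n \setminus F_n$ using the Pl\"ucker-type equations \eqref{jednacinebarFn}. By the definition of $\C P^{1}_{A}$, a point of $\bar F_n$ lies outside $F_n$ exactly when some coordinate $(c_{ij}:c_{ij}^{'})$ equals $(1:0)$, $(0:1)$, or $(1:1)$. Using the triangle relations, I would verify that the irreducible components arising from the value $(1:1)$ are indexed by triples $I=\{i,k,l\}$ and coincide with the subvarieties $\hat F_I$, while the components arising from $(1:0)$ and $(0:1)$ correspond to the collision of a single marker with one of the three reference points and therefore contribute no new blow-up loci. This identifies $\mathcal{G}_n$ as the lattice of minimal boundary components of $\bar F_n$ and their non-empty intersections.

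Second, I would compare this with Keel's inductive construction of $\overline{\mathcal{M}}_{0,n}$ from \cite{K}. Keel shows that $\overline{\mathcal{M}}_{0,n}$ is obtained from a smooth model of $\mathcal{M}_{0,n}$ by a sequence of explicit blow-ups along the loci where three or more marked points collide, organized so that the exceptional divisors have normal crossings. Comparing the defining equations of $\hat F_I$ with the collision loci of the markers $\{i,k,l\}$ in $\mathcal{M}_{0,n}$, I would show that $\mathcal{G}_n$ corresponds, under $F_n\cong\mathcal{M}_{0,n}$, to precisely Keel's building set. The universal property of wonderful compactifications of De Concini--Procesi, Fulton--MacPherson and Li then yields a natural diffeomorphism $\mathcal{F}_n \cong \overline{\mathcal{M}}_{0,n}$, since both are smooth proper varieties extending $\mathcal{M}_{0,n}$ with normal crossings boundary stratified by the matched building set.

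The main obstacle I anticipate is the explicit matching of the lattice of non-empty intersections $\hat F_{I_1}\cap\cdots\cap \hat F_{I_k}$ with the lattice of boundary strata of $\overline{\mathcal{M}}_{0,n}$. One must verify, via \eqref{jednacinebarFn}, that each simultaneous collision of three markers in a stable genus-zero curve corresponds to the algebraic condition $(c_{ij}:c_{ij}^{'})=(1:1)$ on the relevant indices, and conversely that no new components appear under iterated intersection. This is a combinatorial computation that explains why the building set is generated by triples rather than larger subsets, and it is the heart of the argument.
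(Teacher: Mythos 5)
Your overall route --- identify $F_n\cong\mathcal{M}_{0,n}$, match the building set $\mathcal{G}_n$ with the loci along which the Deligne--Mumford space is blown up over a simpler compactification, and invoke the De Concini--Procesi/Li machinery --- is the same strategy the paper relies on. The paper does not prove the theorem in situ but cites~\cite{BT2}, where it is established ``using Keel's description of $\overline{\mathcal{M}}_{0,n}$''; the later lemmas on the divisors of the wonderful compactification of $\bar{F}_{n}$ and Theorem~\ref{divisorsFn} carry out exactly the divisor-by-divisor matching you propose.

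Two points need tightening. First, the locus where a single coordinate $(c_{ij}:c_{ij}^{'})$ equals $(1:1)$ is already a divisor of $\bar{F}_{n}$ (two marked points colliding); an element $\hat{F}_{I}$ of the building set is the codimension-two locus where a triple collides (two of the three conditions are independent, the third follows from the triangle relation). So $\mathcal{G}_{n}$ is not ``the lattice of minimal boundary components'': it deliberately omits all codimension-one pieces, both the $(1:1)$ pair-collision divisors and the toric $(1:0),(0:1)$ divisors. Second, and more seriously, your concluding inference --- that two smooth proper compactifications of $\mathcal{M}_{0,n}$ with normal-crossings boundary and ``matched'' strata lattices must be diffeomorphic --- is not a theorem; strata posets can agree without the varieties being isomorphic, and the wonderful compactification is determined by the pair $(\bar{F}_{n},\mathcal{G}_{n})$, not by the open part alone. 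What the argument actually requires is to show that the reduction morphism $\overline{\mathcal{M}}_{0,n}\to\bar{F}_{n}\cong \bar{L}_{0,n,2}$ is an iterated blow-up whose centers are precisely the proper transforms of the elements of $\mathcal{G}_{n}$, taken in order of increasing dimension; Li's characterization of wonderful compactifications then yields $\overline{\mathcal{M}}_{0,n}\cong\mathcal{F}_{n}$. That identification of the exceptional divisors of the reduction morphism with those of the wonderful compactification is exactly the computation the paper performs (explicitly for $n=5,6,7$ and then in general), and it is what must replace your appeal to a universal property.
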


\begin{rem}
As far as the real spaces of parameters $F_{\omega}$ of the chambers are concerned we recall from~\cite{BT2} that the proof of Proposition~\ref{ComegaF} relies on the observation from~\cite{GMP}, which is  based on~\cite{AT},  that for a given  $C_{\omega}$ the preimage $\hat{\mu}^{-1}(x) = \mu ^{-1}(x)/T^n \subset G_{n,2}/T^n$ does not depend on a point $x\in C_{\omega}$ and we denote it by $F_{\omega}$.  Moreover, the quotient  $\mu ^{-1}(x)/T^n$, and thus $F_{\omega}$,   can be identified with:
\begin{itemize}
\item categorical quotient of $W_{\omega} = \cup _{\sigma \in \omega}W_{\sigma}$ by the action of $(\C ^{\ast})^{n}$, following Mumford~\cite{M},
\item  GIT quotient $\mathcal{G}(\mathcal{L})$ of $(\C P^{1})^{n}$ by the diagonal $PGL_{2}(\C)$ -action defined by the linearisation $\mathcal{O}(\mathcal{L})$ for $\mathcal{L} = (t_1, \ldots, t_n)$ for an arbitrary $(t_1, \ldots ,t_n)\in C_{\omega}$, following Kirwan~\cite{K}
\end{itemize}
\end{rem}
Therefore we deduce:
\begin{thm}\label{omegagit}
The space of parameters $F_{\omega}$ of any chamber $C_{\omega}\subset \Delta _{n,2}$ is homeomorphic to the GIT quotient  $\mathcal{G}(\mathcal{L})$ of $(\C P^{1})^{n}$ defined by  the linearisation $\mathcal{O}(\mathcal{L})$ for an arbitrary $\mathcal{L} = (t_1,\ldots , t_n) \in C_{\omega}$.
\end{thm}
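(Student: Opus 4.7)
The statement is essentially a formal consequence of the identifications collected in the Remark preceding it, so the plan is to assemble those pieces and to verify that the chamber structure on the hypersimplex matches the chamber structure on the cone of fractional linearisations.

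First, I would start from the definition $F_{\omega} = \hat\mu^{-1}(x)$ for any $x\in C_{\omega}$, which by the result of~\cite{GMP} recalled in the Remark is independent of $x$, so $F_{\omega} = \mu^{-1}(x)/T^n$. Next, I would invoke the Gelfand--MacPherson correspondence together with the Kempf--Ness/Kirwan identification of symplectic reductions with GIT quotients: the $T^n$-reduction $\mu^{-1}(x)/T^n$ of $G_{n,2}$ at the level $x=(x_1,\ldots,x_n)$ is canonically homeomorphic to the GIT quotient of $(\C P^{1})^{n}$ by the diagonal $PGL_{2}(\C)$-action linearised by $\mathcal{O}(x_1,\ldots,x_n)$. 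Concretely, representing $L\in G_{n,2}$ by a $2\times n$ matrix sends $L$ to the $n$-tuple of its columns in $\C P^{1}$, the $T^n$-action on $L$ corresponds to the diagonal $(\C^{\ast})^{n}$-rescaling of columns, and the moment map level translates directly into the linearisation weights; this is exactly the content of the second item of the Remark, based on~\cite{K}.

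The remaining step is to show that the homeomorphism class of $\mathcal{G}(\mathcal{L})$ is the same for every choice $\mathcal{L}\in C_{\omega}$. This I would derive from the general principle recalled in the GIT preliminaries: the cone $E_{\Q}^{G}(X)$ of effective fractional linearisations is divided by a finite collection of codimension-one walls into chambers, and both $X^{ss}(\mathcal{L})$ and $X\!/\!/ G(\mathcal{L})$ remain constant as $\mathcal{L}$ varies within a fixed chamber. For $X=(\C P^{1})^{n}$ with the diagonal $PGL_{2}(\C)$-action, the stability reformulation recalled in Section~3 shows that the walls are precisely the hyperplanes $\sum_{j\in S} t_j = \tfrac{1}{2}\sum_{j} t_j$, which after the normalisation $\sum_j t_j = 2$ become $\sum_{j\in S} t_j = 1$ with $2\leq |S|\leq n-2$. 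These are exactly the hyperplanes defining the arrangement $\mathcal{R}_n$ in~\eqref{hyparrang} whose trace on $\Delta_{n,2}$ produces the admissible polytope decomposition $L(\mathcal{R}_{n,2}) = \{C_{\omega}\}$.

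The main obstacle is to confirm this wall-to-wall matching rigorously: one must identify the slice $\{\sum_j t_j = 2,\; 0\le t_j\le 1\}$ of the GIT linearisation cone with $\Delta_{n,2}$ and check that the GIT stability walls coincide with the walls cutting out $\{C_{\omega}\}$. Once this identification is established, combining it with the Gelfand--MacPherson/Kempf--Ness identification of Step 2 and the constancy of $\mathcal{G}(\mathcal{L})$ on chambers yields a canonical homeomorphism $F_{\omega}\cong \mathcal{G}(\mathcal{L})$ for every $\mathcal{L}\in C_{\omega}$, completing the proof.
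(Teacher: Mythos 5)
Your proposal is correct and follows essentially the same route as the paper: the theorem is deduced directly from the preceding Remark, namely the observation of Goresky--MacPherson that $\hat{\mu}^{-1}(x)=\mu^{-1}(x)/T^n$ is independent of $x\in C_{\omega}$ together with Kirwan's identification of this reduction with the GIT quotient $\mathcal{G}(\mathcal{L})$ of $(\C P^{1})^{n}$ via the Gelfand--MacPherson correspondence. Your third step (matching the GIT stability walls with the hyperplane arrangement cutting out the chambers) is a worthwhile consistency check but is logically redundant, since the constancy of $\mathcal{G}(\mathcal{L})$ on $C_{\omega}$ already follows from combining your first two steps.
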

Together with Theorem~\ref{typical}  it  gives:
\begin{thm}\label{omegaweight}
Let  $C_{\omega}\subset \stackrel{\circ}{\Delta} _{n,2}$ such that $\dim C_{\omega}=n-1$. Then,  there exists neighborhood $U$  of   a point $(t_1,\ldots ,t_n)\in C_{\omega}$ such that for any  $\mathcal{A} = (a_1,\ldots , a_n)\in U\cap \mathcal{D}_{0,n}$ there is the natural isomorphism
\[
\overline{\mathcal{M}}_{0, \mathcal{A}} \stackrel{\cong}{\to} F_{\omega}.
\]
\end{thm}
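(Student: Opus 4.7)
The plan is to reduce the statement to a direct combination of Theorem~\ref{typical} and Theorem~\ref{omegagit}. Concretely, the strategy is: (i) recognize that every point of a top-dimensional chamber $C_{\omega}\subset \stackrel{\circ}{\Delta}_{n,2}$ is a typical linearisation in $\partial\mathcal{D}_{0,n}$; (ii) apply Theorem~\ref{typical} to produce the neighborhood $U$ and the isomorphism $\overline{\mathcal{M}}_{0,\mathcal{A}}\cong \mathcal{G}(\mathcal{L})$ for $\mathcal{A}\in U\cap\mathcal{D}_{0,n}$; and (iii) invoke Theorem~\ref{omegagit} to identify $\mathcal{G}(\mathcal{L})$ with $F_{\omega}$.

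For step (i), I would fix $\mathcal{L}=(t_{1},\ldots,t_{n})\in C_{\omega}$. Since $\Delta_{n,2}$ lies in the affine hyperplane $\sum t_{i}=2$ and $\stackrel{\circ}{\Delta}_{n,2}$ is the locus where $0<t_{i}<1$, the point $\mathcal{L}$ automatically belongs to $\partial\mathcal{D}_{0,n}$ as defined in the paper. To check typicality I need $\sum_{i\in S}t_{i}\neq 1$ for every nonempty proper subset $S$. By Theorem~\ref{dimensionn-1} together with the arrangement~\eqref{hyparrang}, the top-dimensional chambers of $\stackrel{\circ}{\Delta}_{n,2}$ are precisely the connected components of the complement of the walls $\{\sum_{i\in S}t_{i}=1\}$ with $2\leq |S|\leq[n/2]$; via the involution $S\leftrightarrow\{1,\ldots,n\}\setminus S$ (which preserves these walls because $\sum t_{i}=2$), this already rules out $\sum_{i\in S}t_{i}=1$ for all $2\leq|S|\leq n-2$. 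The remaining cases $|S|=1$ and $|S|=n-1$ are handled by $0<t_{i}<1$. Hence $\mathcal{L}$ is typical.

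For steps (ii) and (iii), I apply Theorem~\ref{typical} at $\mathcal{L}$ to obtain an open neighborhood $U$ of $\mathcal{L}$ such that $U\cap\mathcal{D}_{0,n}$ lies in a single open fine chamber of $\mathcal{D}_{0,n}$ and, for every $\mathcal{A}\in U\cap\mathcal{D}_{0,n}$, there is a natural isomorphism $\overline{\mathcal{M}}_{0,\mathcal{A}}\stackrel{\cong}{\to}\mathcal{G}(\mathcal{L})$. Theorem~\ref{omegagit} then supplies the homeomorphism $\mathcal{G}(\mathcal{L})\cong F_{\omega}$, and composing yields the required isomorphism $\overline{\mathcal{M}}_{0,\mathcal{A}}\stackrel{\cong}{\to}F_{\omega}$. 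The main (and essentially only) non-formal point is the wall dictionary of step (i): one must observe that the hyperplanes cutting the fine chamber decomposition of $\mathcal{D}_{0,n}$ and those cutting the chamber decomposition of $\stackrel{\circ}{\Delta}_{n,2}$ are given by the same linear equations $\sum_{i\in S}t_{i}=1$, so that an interior point of a top-dimensional $C_{\omega}$ is automatically typical and the neighborhood $U$ provided by Theorem~\ref{typical} can be shrunk to lie inside $C_{\omega}$; no further geometric input is needed beyond the results already invoked.
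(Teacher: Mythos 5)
Your proposal is correct and follows essentially the same route as the paper, which derives the statement directly by combining Theorem~\ref{typical} with Theorem~\ref{omegagit}; your step (i), verifying that an interior point of a top-dimensional chamber is a typical linearisation in $\partial\mathcal{D}_{0,n}$, is exactly the implicit ingredient the paper relies on (and states explicitly later in the proof of the proposition on divisors). The only slip is the closing remark that $U$ "can be shrunk to lie inside $C_{\omega}$" --- $U$ is an open neighborhood in the ambient space while $C_{\omega}$ lies in the hyperplane $\sum t_i=2$, so at most $U\cap\partial\mathcal{D}_{0,n}$ can be arranged to lie in $C_{\omega}$ --- but this is not needed for the argument.
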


\begin{thm}\label{omegabirat}
 Let  $C_{\omega}\subset \stackrel{\circ}{\Delta} _{n,2}$ such that   $C_{\omega}\subset \stackrel{\circ}{P}_{\sigma}$ for some   $P_{\sigma}$, $\dim P_{\sigma}=n-2$. For a $(t_1, \ldots , t_n)\in C_{\omega}$, let $\mathcal{A }=(a_1,\ldots, a_n)\in \mathcal{D}_{0,n}$  is  such that $(t_1, \ldots ,t_n)$ is in the closure of the  coarse chamber for $\mathcal{D}_{0, n}$  containing  $\mathcal{A}$.  Then there exists natural birational morphism
\[
\hat{\rho} : \overline{\mathcal{M}}_{0, \mathcal{A}} \to F_{\omega}.
\]

\end{thm}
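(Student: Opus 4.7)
The plan is to reduce the claim to Theorem~\ref{atypical} via the identification of $F_{\omega}$ with a GIT quotient that is provided by Theorem~\ref{omegagit}. First I fix an arbitrary $(t_1, \ldots, t_n) \in C_{\omega}$ and set $\mathcal{L} = (t_1, \ldots, t_n)$; by Theorem~\ref{omegagit} there is a canonical homeomorphism $F_{\omega} \cong \mathcal{G}(\mathcal{L})$. It then suffices to construct a natural birational morphism $\rho \colon \overline{\mathcal{M}}_{0, \mathcal{A}} \to \mathcal{G}(\mathcal{L})$ and compose it with this homeomorphism to obtain $\hat{\rho}$.

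The next step is to verify that $\mathcal{L}$ satisfies the hypotheses of Theorem~\ref{atypical}, namely that $\mathcal{L} \in \partial \mathcal{D}_{0,n}$ and is atypical. Since $C_{\omega} \subset \stackrel{\circ}{\Delta}_{n,2}$, every point in $C_{\omega}$ has coordinates satisfying $0 < t_i < 1$ and $\sum_i t_i = 2$, which places $\mathcal{L}$ in $\partial \mathcal{D}_{0,n}$ as defined in Section~3. The atypicality follows from the hypothesis $C_{\omega} \subset \stackrel{\circ}{P}_{\sigma}$ with $\dim P_{\sigma} = n-2$: by the classification of admissible polytopes of dimension $n-2$, such a $P_{\sigma}$ is cut out inside $\Delta_{n,2}$ by a single hyperplane $\sum_{i \in S} x_i = 1$ for some $S \subset \{1, \ldots, n\}$ with $2 \leq |S| \leq n-2$. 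Consequently $\sum_{i \in S} t_i = 1$, so $\mathcal{L}$ is atypical in the sense of Section~3. The assumption that $(t_1, \ldots, t_n)$ lies in the closure of the coarse chamber for $\mathcal{D}_{0,n}$ containing $\mathcal{A}$ is exactly the remaining hypothesis of Theorem~\ref{atypical}, so that theorem applies directly and produces the sought birational morphism $\rho \colon \overline{\mathcal{M}}_{0, \mathcal{A}} \to \mathcal{G}(\mathcal{L})$. Composing with the homeomorphism of the first step yields $\hat{\rho}$.

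The main point to check carefully, which I expect to be the principal obstacle, is the independence of $\hat{\rho}$ from the choice of representative $(t_1, \ldots, t_n) \in C_{\omega}$. This is where one needs to exploit the GIT fact recalled in the preliminaries: the set of semi-stable points $X^{ss}(\mathcal{L})$ and the resulting GIT quotient $\mathcal{G}(\mathcal{L})$ are constant as $\mathcal{L}$ varies within a chamber of the $G$-effective cone, and the walls producing the degenerations correspond exactly to the hyperplanes $\sum_{i \in S} t_i = 1$ that also define the chamber decomposition of $\Delta_{n,2}$. Hence for any two choices of representative $\mathcal{L}, \mathcal{L}' \in C_{\omega}$ the GIT quotients are canonically identified, which makes the composite $\hat{\rho}$ well defined and justifies the adjective \emph{natural}.
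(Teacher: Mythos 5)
Your proposal is correct and follows essentially the same route as the paper: the paper derives Theorem~\ref{omegabirat} directly by combining the identification $F_{\omega}\cong\mathcal{G}(\mathcal{L})$ of Theorem~\ref{omegagit} with Hassett's Theorem~\ref{atypical}, just as you do, with the atypicality of $\mathcal{L}$ coming from the hyperplane $\sum_{i\in S}t_i=1$ cutting out the $(n-2)$-dimensional admissible polytope $P_{\sigma}$. Your additional remark on independence of the representative $(t_1,\ldots,t_n)\in C_{\omega}$ is a point the paper leaves implicit in the phrase ``for an arbitrary $\mathcal{L}\in C_{\omega}$'' of Theorem~\ref{omegagit}, and your GIT wall-and-chamber justification of it is sound.
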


Denote by $\mathcal{C}_{n,2}$ the family of chambers from $\stackrel{\circ}{\Delta}_{n,2}$,  by $\mathcal{C}_{\mathcal{H}}$ the family of  chambers from  the fine  chamber decomposition $\mathcal{W}_{f}$ for $\mathcal{D}_{0,n}$ and by $S(\mathcal{C}_{\mathcal{H}})$ the set of all its subsets.   From Theorem~\ref{omegaweight} and Theorem~\ref{omegabirat} we deduce:

\begin{enumerate}
\item For $C_{\omega} \in \mathcal{C}_{n,2}$ such that $\dim C_{\omega}=n-1$ there exists unique $D_{\omega} \in \mathcal{C}_{\mathcal{H}}$ such that $C_{\omega} \subset \overline{D}_{\omega}$.
\item For $C_{\omega}\in \mathcal{C}_{n,2}$ such that $\dim C_{\omega}\leq n-2$, there exists  $D_{\omega}\in \mathcal{C}_{\mathcal{H}}$ such that $C_{\omega}\subset \overline{D}_{\omega}$ and such $D_{\omega}$ is not  unique in general. Therefore, it arises  the set $\{ D_{\omega}\in \mathcal{C}_{\mathcal{H}} | C_{\omega}\subset \overline{D}_{\omega}\}\in S(\mathcal{C}_{\mathcal{H}}) $.
\end{enumerate}

\begin{ex} To verify the  non-uniqueness of $D_{\omega}$ from the second case  we take $n=5$ and the chamber $C_{\omega}\in \mathcal{C}_{5,2}$ defined by:
\[
C_{\omega} : x_1+x_3=1, \;\; x_i+ x_j <1, \; \text{for}\; 1\leq i<j\leq 5, \; \{i, j\}\neq \{1,3\}, \;\; x_1+\ldots +x_5=2.
\]
This chamber is well defined, that is non-empty, say $(\frac{3}{5}, \frac{1}{3}, \frac{2}{5}, \frac{1}{3}, \frac{1}{3})\in C_{\omega}$ and $\dim C_{\omega}=3$.

Consider the fine chamber $\tilde{C}_{\omega}$ in $\R ^{5}$ defined by
\[
\tilde{C}_{\omega}  : x_1+x_3 >1, \;\; x_i+ x_j <1, \; \text{for}\; 1\leq i<j\leq 5, \; \{i, j\}\neq \{1,3\},
\]
and the chamber $D_{\omega}\in \mathcal{C}_{\mathcal{H}}$ given by $D_{\omega} = \tilde{C}_{\omega} \cap \mathcal{D}_{0, 5}$.
Let $D_{\omega _1}\in \mathcal{C}_{\mathcal{H}}$ be a chamber  given by the facet of $D_{\omega}$ defined by the hyperplane $x_1+x_3=1$.   It is obvious that $C_{\omega} \subset  \overline{D}_{\omega}$ and $C_{\omega}\subset \overline{D}_{\omega _1}$.
\end{ex}

Altogether:

\begin{lem}
There exists injective map $\xi : \mathcal{C}_{n,2}\to S(\mathcal{C}_{\mathcal{H}})$ defined by
\[
\xi (C_{\omega})  =\{ D_{\omega}, \;\; C_{\omega}\subset \overline{D}_{\omega}\},
\]
where  $|\xi (C_{\omega})|=1$ if and only if $\dim C_{\omega}=n-1$.
\end{lem}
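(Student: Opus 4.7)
The plan is to describe each chamber $C_\omega \in \mathcal{C}_{n,2}$ and each fine chamber $D \in \mathcal{C}_{\mathcal{H}}$ by a \emph{sign vector} on the family of hyperplanes $\{\sum_{i \in S} x_i = 1\}_{2 \le |S| \le n-2}$ and to reduce the lemma to a combinatorial analysis of refinements of such vectors. Set $\tau_S(x) = \sg(\sum_{i \in S} x_i - 1) \in \{+,-,0\}$ for $x \in \R^n$. Since $\sum x_i = 2$ on $\stackrel{\circ}{\Delta}_{n,2}$, one has there $\tau_{S^c} = -\tau_S$, and a chamber $C_\omega \in \mathcal{C}_{n,2}$ is precisely the locus where $(\tau_S)_S$ takes a prescribed value in $\{+,-,0\}$; $\dim C_\omega = n-1$ holds exactly when no $\tau_S(C_\omega)$ equals $0$. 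A fine chamber $D \in \mathcal{C}_{\mathcal{H}}$ is a non-empty locus in $\mathcal{D}_{0,n}$ where all $\tau_S$ lie in $\{+,-\}$, subject to $\sum a_i > 2$ forbidding $\tau_S(D) = \tau_{S^c}(D) = -$. Comparing the defining inequalities shows that $C_\omega \subset \overline{D}$ iff $\tau_S(D) = \tau_S(C_\omega)$ for every $S$ with $\tau_S(C_\omega) \neq 0$; call such a $D$ a \emph{refinement} of $C_\omega$, so that $\xi(C_\omega)$ is precisely the set of refinements of $C_\omega$.

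First, $\xi(C_\omega)$ is non-empty: for any $x \in C_\omega$ the perturbation $x + \epsilon \mathbf{1}$ has $\sum = 2 + n\epsilon > 2$, and for small $\epsilon > 0$ lies in the fine chamber $D^+$ whose sign vector is obtained from $(\tau_S(C_\omega))$ by replacing every zero with $+$. When $\dim C_\omega = n-1$ there is nothing to replace, so $D^+$ is the unique refinement and $|\xi(C_\omega)| = 1$.

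For $\dim C_\omega < n-1$, I fix some $S_0$ with $\tau_{S_0}(C_\omega) = 0$ and exhibit a second refinement with sign $-$ on $S_0$. Take $e = \mathbf{1} - c\,\mathbf{1}_{S_0}$ with $c \in (1,\, n/|S_0|)$, an admissible open interval since $|S_0| \le n-2 < n$. Then $\sum e_i = n - c|S_0| > 0$ and $e \cdot \mathbf{1}_{S_0} = |S_0|(1-c) < 0$; for each other sign-zero subset $S'$, the vanishing $e \cdot \mathbf{1}_{S'} = |S'| - c|S_0 \cap S'| = 0$ happens for at most one value of $c$, so a generic $c$ in the interval makes $e \cdot \mathbf{1}_{S'} \neq 0$ for every such $S'$. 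The perturbation $x + \epsilon e$ therefore lies, for small $\epsilon > 0$, in a fine chamber $D^- \in \xi(C_\omega)$ with $\tau_{S_0}(D^-) = -$, distinct from $D^+$. Combined with the previous step this gives $|\xi(C_\omega)| = 1$ iff $\dim C_\omega = n-1$.

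Injectivity of $\xi$ then follows by sign-vector recovery: for each $S$, set $\tau_S(C_\omega)$ to be the common value of $\tau_S(D)$ over $D \in \xi(C_\omega)$ if that value is constant, and $0$ otherwise. Any definite sign of $C_\omega$ is preserved in every refinement, while every zero of $C_\omega$ is witnessed by the pair $D^+, D^-$ constructed above, so this rule correctly reconstructs the sign vector, and hence $C_\omega$, from $\xi(C_\omega)$. The main obstacle is the sign-flipping construction, since the domain constraint $\sum a_i > 2$ together with the identity $\mathbf{1}_{S_0} + \mathbf{1}_{S_0^c} = \mathbf{1}$ forbids some joint sign combinations on complementary pairs; the choice $e = \mathbf{1} - c\,\mathbf{1}_{S_0}$ is tailored precisely to flip a single prescribed sign while retaining $\sum e_i > 0$, and one-at-a-time flipping is all that injectivity actually requires.
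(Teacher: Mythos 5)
Your proof is correct, and it takes a genuinely different and more self-contained route than the paper. The paper presents this lemma as a summary (``Altogether:'') of the two enumerated observations preceding it, which are in turn deduced from Hassett's typical/atypical linearisation theorems: uniqueness for $\dim C_{\omega}=n-1$ comes from the fact that a typical linearisation admits a neighbourhood meeting a single open fine chamber, non-uniqueness in lower dimension is witnessed only by the explicit $n=5$ example, and injectivity is asserted without any argument. You replace all of this with elementary convex geometry of the arrangement: the sign-vector description of both families of chambers, the perturbation $x+\epsilon\mathbf{1}$ producing $D^{+}$, and the perturbation $x+\epsilon(\mathbf{1}-c\,\mathbf{1}_{S_0})$ producing $D^{-}$ are all checked correctly (the interval $(1,n/|S_0|)$ is non-empty because $|S_0|\leq n-2$, the total sum stays above $2$, genericity of $c$ avoids the finitely many other walls through $C_{\omega}$, and since every point of $C_{\omega}$ has the same sign vector the whole chamber, not just the chosen $x$, lands in $\overline{D^{\pm}}$). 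The sign-vector recovery rule then supplies the injectivity that the paper never actually proves, which is the main added value of your argument; what the paper's route buys instead is the link to moduli spaces and GIT, which is what the surrounding section needs. One small mismatch of conventions is worth flagging: you take $\mathcal{C}_{\mathcal{H}}$ to consist of the open, top-dimensional fine chambers, whereas the paper's own example exhibits non-uniqueness by taking a \emph{facet} of a fine chamber as a second element of $\mathcal{C}_{\mathcal{H}}$, so the paper implicitly counts lower-dimensional faces as chambers. The lemma holds under either reading and your argument adapts verbatim: a positive-codimension face lies inside some hyperplane $\sum_{i\in S}x_i=1$ and hence can never contain in its closure a chamber with $\tau_S\neq 0$, so uniqueness in the top-dimensional case is unaffected, and your pair $D^{+},D^{-}$ still witnesses every zero for the recovery argument.
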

We also  note the following:
\begin{lem}
If a chamber $C_{\omega}\in \mathcal{C}_{n,2}$  is in the intersection of maximum $k$ hyperplanes from the arrangement~\eqref{hyparrang},  then there are exactly  $2^{k}$ chambers in $\mathcal{C}_{\mathcal{H}}$ having $C_{\omega}$ as a facet.
\end{lem}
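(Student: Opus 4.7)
The plan is a local analysis at a point in the relative interior of $C_\omega$, exploiting that the $k$ active hyperplanes meet $\stackrel{\circ}{\Delta}_{n,2}$ transversally there, and then promoting each of the resulting $2^k$ local sectors to a distinct chamber of $\mathcal{C}_\mathcal{H}$. The crucial input will be the dimension identity $\dim C_\omega = n-1-k$, which forces linear independence of the $k$ defining linear forms modulo the constraint $\sum_i x_i = 2$ cutting out $\stackrel{\circ}{\Delta}_{n,2}$.

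First I would identify the active hyperplanes. Since $C_\omega\subset \stackrel{\circ}{\Delta}_{n,2}$, every coordinate satisfies $0<x_i<1$ strictly on $C_\omega$, so the walls $\{x_i=0\}$ and $\{x_i=1\}$ from the arrangement~\eqref{hyparrang} cannot contain $C_\omega$. The $k$ hyperplanes through $C_\omega$ must therefore all be of the form $H_j\colon \sum_{i\in S_j}x_i = 1$ with $2\leq|S_j|\leq[n/2]$. Because $C_\omega$ has codimension $k$ in $\stackrel{\circ}{\Delta}_{n,2}$, the vectors $\mathbf{1}_{S_1},\ldots,\mathbf{1}_{S_k}$ are linearly independent modulo $\mathbf{1}$; equivalently, $H_1,\ldots,H_k$ meet $\stackrel{\circ}{\Delta}_{n,2}$ transversally along $C_\omega$.

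Next I would fix a point $p$ in the relative interior of $C_\omega$ and choose an open ball $U\subset \stackrel{\circ}{\Delta}_{n,2}$ around $p$ on which every hyperplane of $\mathcal{W}_f$ outside $\{H_1,\ldots,H_k\}$ retains the strict sign it has on $C_\omega$; this is possible by continuity. By the transversality above, $H_1,\ldots,H_k$ partition $U$ into exactly $2^k$ open sectors, indexed by the sign vector $(\epsilon_1,\ldots,\epsilon_k)\in\{+,-\}^k$ of the forms $\sum_{i\in S_j}x_i-1$. Each sector lies in a unique top-dimensional chamber $C_{\omega'}\in \mathcal{C}_{n,2}$ with $p\in \overline{C}_{\omega'}$, and by \thmref{omegaweight} each such $C_{\omega'}$ is associated to a unique fine chamber $D_{\omega'}\in \mathcal{C}_\mathcal{H}$. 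Since the collection $\{H_1,\ldots,H_k\}$ is the same at every relative-interior point of the convex set $C_\omega$, each of these $2^k$ chambers satisfies $C_\omega\subset\overline{D}_{\omega'}$; conversely, any $D\in \mathcal{C}_\mathcal{H}$ with $C_\omega\subset \overline{D}$ contains points of $U$ arbitrarily close to $p$ and therefore coincides with one of them.

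The main obstacle will be the transversality claim above, namely that $\mathbf{1}_{S_1},\ldots,\mathbf{1}_{S_k},\mathbf{1}$ span a $(k+1)$-dimensional subspace of $\R^n$. This is expected to follow directly from the maximality of $k$ together with the dimension identity $\dim C_\omega = n-1-k$, since any nontrivial linear dependence among these vectors would either drop the codimension of $C_\omega$ in $\stackrel{\circ}{\Delta}_{n,2}$ or force one of the $H_j$ to be redundant, contradicting the hypothesis. Once transversality is in hand, the $2^k$ count is immediate from elementary hyperplane-arrangement theory combined with the top-dimensional correspondence furnished by \thmref{omegaweight}.
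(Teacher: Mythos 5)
There is a genuine gap here: you are counting the wrong chambers. The sectors you construct live inside $\stackrel{\circ}{\Delta}_{n,2}\subset\{\sum_i x_i=2\}$, and each is absorbed into a top-dimensional chamber $C_{\omega'}\in\mathcal{C}_{n,2}$ whose associated fine chamber $D_{\omega'}$ is full-dimensional in $\mathcal{D}_{0,n}$, i.e.\ $\dim D_{\omega'}=n$. But for $k\geq 1$ one has $\dim C_{\omega}\leq n-2$, so $C_{\omega}$ is a face of codimension at least $2$ in $\overline{D}_{\omega'}$ and never a facet of it. (Your closing ``conversely'' step also fails literally: a chamber $D\subset\mathcal{D}_{0,n}$ satisfies $\sum_i x_i>2$, hence contains no points of your neighbourhood $U\subset\{\sum_i x_i=2\}$ at all.) Moreover, the number of full-dimensional fine chambers whose closure contains $C_{\omega}$ is not $2^{k}$ in general: already for $n=5$, $k=1$ and $C_{\omega}$ on $x_1+x_3=1$ there are three of them, given by the sign patterns $(>,>)$, $(>,<)$, $(<,>)$ of $\bigl(x_1+x_3-1,\; x_2+x_4+x_5-1\bigr)$ compatible with $\sum_i x_i>2$. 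So the objects you produce neither satisfy the facet condition nor occur in the asserted number.

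The idea you are missing is the one the paper's proof turns on: on the hyperplane $\sum_i x_i=2$ the two equations $\sum_{i\in S}x_i=1$ and $\sum_{i\notin S}x_i=1$ cut out the same wall of $\Delta_{n,2}$, but as hyperplanes of the fine arrangement in $\R^{n}$ they are distinct. Hence each of the $k$ hyperplanes of~\eqref{hyparrang} through $C_{\omega}$ splits into a pair, and a chamber of $\mathcal{C}_{\mathcal{H}}$ having $C_{\omega}$ as a facet is an $(n-k)$-dimensional face of the fine arrangement supported on exactly one member of each pair: it cannot lie on both members of a pair, since that would force $\sum_i x_i=2$ and push it out of $\mathcal{D}_{0,n}$, and it cannot lie on fewer than $k$ of them by the dimension count $\dim C_{\omega}+1=n-k$. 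The choice of one representative from each of the $k$ pairs is what produces the $2^{k}$. Your local transversality analysis is not wasted --- it is the right tool for showing that each such choice yields exactly one face adjacent to $C_{\omega}$ --- but it has to be carried out in $\R^{n}$, transversally to $\{\sum_i x_i=2\}$, rather than inside $\Delta_{n,2}$.
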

\begin{proof}
Assume that $k=1$ that is $C_{\omega}$ belongs to  the hyperplane $x_1+x_2=1$ meaning that it  is  given by the intersection of  $\Delta _{n,2}$ with this hyperplane and the half spaces defined  by the other hyperplanes from~\eqref{hyparrang} . Since the  intersection of this  hyperplane with $\Delta _{n,2}$ is the same as the intersection of the hyperplane  $x_3+\ldots +x_n=1$ with  $\Delta _{n,2}$, we see that if in the above description we replace $x_1+x_2=1$ with $x_3+\ldots +x_n=1$ we obtain the same chamber $C_{\omega}$. 
Now, if we omit  the hyperplane $x_1+\ldots +x_n=2$ which defines the hypersimplex,  and keep all halfspaces, we obtain two  different chambers $D_{\omega _1}$ and $D_{\omega _2}$ in $\mathcal{D}_{0, n}$ defined by $x_1+x_2=1$ and $x_3+\ldots +x_n=1$,  respectively. Obviously,  $\dim D_{\omega_1}=\dim D_{\omega _2}=\dim C_{\omega} +1$ and $C_{\omega}$ is a facet for both of them, and there are no others such chambers.
Applying the same argument  the statement follows for an arbitrary $k$.
\end{proof}

We denote by $\hat{\mathcal{C}}_{\mathcal{H}} = \xi (\mathcal{C}_{n,2})$.   Theorem~\ref{omegaweight} and Theorem~\ref{omegabirat} imply  that 
 any $\overline{\mathcal{M}}_{0, \mathcal{A}}$ such that $\mathcal{A}\in D_{\omega}\in \xi(C_{\omega})\in \hat{\mathcal{C}}_{\mathcal{H}}$ for some $C_{\omega}\in \mathcal{C}_{n,2}$  can be embedded up to birational morphism in  the orbit space $G_{n,2}/T^n$. More precisely:

\begin{cor}\label{embedhassett}
Let $C_{\omega}\subset \stackrel{\circ}{\Delta} _{n,2}$. Then
\begin{enumerate}
\item if $\dim C_{\omega}=n-1$, the manifold $\overline{\mathcal{M}}_{0, \mathcal{A}}$ can be embedded  into $G_{n,2}/T^n$, for a weight vector  $\mathcal{A}$ satisfying the conditions of  Theorem~\ref{omegaweight}.
\item if $\dim C_{\omega}< n-1$, the space $\overline{\mathcal{M}}_{0, \mathcal{A}}$  can be embedded into $G_{n,2}/T^n$ up to birational morphism, for a weight vector  $\mathcal{A}$ satisfying the conditions of Theorem~\ref{omegabirat}.
\end{enumerate}
\end{cor}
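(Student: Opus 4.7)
The plan is to exhibit each space of parameters $F_{\omega}$ as an embedded subspace of $G_{n,2}/T^n$, and then transfer this embedding across to $\overline{\mathcal{M}}_{0,\mathcal{A}}$ using the identifications (or morphisms) supplied by Theorem~\ref{omegaweight} and Theorem~\ref{omegabirat}.

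For the first step, fix any point $x_{0}\in C_{\omega}$ and use the canonical homeomorphism $h_{\omega}:\hat{C}_{\omega}\to C_{\omega}\times F_{\omega}$ from Proposition~\ref{ComegaF} to define
\[
\iota_{x_{0}}:F_{\omega}\hookrightarrow G_{n,2}/T^n,\qquad y\mapsto h_{\omega}^{-1}(x_{0},y).
\]
Because $h_{\omega}$ is a homeomorphism and $\hat{C}_{\omega}\subset G_{n,2}/T^n$, the map $\iota_{x_{0}}$ is a topological embedding. In the case $\dim C_{\omega}=n-1$, Proposition~\ref{ComegaF} further guarantees that $F_{\omega}$ is a smooth manifold, so the embedding is compatible with the smooth structure.

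For part~(1), with $\dim C_{\omega}=n-1$, Theorem~\ref{omegaweight} produces a neighborhood $U$ of any chosen $(t_{1},\ldots,t_{n})\in C_{\omega}$ and a natural isomorphism $\phi:\overline{\mathcal{M}}_{0,\mathcal{A}}\stackrel{\cong}{\to}F_{\omega}$ for every weight vector $\mathcal{A}\in U\cap\mathcal{D}_{0,n}$. The composition $\iota_{x_{0}}\circ\phi$ is then the desired embedding of $\overline{\mathcal{M}}_{0,\mathcal{A}}$ into $G_{n,2}/T^n$. For part~(2), with $\dim C_{\omega}<n-1$, the chamber is contained in the relative interior of some admissible polytope $P_{\sigma}$ of dimension $n-2$ by Theorem~\ref{dimensionn-1} and the theorem preceding it; Theorem~\ref{omegabirat} then furnishes a natural birational morphism $\hat{\rho}:\overline{\mathcal{M}}_{0,\mathcal{A}}\to F_{\omega}$ for every $\mathcal{A}$ whose closure of the containing coarse chamber of $\mathcal{D}_{0,n}$ meets $C_{\omega}$. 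The composition $\iota_{x_{0}}\circ\hat{\rho}$ realizes $\overline{\mathcal{M}}_{0,\mathcal{A}}$ in $G_{n,2}/T^n$ up to the birational morphism $\hat{\rho}$, as required.

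The bulk of the work has already been done in the previous theorems, so the only minor point to verify is that $\iota_{x_{0}}$ does not depend in an essential way on the choice of $x_{0}\in C_{\omega}$: any two such choices give embeddings whose images are related by sliding along the product structure $C_{\omega}\times F_{\omega}$, so all of them are intrinsically the same copy of $F_{\omega}$ sitting inside $\hat{C}_{\omega}\subset G_{n,2}/T^n$. This ensures the stated conclusion is a well-defined property of $\overline{\mathcal{M}}_{0,\mathcal{A}}$ (and of the chamber $C_{\omega}$) rather than an artifact of auxiliary choices.
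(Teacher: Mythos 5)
Your proposal is correct and follows essentially the same route as the paper: the paper deduces the corollary directly from Theorem~\ref{omegaweight} and Theorem~\ref{omegabirat}, and its accompanying remark makes the notion of embedding precise in exactly the way you do, namely by fixing a point $x\in C_{\omega}$ and realizing $F_{\omega}$ as the slice $h_{\omega}^{-1}(\{x\}\times F_{\omega})\subset\hat{C}_{\omega}\subset G_{n,2}/T^n$ via Proposition~\ref{ComegaF}, then composing with the isomorphism (resp.\ birational morphism) from $\overline{\mathcal{M}}_{0,\mathcal{A}}$ to $F_{\omega}$. Your closing observation about independence of the choice of $x$ is a harmless addition not emphasized in the paper.
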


\begin{rem}
Since the space $G_{n,2}/T^n$ is in general far from being a manifold we explain  the meaning of the phrase that a manifold $\overline{\mathcal{M}}_{0,\mathcal{A}} $  can be embedded in this space.  It means that the diagram
\begin{equation}\label{diag}
\begin{CD}
 \mathcal{F}_{n} @>{f}>>\Delta_{n,2}\times \mathcal{F}_{n}\\
@VV \rho _{n,\mathcal{A}} V @VVG_{n} V \\ 
 \overline{\mathcal{M}}_{0, \mathcal{A}}@>{g}>>G_{n,2}/T^n
\end{CD}\end{equation}
is commutative, where
\begin{itemize}  
\item $f: \mathcal{F}_{n}\to \Delta _{n,2}\times \mathcal{F}_{n}$ is defined by $f(p) = (x, p)$ for a fixed point $x\in C_{\omega}$, 
\item $\mathcal{A} \in \xi (C_{\omega})$ and $F_{\omega}\cong \overline{\mathcal{M}}_{0, \mathcal{A}}$ and  $\rho _{n, \mathcal{A}}$ is a reduction morphism,
\item  $G_n$ is the projection defined by~\eqref{projection}  and $g$ is defined by the commutativity of the diagram.
\end{itemize}
\end{rem}  



\subsection{Morphisms and projection maps}

The  exceptional locus of the projection $p_{\omega}$ incorporated in our model is as follows:

\begin{thm}\label{exceptlocus}
The exceptional locus of the projection $p_{\omega} : \mathcal{F}_{n} \to F_{\omega}$ for  $C_{\omega}\subset \stackrel{\circ}{\Delta}_{n,2}$  is given by those $\tilde{F}_{\sigma}$, $\sigma\in \omega$ for which 
$\tilde{F}_{\sigma}$ is not homeomorphic to $F_{\sigma}.$
\end{thm}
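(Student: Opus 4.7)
The plan is to exploit the disjoint decomposition $\mathcal{F}_{n} = \bigsqcup_{\sigma\in\omega}\tilde{F}_{\sigma}$ from Theorem~\ref{univ-virt} (indexed over $\sigma$ with $C_{\omega}\subset\stackrel{\circ}{P}_{\sigma}$) together with the defining formula $p_{\omega}|_{\tilde{F}_{\sigma}}=p_{\sigma}$. Since the decomposition is a partition of $\mathcal{F}_{n}$ and the target splits as $F_{\omega}=\bigcup_{\sigma\in\omega}F_{\sigma}$, the question of locating the exceptional locus of $p_{\omega}$ reduces stratum by stratum to whether each restriction $p_{\sigma}:\tilde{F}_{\sigma}\to F_{\sigma}$ is a homeomorphism.

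First I would dispose of the direction ``not exceptional implies $\tilde{F}_{\sigma}\cong F_{\sigma}$''. If $y\in\tilde{F}_{\sigma}$ lies outside the exceptional locus, then $p_{\omega}$ is a local homeomorphism near $y$, hence so is $p_{\sigma}=p_{\omega}|_{\tilde{F}_{\sigma}}$. Since $\tilde{F}_{\sigma}$ is closed in the smooth compact manifold $\mathcal{F}_{n}$, it is compact; and $F_{\sigma}$ is Hausdorff, sitting inside the Hausdorff space $F_{\omega}$. A continuous surjection which is a local homeomorphism at every point of $\tilde{F}_{\sigma}$ must be injective, and an injective continuous surjection from a compact space to a Hausdorff one is a homeomorphism, giving $\tilde{F}_{\sigma}\cong F_{\sigma}$.

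Conversely, if $\tilde{F}_{\sigma}\not\cong F_{\sigma}$, then $p_{\sigma}$ is a continuous surjection that is not a homeomorphism, so by the compact-Hausdorff argument above it must fail to be injective, and some fiber contains at least two points. This failure transfers directly to $p_{\omega}$, since $p_{\omega}|_{\tilde{F}_{\sigma}}=p_{\sigma}$ and, by the disjointness of the decomposition, the identifications cannot be ``repaired'' by looking at neighboring pieces $\tilde{F}_{\sigma'}$. Combined with the definition of singular points of $W_{\sigma}/T^{n}$ given in Section~4 precisely by the condition $\tilde{F}_{\sigma}\not\cong F_{\sigma}$, this shows that every point of such a $\tilde{F}_{\sigma}$ lies in the exceptional locus.

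The main obstacle is making the notion of ``exceptional locus'' rigorous for the continuous (not a priori algebraic) map $p_{\omega}$; the right convention is that it is the complement of the largest open set on which $p_{\omega}$ is a local homeomorphism. With this in place, compactness of each $\tilde{F}_{\sigma}$ inside the wonderful compactification $\mathcal{F}_{n}$, Hausdorffness of $F_{\omega}$, and the piecewise formula $p_{\omega}=p_{\sigma}$ on $\tilde{F}_{\sigma}$ produce the clean dichotomy ``$p_{\sigma}$ injective $\Leftrightarrow$ $p_{\sigma}$ homeomorphism $\Leftrightarrow$ $\tilde{F}_{\sigma}\cong F_{\sigma}$'', placing the exceptional behavior of $p_{\omega}$ precisely on the union of those virtual strata $\tilde{F}_{\sigma}$, $\sigma\in\omega$, that are singular in the sense of Section~4.
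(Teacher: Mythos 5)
Your overall strategy --- reduce to the disjoint decomposition $\mathcal{F}_{n}=\bigcup_{\sigma\in\omega}\tilde{F}_{\sigma}$ of Theorem~\ref{univ-virt} together with the piecewise formula $p_{\omega}|_{\tilde{F}_{\sigma}}=p_{\sigma}$ --- is exactly the route the paper takes (its proof is essentially the single observation that the outgrowths of the compactification of $F_{\omega}$ to $\mathcal{F}_{n}$ are the strata with $\tilde{F}_{\sigma}\ncong F_{\sigma}$). However, the point-set topology you use to justify the stratumwise dichotomy contains two false steps. First, the pieces $\tilde{F}_{\sigma}$ are \emph{not} closed in $\mathcal{F}_{n}$, hence not compact: they are only locally closed strata. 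The most obvious counterexample is $\tilde{F}_{n}=F_{n}\cong(\C P^{1}_{A})^{n-3}\setminus\Delta_{n-3}$, which is open, dense and non-compact in $\mathcal{F}_{n}$; more generally Proposition~\ref{virtual} exhibits $\tilde{F}_{\sigma}$ as $\mathcal{F}_{s}\times\mathcal{F}_{l}\times\hat{F}_{q}$ with $\hat{F}_{q}$ possibly a non-compact $F_{m}$, and only the closure $\overline{\tilde{F}}_{\sigma}=\mathcal{F}_{s}\times\mathcal{F}_{l}\times\mathcal{F}_{q}$ is compact. So the ``injective continuous surjection from a compact space to a Hausdorff one'' argument does not apply. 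Second, the assertion that a continuous surjection which is a local homeomorphism at every point must be injective is false in general: any nontrivial covering map, e.g.\ $z\mapsto z^{2}$ on $\C^{\ast}$, is a surjective local homeomorphism that is not injective. Both directions of your dichotomy lean on these claims, so as written the argument has a genuine gap.

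What actually closes the gap is the explicit product structure of Proposition~\ref{virtual}: $p_{\sigma}$ is the projection $\mathcal{F}_{s}\times\mathcal{F}_{l}\times\hat{F}_{q}\to F_{\sigma}$ onto (essentially) the last factor, so $p_{\sigma}$ is a homeomorphism precisely when the collapsed factors are points, which is precisely the condition $\tilde{F}_{\sigma}\cong F_{\sigma}$; and where the collapsed factors are positive-dimensional the fibers of $p_{\omega}$ are positive-dimensional, which places the whole stratum in the exceptional locus. Replacing your general compact--Hausdorff and local-homeomorphism reasoning by this concrete description of $p_{\sigma}$ would make the stratumwise reduction correct and bring the argument in line with the paper's. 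You should also note (as the paper implicitly does) that the relevant equivalence is ``$p_{\sigma}$ is a homeomorphism'', not merely ``$\tilde{F}_{\sigma}$ and $F_{\sigma}$ are abstractly homeomorphic''; in this setting the two coincide because of the product form of $p_{\sigma}$, but that is a fact to be cited, not a tautology.
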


\begin{proof}
It follows from Theorem~\ref{univ-virt} that for a chamber $C_{\omega}\subset \stackrel{\circ}{\Delta}_{n,2}$  the outgrows in the compactification of  $F_{\omega}$  to $\mathcal{F}_{n}$, which is   given by $\mathcal{F}_{n}= \cup _{\sigma \in \omega} \tilde{F}_{\sigma}$,  are  those $F_{\sigma}$ such that $\tilde{F}_{\sigma}$ is not homeomorphic to $F_{\sigma}$.  
\end{proof}

\begin{prop}\label{virtual}
A virtual space of parameters  $\tilde{F}_{\sigma}\subset \mathcal{F}_{n}$  is homeomorphic to $\mathcal{F}_{s}\times \mathcal{F}_{l}\times \hat{F}_{q}$, where
\begin{itemize}
\item $\hat{F}_{q} \cong F_{m}$, $m\leq q$ or
\item   $\hat{F}_{q} = \hat{\mathcal{F}}_{q}$   is a wonderful compactification of $\{(c_{ij} :c_{ij}^{'})\in (\C P^{1})^{N_q} | c_{ij}c_{il}^{'}c_{jl} = c_{ij}^{'}c_{il}c_{jl}^{'}, \; c_{ij},  c_{ij}^{'}\neq 0\}$ for  $N_{q} = \binom{q-2}{2}$, with the building set induced from $\mathcal{G}_{n}$.
\end{itemize}
For $s, l, q$ the following holds:  either $0<s , l, q\leq n$ and $s+l+q = n+4$ or one of $s, l, q$ is zero and $s+l+q= n+2$ or two of them are zero and $s+l+q =  n$.   
The projection $p_{\sigma} : \tilde{F}_{\sigma}\to F_{\sigma}$ is given by $p_{\sigma}(f_k, f_l, f_m)=f_m$.

  In addition, 
if $(t_1,\ldots , t_n)\in \stackrel{\circ}{P}_{\sigma}$ then  there exist $\{i_1, \ldots, i_{s-1}\}, \{j_1, \ldots , j_{l-1}\}\subset \{1, \ldots ,n\}$ such that $\{i_1,\ldots , i_{s-1}\}\cap \{j_1, \ldots , j_{l-1}\} = \emptyset$ and $t_{i_1}+\ldots +t_{i_{s-1}}< 1$, $t_{j_1}+\ldots +t_{j_{l-1}}<1$
\end{prop}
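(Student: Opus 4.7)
The plan is to combine the local analysis of \lemref{paramspacestratum} with the wonderful--compactification description of $\mathcal{F}_n$ from \thmref{Keel-univ}. In a Pl\"ucker chart $P^{12}\neq 0$, the stratum $W_\sigma$ is characterized by the index subsets $N_z,N_w\subset\{3,\ldots,n\}$ consisting of those $i$ for which $z_i=P^{2i}$ or $w_i=P^{1i}$ vanishes on $W_\sigma$, together with the specification of which parameters $(c_{ij}:c_{ij}')$ are forced to $(1:1)$ on $W_\sigma$. Write $M=\{3,\ldots,n\}\setminus(N_z\cup N_w)$ for the set of ``mixed'' indices. By \lemref{paramspacestratum} the real space of parameters $F_\sigma$ is determined by the sub-system of equations indexed by $M\cup\{1,2\}$, and is either a point or homeomorphic to some $F_m$.

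Next, I would describe $\tilde F_\sigma$ as the limit locus of main-stratum points approaching $W_\sigma$ inside $\Delta_{n,2}\times\mathcal{F}_n$. On $W_n$ the parameters $(c_{ij}:c_{ij}')$ satisfy the defining equations of $\bar F_n$, and as a point tends to $W_\sigma$ the three subsystems indexed by index pairs inside $N_z\cup\{1,2\}$, inside $N_w\cup\{1,2\}$, and inside $M\cup\{1,2\}$ respectively decouple: the first two lose the constraints coming from $c_{ij}z_iw_j=c_{ij}'z_jw_i$ when the corresponding $z_i$ or $w_i$ vanish, and are resolved only by the blow-ups along $\mathcal{G}_n$, while the third retains the Pl\"ucker-type relations defining $F_\sigma$. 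I would then identify each of the first two decoupled subsystems with the defining system of $\bar F_s$ on $N_z\cup\{1,2\}$ (respectively $\bar F_l$ on $N_w\cup\{1,2\}$), check that the members of $\mathcal{G}_n$ involving only these index pairs restrict to exactly $\mathcal{G}_s$ (respectively $\mathcal{G}_l$), and conclude $\tilde F_\sigma\cong\mathcal{F}_s\times\mathcal{F}_l\times\hat F_q$. The formula $p_\sigma(f_s,f_l,f_m)=f_m$ then follows because $f_s$ and $f_l$ parametrize only the auxiliary degrees of freedom that become free when $W_\sigma$ is reached and that collapse in $F_\sigma$.

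The numerology $s+l+q$ arises from inclusion--exclusion between the pivots $\{1,2\}$ and the subsets $N_z,N_w,M$: generically both pivots are shared by all three factors, yielding
$s+l+q=(|N_z|+2)+(|N_w|+2)+(|M|+2)=n+4$,
while absorption of a pivot into one of the vanishing sets reduces the overcount by two and produces the degenerate cases $n+2$ and $n$. For the additional claim, if $(t_1,\ldots,t_n)\in\stackrel{\circ}{P}_\sigma$ then \thmref{dimensionn-1} places $P_\sigma$ in open half-spaces $\sum_{i\in S}x_i<1$ indexed by subsets $S$ whose Pl\"ucker coordinates vanish on $W_\sigma$; choosing for $S$ the sets $\{i_1,\ldots,i_{s-1}\}$ and $\{j_1,\ldots,j_{l-1}\}$ attached to $N_z$ and $N_w$ together with the appropriate pivot indices gives the two required strict inequalities, which are automatically compatible since $N_z\cap N_w=\emptyset$.

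The main obstacle will be verifying that the building set $\mathcal{G}_n$ restricts coherently to the proposed product decomposition, i.e.\ showing that the iterated blow-ups defining $\mathcal{F}_n$ factor through those defining $\mathcal{F}_s$ and $\mathcal{F}_l$ on the index subsets $N_z\cup\{1,2\}$ and $N_w\cup\{1,2\}$, and that the residual contribution on the third factor is either the classical $F_m$ (when no parameter in $M\cup\{1,2\}$ lies on a blown-up locus) or a wonderful compactification $\hat{\mathcal{F}}_q$ whose building set is the explicit restriction of $\mathcal{G}_n$.
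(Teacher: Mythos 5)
Your plan follows essentially the same route as the paper's proof: work in the chart $P^{12}\neq 0$, record which local coordinates $z_i,w_j$ vanish on $W_\sigma$, observe that the relations on the $(c_{ij}:c_{ij}')$ decouple into three independent subsystems giving $\bar F_s\times\bar F_l\times F_q^{'}$, apply the wonderful compactification factor by factor (with the third factor collapsing to some $F_m$ exactly when the building set restricts to it emptily), and derive the final half-space inequalities from the vanishing data of $P_\sigma$. The step you flag as the main obstacle --- that $\mathcal{G}_n$ restricts to $\mathcal{G}_s$, $\mathcal{G}_l$ and a residual building set on the third factor --- is also the point the paper treats most briefly, so your outline neither diverges from nor falls short of the published argument.
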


\begin{proof}
If $\tilde{F}_{\sigma}\cong F_{\sigma}$,  then $\tilde{F}_{\sigma}\cong F_m$ by Lemma~\ref{paramspacestratum}. Assume that  $\tilde{F}_{\sigma}$ is not homeomorphic to $F_{\sigma}$ and assume that $W_{\sigma}$ belongs to the chart $M_{12}$. Then, there exist local coordinates $z_{i_1}, \ldots , z_{i_{s-2}}=0$ and $w_{j_1}, \ldots , w_{j_{l-2}}=0$, where $\{i_1, \ldots , i_{s-2}\} \cap \{j_1, \ldots , j_{l-2}\} =\emptyset$. Due to the $S_n$-action we may take these coordinates to be  $z_3, \ldots, z_s$ and $w_{s+1}, \ldots , w_{l+s-2}$.
The points  $(c_{ij} : c_{ij}^{'})$  considering to be from  $(\C P^{1})^{N}$, $N\binom{n-2}{2}$ and  which satisfy equations~\eqref{eqmain} for the values of the local coordinates which correspond to   $W_{\sigma}$, are given by  $\bar{F}_{s}\times \bar{F}_{l}\times  F_{q}^{'}$,  where  $F_{q}^{'}$ differs from $F_q$ by the condition that a coordinate $(c_{ij}:c_{ij}^{'})$ is allowed to be $(1:1)$, all subject  to the relations $c_ {ij}c_{ik}^{'}c_{jk} = c_{ij}^{'}c_{ik}c_{jk}^{'}$.  We note that these relations  do not give any relations among the factors $\bar{F}_{s}, \bar{F}_{l}$ and  $F_{q}^{'}$. For example, if $(c_{ij}:c_{ij}^{'})\in \bar{F}_{s}$ then $3\leq i, j\leq s $ and for any $k\geq s+1$ we have that $(c_{ik}:c_{ik}^{'})= (c_{jk}: c_{jk}^{'})= (0:1)$ does not belong $\bar{F}_{l}$  neither $\hat{F}_q$. 

If   the restriction of the building set $\mathcal{G}_{n}$ on $F_{q}^{'}$ is  empty set, then $F_{q}^{'}$  is homeomorphic to some $F_{m}$, where $m\leq q$. Namely, if there exists $(c_{ij}:c_{ij}^{'})\in \hat{F}_{q}$ and $(c_{ij}:c_{ij}^{'})=(1:1)$,  we obtain that $c_{ip}^{'}c_{jp} = c_{ip}c_{jp}^{'}$, which gives 
$(c_{ip}:c_{ip}^{'}) = (c_{jp}:c_{jp}^{'})$,  so one of these two coordinates can be omitted. Thus, in this case we obtain that $\tilde{F}_{\sigma}\subset \mathcal{F}_{n}$ is given by $\mathcal{F}_{s}\times \mathcal{F}_{l}\times F_{m}$.

 Let $\mathcal{G}_{n,q}\neq \emptyset$ be the restriction of the building set $\mathcal{G}_{n}$ on $\overline{F_{q}^{'}} = \bar{F}_{q}\subset (\C P^{1})^{N_q}$. Then $\tilde{F}_{\sigma}\subset \mathcal{F}_{n}$ is given by $\mathcal{F}_{s}\times \mathcal{F}_{l}\times \hat{\mathcal{F}}_{q}$, where   $\hat{\mathcal{F}}_{q}$ is the restriction to $F_{q}^{'}$  of the wonderful compactification for  $\bar{F}_{q}$ with the building set $\mathcal{G}_{n, q}$.


Note that  $\Lambda _{i_{p}i_{q}}$, $1\leq p<q\leq s-2$, then $\Lambda _{2i_{p}}$, $1\leq p\leq s-2$ and $\Lambda _{j_{p}j_{q}}$, $1\leq p<q\leq l-2$, then $\Lambda_{1j_{p}}$, $1\leq p\leq l-2$  are not among  the vertices of the polytope $P_{\sigma}$.  Thus,  following~\cite{BT2} we see that   $P_{\sigma}$ belongs to the halfspaces $x_2+x_{i_1}+\ldots +x_{i_{s-2}}\leq 1$ and $x_1+x_{j_1}+\ldots +x_{j_{l-2}}\leq 2$.  It implies that for 
$(t_1, \ldots , t_n) \in \stackrel{\circ}{P}_{\sigma}$ we have that $t_2+t_{i_1}+\ldots +t_{i_{s-2}}<1$ and  $t_1+t_{j_1}+\ldots +t_{j_{l-2}}< 1$.

\end{proof}

It follows from  Proposition~\ref{virtual} that the closure    $\overline{\tilde{F}}\subset \mathcal{F}_{n}$  is  $\mathcal{F}_{s}\times \mathcal{F}_{l}\times\mathcal{F}_{q}$.



Denote by $\overline{F}_{\sigma,\omega}$ the closure of $F_{\sigma}\cong F_m$ in $F_{\omega}$ for $\sigma \in \omega$. 

\begin{prop}\label{proj}
The following holds:
\begin{enumerate}
\item  $\overline{F}_{\sigma, \omega}$ is the union of $F_{\sigma ^{'}}$ for some $\sigma ^{'} \in \omega$,
\item  the projection $p_{\omega} : \mathcal{F}_{n}\to F_{\omega}$ induces the projection $p_{\omega, \sigma} : \tilde{\mathcal{F}}_{\sigma} \to F_{\sigma, \omega}$.
\item  $\tilde{\mathcal{F}}_{\sigma}$ is the union of the corresponding $\tilde{F}_{\sigma ^{'}}$,
\end{enumerate}
\end{prop}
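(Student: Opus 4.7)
The plan is to handle the three items in the order (1), (3), (2). Statements (1) and (3) are parallel assertions about closures being unions of strata---downstairs in $F_\omega$ and upstairs in $\mathcal{F}_n$, respectively---while (2) is the matching statement that reconciles the two levels through the already-constructed projection $p_\omega$.

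For (1), I would invoke the identification of $F_\omega$ with the categorical quotient $W_\omega/(\C^\ast)^n$ from the remark preceding Theorem~\ref{omegagit}, where $W_\omega = \bigcup_{\sigma \in \omega} W_\sigma$. The Pl\"ucker-coordinate stratification $\{W_\sigma\}$ is $(\C^\ast)^n$-invariant, and the vanishing or non-vanishing of a Pl\"ucker coordinate is a closed/open condition on $G_{n,2}$. Hence for any pair $\sigma, \sigma' \in \omega$, either $W_{\sigma'} \subset \overline{W_\sigma}$ in its entirety or $W_{\sigma'} \cap \overline{W_\sigma} = \emptyset$. Consequently $\overline{W_\sigma} \cap W_\omega$ is a union of strata, and passing to the categorical quotient yields $\overline{F}_{\sigma,\omega} = \bigcup F_{\sigma'}$, where the union is over those $\sigma' \in \omega$ with $W_{\sigma'} \subset \overline{W_\sigma}$.

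Next, for (3), Theorem~\ref{univ-virt} gives the disjoint decomposition $\mathcal{F}_n = \bigsqcup_{\sigma : C_\omega \subset \stackrel{\circ}{P}_\sigma} \tilde{F}_\sigma$, so the closure of any $\tilde{F}_\sigma$ in $\mathcal{F}_n$ is automatically a union of pieces of this partition. Proposition~\ref{virtual} and the remark following it describe $\tilde{\mathcal{F}}_\sigma$ explicitly as the product $\mathcal{F}_s \times \mathcal{F}_l \times \mathcal{F}_q$; by inspecting which pieces $\tilde{F}_{\sigma'}$ sit inside this product one reads off that they are precisely those indexed by substrata $W_{\sigma'} \subset \overline{W_\sigma}$ with $\sigma' \in \omega$. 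The reason this matches so cleanly is that $\mathcal{F}_n$ is a wonderful compactification with building set $\mathcal{G}_n$ tailored exactly to the $\hat{F}_I$-conditions that encode Pl\"ucker-coordinate vanishings, so degenerations in $\mathcal{F}_n$ correspond to degenerations of strata in $G_{n,2}$.

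Finally (2) follows by combining (1) and (3): both statements identify the same index set $\{\sigma' \in \omega : W_{\sigma'} \subset \overline{W_\sigma}\}$. Since $p_\omega$ is continuous (Theorem~\ref{univ-virt}) and restricts on each $\tilde{F}_{\sigma'}$ to $p_{\sigma'} : \tilde{F}_{\sigma'} \to F_{\sigma'}$, its restriction to $\tilde{\mathcal{F}}_\sigma = \bigcup \tilde{F}_{\sigma'}$ is a continuous surjection onto $\overline{F}_{\sigma,\omega} = \bigcup F_{\sigma'}$, giving the desired map $p_{\omega,\sigma}$. I expect the main obstacle to lie in rigorously verifying that the two index sets genuinely coincide: one must rule out spurious boundary components in the wonderful compactification that have no counterpart in the Pl\"ucker stratification of $G_{n,2}$, and doing this requires a careful bookkeeping via the explicit form of $\mathcal{G}_n$ and the product decomposition of $\tilde{F}_\sigma$ supplied by Proposition~\ref{virtual}.
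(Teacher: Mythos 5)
Your argument for item (1) has the same skeleton as the paper's: both reduce the claim to the assertion that if the closure of a stratum $W_{\sigma}$ meets another stratum $W_{\sigma'}$, then it contains all of $W_{\sigma'}$. But your justification of that assertion --- ``the vanishing or non-vanishing of a Pl\"ucker coordinate is a closed/open condition, hence either $W_{\sigma'}\subset\overline{W_{\sigma}}$ in its entirety or $W_{\sigma'}\cap\overline{W_{\sigma}}=\emptyset$'' --- is not a valid deduction, and it is exactly the nontrivial point. The open/closed nature of the defining conditions only gives the inclusion of $\overline{W_{\sigma}}$ in the locus $\{P^{I}=0 \text{ for } I\notin\sigma\}$; it does not prevent $\overline{W_{\sigma}}\cap W_{\sigma'}$ from being a nonempty \emph{proper} subset of $W_{\sigma'}$. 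Indeed, for the matroid decomposition of a general Grassmannian $G_{n,k}$ this frontier property genuinely fails (closures of matroid strata need not be unions of matroid strata), so no argument using only the shape of the defining equations can succeed. The paper closes precisely this gap by invoking the Gelfand--Serganova result \cite{GS}, which for $G_{n,2}$ guarantees $W_{\sigma'}/T^n\subset\overline{W_{\sigma}/T^n}$ whenever this intersection is nonempty; you need that input (or an explicit verification for rank-two matroid strata) at this step.

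For the remaining items the paper simply observes that (2) is immediate and implies (3): since $p_{\omega}$ restricts on each $\tilde{F}_{\sigma'}$ to $p_{\sigma'}:\tilde{F}_{\sigma'}\to F_{\sigma'}$ and the decomposition of Theorem~\ref{univ-virt} is disjoint, the preimage of $\overline{F}_{\sigma,\omega}=\bigcup F_{\sigma'}$ is $\bigcup\tilde{F}_{\sigma'}$. Your route through (3) first is workable in principle, but the sentence ``the closure of any $\tilde{F}_{\sigma}$ in $\mathcal{F}_{n}$ is automatically a union of pieces of this partition'' is false as a general statement about partitions, and the subsequent ``inspection'' of which pieces sit inside the product $\mathcal{F}_{s}\times\mathcal{F}_{l}\times\hat{F}_{q}$ is not carried out. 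The spurious boundary components you worry about at the end are excluded by the Gelfand--Serganova frontier property applied downstairs, not by bookkeeping with the building set $\mathcal{G}_{n}$; once item (1) is in place, (2) and (3) really are formal consequences.
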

\begin{proof}
To prove the first statement we note that $C_{\omega}\times  \bar{F}_{\sigma, \omega} \subset \overline{W_{\sigma}/T^n}$, so if $\bar{F}_{\sigma, \omega}\cap F_{\sigma ^{'}}\neq \emptyset$ then $ \overline{W_{\sigma}/T^n}\cap W_{\sigma}^{'}/T^n \neq \emptyset$. For the case of $G_{n,2}$ manifolds it follows from~\cite{GS} that  $W_{\sigma}^{'}/T^n\subset 
\overline{W_{\sigma}/T^n}\cap W_{\sigma}^{'}/T^n $, which implies that $ F_{\sigma ^{'}}\subset \bar{F}_{\sigma, \omega}$.  

 The second statement is obvious and it implies the third statement.
\end{proof}



The dimension of $\mathcal{F}_{n}$ is $2n-6$, so the divisors of maximal dimension  in $\mathcal{F}_{n}$ for the projection $p_{\omega} : \mathcal{F}_{n}\to F_{\omega}$ are given by those $\overline{\tilde{F}}_{\sigma}$ of dimension $2n-8$.  Since by  Lemma~\ref{virtual} for $k, s ,q>0$  we have that $\dim \overline{\tilde{F}}_{\sigma} =  2(k+s+q)-18 \leq 2n-10$, it follows that for a such divisor   $\overline{\tilde{F}}_{\sigma}$  exactly one of $l, s$ is zero in order to have singularity. Thus, we obtain:

\begin{cor}
The divisors of maximal dimension for   the projection $p_{\omega} : \mathcal{F}_{n}\to F_{\omega}$ are given by
\[
\overline{\tilde{F}_{\sigma}} = \mathcal{F}_{p}\times \mathcal{F}_q,
\]
 where  $p, q> 0$ and $ p+q=n+2$ or $q=0$ and $p=n-1$.  The case $q=0$ and $p=n-1$ is possible when $\dim C_{\omega}\leq n-2$ and it corresponds to those  $\sigma\in \omega$ such that $\dim P_{\sigma}=n-2$.
\end{cor}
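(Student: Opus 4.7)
The plan is a dimension count that exploits Proposition~\ref{virtual} together with Lemma~\ref{paramspacestratum}. Since $\dim_{\R}\mathcal{F}_n=2n-6$, a divisor of maximal dimension has real dimension $2n-8$, and I would enumerate the possibilities for $\overline{\tilde F_\sigma}$ according to whether $\tilde F_\sigma$ is homeomorphic to $F_\sigma$ or not.

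Assume first that $\tilde F_\sigma\not\cong F_\sigma$, so Proposition~\ref{virtual} provides the product decomposition $\overline{\tilde F_\sigma}\cong\mathcal{F}_s\times\mathcal{F}_l\times\mathcal{F}_q$. If all three of $s,l,q$ are positive then $s+l+q=n+4$ and the real dimension is $2(s+l+q)-18=2n-10$, strictly less than $2n-8$; such strata yield no divisors of maximal dimension. If exactly one of $s,l,q$ vanishes, then $s+l+q=n+2$ and the remaining two factors give $\overline{\tilde F_\sigma}\cong\mathcal{F}_p\times\mathcal{F}_q$ with $p,q>0$ and $p+q=n+2$, of real dimension $2(p+q)-12=2n-8$. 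The case in which two of them vanish produces a single factor of index $n$, i.e.\ $\mathcal{F}_n$ itself, which is not a proper divisor. This isolates the first family claimed in the corollary.

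The second family, $\overline{\tilde F_\sigma}\cong\mathcal{F}_{n-1}$ (formally $p=n-1$, $q=0$), I would extract from the alternative case $\tilde F_\sigma\cong F_\sigma$, where by Lemma~\ref{paramspacestratum} one has $F_\sigma\cong F_m$ with $m\le n-1$ and hence $\overline{\tilde F_\sigma}\cong\mathcal{F}_m$, of real dimension $2m-6$. The constraint $2m-6=2n-8$ forces $m=n-1$. I would then argue that this maximal value $m=n-1$ is attained precisely when $\dim P_\sigma=n-2$, by tracing the proof of Lemma~\ref{paramspacestratum}: every coincidence $(c_{ij}:c_{ij}^{'})=(1:1)$ or forced vanishing of a local coordinate drops the effective index of $F_\sigma$ by one, and by the description of admissible polytopes in Theorem~\ref{dimensionn-1} each such coincidence corresponds to a single hyperplane constraint $\sum_{i\in S}x_i=1$ cutting out $P_\sigma$. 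Hence a single reduction in index matches exactly one reduction in the dimension of $P_\sigma$, giving the equivalence $m=n-1\Longleftrightarrow\dim P_\sigma=n-2$. Since any chamber $C_\omega\subset\stackrel{\circ}{P}_\sigma$ inherits the hyperplane constraint, one obtains $\dim C_\omega\le n-2$, exactly the hypothesis stated in the corollary.

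The main obstacle is the identification in the last paragraph: verifying rigorously the bijection between the reduction of index in $F_\sigma\cong F_m$ and the reduction of dimension of $P_\sigma$ via the structural description of strata given in the proof of Lemma~\ref{paramspacestratum}. Once this is in place, the corollary follows by simple bookkeeping of the three cases above.
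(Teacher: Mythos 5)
Your treatment of the first family is correct and is essentially the paper's own argument: since $\dim \mathcal{F}_n=2n-6$, a divisor of maximal dimension must have dimension $2n-8$; by Proposition~\ref{virtual} the case $s,l,q>0$ gives $2(s+l+q)-18=2(n+4)-18=2n-10$, which is too small, so exactly one index must vanish, yielding $\overline{\tilde{F}_\sigma}=\mathcal{F}_p\times\mathcal{F}_q$ with $p,q>0$ and $p+q=n+2$. Up to this point your bookkeeping matches the text preceding the corollary.

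The derivation of the second family ($q=0$, $p=n-1$) is where the argument fails. You look for $\overline{\tilde{F}_\sigma}\cong\mathcal{F}_{n-1}$ among strata with $\tilde{F}_\sigma\cong F_\sigma$; but by Theorem~\ref{exceptlocus} precisely those strata lie \emph{outside} the exceptional locus of $p_\omega$, so they cannot be divisors that the projection contracts. The $\mathcal{F}_{n-1}$ in the corollary comes instead from a stratum with $\tilde{F}_\sigma\not\cong F_\sigma$, namely the product stratum lying over the hyperplane section $P_\sigma=\Delta_{n,2}\cap\{x_i+x_j=1\}$ (dimension $n-2$): there all parameters equal $(1:1)$, so $F_\sigma$ is a point, while $\tilde{F}_\sigma$ is the divisor of the wonderful compactification over the point $(1:1)^{N}$, whose closure is $\mathcal{F}_{n-1}$ (formally $\mathcal{F}_3\times\mathcal{F}_{n-1}$ with one index degenerating to $3$); such a $\sigma$ can belong to $\omega$ only if $C_\omega$ lies in that hyperplane, forcing $\dim C_\omega\le n-2$. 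Moreover the equivalence you assert, $m=n-1\Longleftrightarrow\dim P_\sigma=n-2$, is false: the stratum with a single vanishing Pl\"ucker coordinate has $F_\sigma\cong F_{n-1}$ (for $n=5$ this is the space $F_{x_1+x_3\le 1}\cong \C P^1\setminus\{0,1,\infty\}$ appearing in Example~\ref{nohom}), yet its admissible polytope $\Delta_{n,2}\cap H_{S}$ has dimension $n-1$ by Theorem~\ref{dimensionn-1}, which classifies the $(n-1)$-dimensional polytopes as cut out by open half-spaces rather than by hyperplanes. The purported bijection between $(1:1)$-coincidences and hyperplane constraints has no support in the proof of Lemma~\ref{paramspacestratum}: the coincidences govern which local coordinates $w_j$ may be eliminated, whereas $\dim P_\sigma$ is governed by which vertices $\Lambda_I$ survive, and these reductions are not in correspondence. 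So your count fully justifies the first case of the corollary, but not the case $q=0$, $p=n-1$.
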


\begin{cor}
The projection $p_{\omega} : \mathcal{F}_{n}\to F_{\omega}$ contracts the boundary divisors $D_{s, l, q}=\mathcal{F}_{s}\times \mathcal{F}_{l}\times \mathcal{F}_{q}$ by the composition of the projection and the  blowing down
\[ 
D_{s,l,q}\to \mathcal{F}_{q} \to \bar{F}_{q}.
\]
\end{cor}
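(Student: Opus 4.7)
The plan is to combine Proposition~\ref{virtual} with the description of $\mathcal{F}_{q}$ as a wonderful compactification of $\bar{F}_{q}$, and with the fact that $p_{\omega}$ restricted to each virtual space of parameters coincides with $p_{\sigma}$. By Proposition~\ref{virtual}, the virtual space $\tilde{F}_{\sigma}$ decomposes as the product $\mathcal{F}_{s}\times \mathcal{F}_{l}\times \hat{F}_{q}$, and $p_{\sigma}$ is precisely the projection $(f_s,f_l,f_m)\mapsto f_m$ onto the third factor. Since by Theorem~\ref{univ-virt} the restriction of $p_{\omega}$ to $\tilde{F}_{\sigma}$ agrees with $p_{\sigma}$, and since $D_{s,l,q}=\overline{\tilde{F}_{\sigma}}=\mathcal{F}_{s}\times \mathcal{F}_{l}\times \mathcal{F}_{q}$ by the discussion following Proposition~\ref{virtual}, continuity forces the extension of $p_{\omega}$ to $D_{s,l,q}$ to factor through the product projection onto $\mathcal{F}_{q}$.

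The second step is to identify the induced map $\mathcal{F}_{q}\to F_{\omega}$ with the wonderful blow-down $\mathcal{F}_{q}\to \bar{F}_{q}$. On the dense open subset $\hat{F}_{q}\subset \mathcal{F}_{q}$ this map agrees with the natural inclusion $\hat{F}_{q}\subset \bar{F}_{q}$, composed with the embedding $\bar{F}_{q}\subset F_{\omega}$ coming from Proposition~\ref{proj}(1) (recall that $F_{\sigma}\cong \hat{F}_{q}$ and its closure in $F_{\omega}$ is a union of $F_{\sigma'}$). By the last item of Proposition~\ref{virtual}, $\mathcal{F}_{q}$ is by construction the wonderful compactification of $\bar{F}_{q}$ relative to the building set induced from $\mathcal{G}_{n}$; such a compactification carries a canonical projective morphism to $\bar{F}_{q}$ which is an isomorphism over the locus where no subvariety of the building set vanishes, in particular over $\hat{F}_{q}$. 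Uniqueness of continuous extensions from a dense subset therefore forces the extension to coincide with this blow-down.

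Combining the two steps yields the desired factorization
\[
p_{\omega}|_{D_{s,l,q}} : \mathcal{F}_{s}\times \mathcal{F}_{l}\times \mathcal{F}_{q} \;\longrightarrow\; \mathcal{F}_{q} \;\longrightarrow\; \bar{F}_{q} \;\subset\; F_{\omega}.
\]
The main obstacle is the second step: one must verify that the building set on $\bar{F}_{q}$ obtained by restriction from $\mathcal{G}_{n}$ matches the one that intrinsically defines $\mathcal{F}_{q}$, so that the extension really is a wonderful blow-down rather than some other birational resolution. This compatibility follows from the recursive nature of the definition of $\mathcal{G}_{n}$ together with the explicit local coordinate description used in the proof of Proposition~\ref{virtual}, but it requires careful bookkeeping of which elements $\hat{F}_{I}$ of $\mathcal{G}_{n}$ restrict to nontrivial subvarieties in the third factor.
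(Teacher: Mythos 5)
Your proposal is correct and follows essentially the route the paper intends: the corollary is stated without a separate proof, as an immediate consequence of Proposition~\ref{virtual} (the product decomposition $\tilde{F}_{\sigma}\cong \mathcal{F}_{s}\times\mathcal{F}_{l}\times\hat{F}_{q}$ with $p_{\sigma}$ the third-factor projection), the identification $\overline{\tilde{F}}_{\sigma}=\mathcal{F}_{s}\times\mathcal{F}_{l}\times\mathcal{F}_{q}$, and the canonical blow-down from the wonderful compactification $\mathcal{F}_{q}\to\bar{F}_{q}$, which is exactly what you assemble via continuity. The compatibility of the restricted building set that you flag at the end is indeed the one point the paper leaves implicit, so noting it is appropriate rather than a defect of your argument.
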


Together with Theorem~\ref{divisorsFn} which we prove later one, we deduce:

\begin{prop}
Let $\dim C_{\omega}=n-1$. The  space $\mathcal{D}_{s,l,q} = \overline{\mathcal{M}}_{0, s}\times \overline{\mathcal{M}}_{0, l}\times \overline{\mathcal{M}}_{0, q}$ is a divisor for the birational reduction morphism $\rho _{\mathcal{A}_{0}, \mathcal{A}} : \overline{\mathcal{M}}_{0, n} \to \overline{\mathcal{M}}_{0, \mathcal{A}}$, where $\mathcal{A}_{0} = (1, \ldots ,1)$ and 
 $\mathcal{A}\in D_{\omega}\subset \mathcal{D}_{0, n}$, where  $C_{\omega}\subset \overline{D}_{\omega}$.

In addition   projection $p_{\omega}$ restricted to  $D_{s,l,q}$ coincides with  $\rho _{\mathcal{A}_{0}, \mathcal{A}} $ restricted  to $\mathcal{D}_{s,l,q}$.
 \end{prop}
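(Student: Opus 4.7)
The plan is to bridge the birational morphism $\rho_{\mathcal{A}_{0},\mathcal{A}}$ with the projection $p_{\omega}$ via the identifications already established in the paper. First I would invoke Theorem~\ref{Keel-univ}, which gives $\mathcal{F}_{n}\cong\overline{\mathcal{M}}_{0,n}=\overline{\mathcal{M}}_{0,\mathcal{A}_{0}}$, together with Theorem~\ref{omegaweight}, which for $\dim C_{\omega}=n-1$ and appropriate $\mathcal{A}\in D_{\omega}$ yields $F_{\omega}\cong\overline{\mathcal{M}}_{0,\mathcal{A}}$. Under these isomorphisms both $p_{\omega}$ and $\rho_{\mathcal{A}_{0},\mathcal{A}}$ are birational morphisms restricting to the identity on the open dense subset $\mathcal{M}_{0,n}\cong F_{n}$; by properness of domain and normality of target they must agree globally, so it suffices to analyse the behaviour of $p_{\omega}$ on the boundary.

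Next I would use Proposition~\ref{virtual} to write $\overline{\tilde{F}}_{\sigma}\cong\mathcal{F}_{s}\times\mathcal{F}_{l}\times\mathcal{F}_{q}$ and apply Keel's identification factor-by-factor to identify $D_{s,l,q}$ with $\mathcal{D}_{s,l,q}=\overline{\mathcal{M}}_{0,s}\times\overline{\mathcal{M}}_{0,l}\times\overline{\mathcal{M}}_{0,q}$. The final assertion of Proposition~\ref{virtual} -- the existence of disjoint subsets $\{i_{1},\ldots,i_{s-1}\}$ and $\{j_{1},\ldots,j_{l-1}\}$ of indices whose weight sums are strictly less than $1$ at any $(t_{1},\ldots,t_{n})\in\stackrel{\circ}{P}_{\sigma}$ -- matches exactly the hypothesis $b_{I}\leq 1$ of Proposition~\ref{reductionboundarydivisors} that forces the corresponding Hassett boundary divisor to be contracted. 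A double application of Proposition~\ref{reductionboundarydivisors} in the sense of the remark following it -- first with $I=\{i_{1},\ldots,i_{s-1}\}$ to extract the factor $\overline{\mathcal{M}}_{0,s}$, then with $I=\{j_{1},\ldots,j_{l-1}\}$ on the remaining factor to extract $\overline{\mathcal{M}}_{0,l}$ -- exhibits $\mathcal{D}_{s,l,q}$ as a boundary divisor contracted by $\rho_{\mathcal{A}_{0},\mathcal{A}}$.

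For the coincidence of restrictions, the corollary immediately preceding the proposition identifies $p_{\omega}|_{D_{s,l,q}}$ with the composition $D_{s,l,q}\to\mathcal{F}_{q}\to\bar{F}_{q}$, namely projection onto the third factor followed by a blowing down. On the Hassett side, iterating the factorisation supplied by Proposition~\ref{reductionboundarydivisors} gives $\rho_{\mathcal{A}_{0},\mathcal{A}}|_{\mathcal{D}_{s,l,q}}\colon\mathcal{D}_{s,l,q}\to\overline{\mathcal{M}}_{0,q}\to\overline{\mathcal{M}}_{0,\mathcal{B}'}$, and under the identifications of the previous paragraph these two factorisations agree term by term. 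The principal obstacle I anticipate is verifying that Keel's diffeomorphism $\mathcal{F}_{n}\cong\overline{\mathcal{M}}_{0,n}$ from~\cite{BT2} is compatible \emph{stratum-by-stratum} with Hassett's boundary structure -- concretely, that the subsets $\{i_{1},\ldots,i_{s-2}\}$ and $\{j_{1},\ldots,j_{l-2}\}$ indexing the vanishing local coordinates on $W_{\sigma}$ produce exactly the partition of $\{1,\ldots,n\}$ appearing in the decomposition rule~\eqref{decomprule}. Once this combinatorial matching is established, the remainder of the argument is bookkeeping.
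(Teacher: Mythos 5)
Your argument for the first assertion is essentially the paper's: transport the inequalities $t_{i_1}+\cdots+t_{i_{s-1}}<1$ and $t_{j_1}+\cdots+t_{j_{l-1}}<1$ supplied by Proposition~\ref{virtual} to the weight data and feed them into Hassett's contraction criterion (Proposition~\ref{reductionboundarydivisors}, iterated as in the remark following it). The one step you wave at --- ``matches exactly the hypothesis $b_I\leq 1$'' --- is precisely the step that needs an argument and that the paper makes explicit: Proposition~\ref{virtual} gives the inequalities for points $(t_1,\ldots,t_n)\in\stackrel{\circ}{P}_{\sigma}\subset\Delta_{n,2}$, whereas Hassett's criterion must be applied to the weight vector $\mathcal{A}\in D_{\omega}\subset\mathcal{D}_{0,n}$, which does not lie on the hyperplane $\sum x_i=2$. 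The transfer works because $\dim C_{\omega}=n-1$ forces the linearisation to be typical, so $D_{\omega}$ is unique and $C_{\omega}$, $D_{\omega}$ lie in the same open chamber of the coarse arrangement $\mathcal{W}_{c}$; membership in that chamber is what fixes the signs of all expressions $\sum_{i\in S}x_i-1$ simultaneously for $(t_1,\ldots,t_n)$ and for $\mathcal{A}$. Add that line; without it the verification of the hypothesis of Proposition~\ref{reductionboundarydivisors} for $\mathcal{A}$ is unsupported. For the second assertion the paper proceeds differently from you: it compares the two factorizations term by term, identifying the final reduction $\overline{\mathcal{M}}_{0,q}\to\overline{\mathcal{M}}_{0,\mathcal{B}}$ with the blow-down $\mathcal{F}_{q}\to\bar{F}_{q}$ via Theorem~\ref{divisorsFn}, while you invoke rigidity (two maps agreeing on the dense subset $F_n\cong\mathcal{M}_{0,n}$ with Hausdorff, respectively separated, target agree everywhere). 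Your route is shorter and avoids the bookkeeping, but it shifts all the weight onto the compatibility you yourself flag at the end --- that the identifications of Theorem~\ref{Keel-univ} and Theorem~\ref{omegaweight} restrict to the same map on $\mathcal{M}_{0,n}$ --- which is exactly the content the paper's term-by-term comparison establishes; so the two proofs are of comparable length once that check is written out.
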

\begin{proof}
If $\dim C_{\omega}=n-1$,  then   $\mathcal{O}(\mathcal{L}), \mathcal{L} = (t_1, \ldots , t_n)\in C_{\omega}$ is a typical linearisation.  So, there exists a unique chamber  $D_{\omega}$  in $\mathcal{D}_{0, n}$ such that $C_{\omega}\subset \overline{D}_{\omega}$.  Then $C_{\omega}$ and $D_{\omega}$ belong to the same open  chamber in $\R^{n}$ defined by the arrangement $\mathcal{W}_{c}$. If  $t_{i_1}+\ldots +t_{i_{s-1}}<1$ and $j_1+\ldots +j_{l-1}<1$  then  $a_{i_1}+\ldots +a_{i_{s-1}}<1$ and $a_{j_1}+\ldots +a_{j_{l-1}}<1$  so, according to Theorem~\ref{reduction} ,  $\overline{\mathcal{M}}_{0, s}\times \overline{\mathcal{M}}_{0, l}\times \overline{\mathcal{M}}_{0, q}$ is a divisor   for the reduction  $\rho_{\mathcal{A}_{0}, \mathcal{A}}$,  which maps by
\[
\overline{\mathcal{M}}_{0, s}\times \overline{\mathcal{M}}_{0, l}\times \overline{\mathcal{M}}_{0, q}\to \overline{\mathcal{M}}_{0, q}
\stackrel{\rho_{\mathcal{A}_{0}(q), \mathcal{B}}}{\lra}\overline{\mathcal{M}}_{0, \mathcal{B}},
\]
where $\mathcal{B}= (r_1, \ldots , r_{q-1}, 1,1)$ and $\{r_1, \ldots , r_{q-1}\}= \{1, \ldots, n\}\setminus \{i_1, \ldots , i_{s-1}, j_1, \ldots , j_{s-1}\}$.
According to Theorem~\ref{divisorsFn},  the last reduction coincides with the blowing down $\mathcal{F}_{q} \to \bar{F}_{q}$.

\end{proof}



Altogether we obtain:

\begin{thm}\label{projred}
 The projection map $p_{\omega} : \mathcal{F}_{n}\to F_{\omega}$ for $\dim C_{\omega}=n=-1$  coincides with the reduction morphism  $\rho_{\mathcal{B}, \mathcal{A}} : \overline{\mathcal{M}}_{0, \mathcal{A}} \to  \overline{\mathcal{M}}_{0, \mathcal{B}}$ for $\mathcal{A} = (1, \ldots, 1)$ and $\mathcal{B}\in D_{\omega}\subset \mathcal{D}_{0, n}$ such that $C_{\omega}\subset \overline{D}_{\omega}$. 
\end{thm}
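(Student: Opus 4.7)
The plan is to combine the identifications of source and target of both maps with the same Hassett moduli spaces and the matching of exceptional loci established in the preceding proposition. Since $\dim C_\omega = n-1$, the linearisation $\mathcal{O}(\mathcal{L})$ for $\mathcal{L}\in C_\omega$ is typical, so Theorem~\ref{omegaweight} provides the isomorphism $F_\omega \cong \overline{\mathcal{M}}_{0,\mathcal{B}}$ for every $\mathcal{B}$ in the unique fine chamber $D_\omega$ with $C_\omega\subset\overline{D}_\omega$. Combined with $\mathcal{F}_n \cong \overline{\mathcal{M}}_{0,n}$ from Theorem~\ref{Keel-univ}, both $p_\omega$ and $\rho_{\mathcal{B},\mathcal{A}}$ become continuous surjections $\overline{\mathcal{M}}_{0,n}\to \overline{\mathcal{M}}_{0,\mathcal{B}}$ that need to be compared.

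First I would verify agreement on the open dense locus $\mathcal{M}_{0,n}$. Under the identifications, $\mathcal{M}_{0,n}\cong F_n$ embeds in $\mathcal{F}_n$ as $\tilde{F}_n = F_n$ by Theorem~\ref{univ-virt}, and in $F_\omega = \bigcup_{\sigma\in\omega} F_\sigma$ as the same $F_n$ (which is present since $C_\omega\subset \stackrel{\circ}{\Delta}_{n,2}$ forces the main stratum $W_n$ to contribute). The definition $p_\omega(y) = p_\sigma(y)$ for $y\in \tilde{F}_\sigma$ gives $p_\omega|_{F_n}=\mathrm{id}$, while $\rho_{\mathcal{B},\mathcal{A}}$ is biregular on $\mathcal{M}_{0,n}$ by Theorem~\ref{reduction}. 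Hence both maps restrict to the identity on this open dense subspace.

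Next I would invoke the preceding proposition, which identifies each boundary divisor $D_{s,l,q} = \mathcal{F}_s\times\mathcal{F}_l\times\mathcal{F}_q$ in $\mathcal{F}_n$ with the Hassett divisor $\mathcal{D}_{s,l,q} = \overline{\mathcal{M}}_{0,s}\times\overline{\mathcal{M}}_{0,l}\times\overline{\mathcal{M}}_{0,q}$, and shows that $p_\omega|_{D_{s,l,q}}$ and $\rho_{\mathcal{B},\mathcal{A}}|_{\mathcal{D}_{s,l,q}}$ coincide, both being the composition of the projection onto $\mathcal{F}_q$ with the blowdown $\mathcal{F}_q\to\bar{F}_q$ (interpreted on the Hassett side as a further reduction $\rho_{\mathcal{A}_0(q),\mathcal{B}}$). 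Since $\overline{\mathcal{M}}_{0,\mathcal{B}}$ is Hausdorff, two continuous maps from $\overline{\mathcal{M}}_{0,n}$ that agree on a dense subset must be equal, which concludes the argument.

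The main obstacle I expect is the inductive extension to deeper boundary strata: by the remark following Proposition~\ref{reductionboundarydivisors}, a reduction morphism restricted to a deep stratum factors as a chain of further reductions, and the matching projection $p_\sigma$ on $\tilde{F}_\sigma \cong \mathcal{F}_s\times\mathcal{F}_l\times\hat{F}_q$ from Proposition~\ref{virtual} carries an analogous recursive structure, so one must check that the two towers coincide level by level. Fortunately, separatedness of the target already forces the global equality once agreement on the open dense $\mathcal{M}_{0,n}$ is in place, so this combinatorial verification is a geometric refinement of the statement rather than a logical gap in the proof.
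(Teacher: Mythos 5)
Your proposal is correct and follows essentially the same route as the paper: the paper states this theorem as an ``Altogether we obtain'' summary of the preceding proposition (which matches the divisors $D_{s,l,q}=\mathcal{F}_s\times\mathcal{F}_l\times\mathcal{F}_q$ contracted by $p_\omega$ with the Hassett divisors contracted by $\rho_{\mathcal{A}_0,\mathcal{A}}$ and shows the restrictions agree) combined with the identifications $\mathcal{F}_n\cong\overline{\mathcal{M}}_{0,n}$ and $F_\omega\cong\overline{\mathcal{M}}_{0,\mathcal{B}}$ from Theorems~\ref{Keel-univ} and~\ref{omegaweight}. Your closing step --- that two continuous maps into a Hausdorff target agreeing on the dense open locus $\mathcal{M}_{0,n}\cong F_n$ must agree everywhere --- is a clean way of making explicit the final glue that the paper leaves implicit, and it correctly renders the level-by-level check on deeper strata a consistency verification rather than a logical necessity.
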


We mention that for the forgetting morphism it is easy to verify::

\begin{prop}\label{forg}
The projection $r_{q} : \mathcal{F}_{n} \to \mathcal{F}_{n,q}$ $1\leq q\leq n$ and the forgetting morphism $\Phi _{\mathcal{A}_{0}, \mathcal{A}_{0}^{'}} : \overline{\mathcal{M}}_{\mathcal{A}_{0}} \to \overline{\mathcal{M}}_{\mathcal{A}_{0}^{'}}$ coincide,  where $\mathcal{A}_{0}=(1, \ldots , 1)$ and $\mathcal{A}_{0}^{'} =\pi ^{q}(\mathcal{A}_{0})$ is given by forgetting $q$-th coordinate.
\end{prop}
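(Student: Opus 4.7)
The strategy is to identify all spaces in sight as compactifications of $\mathcal{M}_{0,n}$ and $\mathcal{M}_{0,n-1}$, verify that the two maps agree on the common open dense stratum, and extend by continuity.

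First, I would invoke Theorem~\ref{Keel-univ} to identify $\mathcal{F}_{n}\cong \overline{\mathcal{M}}_{0,n}$, and, by applying the same theorem with the index set $\{1,\ldots,n\}\setminus\{q\}$, $\mathcal{F}_{n-1,q}\cong \overline{\mathcal{M}}_{0,n-1}$. Under the identification~\eqref{CPAdiagon} the open dense subspace $F_{n}\subset \mathcal{F}_{n}$ is precisely $\mathcal{M}_{0,n}$, and the coordinates $(c_{ij}:c_{ij}^{'})$ for $3\leq i<j\leq n$ record the cross-ratios determined by the marked points $s_1,s_2,s_i,s_j$ on a smooth genus zero curve. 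Under the analogous identification, $F_{n-1,q}\subset \mathcal{F}_{n-1,q}$ is $\mathcal{M}_{0,n-1}$ and its defining coordinates are the cross-ratios involving only points $s_i$ with $i\neq q$.

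Second, by Proposition~\ref{restr} the projection $r_{q}$ is given by restriction to the coordinates $(c_{ij}:c_{ij}^{'})$ with $i,j\neq q$; equivalently, $r_{q}$ discards exactly those cross-ratios which involve the marked point $s_q$. On the open dense subset $\mathcal{M}_{0,n}\subset \overline{\mathcal{M}}_{0,n}$ this is literally the set-theoretic operation that erases $s_q$ from the smooth marked curve $(C,s_1,\ldots,s_n)$; since for $\mathcal{A}_{0}^{'}=\pi^{q}(\mathcal{A}_{0})$ one has $-2+a_{i_1}+\ldots+a_{i_{n-1}}=n-3>0$ for $n\geq 4$, the forgetting morphism $\Phi_{\mathcal{A}_{0},\mathcal{A}_{0}^{'}}$ is defined and, since $\mathcal{A}_{0}=(1,\ldots,1)$ retains the classical stability condition, no contraction of components occurs on $\mathcal{M}_{0,n}$. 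Hence $r_{q}$ and $\Phi_{\mathcal{A}_{0},\mathcal{A}_{0}^{'}}$ agree pointwise on $\mathcal{M}_{0,n}$.

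Third, extend by density: both $r_{q}$ and $\Phi_{\mathcal{A}_{0},\mathcal{A}_{0}^{'}}$ are continuous maps defined on all of $\overline{\mathcal{M}}_{0,n}\cong \mathcal{F}_{n}$, taking values in the Hausdorff space $\overline{\mathcal{M}}_{0,n-1}\cong \mathcal{F}_{n-1,q}$. Since they coincide on the dense subset $\mathcal{M}_{0,n}$, they coincide everywhere. The only point that requires care, and is the main obstacle, is the boundary compatibility: one must know that the wonderful compactification structure on $\mathcal{F}_{n}$ built in~\cite{BT2} identifies with the Deligne--Mumford boundary so that the continuous extension of the forgetful map across the boundary strata is unambiguous. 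This compatibility, however, is precisely the content of Theorem~\ref{Keel-univ}, so once that identification is invoked the statement reduces to the density argument above.
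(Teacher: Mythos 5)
Your argument is correct, and in fact the paper offers no proof of this proposition at all -- it is stated with the remark that it ``is easy to verify'' -- so your cross-ratio-plus-density argument is exactly the kind of verification intended: the coordinates $(c_{ij}:c_{ij}^{'})$ on $F_{n}$ are the cross-ratios of $s_1,s_2,s_i,s_j$, the restriction $r_{q}$ discards precisely those involving $s_q$, this agrees with $\Phi_{\mathcal{A}_0,\mathcal{A}_0^{'}}$ on the open dense locus $\mathcal{M}_{0,n}=F_{n}$ where no stabilization occurs, and two continuous maps into a Hausdorff target agreeing on a dense set agree everywhere. One small clarification: your closing paragraph overstates what is needed at the boundary -- once both maps are known to be globally defined and continuous (Proposition~\ref{restr} for $r_{q}$, Hassett's theorem for $\Phi$), the density argument needs only that the diffeomorphism of Theorem~\ref{Keel-univ} restricts on $F_{n}$ to the standard identification \eqref{CPAdiagon} with $\mathcal{M}_{0,n}$, not any separate stratum-by-stratum compatibility.
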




\section{Losev-Manin spaces and the model for  $G_{n,2}/T^n$}

As mentioned in the Introduction the Losev-Manin spaces $\overline{L}_{g, S}$ are the spaces of stable curves of genus $g$ endowed with a family of smooth painted by black or white  points  labeled by S satisfying that all white points are pairwise distinct and distinct from black ones, while black points may behave  arbitrarily. The number of white as well a as black points is assumed to be $\geq 2$.  In~\cite{M},  it is proved that $\overline{L}_{g, S}$ can be interpreted as the space $\overline{\mathcal{M}}_{g, \mathcal{A}}$ for appropriately chosen weights $\mathcal{A}$.
\begin{lem}
Let $S =W\cup  B$  be a painted set. Consider a weight data $\mathcal{A}$  on
$S$  satisfying the following conditions:
\begin{equation}\label{LMW}
 a_{s}=1\; \text{ for all}\;  s\in W,\; \text{and}\;  \sum _{t\in B} a_{t} \leq  1.
\end{equation}
Then a semistable $S$–pointed curve $(C, x_{s} | s\in  S)$  is painted stable if and only if
it is weighted stable with respect to $\mathcal{A}$.
\end{lem}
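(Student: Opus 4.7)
The plan is to unpack both the painted-stability condition of Losev–Manin and the weighted-stability condition of Hassett, and then to verify the equivalence componentwise: the coincidence condition translates directly, while the ampleness of $K_C + \sum a_s x_s$ matches the painted stability hypothesis on each irreducible component. Throughout, the hypothesis $a_s=1$ for $s\in W$ and $\sum_{t\in B}a_t\leq 1$ is what forces the two notions to agree.

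First I would handle the combinatorial coincidence condition. In the direction \emph{painted stable} $\Rightarrow$ \emph{weighted stable}, if $I\subset S$ is a set of pairwise coinciding points, then painted stability forces $|I\cap W|\leq 1$ (two whites cannot coincide and a white cannot coincide with a black), so
\[
\sum_{i\in I}a_i \;=\;\sum_{i\in I\cap W}a_i+\sum_{i\in I\cap B}a_i\;\leq\; 1+0 \quad\text{or}\quad 0+\sum_{t\in B}a_t\;\leq\; 1,
\]
the first case when $|I\cap W|=1$ (forcing $I\cap B=\emptyset$) and the second when $I\subset B$. Conversely, if $(C,\{x_s\})$ is weighted stable, then two coinciding whites would give total weight $2>1$, and a white coinciding with any black $t$ (with $a_t>0$) would give weight $>1$; hence whites are pairwise distinct and disjoint from blacks, while any coincidence among blacks is automatically allowed since $\sum_B a_t\leq 1$. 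This is exactly painted stability at the level of marked points.

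Next I would address the ampleness/stability condition on irreducible components. For a nodal genus $g$ curve, $K_C+\sum a_s x_s$ being $\pi$-relatively ample is equivalent, component by component, to the standard numerical criterion on each smooth rational component $E$: the number of nodes on $E$ plus the sum of weights of marked points on $E$ must exceed $2$ (and an analogous nontrivial condition on elliptic components). Under the painted convention $a_s=1$ for whites and $\sum_B a_t\leq 1$, the painted-stability requirement that each rational component carry at least two \emph{special} points counted with their painted multiplicity (white or black cluster) coincides with this inequality: a white contributes $1$, and all blacks sitting at a common point contribute at most $1$, which is precisely how a Losev–Manin painted cluster should count. Running this matching both ways gives the equivalence of the ampleness condition with the painted component-wise stability.

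The main obstacle I anticipate is the bookkeeping in the component-wise ampleness comparison, particularly on components where several black points coincide: one must verify that the sum of their $a_t$-weights, which is $\leq 1$ rather than exactly $1$, still reproduces the painted count in a way that neither strengthens nor weakens the inequality $\deg(K_C+\sum a_s x_s)|_E>0$. Once this numerical check is done uniformly (treating a painted black cluster as a single Cartier-like contribution of total weight $\sum_{t\in B\cap E}a_t\leq 1$), the two stability notions coincide and the lemma follows.
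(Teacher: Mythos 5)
The paper itself contains no proof of this lemma: it is quoted from Manin's paper \cite{M} on the stacks $\overline{L}_{g,S}$, where it is established by exactly the kind of direct verification you propose. So your overall strategy --- translate both stability notions into (a) a coincidence condition on the marked points and (b) a positivity condition on each irreducible component, then match them under the hypotheses $a_s=1$ for $s\in W$ and $\sum_{t\in B}a_t\le 1$ --- is the right one, and your treatment of (a) is correct (with the small remark that for a coinciding set $I$ with $|I|\ge 2$ one in fact has $I\subset B$, not merely $|I\cap W|\le 1$).

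Your treatment of (b) is too loose to stand as written. The numerical criterion is $\deg\bigl(K_C+\sum_s a_s x_s\bigr)\bigl|_E>0$ on every component $E$; for a genus-zero component this reads $k_E+\sum_{t\in B,\ x_t\in E}a_t>2$, where $k_E$ is the number of nodes plus white points on $E$. The painted stability condition of Losev--Manin is that every genus-zero component carry at least three special points, of which at least two are nodes or white. Your phrase ``at least two special points counted with their painted multiplicity'' does not reproduce this: a component with two nodes and no marked points has painted count $2$ but degree exactly $0$, so it must be rejected by both notions, and ``at least two'' would wrongly admit it. The correct case analysis is: the inequality holds iff $k_E\ge 3$, or $k_E=2$ and $E$ carries at least one black point; and it can never hold when $k_E\le 1$, precisely because $\sum_{t\in B}a_t\le 1$ caps the total black contribution. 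This last case is where the hypothesis on the black weights is genuinely used, and it is the content of the ``main obstacle'' you flag but do not resolve: the black weight carried by a component lies in $(0,1]$ when nonempty and equals $0$ otherwise, so it can tip the degree from $0$ to positive but never from $\le -1$ to positive. Writing out this three-case check (together with the routine genus $\ge 1$ cases) closes the gap and completes your argument.
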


We consider the  spaces $\bar{L}_{0, S}$ and by previous Lemma obtain that $\bar{L}_{0, S} = \overline{\mathcal{M}}_{0, \mathcal{A}}$, where $\mathcal{A}$ satisfies~\eqref{LMW}.

For the clearness of the exposition we  first note the following, see also~\cite{H}.

\begin{lem}\label{aifree}
 The space $\overline{\mathcal{M}}_{0, \mathcal{A}}$, where  $\mathcal{A} = (1,\ldots ,1, a_{k+1},\ldots , a_n)$ and it holds
\begin{equation}\label{losevmaninweight}
0 <a_i < 1\; \text{for}\;  k+1\leq i\leq n \;  \text{and} \; \sum _{i=k+1}^{n} a_i\leq 1
\end{equation}
 does not depend on the choice of $a_i$'s which satisfy  the given conditions.
\end{lem}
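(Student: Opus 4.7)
The approach is to apply Proposition~\ref{chambersweight}: the moduli space $\overline{\mathcal{M}}_{0,\mathcal{A}}$ is constant on each chamber of the coarse chamber decomposition of $\mathcal{D}_{0,n}$, cut out by the hyperplane arrangement $\mathcal{W}_c$ of Equation~\eqref{chcoarse}. Hence it suffices to show that any two weight vectors $\mathcal{A}=(1,\dots,1,a_{k+1},\dots,a_n)$ and $\mathcal{A}'=(1,\dots,1,a'_{k+1},\dots,a'_n)$ satisfying~\eqref{losevmaninweight} lie in the same (closed) coarse chamber, i.e.\ that no hyperplane $H_S:\sum_{j\in S}a_j=1$ with $3\le|S|\le n-2$ strictly separates them.

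The key step is a case analysis on how $S$ meets the partition $\{1,\dots,k\}\sqcup\{k+1,\dots,n\}$. (i) If $S\cap\{1,\dots,k\}\neq\emptyset$, pick $i_0$ in this intersection; since $a_{i_0}=1$ and $|S|\ge 3$ forces at least one further index $j_0\in S$ contributing $a_{j_0}>0$, one has $\sum_{j\in S}a_j>1$ for \emph{every} admissible weight vector, so both $\mathcal{A}$ and $\mathcal{A}'$ sit strictly on the $>1$ side of $H_S$. (ii) If $S$ is a proper subset of $\{k+1,\dots,n\}$, then $\sum_{j\in S}a_j<\sum_{i=k+1}^n a_i\le 1$, the first inequality being strict because $a_j>0$ for the missing indices; thus both lie strictly on the $<1$ side. (iii) The only remaining possibility is $S=\{k+1,\dots,n\}$, which can happen only when $3\le k\le n-3$; here $\sum_{j\in S}a_j=\sum_{i=k+1}^n a_i\le 1$, and both weight vectors lie in the closed half-space $\{\sum\le 1\}$, either on the wall or strictly below it.

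In each of the three cases the hyperplane $H_S$ does not strictly separate $\mathcal{A}$ from $\mathcal{A}'$, so they belong to the closure of a common chamber of $\mathcal{W}_c$. By Proposition~\ref{chambersweight} this already forces $\overline{\mathcal{M}}_{0,\mathcal{A}}\cong\overline{\mathcal{M}}_{0,\mathcal{A}'}$ whenever both vectors lie in the interior of the same chamber, which covers the generic case.

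The main obstacle is case~(iii) when $\sum_{i=k+1}^na_i=1$ exactly, so that $\mathcal{A}$ sits on the wall while $\mathcal{A}'$ may lie strictly inside. Proposition~\ref{chambersweight} on its own only asserts constancy on open chambers, so one must argue directly that nothing new happens on this particular wall. For this, I would invoke the definition of weighted stability: the conditions determining which $S$-pointed nodal curves are $\mathcal{A}$-stable are (a) the combinatorial coincidence condition $\sum_{i\in I}a_i\le 1$ on sets of collapsing marked points, and (b) the ampleness of $K_C+\sum a_is_i$, both of which depend only on the sign pattern of the affine functions $\sum_{j\in S}a_j-1$. The case analysis above shows that this sign pattern (with $\le 1$ in case~(iii)) is identical for all admissible $\mathcal{A}$ satisfying~\eqref{losevmaninweight}, so the corresponding moduli functors---and hence their representing stacks---coincide.
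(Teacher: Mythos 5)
Your proof is correct and its overall architecture matches the paper's: both observe that all admissible weight vectors lie in a common cell of the chamber decomposition, handle the generic case by Proposition~\ref{chambersweight}, and isolate the boundary case $\sum_{i=k+1}^{n}a_i=1$ as the only real issue. Where you diverge is in how that boundary case is settled. The paper follows Hassett's Proposition 5.3: it produces an explicit perturbed weight $\mathcal{B}=(1-\frac{\varepsilon}{n},\ldots,1-\frac{\varepsilon}{n},a_{k+1}-\frac{\varepsilon}{n},\ldots,a_n-\frac{\varepsilon}{n})$ lying in the open chamber and dominated componentwise by $\mathcal{A}$, invokes Theorem~\ref{reduction} to obtain a reduction morphism, and then cites Hassett's Corollary 4.7 to conclude that this morphism is an isomorphism. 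You instead argue directly that the stability conditions are combinatorially identical for all weights satisfying~\eqref{losevmaninweight}, so the moduli functors literally coincide. That is a legitimate and more self-contained route (Hassett's Corollary 4.7 is itself proved by exactly this kind of comparison), but as written it is slightly imprecise: the ampleness of $K_C+\sum a_is_i$ on a component with $m$ nodes and marking set $S$ is the strict inequality $\sum_{j\in S}a_j>2-m$, not a condition on the sign of $\sum_{j\in S}a_j-1$ alone. One must check that for $m=1$ the condition $\sum_{j\in S}a_j>1$ fails equally whether $\sum_{i=k+1}^{n}a_i$ equals $1$ or is strictly less than $1$, and similarly for $m=0$ and $m=2$; this check does go through (stability of a component turns out to depend only on $m$, on $|S\cap\{1,\ldots,k\}|$, and on whether $S$ meets $\{k+1,\ldots,n\}$), but it should be spelled out, together with the remark that relative ampleness is a fiberwise condition, to make the claim that the representing stacks coincide airtight.
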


\begin{proof}
Consider first the case when $a_{k+1}+\ldots +a_{n}<1$. Then $\mathcal{A}$ belongs to the open chamber for $\mathcal{D}_{0,n}$ defined by
\begin{equation}\label{chamber}
D_{0,n,k}: x_i + x_{l}>1\; \text{for}\; 1\leq i\leq k, 1\leq l\leq n, l\neq i, \;\; x_{k+1}+\ldots +x_{n}<1.
\end{equation}
 By Proposition~\ref{chambersweight}, the space 
 $\overline{\mathcal{M}}_{0, \mathcal{B}}$ does not depend on the choice $\mathcal{B}$ from this chamber. It follows that  $\overline{\mathcal{M}}_{0, \mathcal{A}}$ 
does not depend on the choice of $a_i$'s which satisfy~\eqref{losevmaninweight}.

Consider now the case when $a_{k+1}+\ldots +a_{n}=1$. Then $\mathcal{A}$ belongs to the chamber defined by $x_i+x_l>1$, $ 1\leq i\leq k, 1\leq l\leq n, l\neq i$ and   $ x_{k+1}+\ldots +x_{n}=1$. By Proposition~\ref{chambersweight}  the space $\overline{\mathcal{M}}_{0, \mathcal{A}}$ does not depend on choice of $a_i$'s such that $\mathcal{A}$ belongs to this chamber.

  We show that $\overline{\mathcal{M}}_{0, \mathcal{A}}$ is isomorphic to $\overline{\mathcal{M}}_{0, \mathcal{B}}$ for some 
weight vector $\mathcal{B}$ which belongs to a fine open chamber in $\mathcal{D}_{0,n}$. We follow the proof of Proposition 5.3 in~\cite{H}. Let $T_1$ , $T_{<1}$, $T_{>1}$ be the collection of all subsets $S\subset \{1,\ldots ,n\}$, $2\leq |S|\leq n-2$ such that $\sum _{i\in S}a_{i}=1$, $\sum _{i\in S}a_i<1$, $\sum _{i\in S}a_i>1$ respectively. More precisely,
\[
T_1 = \{S\}, \; \text{where}\; S=\{k+1, \ldots , n\},
\]
\[
T_{>1} = \{S | S=\{\underbrace{1,\ldots ,1}_{s}, a_{l_1},\ldots ,a_{l_r}\}, \; \text{where}\; 1\leq s\leq k, k+1\leq l_1,\ldots, l_r\leq n, 1\leq r\leq n-k\}
\]
\[
T_{<1}= \{ S | S=\{a_{l_1}, \ldots ,a_{l_r}\}, \; \text{where}\; k+1\leq l_1,\ldots, l_r\leq n, 2\leq s\leq n-k\}.
\]
 There exists $0<\varepsilon <1$ such that $\sum _{i\in S}a_i > 1+\varepsilon$ for any $S\in T_{>1}$, then $a_i>\varepsilon$ for $k+1\leq i \leq n$  and $k+a_{k+1}+\ldots a_k>2+\varepsilon$. Consider the weight vector
\[
\mathcal{B}= (1-\frac{\varepsilon}{n}, \ldots, 1-\frac{\varepsilon}{n}, a_{k+1}-\frac{\varepsilon}{n}, \ldots , a_{n}-\frac{\varepsilon}{n}).
\]
By Theorem~\ref{reduction} there exists birational reduction morphism $\rho _{\mathcal{B}, \mathcal{A}} : \overline{\mathcal{M}}_{0, \mathcal{A}}\to \overline{\mathcal{M}}_{0, \mathcal{B}}$.   This morphism is an isomorphism by making use of Corollary 4.7 from~\cite{H}.

We note that the vector $\mathcal{B}$ belongs to the chamber given by~\eqref{chamber}, which altogether proves the statement.  
\end{proof}

We denote by  $\overline{L}_{0, n, k}$ the Losev-Manin space for which $|W|=k$ and $|S|=n$, where $2\leq k\leq n-2$.
It follows that
\begin{cor}\label{LM-weight}
The space $\overline{L}_{0, n, k}$, $2\leq k<n$ coincides with the space $ \overline{\mathcal{M}}_{0, \mathcal{A}}$, where 
\begin{equation}\label{LML1}
\mathcal{A} = (\underbrace{1, \ldots , 1}_{k}, a_{k+1}, \ldots , a_n)
\end{equation}
for an arbitrary point $(a_{k+1},\ldots , a_{n})$ such that  $0 <a_i < 1\; \text{for}\;  k+1\leq i\leq n \;  \text{and} \; \sum _{i=k+1}^{n} a_i\leq 1$.
\end{cor}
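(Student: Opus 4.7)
The plan is to combine the two immediately preceding results: the painted-stability lemma, which identifies $\bar{L}_{0, S}$ with $\overline{\mathcal{M}}_{0, \mathcal{A}}$ whenever $\mathcal{A}$ satisfies \eqref{LMW}, and Lemma~\ref{aifree}, which asserts that within the family of weight vectors singled out by \eqref{losevmaninweight} the moduli space $\overline{\mathcal{M}}_{0, \mathcal{A}}$ is independent of the particular choice of the black weights.

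First I would verify that any vector $\mathcal{A}$ of the form prescribed in the statement lies in the intersection of the two hypotheses. Taking $W = \{1, \ldots, k\}$ (the white indices) and $B = \{k+1, \ldots, n\}$ (the black indices), the conditions $a_s = 1$ for $s\in W$ and $\sum_{t\in B} a_t \leq 1$ coincide exactly with \eqref{LMW}. Moreover, since the constraint $2\leq k\leq n-2$ forces $|B| = n-k \geq 2$ and each $a_i > 0$, the requirement $\sum_{i=k+1}^{n} a_i \leq 1$ automatically forces every individual $a_i < 1$, so the hypothesis \eqref{losevmaninweight} required by Lemma~\ref{aifree} is fulfilled as well.

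The painted-stability lemma then yields $\bar{L}_{0, n, k} \cong \overline{\mathcal{M}}_{0, \mathcal{A}}$ for at least one such vector $\mathcal{A}$, and Lemma~\ref{aifree} extends this identification to \emph{every} admissible $\mathcal{A}$, since all such weights determine one and the same moduli space. There is no genuine obstacle in the argument: the substantive content has already been packaged in Lemma~\ref{aifree}, whose proof handles both the generic case (by Proposition~\ref{chambersweight} applied to the open chamber defined in \eqref{chamber}) and the boundary case $\sum_{i>k} a_i = 1$ (by passing through an auxiliary weight $\mathcal{B}$ in a fine open chamber and promoting the birational reduction morphism $\rho_{\mathcal{B}, \mathcal{A}}$ to an isomorphism via Hassett's criterion). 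The corollary is therefore essentially a clean consolidation of these two facts, making the Losev--Manin description $\bar{L}_{0, n, k} = \overline{\mathcal{M}}_{0, \mathcal{A}}$ available for the entire family of admissible weights rather than one prescribed representative.
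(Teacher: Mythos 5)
Your proposal is correct and follows exactly the paper's own (implicit) derivation: the corollary is stated there as an immediate consequence of the painted-stability lemma identifying $\bar{L}_{0,S}$ with $\overline{\mathcal{M}}_{0,\mathcal{A}}$ for $\mathcal{A}$ satisfying \eqref{LMW}, combined with Lemma~\ref{aifree} on independence of the choice of black weights. Your additional check that $\sum_{i>k} a_i \leq 1$ together with $|B|\geq 2$ and positivity forces each $a_i<1$ is a harmless and correct verification of the compatibility of the hypotheses.
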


\subsection{Losev-Manin spaces  $\overline{L}_{0, n, k}$ and spaces of parameters of chambers}

We want to show that the spaces   $\overline{L}_{0, n, k}$ can be, using  GIT theory, related by birational morphisms   to the spaces of parameters of some chambers for the Grassmannian $G_{n,2}$. We first emphasize that this can not be done by applying Theorem~\ref{omegaweight} or Theorem~\ref{omegabirat} to    the representation  $\overline{L}_{0, n, k}$ as the moduli space $\overline{\mathcal{M}}_{0, \mathcal{A}}$, where $\mathcal{A} = (1, \ldots, 1, a_{k+1}, \ldots , a_n)$. 

\begin{lem}
The closure of  the chamber $D_{0,n,k} \in \mathcal{D}_{0,n}$ defined by~\eqref{chamber} has empty intersection with any chamber $C_{\omega}\in \mathcal{C}_{n,2}$.  
\end{lem}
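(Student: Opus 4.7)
The plan is to argue by contradiction: suppose there is a point $x\in\overline{D_{0,n,k}}\cap C_\omega$, and derive a pair of incompatible inequalities. First I would unpack what $x$ must satisfy. Since $C_\omega\in\mathcal{C}_{n,2}$ lies in $\stackrel{\circ}{\Delta}_{n,2}$ and avoids every hyperplane of the arrangement $\mathcal{R}_n$ in \eqref{hyparrang}, the point satisfies $x_1+\cdots+x_n=2$, $0<x_i<1$ for every $i$, and $\sum_{i\in S}x_i\neq 1$ for every $S\subset\{1,\dots,n\}$ with $2\le|S|\le n-2$. On the other side, $\overline{D_{0,n,k}}$ (closure taken in $\R^n$) is cut out by the non-strict forms of the inequalities defining the chamber: in particular $x_i+x_l\ge 1$ for all $1\le i\le k$ and all $l\ne i$. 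Since each such $\{i,l\}$ has cardinality $2$, which lies in the range $[2,n-2]$ (recall the Losev-Manin regime forces $n\ge 4$), combining with $x_i+x_l\neq 1$ upgrades this to a strict inequality: $x_i+x_l>1$ for every $1\le i\le k$ and $l\ne i$.

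The contradiction now falls out from a two-line computation, provided we can choose two disjoint index pairs $(i_1,l_1),(i_2,l_2)$ with $i_1,i_2\le k$. Since $k\ge 2$ and $n\ge 4$, take $(i_1,l_1)=(1,3)$ and $(i_2,l_2)=(2,4)$. The first gives $x_1+x_3>1$, which by $x_1+\cdots+x_n=2$ is equivalent to
\begin{equation*}
x_2+x_4+\cdots+x_n<1.
\end{equation*}
Since all coordinates are nonnegative, this forces $x_2+x_4<1$. But the second pair gives $x_2+x_4>1$, contradicting the previous line.

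The main point to be careful about is the interpretation of ``closure,'' since $D_{0,n,k}\subset\mathcal{D}_{0,n}=\{\sum t_i>2\}$ while every $C_\omega$ lies on the hyperplane $\sum x_i=2$, so the two families of chambers are disjoint simply for affine reasons. The substantive content of the lemma is that even the boundary facet $\overline{D_{0,n,k}}\cap\{\sum x_i=2\}\subset\partial\mathcal{D}_{0,n}$ never lies inside an open $C_\omega$; equivalently, every relatively interior point of that facet lies on some hyperplane of $\mathcal{R}_n$. Once the closure is read in $\R^n$, the rest is the short combinatorial argument above, and one sees that the obstruction is essentially built into the Losev-Manin assumption $2\le k\le n-2$ — outside that range one would not have two disjoint pairs $(i,l)$ with $i\le k$ to exploit.
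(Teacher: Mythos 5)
Your argument contains one genuinely false step: the claim that every $C_\omega\in\mathcal{C}_{n,2}$ ``avoids every hyperplane of the arrangement $\mathcal{R}_n$,'' i.e.\ that $\sum_{i\in S}x_i\neq 1$ for all $S$ with $2\le|S|\le n-2$. That holds only for chambers of maximal dimension $n-1$. The family $\mathcal{C}_{n,2}$ consists of \emph{all} chambers of the decomposition \eqref{chdef}, including lower-dimensional ones that lie \emph{on} hyperplanes of $\mathcal{R}_n$ — the paper's own example exhibits a $3$-dimensional chamber in $\mathcal{C}_{5,2}$ contained in $x_1+x_3=1$, and Theorems~\ref{omegabirat} and~\ref{LMChn-2} are precisely about such chambers. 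So your upgrade of $x_i+x_l\ge 1$ to $x_i+x_l>1$ is not justified, and your final contradiction, as written, leans on those strict inequalities. (That this step is not innocuous is visible at $n=4$: your argument would ``prove'' the lemma there, yet the barycenter $(\tfrac12,\dots,\tfrac12)$ is a $0$-dimensional chamber of $\stackrel{\circ}{\Delta}_{4,2}$ lying in $\overline{D_{0,4,2}}$.)

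The good news is that your computation survives with the non-strict inequalities alone, at the cost of using strict positivity of the coordinates (which you do have, since $C_\omega\subset\stackrel{\circ}{\Delta}_{n,2}$ gives $0<x_i<1$) and $n\ge 5$: from $x_1+x_3\ge 1$ and $\sum x_i=2$ one gets $x_2+x_4+\cdots+x_n\le 1$, which together with $x_2+x_4\ge 1$ forces $x_5+\cdots+x_n\le 0$, contradicting $x_5>0$. With that one-line repair your proof is correct, and it is in the same spirit as the paper's: the paper likewise first disposes of the open chamber for the affine reason $\sum x_i=2$ versus $\sum x_i>2$, and then derives incompatible two-term inequalities on the boundary (using pairs of the form $x_i+x_j$ with $i\le k$, $j\ge k+1$ rather than your pairs $(1,3),(2,4)$). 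Your repaired version is arguably the cleaner of the two; just delete the hyperplane-avoidance claim and replace ``nonnegative'' by ``strictly positive'' in the step that extracts $x_2+x_4<1$.
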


\begin{proof}
If $C_{\omega}\in \mathcal{C}_{n,2}$ then $C_{\omega}$ belongs to the hyperplane $x_1+\ldots +x_n=2$. It directly implies that $C_{\omega}\cap D_{0,n,k}=\emptyset$. A chamber $C_{\omega}$ can not intersect the boundary of $D_{0, n, k}$ since it would have to intersect a wall of $D_{0,n, k}$ which belongs to a hyperplane $x_i+x_j=1$, $1\leq i\leq k$, $k+1\leq j\leq n$, say the wall $x_1+x_{k+1}=1$. But for a point from this wall of $D_{0, n, k}$ it holds $x_2+x_{k+1}=1$ or $x_{2}+x_{k+2} >1$ and in both cases  it does not belong to $C_{\omega}$.
\end{proof}

Nevertheless,  we prove that $\overline{\mathcal{M}}_{0, \mathcal{A}}$ is birationally equivalent to a space of parameters $F_{\omega}$.
  Consider  the representation   of  $\overline{L}_{0, n, k}$ described by the moduli space whose weight vector is given by~\eqref{LML1} such that $\sum _{i=k+1}^{n}a_{i}<1$. 

 Let further  the weight vector $\mathcal{B}$ be defined by  
$\mathcal{B} = (1-\frac{\varepsilon}{n},\ldots ,1-\frac{\varepsilon}{n},a_{k+1},\ldots , a_{n})$, where $\varepsilon$ is  such that the  coordinates $b_i$, $1\leq i\leq n$ of the vector $\mathcal{B}$ satisfy
\begin{equation}\label{boundD}
0 < b_i <1\;  \text{for any}\;   1\leq i\leq k, \;\; 
 \sum _{i=1}^n b_i  =2,
\end{equation}
\begin{equation}\label{condtypical}
b_{i_1}+\ldots b_{i_l}  \neq 1
\end{equation}
for any  $2\leq i_1<\ldots i_l\leq n-1$.

Such $\varepsilon$ always exists. For example, assume that $(l-k)\sum _{s\in S} a_{s} + l\sum _{q\in Q} a_{q} \neq 2l-k$ for any $0\leq l\leq k$ and any division $\{k+1,\ldots n\} = P\cup Q$. The weights $a_i$'s can be chosen to satisfy these finite number of non-contradicting conditions.   Let $A_{0} = \sum _{i=k+1}^{n}a_{i}$ and  $a_0 = \frac{n(k-2+A_0)}{k}$ 	and put $\varepsilon = a_0$. It is easy to verify that the conditions~\eqref{boundD} are satisfied.


The  conditions~\eqref{boundD} and~\eqref{condtypical} immediately imply:
\begin{lem}
The weight  vector $\mathcal{B}$ belongs to $\partial \mathcal{D}_{0, n}$  and $\mathcal{B}$  is a typical linearisation.
\end{lem}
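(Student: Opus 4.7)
The plan is to check the two required conditions directly from the defining relations for $\mathcal{B}$ and the way $\varepsilon$ has been chosen. The proof is essentially a verification, so the main task is to translate condition~\eqref{condtypical} (together with the non-contradicting conditions imposed on the $a_i$'s) into the statement that no proper subsum of the $b_i$'s equals $1$. The only delicate point is organizing the case analysis so that every proper nonempty subset $I\subset\{1,\ldots,n\}$ is covered exactly once.

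For the membership $\mathcal{B}\in\partial\mathcal{D}_{0,n}$, recall that $\partial\mathcal{D}_{0,n}$ consists of those $(t_1,\ldots,t_n)$ with $0<t_i<1$ and $\sum t_i=2$. The inequalities $0<b_i<1$ for $1\leq i\leq k$ are part of the hypothesis~\eqref{boundD}, while for $k+1\leq i\leq n$ we have $b_i=a_i$ and $0<a_i<1$ by the defining conditions on the Losev-Manin weight~\eqref{LML1}. The sum equality $b_1+\cdots+b_n=2$ is the second half of~\eqref{boundD}; equivalently, a direct computation shows it is precisely the equation that forces $\varepsilon=a_0=n(k-2+A_0)/k$, which is exactly the choice made just before the lemma.

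For typicality, I need $\sum_{i\in I}b_i\neq 1$ for every $I\subsetneq\{1,\ldots,n\}$ with $1\leq|I|\leq n-1$. Since $\sum_i b_i=2$, this condition is stable under complementation, so it suffices to consider $2\leq |I|\leq n-2$ (the cases $|I|=1$ and $|I|=n-1$ being excluded immediately by $0<b_i<1$). For a subset $I$ write $l=|I\cap\{1,\ldots,k\}|$ and $S=I\cap\{k+1,\ldots,n\}$, $Q=\{k+1,\ldots,n\}\setminus S$. A short computation gives
\[
\sum_{i\in I}b_i \;=\; l\Bigl(1-\tfrac{\varepsilon}{n}\Bigr)+\sum_{s\in S}a_s,
\]
and substituting $\varepsilon/n=(k-2+A_0)/k$ and rearranging, $\sum_{i\in I}b_i=1$ becomes
\[
(l-k)\sum_{s\in S}a_s + l\sum_{q\in Q}a_q \;=\; 2l-k,
\]
which is the family of equalities that the $a_i$'s were chosen to avoid. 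Hence each such subsum is $\neq 1$, and $\mathcal{B}$ is typical.

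The main (very mild) obstacle is simply to confirm that the algebraic reformulation of $\sum_{i\in I}b_i=1$ matches exactly the non-contradicting relations $(l-k)\sum_{s\in S}a_s+l\sum_{q\in Q}a_q\neq 2l-k$ that were already excluded in the choice of $a_i$'s, and to note that the remaining boundary cases $|I|\in\{1,n-1\}$ are handled by the stronger inequalities $0<b_i<1$. Once this bookkeeping is in place, both halves of the lemma are immediate.
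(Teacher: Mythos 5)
Your verification is correct and follows the same route as the paper, which simply states that the conditions~\eqref{boundD} and~\eqref{condtypical} imposed on $\mathcal{B}$ immediately yield both membership in $\partial\mathcal{D}_{0,n}$ and typicality. Your only addition is to carry out explicitly the algebra showing that $\sum_{i\in I}b_i=1$ is equivalent to $(l-k)\sum_{s\in S}a_s+l\sum_{q\in Q}a_q=2l-k$ — precisely the relations excluded in the choice of the $a_i$'s — which the paper leaves implicit in the paragraph preceding the lemma; this computation checks out.
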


Then employing Theorem~\ref{typical} we deduce:

\begin{cor}
There exists $\delta >0$ such that 
\[
\overline{\mathcal{M}}_{0, \mathcal{B}_{\delta}} \cong \mathcal{G}(\mathcal{B})\; \text{where }\; \mathcal{B}_{\delta} = (b_1 +\delta, \ldots , b_{k}+\delta, a_{k+1},\ldots, a_{n})
\]
and $b_i+\delta < 1$ for $1\leq i\leq k$.
\end{cor}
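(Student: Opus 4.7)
The plan is to apply Theorem~\ref{typical} directly to the typical boundary linearisation $\mathcal{B}$ that was just constructed. The preceding lemma establishes exactly the two hypotheses required: $\mathcal{B} \in \partial \mathcal{D}_{0,n}$ (because of the conditions~\eqref{boundD}, in particular $\sum_i b_i = 2$) and $\mathcal{B}$ is typical (because of~\eqref{condtypical}, which rules out $b_{i_1}+\ldots+b_{i_l}=1$ for any proper subset of indices). Theorem~\ref{typical} then supplies an open neighborhood $U$ of $\mathcal{B}$ in $\R^n$ such that $U \cap \mathcal{D}_{0,n}$ lies in a single open fine chamber, and for every $\mathcal{A} \in U \cap \mathcal{D}_{0,n}$ there is a natural isomorphism $\overline{\mathcal{M}}_{0,\mathcal{A}} \cong \mathcal{G}(\mathcal{B})$.

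The remaining task is to produce $\delta > 0$ so that $\mathcal{B}_{\delta}$ lies in $U \cap \mathcal{D}_{0,n}$. First I would check that $\mathcal{B}_{\delta} \in \mathcal{D}_{0,n}$ for all sufficiently small $\delta > 0$: the last $n-k$ coordinates $a_{k+1},\ldots,a_n$ already satisfy $0 < a_j < 1$ by construction; each of the first $k$ coordinates $b_i+\delta$ stays in $(0,1)$ because $b_i < 1$ by~\eqref{boundD}, so for $\delta < \min_{1\le i\le k}(1-b_i)$ we get $0 < b_i+\delta < 1$ as claimed in the statement; and finally the total weight is $\sum_i b_i + k\delta = 2 + k\delta > 2$, which is the genus zero admissibility condition $\sum a_i > 2$.

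Second, I would simply shrink $\delta$ further so that the straight-line perturbation $\mathcal{B}_{\delta} = \mathcal{B} + \delta(1,\ldots,1,0,\ldots,0)$ stays inside the open neighborhood $U$. Taking $\delta$ smaller than both $\min_{1\le i\le k}(1-b_i)$ and a radius for which the ball of that radius around $\mathcal{B}$ is contained in $U$ finishes the argument: $\mathcal{B}_{\delta} \in U\cap \mathcal{D}_{0,n}$, hence $\overline{\mathcal{M}}_{0,\mathcal{B}_{\delta}} \cong \mathcal{G}(\mathcal{B})$ by Theorem~\ref{typical}.

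There is essentially no obstacle here; the statement is a bookkeeping corollary that unpacks Theorem~\ref{typical} for the specific weight vector $\mathcal{B}$ constructed just above. The only mild subtlety worth spelling out is that the perturbation direction $(1,\ldots,1,0,\ldots,0)$ moves $\mathcal{B}$ strictly into the interior $\mathcal{D}_{0,n}$ (since it increases the total weight past $2$), which is why a \emph{single-sided} $\delta$ suffices rather than the two-sided neighborhood one would need if $\mathcal{B}$ sat on a wall of $\mathcal{D}_{0,n}$ from which $U$ extended in both directions.
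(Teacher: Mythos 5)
Your proposal is correct and follows exactly the route the paper intends: the corollary is stated as an immediate consequence of Theorem~\ref{typical} applied to the typical boundary linearisation $\mathcal{B}$, with $\delta$ chosen small enough that $\mathcal{B}_{\delta}$ lands in the neighborhood $U\cap\mathcal{D}_{0,n}$ supplied by that theorem. The paper gives no further detail, so your verification that $\mathcal{B}_{\delta}\in\mathcal{D}_{0,n}$ (each $b_i+\delta<1$ and total weight $2+k\delta>2$) is simply the bookkeeping the authors left implicit.
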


\begin{rem}
The  weight $\mathcal{B}$ belongs to $\stackrel{\circ}{\Delta}_{n,2}$, that is to the interior of the admissible polytope $P_{k+1,\ldots, n}^{\leq 1}$ defined by the condition $x_{k+1}+\ldots +x_n\leq 1$. In addition, being typical,  $\mathcal{B}$  belongs  to  some chamber  $C_{\omega}$ in the interior of  $P_{1,\ldots, k}^{\leq 1}$ such that  $\dim C_{\omega} = n-1$. 

\end{rem}
Therefore, Theorem~\ref{omegagit}  and Theorem~\ref{omegaweight} give:
\begin{cor} 
There exists a chamber $C_{\omega}\subset \stackrel{\circ}{P}_{k+1, \ldots ,n}^{\leq 1}\subset \stackrel{\circ}{\Delta} _{n,2}$ of maximal dimension $n-1$ such that
\[
\overline{\mathcal{M}}_{0, \mathcal{B}_{\delta}}\cong F_{\omega},
\]
where $F_{\omega}$ is the space of parameters from $C_{\omega}$.
\end{cor}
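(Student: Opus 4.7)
The plan is to exhibit the chamber $C_\omega$ directly from the typical linearisation $\mathcal{B}$ and then chain together the identifications already established in the preceding corollaries and theorems.

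First I would verify that $\mathcal{B}$ sits inside the admissible polytope $\stackrel{\circ}{P}_{k+1,\ldots,n}^{\leq 1}$. The condition $\sum_{i=k+1}^n a_i<1$ on the original weights together with $\sum b_i=2$ and the choice of $\varepsilon$ forces $b_{k+1}+\ldots+b_n=a_{k+1}+\ldots+a_n<1$ and $0<b_i<1$ for all $i$, so $\mathcal{B}$ lies in the interior of the hypersimplex $\Delta_{n,2}$ and in the open half-space defining $P_{k+1,\ldots,n}^{\leq 1}$. Next, the typicality condition \eqref{condtypical} says $\mathcal{B}$ avoids every hyperplane of the form $\sum_{i\in S} x_i=1$ for $2\leq |S|\leq n-2$; combined with the fact that $\mathcal{B}$ lies off the coordinate hyperplanes $x_i=0$ and $x_i=1$, this means $\mathcal{B}$ avoids every hyperplane in the arrangement $\mathcal{R}_n$ of \eqref{hyparrang}. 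Hence $\mathcal{B}$ belongs to a unique top-dimensional cell $C_\omega$ of the chamber decomposition $L(\mathcal{R}_{n,2})$, and by Theorem~\ref{dimensionn-1} we get $\dim C_\omega=n-1$ with $C_\omega\subset \stackrel{\circ}{P}_{k+1,\ldots,n}^{\leq 1}$.

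Having located $C_\omega$, the isomorphism $\overline{\mathcal{M}}_{0,\mathcal{B}_\delta}\cong F_\omega$ is obtained by composing two known identifications. By the previous corollary, $\overline{\mathcal{M}}_{0,\mathcal{B}_\delta}\cong \mathcal{G}(\mathcal{B})$, where the right-hand side is the GIT quotient corresponding to the typical linearisation $\mathcal{B}$. On the other hand, Theorem~\ref{omegagit} gives $F_\omega\cong \mathcal{G}(\mathcal{L})$ for any $\mathcal{L}\in C_\omega$, and since $\mathcal{B}\in C_\omega$ we may take $\mathcal{L}=\mathcal{B}$. Chaining the two yields the desired homeomorphism. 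Equivalently, one may appeal directly to Theorem~\ref{omegaweight}: since $\dim C_\omega=n-1$, there is a neighborhood $U$ of $\mathcal{B}$ all of whose points in $\mathcal{D}_{0,n}$ give moduli spaces isomorphic to $F_\omega$, and for sufficiently small $\delta$ the perturbed weight $\mathcal{B}_\delta$ lies in $U\cap \mathcal{D}_{0,n}$.

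The only delicate point will be justifying that the small shift $\delta$ used to perturb $\mathcal{B}$ into the admissible domain $\mathcal{D}_{0,n}$ (Hassett requires $\sum a_i>2$) can be chosen compatibly with the neighborhood in Theorem~\ref{omegaweight} and with keeping $b_i+\delta<1$; but this is a matter of choosing $\delta$ smaller than the three independent positive quantities arising from the typicality gap, the Hassett neighborhood, and the bound $1-\max_i b_i$, so it poses no genuine obstacle.
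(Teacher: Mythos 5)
Your proposal is correct and follows essentially the same route as the paper: the paper locates $\mathcal{B}$ in a maximal-dimensional chamber of $\stackrel{\circ}{P}_{k+1,\ldots,n}^{\leq 1}$ via its typicality (the preceding Remark) and then chains the previous corollary $\overline{\mathcal{M}}_{0,\mathcal{B}_{\delta}}\cong\mathcal{G}(\mathcal{B})$ with Theorem~\ref{omegagit}/Theorem~\ref{omegaweight}, exactly as you do. Your write-up merely makes explicit the hyperplane-arrangement argument and the compatibility of $\delta$, which the paper leaves implicit.
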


Note also that any vector which belongs to a chamber of maximal dimension in $\stackrel{\circ}{P}_{k+1, \ldots, n}^{\leq 1}$    satisfies conditions~\eqref{boundD} and~\eqref{condtypical}.  Then the above can be applied  to any such vector due to Lemma~\ref{aifree}.

Altogether, reduction Theorem~\ref{reduction} finally  implies:

\begin{thm}\label{LMChn-1}
For a Losev-Manin space $\overline{L}_{0, n, k}$ and any  chamber $C_{\omega}$ which belongs to $\stackrel{\circ}{P}_{k+1,\ldots ,n}^{\leq 1}$ and  $\dim C_{\omega}=n-1$,  there exists   birational morphism 
\[
\overline{L}_{0, n, k} \to F_{\omega},
\]
given by the birational reduction morphism $ \overline{\mathcal{M}}_{0, \mathcal{A}} \to \overline{\mathcal{M}}_{0, \mathcal{B}_{\delta}}$, where
$\mathcal{A} = (\underbrace{1, \ldots , 1}_{k}, a_{k+1}, \ldots , a_{n})$ and  $\mathcal{B}_{\delta} = (b_1 +\delta, \ldots , b_{k}+\delta, a_{k+1},\ldots, a_{n})$ for $\mathcal{B} = (b_1,\ldots , b_{k}, a_{k+1},\ldots , a_{n})\in C_{\omega}$.
\end{thm}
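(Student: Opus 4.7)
The plan is to assemble the statement by chaining together identifications already established in the section, so the work is primarily bookkeeping of weight vectors and verification that the reduction morphism of Theorem~\ref{reduction} is applicable. First, I would invoke Corollary~\ref{LM-weight} to rewrite the left-hand side as $\overline{L}_{0,n,k}\cong \overline{\mathcal{M}}_{0,\mathcal{A}}$ with $\mathcal{A}=(1,\ldots,1,a_{k+1},\ldots,a_n)$ satisfying $\sum_{i>k}a_i\le 1$, using Lemma~\ref{aifree} to see that the choice of the $a_i$ in this range is immaterial.

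Next, I would pick an arbitrary $\mathcal{B}=(b_1,\ldots,b_k,a_{k+1},\ldots,a_n)\in C_\omega$. The hypothesis $C_\omega\subset\stackrel{\circ}{P}_{k+1,\ldots,n}^{\le 1}$ with $\dim C_\omega=n-1$ and the remark preceding the theorem guarantee that $\mathcal{B}\in\partial\mathcal{D}_{0,n}$ is typical; in particular $0<b_i<1$ for $1\le i\le k$. I would then apply the corollary stating that for sufficiently small $\delta>0$ the shifted vector $\mathcal{B}_\delta=(b_1+\delta,\ldots,b_k+\delta,a_{k+1},\ldots,a_n)$ lies in the neighborhood of $\mathcal{B}$ given by Theorem~\ref{typical}, whence $\overline{\mathcal{M}}_{0,\mathcal{B}_\delta}\cong\mathcal{G}(\mathcal{B})\cong F_\omega$ by Theorem~\ref{omegagit} and Theorem~\ref{omegaweight}. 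Here I need $\delta$ small enough to simultaneously ensure $b_i+\delta<1$ for all $i\le k$ and that $\mathcal{B}_\delta$ stays inside the fine open chamber of $\mathcal{D}_{0,n}$ containing $\mathcal{B}$; both are clearly achievable since the relevant constraints are strict inequalities.

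Finally, I would apply Theorem~\ref{reduction} to compare $\mathcal{A}$ and $\mathcal{B}_\delta$: for $i\le k$ we have $b_i+\delta\le 1=a_i$, and for $i>k$ the coordinates coincide, so $(\mathcal{B}_\delta)_i\le \mathcal{A}_i$ holds coordinatewise. Theorem~\ref{reduction} then produces the birational reduction morphism
\[
\rho_{\mathcal{A},\mathcal{B}_\delta}\colon \overline{\mathcal{M}}_{0,\mathcal{A}}\longrightarrow \overline{\mathcal{M}}_{0,\mathcal{B}_\delta}.
\]
Composing with the isomorphisms from steps one and two yields the desired birational morphism $\overline{L}_{0,n,k}\to F_\omega$, and by construction it coincides with $\rho_{\mathcal{A},\mathcal{B}_\delta}$ under the identifications.

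The main obstacle is purely the combinatorial bookkeeping for the choice of $\delta$: I must keep $\mathcal{B}_\delta$ inside the open fine chamber (so that the identification $\overline{\mathcal{M}}_{0,\mathcal{B}_\delta}\cong F_\omega$ of Theorem~\ref{omegaweight} applies) while ensuring all $b_i+\delta\le 1$ (so that Theorem~\ref{reduction} applies). Both requirements are open conditions at $\mathcal{B}$, hence any sufficiently small positive $\delta$ works. Once this is verified, the statement reduces mechanically to a composition of previously-proved isomorphisms and the reduction morphism, with birationality transported through the isomorphisms.
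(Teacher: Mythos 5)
Your argument is correct and follows essentially the same route as the paper: identify $\overline{L}_{0,n,k}$ with $\overline{\mathcal{M}}_{0,\mathcal{A}}$ via Corollary~\ref{LM-weight} and Lemma~\ref{aifree}, observe that any $\mathcal{B}$ in a chamber of dimension $n-1$ inside $\stackrel{\circ}{P}^{\leq 1}_{k+1,\ldots,n}$ is a typical linearisation in $\partial\mathcal{D}_{0,n}$, perturb to $\mathcal{B}_\delta$ so that Theorem~\ref{typical}, Theorem~\ref{omegagit} and Theorem~\ref{omegaweight} give $\overline{\mathcal{M}}_{0,\mathcal{B}_\delta}\cong\mathcal{G}(\mathcal{B})\cong F_\omega$, and conclude with the reduction morphism of Theorem~\ref{reduction}. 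The only cosmetic difference is that the paper first exhibits an explicit $\mathcal{B}=(1-\tfrac{\varepsilon}{n},\ldots,1-\tfrac{\varepsilon}{n},a_{k+1},\ldots,a_n)$ before noting that an arbitrary point of such a chamber works, whereas you start directly from an arbitrary $\mathcal{B}\in C_\omega$.
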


Let us now consider a vector  $\mathcal{B} = (b_1, \ldots , b_n)$ which satisfies~\eqref{boundD}, but which does not satisfy~\eqref{condtypical}. Then  $\mathcal{B}$ is an  atypical linearisation.  Let $b_{s}+\ldots +b_n<1$ for some $3\leq s\leq n-2$.  Then for each coarse chamber in $\mathcal{D}_{0,n}$ which contains $\mathcal{B}$ in its closure, we can find $\delta _{1}, \ldots , \delta _{s-1}$ such that  $\tilde{B}_{\delta} = (b_1+\delta _1, \ldots ,
b_{s-1} +\delta _{s-1}, b_1, \ldots , b_n)$ belongs to that chamber. Now, consider $\mathcal{A} = (1,\ldots ,1, b_s, \ldots , b_n)$ and the Losev-Manin space $\overline{L}_{0, n, k}$. 
Then Theorem~\ref{atypical} and Theorem~\ref{reduction} imply that there exists a birational morphism $\overline{L}_{0, n, k} \to \overline{\mathcal{M}}(0, \mathcal{B}_{\delta})\to \mathcal{G}(\mathcal{B})$ given as the composition of birational morphisms. Altogether we deduce:

\begin{thm}\label{LMChn-2}								
For any chamber $C_{\omega}$ in $\stackrel{\circ}{\Delta}_{n,2}$ such that  $\dim C_{\omega}\leq n-2$ there exists a Losev-Manin space $\overline{L}_{0, n, k}$ such that there exists birational morphism
\[
\overline{L}_{0, n, k} \to F_{\omega}.
\]
\end{thm}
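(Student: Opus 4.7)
The plan is to follow the informal proof sketched just before the statement, realizing the birational morphism $\overline{L}_{0,n,k}\to F_\omega$ as the composition of a birational reduction morphism of Hassett spaces (Theorem~\ref{reduction}) with the birational GIT morphism of Theorem~\ref{atypical}, combined with the identification $F_\omega\cong\mathcal{G}(\mathcal{B})$ from Theorem~\ref{omegagit}.

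First, I pick a point $\mathcal{B}=(b_1,\ldots,b_n)$ in the relative interior of $C_\omega$. Because $C_\omega\subset\stackrel{\circ}{\Delta}_{n,2}$ one has $0<b_i<1$ and $\sum_i b_i=2$, so $\mathcal{B}\in\partial\mathcal{D}_{0,n}$. Since $\dim C_\omega\le n-2$, the chamber $C_\omega$ lies in at least one hyperplane of the arrangement~\eqref{hyparrang}, hence there exists $S\subset\{1,\ldots,n\}$ with $2\le|S|\le n-2$ and $\sum_{i\in S}b_i=1$; in particular $\mathcal{B}$ is an atypical linearisation. After relabeling I may assume $S=\{k+1,\ldots,n\}$ with $k=n-|S|$, so $2\le k\le n-2$. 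I then define the Losev-Manin weight $\mathcal{A}=(1,\ldots,1,b_{k+1},\ldots,b_n)$; since $0<b_i<1$ for $i>k$ and $\sum_{i>k}b_i=1$, Corollary~\ref{LM-weight} identifies $\overline{\mathcal{M}}_{0,\mathcal{A}}\cong\overline{L}_{0,n,k}$.

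Next, I push $\mathcal{B}$ into the interior of $\mathcal{D}_{0,n}$ by setting
\[
\tilde{B}_{\delta}=(b_1+\delta_1,\ldots,b_k+\delta_k,b_{k+1},\ldots,b_n),
\]
with $\delta_i>0$ small enough that $b_i+\delta_i<1$ and that $\tilde{B}_{\delta}$ lies in the relative interior of a single coarse chamber of $\mathcal{D}_{0,n}$ whose closure contains $\mathcal{B}$. Such a $\delta$ exists because the coarse arrangement is locally finite and $\mathcal{B}$ lies in the closure of at least one of its chambers. Theorem~\ref{atypical} applied to the atypical $\mathcal{B}$ and to the weight data $\tilde{B}_{\delta}$ then yields a natural birational morphism $\overline{\mathcal{M}}_{0,\tilde{B}_{\delta}}\to\mathcal{G}(\mathcal{B})$, while Theorem~\ref{omegagit} gives $\mathcal{G}(\mathcal{B})\cong F_\omega$ since $\mathcal{B}\in C_\omega$. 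On the other hand $\tilde{B}_{\delta,i}\le a_i$ for every $i$ (with equality for $i>k$), so Theorem~\ref{reduction} supplies the birational reduction morphism $\overline{L}_{0,n,k}=\overline{\mathcal{M}}_{0,\mathcal{A}}\to\overline{\mathcal{M}}_{0,\tilde{B}_{\delta}}$. Composing the two gives the desired birational morphism $\overline{L}_{0,n,k}\to F_\omega$.

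The main obstacle I anticipate is the perturbation step: one must verify that for every subset $T$ with $3\le|T|\le n-3$ for which $\sum_{i\in T}b_i\ne1$ a sufficiently small $\delta$ keeps $\tilde{B}_{\delta}$ on the same side of the wall $\sum_{i\in T}x_i=1$, while on those walls of the coarse arrangement that do pass through $\mathcal{B}$ the sign of each $\delta_i$ has to be coordinated so that $\tilde{B}_{\delta}$ does land in a genuine coarse chamber having $\mathcal{B}$ in its closure. Because the conclusion does not depend on which adjacent coarse chamber is chosen, the freedom in picking $\delta$ is exactly what is needed to make Theorem~\ref{atypical} applicable, and this is the only delicate point in the argument.
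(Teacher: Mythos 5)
Your proposal is correct and follows essentially the same route as the paper: the authors likewise take an atypical linearisation $\mathcal{B}\in C_{\omega}$, perturb it to $\tilde{B}_{\delta}$ in an adjacent coarse chamber, and realize the morphism as the composition of the reduction morphism $\overline{L}_{0,n,k}=\overline{\mathcal{M}}_{0,\mathcal{A}}\to\overline{\mathcal{M}}_{0,\tilde{B}_{\delta}}$ from Theorem~\ref{reduction} with the birational morphism $\overline{\mathcal{M}}_{0,\tilde{B}_{\delta}}\to\mathcal{G}(\mathcal{B})\cong F_{\omega}$ from Theorem~\ref{atypical} and Theorem~\ref{omegagit}. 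Your discussion of the perturbation step is in fact slightly more careful than the paper's.
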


\subsection{Losev-Manin spaces and wonderful compactification}

Let $\mathcal{A}_{0} = (1, \ldots , 1)$ and $\mathcal{A} = (1,\ldots , 1, a_{k+1}, \ldots , a_n)$. The natural birational reduction morphism $\rho _{\mathcal{A}_{0}, \mathcal{A}}: \overline{\mathcal{M}} _{0, \mathcal{A}_{0}}\to \overline{\mathcal{M}}_{0, \mathcal{A}}$ gives the birational  morphism $\rho _{n, k} : \mathcal{F}_{n}\to L_{0, n, k}$. By Theorem the divisors of maximal dimension for this morphism are given by $D_{I, J}  \cong \mathcal{F}_{r+1}\times \mathcal{F}_{n-r+1}$, where $I \subset \{k+1,\ldots ,n\}$ and $|I|=r$ for $3\leq r\leq n-k$,  while $J=\{1, \ldots, n\}\setminus I$. The morphism $\rho _{n, k}$ factorizes on $D_{I, J}$ as
\begin{equation}\label{divred}
\mathcal{F}_{r+1}\times \mathcal{F}_{n-r+1} \stackrel{\pi}{\to} \mathcal{F}_{n-r+1} \stackrel{\rho _{n-r+1, k}}{\to}   L_{0, n-r+1, k}.
\end{equation}
For $r=n-k$ we obtain the following contraction:
\[
\mathcal{F}_{n-k+1}\times \mathcal{F}_{k+1} \stackrel{\pi}{\to} \mathcal{F}_{k+1}\stackrel{\rho _{k+1, k}}{\to}   L_{0, k+1, k}, \]
where the second morphism is an isomorphism.

It follows from~\eqref{divred} that,  for a fixed $k$,  the morphism $\rho _{n,k}$   can be described inductively on $n$.

We consider now the case  $k=2$.  For low $n$ we have:

\begin{lem}\label{lem:LM}
The Losev -Manin space $L_{0, n,2}$ is  
\begin{enumerate}
\item for $n=5$  isomorphic to $\mathcal{F}_{5}/\mathcal{F}_{4}$;
\item for $n=6$  obtained from $\mathcal{F}_{6}$ by contracting  the divisor $\mathcal{F}_{5}$ to a point and four divisors isomorphic to $\mathcal{F}_{4}\times \mathcal{F}_{4}$ to $\mathcal{F}_{4}$;
\item for $n=7$ obtained from $\mathcal{F}_{7}$ by contracting the divisor $\mathcal{F}_{6}$ to a point, each of ten  divisors $\mathcal{F}_{5}\times \mathcal{F}_{4}$ to $\mathcal{F}_{4}$ and each of five divisors $\mathcal{F}_{4}\times \mathcal{F}_{5}$ to $\mathcal{F}_{5}/\mathcal{F}_{4}$.
\end{enumerate}
\end{lem}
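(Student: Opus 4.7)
The plan is to apply the birational reduction morphism $\rho_{n,2}:\mathcal{F}_n\to L_{0,n,2}$ that arises from $\rho_{\mathcal{A}_0,\mathcal{A}}$ with $\mathcal{A}_0=(1,\ldots,1)$ and $\mathcal{A}=(1,1,a_3,\ldots,a_n)$, $\sum_{i\geq 3}a_i\leq 1$, and to determine $L_{0,n,2}$ for $n=5,6,7$ by enumerating its contracted divisors via the factorization~\eqref{divred}. By Proposition~\ref{reductionboundarydivisors}, the contracted divisors are exactly the boundary divisors $D_{I,J}$ with $|I|=r>2$ and $a_I\leq 1$; since the weights at positions $1$ and $2$ equal $1$, the admissible subsets are precisely $I\subset\{3,\ldots,n\}$ with $3\leq r\leq n-2$, producing $\binom{n-2}{r}$ divisors at each level $r$.

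The proof is inductive on $n$ with two base cases. First, $L_{0,3,2}$ is a point, since $\mathcal{F}_3$ itself is a point. Second, $L_{0,4,2}\cong\mathcal{F}_4$: for $n=4$ the admissible range $3\leq r\leq n-2=2$ is empty, so no divisors are contracted and $\rho_{4,2}$ is an isomorphism.

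With the base cases in hand, each clause follows by reading off~\eqref{divred}. For $n=5$ only $r=3$ contributes; the unique divisor is $\mathcal{F}_4\times\mathcal{F}_3\cong\mathcal{F}_4$, and it is collapsed to the point $L_{0,3,2}$, so $L_{0,5,2}\cong\mathcal{F}_5/\mathcal{F}_4$. For $n=6$ one has $r\in\{3,4\}$: the unique $r=4$ divisor is $\mathcal{F}_5\times\mathcal{F}_3\cong\mathcal{F}_5$ collapsed to a point, while the $\binom{4}{3}=4$ divisors of the $r=3$ family have the form $\mathcal{F}_4\times\mathcal{F}_4$ and each collapses by the second projection to $L_{0,4,2}\cong\mathcal{F}_4$. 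For $n=7$ one has $r\in\{3,4,5\}$: the unique $r=5$ divisor $\mathcal{F}_6\times\mathcal{F}_3\cong\mathcal{F}_6$ collapses to a point, the $\binom{5}{4}=5$ divisors of the $r=4$ family are each $\mathcal{F}_5\times\mathcal{F}_4$ collapsed to $L_{0,4,2}\cong\mathcal{F}_4$, and the $\binom{5}{3}=10$ divisors of the $r=3$ family are each $\mathcal{F}_4\times\mathcal{F}_5$ collapsed via $\rho_{5,2}$ to $L_{0,5,2}\cong\mathcal{F}_5/\mathcal{F}_4$ (the case established above).

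The main effort is purely combinatorial bookkeeping: tracking which of the two factors in $\mathcal{F}_{r+1}\times\mathcal{F}_{n-r+1}$ is the $I$-side collapsed by $\pi$ and which is the $J$-side further reduced by $\rho_{n-r+1,2}$, and verifying that divisors outside the range $3\leq r\leq n-2$ contribute nothing to the exceptional locus (for $r=2$ one has $D_{I,J}\cong\mathrm{pt}\times\mathcal{F}_{n-1}$ on which $\pi$ is already an isomorphism, so such divisors map birationally rather than being contracted). Once this bookkeeping is pinned down, each clause of the lemma is a direct application of~\eqref{divred}.
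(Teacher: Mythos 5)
Your proof follows essentially the same route as the paper: enumerate the boundary divisors $D_{I,J}$ contracted by $\rho_{n,2}$ using Proposition~\ref{reductionboundarydivisors}, note that the weights $1$ at positions $1,2$ force $I\subset\{3,\dots,n\}$ with $3\le r\le n-2$, and read off the contraction targets from the factorization~\eqref{divred} together with the base cases $L_{0,3,2}=\mathrm{pt}$ and $L_{0,4,2}\cong\mathcal{F}_4$. Parts (1) and (2) agree with the paper exactly.

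In part (3), however, your multiplicities differ from the lemma as printed, and yours are the ones forced by the setup: for $n=7$ the $r=4$ family is indexed by $I\subset\{3,\dots,7\}$ with $|I|=4$, giving $\binom{5}{4}=5$ divisors $\mathcal{F}_{r+1}\times\mathcal{F}_{n-r+1}=\mathcal{F}_5\times\mathcal{F}_4$ contracted to $L_{0,4,2}\cong\mathcal{F}_4$, while the $r=3$ family gives $\binom{5}{3}=10$ divisors $\mathcal{F}_4\times\mathcal{F}_5$ contracted to $L_{0,5,2}\cong\mathcal{F}_5/\mathcal{F}_4$. The lemma (and the paper's proof of it) states ten divisors contracting to $\mathcal{F}_4$ and five contracting to $\mathcal{F}_5/\mathcal{F}_4$, i.e.\ the two counts are interchanged relative to what \eqref{divred} and the binomial coefficients yield; the same interchange appears in the paper's subsequent count of divisors of the wonderful compactification of $\bar{F}_7$, where there are ten subvarieties $\hat{F}_I$ (three-element subsets of $\{3,\dots,7\}$) but only five distinct intersections $\hat{F}_I\cap\hat{F}_J\cong\bar{F}_4$ (indexed by four-element subsets). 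So your bookkeeping is the correct one, and it in fact flags an apparent transposition of the multiplicities in the published statement rather than a gap in your argument.
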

\begin{proof}
The weight vector which corresponds to $L_{0, 5, 2}$ is $\mathcal{A}=(1,1,a_3,a_4,a_5)$, where $a_3+a4+a_5 <1$. The reduction birational morphism has one divisor $D= \mathcal{F}_{4}\times \mathcal{F}_{3}$ which contracts to $\mathcal{F}_{3}$,  that is to a point

The reduction birational morphisms $\rho _{6,2} : \mathcal{F}_{6} \to L_{0, 6, 2}$ has five divisors given as follows: for $r=4$ it is $\mathcal{F}_{5}\times \mathcal{F}_{3}$ and for $r=3$,  there are four divisors corresponding to the three element subsets of $\{3,4,5,6\}$ each isomorphic to $\mathcal{F}_{4}\times \mathcal{F}_{4}$. The reduction morphism contract $\mathcal{F}_{5}$ to a point and each of $\mathcal{F}_{4}\times \mathcal{F}_{4}$ to $\mathcal{F}_{4}$.

For $n=7$ the divisors  are as follows: for $r=5$,   it is $\mathcal{F}_{6}$ and it contracts to $\mathcal{F}_{3}$, that is to a point;  for $r=4$,  there are ten divisors $\mathcal{F}_{5}\times \mathcal{F}_{4}$ and they contract to $\mathcal{F}_{4}\stackrel{\rho _{4,2}}{\to}L_{0, 4,2}$ that is to $\mathcal{F}_{4}$; for $r=3$,  there are five divisors $\mathcal{F}_{4}\times \mathcal{F}_{5}$ and they contract to $\mathcal{F}_{5}\stackrel{\rho _{5, 3}}{\to}L_{0, 5, 3}$ that  is to $\mathcal{F}_{5}/\mathcal{F}_{4}$.
\end{proof}

Recall~\cite{BT2} once again   that  the universal space of parameters $\mathcal{F}_{n}$ can be obtained as the wonderful compactification of $\bar{F}_{n}$ with the building set 
$\mathcal{G}_{n}$ which consists of all  possible nonempty intersections of the  $\hat{F}_{I} =\bar{F}_{n}\cap \{(c_{ij}: c_{ij}^{'}) =  (c_{ik}:c_{ik}^{'})  =(c_{jk}:c_{jk}^{'})=(1:1)\}$, where $I = \{i,j,k\} \in \{ I\subset \{1, \ldots , n\}, |I|=3\}$ and $n\geq 5$. This compactification is obtained as iterated blow ups along the varieties from  this building set.  We  list the divisors in this compactification for small $n$.

\begin{lem}
The divisors in the wonderful compactification of $\bar{F}_{n}$ to $\mathcal{F}_{n}$ are:
\begin{enumerate}
\item $\C P^1$  for $n=5$;
\item $\mathcal{F}_{5}$ and four copies of $\mathcal{F}_{4}\times \mathcal{F}_{4}$ for $n=6$;
\item $\mathcal{F}_{6}$, ten copies of $\mathcal{F}_{5}\times \mathcal{F}_{4}$ and five copies $\mathcal{F}_{4}\times \mathcal{F}_{5}$ for $n=7$.
\end{enumerate}
\end{lem}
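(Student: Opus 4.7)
The plan is to enumerate the building set $\mathcal{G}_n$ explicitly for each $n\in\{5,6,7\}$ and then carry out the wonderful compactification blow-up step by step, identifying each exceptional divisor. The first structural observation is that any non-empty intersection $\hat{F}_{I_1}\cap\cdots\cap\hat{F}_{I_k}$ depends only on the union $J=I_1\cup\cdots\cup I_k\subset\{3,\ldots,n\}$, and equals the subvariety $\hat{F}_J$ obtained by setting $(c_{ij}:c_{ij}')=(1:1)$ for every pair $\{i,j\}\subset J$; this propagation follows from the Pl\"ucker-type relations $c_{ij}c_{ik}'c_{jk}=c_{ij}'c_{ik}c_{jk}'$. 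A short calculation shows $\hat{F}_J\cong\bar{F}_{n-|J|+3}$ of complex dimension $n-|J|$, so $\mathcal{G}_n$ consists of one element $\hat{F}_J$ for each subset $J\subset\{3,\ldots,n\}$ with $|J|\geq 3$, ordered by reverse inclusion. In particular, for $n=5$ the only element is the point $\hat{F}_{\{3,4,5\}}$, and blowing it up in the $2$-fold $\bar{F}_5$ gives an exceptional $\mathbb{P}(T_p\bar{F}_5)\cong\C P^1$.

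For $n=6$ the set $\mathcal{G}_6$ consists of four curves $\hat{F}_J\cong\C P^1$ with $|J|=3$ together with the common point $\hat{F}_{\{3,4,5,6\}}$, and all four curves meet at that point. Blowing up the point first produces an exceptional $\C P^2$; the proper transforms of the four curves meet this $\C P^2$ at four distinct tangent directions, and blowing them up turns $\C P^2$ into $\C P^2$ blown up at four points, which by Kapranov's construction is $\overline{\mathcal{M}}_{0,5}=\mathcal{F}_5$. Each curve's exceptional divisor is a $\mathbb{P}^1$-bundle over $\C P^1$; verifying triviality of the normal bundle gives $\mathcal{F}_4\times\mathcal{F}_4$.

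For $n=7$ the building set $\mathcal{G}_7$ contains $10$ surfaces $\hat{F}_I\cong\bar{F}_5$ with $|I|=3$, $5$ curves $\hat{F}_J\cong\C P^1$ with $|J|=4$, and the single point $\hat{F}_{\{3,\ldots,7\}}$. Each curve is contained in the $4$ surfaces $\hat{F}_I$ with $I\subset J$ and passes through the point, while each surface contains two of the curves and the point. Blowing up in the order point, curves, surfaces, the three resulting classes of exceptional divisors become: from the point, an initial $\C P^3$ successively modified by blowing up $5$ points (where the curves meet it) and then $10$ lines (where the proper transforms of the surfaces meet it), which by Kapranov's iterated description of $\overline{\mathcal{M}}_{0,6}$ equals $\mathcal{F}_6$; from each curve, a $\mathbb{P}^2$-bundle over $\C P^1$ whose $\mathbb{P}^2$ fiber is then blown up at the four points coming from the four containing surfaces, yielding $\mathcal{F}_5$ as the new fiber and hence the divisor $\mathcal{F}_4\times\mathcal{F}_5$; from each surface, a $\mathbb{P}^1$-bundle over the proper transform of $\bar{F}_5$, which has become $\mathcal{F}_5$ after the earlier blow-ups of the point and curves, giving $\mathcal{F}_5\times\mathcal{F}_4$.

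The main obstacle is to justify that each exceptional divisor is a genuine product $\mathcal{F}_s\times\mathcal{F}_t$ rather than a twisted bundle; this uses the nested/factorizable structure of the building set in the De Concini--Procesi framework, which guarantees that the normal directions along each blown-up locus split compatibly with the inclusions in $\mathcal{G}_n$. Consistency checks are provided by the uniform real dimension $2n-8$ of every listed divisor and by the match with Kapranov's iterated blow-up description of $\overline{\mathcal{M}}_{0,n}$.
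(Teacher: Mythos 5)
Your argument follows essentially the same route as the paper's: enumerate the building set $\mathcal{G}_n$, note that every intersection of the $\hat{F}_I$'s is again a subvariety of the same type indexed by the union of the triples, and read off the exceptional divisors of the iterated blow-up locus by locus. You actually justify more than the paper does — in particular the identification of the divisor over the deepest point $S$ with $\mathcal{F}_{n-1}$ via Kapranov's model (the $\C P^{n-4}$ of normal directions at $S$ acquires the tangent directions of the curves as marked points and the tangent planes of the surfaces as the lines through pairs of them, using your incidence counts: each curve lies in four surfaces, each surface contains two curves), where the paper simply asserts the outcome; your attribution of the ten divisors to the surfaces $\hat{F}_I$ and the five to the curves $\hat{F}_I\cap\hat{F}_J$ is also the internally consistent one.

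Two corrections are needed. First, your general formula $\hat{F}_J\cong\bar{F}_{n-|J|+3}$ of dimension $n-|J|$ is off by two: imposing $(c_{ij}:c_{ij}')=(1:1)$ for all pairs in $J$ amounts to merging the $|J|$ indices into one, so $\hat{F}_J\cong\bar{F}_{n-|J|+1}$ of complex dimension $n-|J|-2$ (for $n=7$, $|J|=3$ this is the surface $\bar{F}_5$, not the ambient $\bar{F}_7$). Your subsequent case-by-case identifications all use the correct values, so the slip does not propagate, but as written the formula contradicts them. Second, the claim that an intersection depends only on the union rests on the implication that $(c_{ac}:c_{ac}')=(c_{ad}:c_{ad}')=(1:1)$ forces $(c_{cd}:c_{cd}')=(1:1)$, which propagates only along \emph{overlapping} triples; this is automatic for $n\leq 7$, where any two $3$-subsets of $\{3,\ldots,n\}$ meet, but is not the literal statement for larger $n$. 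The remaining soft point — triviality of the $\mathbb{P}^1$- and $\mathbb{P}^2$-bundles over the blown-up loci, so that the divisors are genuine products — is treated no more rigorously in the paper than in your appeal to the De Concini–Procesi splitting.
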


\begin{proof}
The universal space of parameters $\mathcal{F}_{5}$ is the blow-up of $\bar{F}_{5}$ at the point $((1:1), (1:1), (1:1))$ so the divisor is $\C P^{1}$.
The space $\mathcal{F}_{6}$ is the  iterated blow-up of $\bar{F}_{6}$  along  the subvarieties $\hat{F}_{345}, \hat{F}_{346}, \hat{F}_{356}, \hat{F}_{456}$, which are isomorphic to $\C P^{1} \cong \mathcal{F}_{4}$  and the point $S = (1:1)^{6}$. The point $S$ is the intersection point of all these subvarieties and each two of them as well. Therefore, there are four divisors isomorphic to $\mathcal{F}_{4}\times \mathcal{F}_{4}$.

For $n=7$ the subvarieties $\hat{F}_{I} \subset \bar{F}_{7}$ are $\hat{F}_{345}, \hat{F}_{346}, \hat{F}_{347}, \hat{F}_{356}, \hat{F}_{357}, \hat{F}_{367}, \hat{F}_{456}, \hat{F}_{457}$, $\hat{F}_{567}$ and each of them is isomorphic to $\bar{F}_{5}$. The intersection of any of these  subvarieties is  the point $S=(1:1)^ {10}$ or it is  isomorphic to $\bar{F}_{4}$. More precisely, $\hat{F}_{I}\cap \hat{F}_{J}\cong \bar{F}_{4}$ for $|I \cap J|=2$, while $\hat{F}_{I}\cap \hat{F}_{J}=S$ for $|I\cap J|=1$.  There are ten choices for the first case and  the five choices for the second case.  As the result of the iterated blow up procedure we obtain the following divisors: $\mathcal{F}_{6}$ at the point $S$, ten copies of $\mathcal{F}_{5}\times \mathcal{F}_{4}$ along  subvarieties $\hat{F}_{I}\cap \hat{F}_{J} \cong \bar{F}_{4}$ and five copies of $\mathcal{F}_{4}\times \mathcal{F}_{5}$ 
along subvarieties $\hat{F}_{I}$. 
\end{proof}

In an analogous way it can be proved the generalization for an arbitrary $n$:

\begin{thm}\label{divisorsFn}
The divisors for the reduction morphism $\rho _{n, 2} : \mathcal{F}_{n} \to L_{0, n, 2}$ coincide with the divisors in the wonderful compactification   of $\bar{F}_{n}$ which produces $\mathcal{F}_{n}$. 
Thus, the  Losev-Manin $L_{0, n,2}$ is isomorphic to $\bar{F}_{n}\subset (\C P^{1})^{N}$, $N = \binom{n-2}{2}$.
\end{thm}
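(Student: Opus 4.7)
The plan is to prove the theorem by induction on $n$, showing that the birational reduction morphism $\rho_{n,2}\colon \mathcal{F}_n\to L_{0,n,2}$ coincides with the blow-down $\mathcal{F}_n\to \bar{F}_n$ inverse to the wonderful compactification. Since both maps are isomorphisms on the dense open stratum $F_n\cong \mathcal{M}_{0,n}$, it suffices to identify the exceptional divisors of the two morphisms and to check that the restrictions of the two maps to each such divisor agree.

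First, both families of divisors are indexed by subsets $I\subset \{3,\ldots,n\}$ with $|I|\ge 3$. Applying Proposition~\ref{reductionboundarydivisors} to $\rho_{\mathcal{A}_0,\mathcal{A}}$ with $\mathcal{A}_0=(1,\ldots,1)$ and $\mathcal{A}=(1,1,a_3,\ldots,a_n)$, $\sum_{i\ge 3} a_i<1$, gives the contracted boundary divisors $D_{I,I^c}\cong \mathcal{F}_{|I|+1}\times \mathcal{F}_{n-|I|+1}$ for exactly such $I$, since the condition $\sum_{i\in I} a_i\le 1$ forces $I\cap\{1,2\}=\emptyset$. On the wonderful compactification side, the Pl\"ucker-like relations defining $\bar{F}_n$ imply that $\hat{F}_I:=\bigcap_{\{i,j,k\}\subset I}\hat{F}_{\{i,j,k\}}\cong \bar{F}_{n-|I|+1}$: once $(c_{ij}:c_{ij}')=(1:1)$ for all $i,j\in I$, the relations force $(c_{ik}:c_{ik}')=(c_{jk}:c_{jk}')$ for $i,j\in I$ and $k\notin I$, so the common value $(c_k:c_k')$ together with the remaining free coordinates satisfies precisely the defining Pl\"ucker-like relations of $\bar{F}_{n-|I|+1}$. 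Thus the building set $\mathcal{G}_n$ is indexed by the same subsets $I$.

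Second, identify each exceptional divisor. For $I$ with $|I|=r$, the final exceptional component in $\mathcal{F}_n$ over $\hat{F}_I$ is isomorphic to $\mathcal{F}_{r+1}\times \mathcal{F}_{n-r+1}$: the second factor is the wonderful compactification of $\hat{F}_I\cong \bar{F}_{n-r+1}$ with respect to the induced building set $\mathcal{G}_n\cap \hat{F}_I$, which by the inductive hypothesis equals $\mathcal{F}_{n-r+1}$; the first factor comes from the projectivized normal bundle $\mathbb{P}(N_{\hat{F}_I/\bar{F}_n})\cong \C P^{r-2}$, iteratively modified by the preceding blow-ups of the smaller building set pieces $\hat{F}_J\subsetneq \hat{F}_I$ (i.e.\ $J\supsetneq I$), which by Kapranov's realization of $\overline{\mathcal{M}}_{0,r+1}$ as iterated blow-ups of $\C P^{r-2}$ yields $\mathcal{F}_{r+1}$. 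The small cases $n=5,6,7$ are verified in Lemma~\ref{lem:LM} and the subsequent lemma.

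Third, match the contractions and conclude. The blow-down $\mathcal{F}_n\to \bar{F}_n$ restricted to $\mathcal{F}_{r+1}\times \mathcal{F}_{n-r+1}$ contracts the first factor to a point and blows down the second factor to $\bar{F}_{n-r+1}$, while the Hassett reduction, by the factorization~\eqref{divred}, projects away the first factor and applies $\rho_{n-r+1,2}$ to the second factor, whose target $L_{0,n-r+1,2}$ equals $\bar{F}_{n-r+1}$ by the inductive hypothesis. Hence the contractions agree divisor by divisor, and since both morphisms are isomorphisms on the dense open $F_n$ with $\mathcal{F}_n$ smooth and connected, we obtain $L_{0,n,2}\cong \bar{F}_n$. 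The main obstacle is Step~2: verifying that the iterated blow-ups along the smaller building set pieces inside $\hat{F}_I$ produce $\mathcal{F}_{r+1}$ from $\C P^{r-2}$ via exactly Kapranov's blow-up centers. This requires a careful normal-bundle and strict-transform computation, identifying the traces $\hat{F}_J\cap\mathbb{P}(N_{\hat{F}_I/\bar{F}_n})$ with the Kapranov blow-up centers in $\C P^{r-2}$, which is the technical heart of the argument.
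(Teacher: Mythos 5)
Your proposal is correct and follows essentially the same route as the paper: identify the contracted boundary divisors of $\rho_{n,2}$ via Proposition~\ref{reductionboundarydivisors} (subsets $I\subset\{3,\ldots,n\}$, $|I|\ge 3$, since $a_1=a_2=1$ forces $I\cap\{1,2\}=\emptyset$), identify the exceptional divisors $\mathcal{F}_{r+1}\times\mathcal{F}_{n-r+1}$ of the wonderful compactification of $\bar{F}_{n}$ over the same index set, and match the contractions via the factorization~\eqref{divred}. The only difference is one of packaging: the paper verifies the two divisor lists explicitly for $n=5,6,7$ in the two preceding lemmas and asserts the general case ``in an analogous way,'' whereas you organize the general case as an induction and honestly isolate the one step (that the iterated blow-ups turn the projectivized normal bundle $\C P^{r-2}$ into $\mathcal{F}_{r+1}$ \`a la Kapranov) that the paper's sketch likewise leaves implicit.
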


\begin{cor}\label{barFn}
 The space $\bar{F}_{n}$ is isomorphic to $\overline{\mathcal{M}}_{0, \mathcal{A}}$ for $\mathcal{A} = (1, 1, a_3, \ldots , a_n)$, where $\sum _{i=1}^{n}a_i \leq 1$.
\end{cor}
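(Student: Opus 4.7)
The proof is essentially a direct combination of two results already established just above, so the plan is more about identifying the right chain of isomorphisms than producing any new argument. I would proceed as follows.

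First, I would invoke Corollary~\ref{LM-weight} in the case $k=2$: this identifies the Losev--Manin space $\overline{L}_{0, n, 2}$ with the Hassett moduli space $\overline{\mathcal{M}}_{0, \mathcal{A}}$ for any weight vector of the form $\mathcal{A}=(1, 1, a_3, \ldots, a_n)$ with $0 < a_i < 1$ and $\sum_{i=3}^{n} a_i \leq 1$ (the summation condition in the statement, literally written as $\sum_{i=1}^{n} a_i \leq 1$, must be read as $\sum_{i=3}^{n} a_i \leq 1$, since $a_1 = a_2 = 1$). The independence from the particular choice of $a_3, \ldots, a_n$ satisfying these inequalities was supplied by Lemma~\ref{aifree}, so no additional work is needed here.

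Second, I would apply Theorem~\ref{divisorsFn}, which was just proved through the comparison of the divisors of the reduction morphism $\rho_{n,2} \colon \mathcal{F}_n \to \overline{L}_{0, n, 2}$ with the divisors arising in the iterated blow-up construction of $\mathcal{F}_n$ from $\bar{F}_n$ with building set $\mathcal{G}_n$. That theorem yields the isomorphism $\overline{L}_{0, n, 2} \cong \bar{F}_n \subset (\C P^1)^N$ with $N = \binom{n-2}{2}$.

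Composing the two identifications gives the chain
\[
\bar{F}_n \;\cong\; \overline{L}_{0, n, 2} \;\cong\; \overline{\mathcal{M}}_{0, \mathcal{A}},
\]
which is exactly the content of the corollary. There is no genuine obstacle: the corollary is a packaging statement that records how the permutohedral realization of $\overline{L}_{0, n, 2}$ via $\bar{F}_n$ interacts with Hassett's weight formalism. If anything, the only point worth flagging in the written proof is the small typographical correction to the range of summation.
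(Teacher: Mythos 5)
Your proof is correct and follows exactly the route the paper intends: the corollary is stated without a separate argument precisely because it is the composition of Theorem~\ref{divisorsFn} (which gives $\overline{L}_{0,n,2}\cong \bar{F}_{n}$) with Corollary~\ref{LM-weight} in the case $k=2$ (which gives $\overline{L}_{0,n,2}\cong \overline{\mathcal{M}}_{0,\mathcal{A}}$ for $\mathcal{A}=(1,1,a_3,\ldots,a_n)$). Your reading of the summation condition as $\sum_{i=3}^{n}a_i\leq 1$ is also the correct interpretation of the statement.
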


We show that the  manifold   $\bar{F}_{5}$ is a subspace of $G_{n,2}/T^n$, that is $L_{0,5,2}$ can be realized as a  subspace in $G_{5,2}/T^5$.

\begin{prop}
The space $\bar{F}_{5}$  is isomorphic to $F_{\omega}$ for the chamber $C_{\omega} =\{ (t_1,\ldots , t_5)\in \Delta _{5,2} | t_1+t_2 >1, \; t_i+t_j<1, \; \{i,j\}\neq \{1,2\}\}$. 
\end{prop}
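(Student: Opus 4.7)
The plan is to combine Theorem~\ref{LMChn-1}, Proposition~\ref{reductionboundarydivisors}, and Theorem~\ref{divisorsFn}, showing that the birational morphism produced by Theorem~\ref{LMChn-1} is actually an isomorphism in this particular case. First I would verify that $C_\omega$ is a non-empty open chamber of maximal dimension $n-1=4$ in $\stackrel{\circ}{\Delta}_{5,2}$ and that it is contained in $\stackrel{\circ}{P}_{3,4,5}^{\leq 1}$: the point $(2/3,2/3,2/9,2/9,2/9)$ lies in $C_\omega$, and the defining inequality $t_1+t_2>1$ is equivalent on the slice $\sum_i t_i=2$ to $t_3+t_4+t_5<1$.

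By Theorem~\ref{LMChn-1} applied with $n=5$ and $k=2$ there is a birational morphism $\overline{L}_{0,5,2}\to F_\omega$ given by the reduction morphism $\rho=\rho_{\mathcal{A},\mathcal{B}_\delta}\colon \overline{\mathcal{M}}_{0,\mathcal{A}}\to \overline{\mathcal{M}}_{0,\mathcal{B}_\delta}$ for $\mathcal{A}=(1,1,a_3,a_4,a_5)$ and $\mathcal{B}_\delta=(b_1+\delta,b_2+\delta,a_3,a_4,a_5)$ with $\mathcal{B}=(b_1,b_2,a_3,a_4,a_5)\in C_\omega$. Since Theorem~\ref{divisorsFn} gives $\overline{L}_{0,5,2}\cong \bar{F}_5$, it suffices to show that $\rho$ is an isomorphism.

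By Proposition~\ref{reductionboundarydivisors}, $\rho$ contracts precisely those boundary divisors $D_{I,J}$ of the source with $|I|>2$ and $b_I:=\sum_{i\in I}(\mathcal{B}_\delta)_i\leq 1$; for $n=5$ only $3$-element subsets $I\subset\{1,\ldots,5\}$ are admissible. The subset $I=\{3,4,5\}$ satisfies $b_I=a_3+a_4+a_5<1$, but the bubble configuration with $\{3,4,5\}$ on one component fails $\mathcal{A}$-stability since the ampleness of $K_C+\sum a_i s_i$ on that component demands $(a_3+a_4+a_5)-1>0$, which does not hold; thus $D_{\{3,4,5\},\{1,2\}}$ does not appear in the source moduli space. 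For every other $3$-element $I$, the chamber inequalities force $b_I>1$: if $\{1,2\}\subset I$ then $b_I\geq b_1+b_2+2\delta>1$, while for $I=\{1,j,k\}$ with $\{j,k\}\subset \{3,4,5\}$ and complementary element $\ell$, the identity $\sum_i(\mathcal{B}_\delta)_i=2+2\delta$ combined with $b_2+a_\ell<1$ yields $b_I=2+2\delta-(b_2+\delta)-a_\ell>1$, and symmetrically for $I=\{2,j,k\}$. Hence $\rho$ contracts no divisor, and being a proper birational morphism between smooth projective surfaces, it is an isomorphism; concatenating these identifications gives $\bar{F}_5\cong \overline{L}_{0,5,2}\cong F_\omega$.

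The main obstacle is the combinatorial case analysis in the third paragraph, and in particular the observation that the single subset $I$ which formally satisfies $b_I\leq 1$ is harmless because the corresponding configuration is excluded from the source by Hassett-instability rather than by failure of the $b_I$-criterion.
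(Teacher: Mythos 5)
Your proof is correct, but it takes a genuinely different route from the paper's. The paper argues on the Grassmannian side: it lists the admissible polytopes whose interiors contain $C_{\omega}$ (the hypersimplex itself, the nine polytopes $t_i+t_j\leq 1$ with $\{i,j\}\neq\{1,2\}$, the polytope $t_3+t_4+t_5\leq 1$, and the intersections of pairs of disjoint half-spaces of this type), reads off the corresponding spaces of parameters of strata (one copy of $F_5$, nine copies of $F_4$, and finitely many points), and verifies directly that this union is exactly the compactification $\bar{F}_{5}$ of $F_5$ in $(\C P^{1})^{3}$; the link to $L_{0,5,2}$ is then made through Lemma~\ref{lem:LM} and the observation that $\bar{F}_{5}$ is $\mathcal{F}_{5}$ with the exceptional $\C P^{1}=\mathcal{F}_{4}$ blown down. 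You instead stay entirely on the moduli-theoretic side: you take the birational morphism $\overline{L}_{0,5,2}\to F_{\omega}$ furnished by Theorem~\ref{LMChn-1} (together with the isomorphism $F_{\omega}\cong\overline{\mathcal{M}}_{0,\mathcal{B}_{\delta}}$ coming from typicality of the linearisation on a maximal-dimensional chamber) and upgrade it to an isomorphism by showing, via Proposition~\ref{reductionboundarydivisors}, that the reduction morphism $\rho_{\mathcal{A},\mathcal{B}_{\delta}}$ contracts no divisor: the only triple $I$ with $b_I\leq 1$ is $\{3,4,5\}$, whose divisor $D_{\{3,4,5\},\{1,2\}}$ is absent from the source because the corresponding component violates $\mathcal{A}$-ampleness, while every other triple has $b_I>1$ by the chamber inequalities; a birational morphism of smooth projective surfaces with empty exceptional locus is an isomorphism, and Theorem~\ref{divisorsFn} supplies $\overline{L}_{0,5,2}\cong\bar{F}_{5}$. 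Your computations check out (including the inequality $b_I=2+\delta-b_2-a_{\ell}>1$ for $I=\{1,j,k\}$). What your route buys is a cleaner, more conceptual argument that avoids enumerating strata and isolated points and makes visible exactly where the maximality of the chamber and the constraint $a_3+a_4+a_5\leq 1$ enter; what the paper's route buys is an explicit description of $F_{\omega}$ as a union of quotients of strata inside $G_{5,2}/T^5$, which is precisely the form needed for the subsequent Corollary~\ref{F5}.
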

\begin{proof}
Let $\mathcal{A} = (1,1,\frac{5}{18}, \frac{5}{18},\frac{5}{18})$, then $L_{0,5,2} \cong \mathcal{M}_{0, \mathcal{A}}$. Note that $\mathcal{A}$ belongs to the chamber in $\R ^{5}$ given by $x_1+x_i>1, x_2+x_i>1, x_3+x_4+x_5<1$.  Let $\mathcal{L}=(\frac{3}{6}, \frac{4}{6}, \frac{5}{18}, \frac{5}{18}, \frac{5}{18})$, then $\mathcal{L}\in \stackrel{\circ}{\Delta}_{n,2}$ and $\mathcal{L}$ is a typical linearisation. It belongs to the chamber $C_{\omega}$ in $\stackrel{\circ}{\Delta}_{n,2}$ given by $t_1+t_2>1$ and $t_i+t_j<1$ for the others. It follows that 
\[
F_{\omega} \cong \mathcal{G}(\mathcal{L}) \cong \mathcal{M}_{0, \mathcal{B}}\;\; \text{for}\;\; \mathcal{B}\in \mathcal{U}(\mathcal{L})\cap \mathcal{D}_{0,5}.
\]
On the other hand, the space of parameters $F_{\omega}$ is $\bar{F}_{5}$. We can see it easily as follows: the admissible polytopes which form the chamber $C_{\omega}$ are the hypersimplex $\Delta _{n,2}$ whose space of parameters is $F_5$  and the polytopes   $t_i+t_j<1$ for $\{i, j\}\neq \{1,2\}$ whose spaces of parameters are $F_4$,  then the polytope $t_4+t_5+t_6< 1$ and the polytopes given by the intersection of $t_i +t_j<1, t_k+t_l<1$, $\{i,j\}, \{k, l\}\neq \{1,2\}$ and $\{i,j\}\cap \{k,l\} =\emptyset$ whose spaces of parameters are points.  It easy to verify that these nine copies of $F_4$ together with these $14$ points compactify $F_{5}$ to $\bar{F}_{5}$.

Since $\bar{F}_{5}$ is obtained form $\mathcal{F}_{5}$ by blowing down $\C P^1 = \mathcal{F}_{4}$ it follows from Lemma~\ref{lem:LM} that $L_{0,5,2}$ and $\bar{F}_{5}$ are isomorphic.
\end{proof}

Since $F_{\omega}\cong \hat{\mu}^{-1}(t)$ for $t\in C_{\omega}$ we deduce:

\begin{cor}\label{F5}
The Losev-Manin space $L_{0,5,2}$ is contained the orbit  space $G_{5,2}/T^5$, that is
\[
L_{0,5,2}\cong (G_{5,2}\setminus (W_{12}\cup W_{12, 34}\cup W_{12, 35}\cup W_{12. 45}))/(\C ^{\ast})^5.
\]
\end{cor}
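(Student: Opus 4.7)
The plan is to chain together the preceding proposition, Proposition~\ref{ComegaF} and the Remark immediately before Theorem~\ref{omegagit}, and then to carry out a short combinatorial identification of $G_{5,2}\setminus W_\omega$ with the stated four-stratum union.

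First I would apply the preceding proposition to identify $L_{0,5,2}$ with the space of parameters $F_\omega$ for the maximal-dimensional chamber
\[
C_\omega = \{(t_1,\ldots,t_5)\in \Delta_{5,2} \mid t_1+t_2>1,\ t_i+t_j<1 \text{ for } \{i,j\}\neq \{1,2\}\}.
\]
Proposition~\ref{ComegaF} then supplies the homeomorphism $F_\omega \cong \hat{\mu}^{-1}(t)$ for any $t\in C_\omega$, which immediately gives the embedding $L_{0,5,2}\hookrightarrow G_{5,2}/T^5$ claimed in the first line of the corollary. The Remark preceding Theorem~\ref{omegagit}, via the Mumford identification, writes $\hat{\mu}^{-1}(t) = \mu^{-1}(t)/T^5$ as the categorical quotient $W_\omega/(\C^*)^5$, where $W_\omega = \bigcup_{\sigma\in\omega}W_\sigma$ is the union of those strata of $G_{5,2}$ whose admissible polytope $P_\sigma$ contains $C_\omega$.

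It then remains to verify the set-theoretic equality
\[
W_\omega = G_{5,2}\setminus (W_{12}\cup W_{12,34}\cup W_{12,35}\cup W_{12,45}).
\]
A stratum $W_\sigma$ lies outside $W_\omega$ exactly when $P_\sigma\not\supset C_\omega$, equivalently when some hyperplane of the arrangement $\mathcal{R}_5$ from~\eqref{hyparrang} separates $P_\sigma$ from a test point $t\in C_\omega$. The defining inequalities of $C_\omega$ come from the four hyperplanes $x_1+x_2=1$ and $x_i+x_j=1$ for $\{i,j\}\in \{\{3,4\},\{3,5\},\{4,5\}\}$; each separation forces a specific vanishing pattern of Pl\"ucker coordinates on $W_\sigma$, and Theorem~\ref{dimensionn-1} combined with the realizability of rank $2$ matroids on $\{1,\ldots,5\}$ pins down the four possibilities to the four strata listed.

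The main obstacle will be this last combinatorial step. The separation by $x_1+x_2=1$ alone a priori sweeps up many Grassmannian strata (all those with $\{1,2\}\notin \sigma$), so the naive enumeration of bad matroids is much larger than four; however, many candidate matroids are either non-realizable over $\C$ or are subsumed by a unique maximal realizable stratum, and the duality $x_1+x_2 = 2-x_3-x_4-x_5$ on $\Delta_{5,2}$ collapses several hyperplanes further, so that the final collection reduces exactly to the four pieces $W_{12},W_{12,34},W_{12,35},W_{12,45}$ appearing in the corollary.
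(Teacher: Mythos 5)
Your opening chain of identifications is exactly the paper's route: the preceding proposition gives $L_{0,5,2}\cong\bar{F}_{5}\cong F_{\omega}$ for the chamber $C_{\omega}$, Proposition~\ref{ComegaF} gives $F_{\omega}\cong\hat{\mu}^{-1}(t)$ for $t\in C_{\omega}$, and the Remark before Theorem~\ref{omegagit} supplies the quotient description. The paper then closes in one line by appealing to Theorem~\ref{dimensionn-1} to read off which admissible polytopes fail to contain $C_{\omega}$, whereas you propose to justify the same combinatorial identification by an enumeration of rank-$2$ matroids filtered by realizability.

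That last step is where your plan breaks, and not in the way you anticipate. Consider the stratum $W_{\sigma}$ on which $P^{12}=P^{13}=P^{23}=0$ and all other Pl\"ucker coordinates are nonzero. It is realizable over $\C$ (take columns $c_1=c_2=c_3=(1,0)$, $c_4=(0,1)$, $c_5=(1,1)$); indeed essentially every rank-$2$ matroid on five elements is $\C$-realizable, so non-realizability cannot prune the list. This stratum is none of the four listed strata and is contained in none of them, yet its admissible polytope is $\Delta_{5,2}\cap\{x_1+x_2+x_3\le 1\}$ --- a full-dimensional admissible polytope (the case $|S|=3$ of Theorem~\ref{dimensionn-1}) whose interior is disjoint from $C_{\omega}$, since $t_1+t_2>1$ forces $t_1+t_2+t_3>1$. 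The same happens for every triple $S\neq\{3,4,5\}$, and also for strata with lower-dimensional polytopes such as $P^{12}=P^{34}=P^{35}=P^{45}=0$, whose polytope lies in the hyperplane $x_1+x_2=1$. Hence the set-theoretic equality $W_{\omega}=G_{5,2}\setminus(W_{12}\cup W_{12,34}\cup W_{12,35}\cup W_{12,45})$ on which your argument hinges cannot be reached by discarding non-realizable matroids or by subsumption: these extra strata genuinely lie in the right-hand side but not in $\bigcup_{\sigma\in\omega}W_{\sigma}$. What actually carries the corollary is the fiber description: $\hat{\mu}^{-1}(t)$ only meets the strata with $t\in\stackrel{\circ}{P}_{\sigma}$, so the extra strata simply contribute nothing to $F_{\omega}=\hat{\mu}^{-1}(t)$ rather than being absent from the open set. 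You should either argue at the level of the fiber $\mu^{-1}(t)/T^5$, as the paper implicitly does, or explain why the additional unstable strata are collapsed in the categorical quotient of the displayed open set; as written, the matroid-realizability step is false and the proof would not close.
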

\begin{proof}
By Theorem~\ref{dimensionn-1} the chamber $C_{\omega} =\{ (t_1,\ldots , t_5)\in \Delta _{5,2} | t_1+t_2 >1, \; t_i+t_j<1, \; \{i,j\}\neq \{1,2\}\}$ is obtained as the intersection of all admissible polytopes apart from $P_{12}, P_{12, 34}, P_{12, 35}, P_{12, 45}$ which proves  the statement.
\end{proof}

\begin{rem}
The previous statement does not hold for $n\geq 5$, that is the space $\bar{F}_{n}$ for $n\geq 6$ can not be obtained as the space of parameters  $F_{\omega}$ for some chamber $C_{\omega}\subset \Delta_{n,2}$. In more detail, on the one hand by Corollary~\ref{barFn} we see that $\bar{F}_{n}$ can be obtained  from the reduction morphism  $\rho _{n,k} : \mathcal{F}_{n}\to \bar{L}_{0,n,2}$ by collapsing the divisors $\mathcal{F}_{r+1}\times \mathcal{F}_{n-r+1}$ according to formula~\eqref{divred}, where the factors $\mathcal{F}_{r+1}$ are determined by all  $\{a_{i_1}, \ldots , a_{i_r}\}\subset \{a_{3}, \ldots , a_{n}\}$, $r\geq 3$. On the other hand   if $C_{\omega}\subset P_{\sigma}$, where $P_{\sigma}$ is an admissible polytope given  by $t_{3}+\ldots + t_{n}\leq 1$, then there is  no  neighborhood $U$  of $(t_1, \ldots , t_n)\in \stackrel{\circ}{P}_{\sigma}$ such that for $(x_1, \ldots , x_n)\in U$ it holds $x_1+x_3+\ldots + x_n >1$ and $x_2+ x_3+\ldots + x_n > 1$. Namely, for some $n_0$ we would have that $(t_1+\frac{1}{n}, t_2, \ldots , t_n)\in U$ and,  since $t_1+\ldots +t_n =2$ for  $t_2+ t_3+ \ldots +t_n >1$ we have $t_1+t_3+\ldots + t_n<1$ which implies $t_1+\frac{1}{n_1}+t_1+\ldots +t_n<1$ for large enough $n_1$. Therefore, for any neighborhood $U$ of $(t_1, \ldots ,t_n)$  the reduction morphism $\mathcal{F} \to \overline{\mathcal{M}}_{0, (a_1, \ldots, a_n)}$, $ (a_1, \ldots ,a_n)\in U$ can not coincide with $\rho _{n,2}$. 

If $C_{\omega}\subset P_{\sigma}$ for all  $P_{\sigma}$  determined by $x_{i_1}+\ldots + x_{i_{n-2}}>1$, being equivalent that  $C_{\omega}\subset P_{\sigma}$  for all  $x_i+x_j\leq 1$, then $F_{\omega}$ contains $\binom{n}{2}$ exemplars  of $F_{n-1}$, since the space of parameters of the stratum whose admissible polytope if $x_i+x_j<1$ is $F_{n-1}$. On the other side,   it easily verifies that $\bar{F}_{n}$ contains $\binom{n-2}{1}-1$ such exemplars.  
\end{rem}

\subsection{Losev-Manin spaces and toric varieties}

It  is the result of~\cite{LM} that   the Losev-Manin spaces $L_{0, n, 2}$ are toric varieties. In addition, following~\cite{KAP} the toric varieties $L_{0, n,2}$ are  the permutohedral varieties $\chi (P_{e}^{n-3})$, that is the toric varieties over a  permutohedron $P_{e}^{n-3}$.  Thus,  by~\cite{GS} a variety  $L_{0, n, 2}$ can be obtained as  the closure of a principal $(\C ^{\ast})^{n-3}$-orbit in a complex complete flag manifold $Fl(n-3)$.

From  Theorem~\ref{LMChn-1} and Corollary~\ref{F5} we deduce:

\begin{cor}

\begin{itemize}

\item The permutohedral variety $\chi (P_{e}^{2})$ can be isomorphically mapped to the  variety in the orbit  space $G_{5,2}/T^5$.
\item A permutohedral variety $\chi (P_{e}^{n-3})$ can be mapped by birational morphism to a variety in the orbit space $G_{n,2}/T^n$ for $n\geq 6$.
\end{itemize}
\end{cor}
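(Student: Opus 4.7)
The plan is to deduce both items from the already established identification of the permutohedral variety $\chi(P_{e}^{n-3})$ with the Losev--Manin space $\bar{L}_{0,n,2}$, which follows from the Losev--Manin and Kapranov descriptions cited just before the statement. Once this identification is in place, each item reduces to locating an appropriate chamber $C_{\omega}$ in $\Delta_{n,2}$ and invoking the corresponding embedding or birational result proved earlier in the paper.

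For the first item I would proceed as follows. Set $n=5$ and take the chamber
\[
C_{\omega}=\{(t_1,\ldots,t_5)\in \Delta_{5,2}\mid t_1+t_2>1,\ t_i+t_j<1\text{ for }\{i,j\}\neq\{1,2\}\}.
\]
By Corollary~\ref{F5} the space of parameters $F_{\omega}$ is isomorphic to $\bar{L}_{0,5,2}$ and is realized as the honest subspace $(G_{5,2}\setminus(W_{12}\cup W_{12,34}\cup W_{12,35}\cup W_{12,45}))/(\C^{\ast})^{5}$ of the orbit space. Combined with the identification $\chi(P_{e}^{2})\cong \bar{L}_{0,5,2}$, this yields the desired isomorphic map into $G_{5,2}/T^{5}$.

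For the second item I would use Theorem~\ref{LMChn-1} with $k=2$. First I verify that the admissible polytope $P^{\leq 1}_{3,\ldots,n}$ defined by $x_3+\cdots+x_n\leq 1$ lies inside $\stackrel{\circ}{\Delta}_{n,2}$ and that its interior contains a chamber $C_{\omega}\in \mathcal{C}_{n,2}$ of maximal dimension $n-1$; explicit typical linearisations of the form $(b_1,b_2,a_3,\ldots,a_n)$ with $b_i<1$, $\sum a_i<1$ and all partial sums distinct from $1$ (whose existence is exactly what is built in the lemmas preceding Theorem~\ref{LMChn-1}) provide interior points of such a chamber. Theorem~\ref{LMChn-1} then furnishes a birational reduction morphism $\bar{L}_{0,n,2}\to F_{\omega}$, and since by Proposition~\ref{ComegaF} the space $F_{\omega}\cong \hat{\mu}^{-1}(x)$ for any $x\in C_{\omega}$ is canonically a subspace of $G_{n,2}/T^{n}$, composing with $\chi(P_{e}^{n-3})\cong \bar{L}_{0,n,2}$ gives the required birational morphism of $\chi(P_{e}^{n-3})$ onto a variety in $G_{n,2}/T^{n}$.

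The routine combinatorial verifications are: (i) the chamber $C_{\omega}$ above for $n=5$ is non-empty and exhausts the interior of $\Delta_{5,2}$ outside the listed admissible polytopes, and (ii) for $n\geq 6$ the polytope $P^{\leq 1}_{3,\ldots,n}$ contains a chamber of maximal dimension. The main conceptual obstacle is making sure that, for $n\geq 6$, one genuinely needs a birational morphism rather than an isomorphism: the Remark following Corollary~\ref{barFn} already shows $\bar{F}_{n}$ (hence $\bar{L}_{0,n,2}$) cannot be realized as any $F_{\omega}$ for $n\geq 6$, so one should explain clearly in the proof that the maximality of $\dim C_{\omega}$ together with the failure of $\bar{L}_{0,n,2}\to F_{\omega}$ to be an isomorphism in general is exactly what forces the statement to be phrased with a birational morphism.
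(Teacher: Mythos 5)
Your proposal is correct and follows essentially the same route as the paper, which deduces the corollary directly from Corollary~\ref{F5} (for the $n=5$ isomorphism) and Theorem~\ref{LMChn-1} with $k=2$ (for the birational morphism when $n\geq 6$), combined with the identification of $\bar{L}_{0,n,2}$ with the permutohedral variety $\chi(P_{e}^{n-3})$. Your additional remarks on why only a birational morphism is available for $n\geq 6$ are consistent with the paper's Remark following Corollary~\ref{barFn} but are not needed for the deduction itself.
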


\begin{rem}
To any graph $\Gamma$ can be assigned a toric variety $\chi (\mathcal{P}\Gamma)$. In more detail for a graph on $n+1$ vertices the graph associahedron $\mathcal{P}\Gamma$ is a $n$-dimensional convex polytope constructing by truncating the $n$-simplex based on the connected subgraphs of $\Gamma$. These polytopes include  as well permutohedron, cyclohedron  associahedron and stellahedron. The toric variety  $\chi (\mathcal{P}\Gamma)$ is obtained by blowing up a projective space along coordinate subspaces which correspond to the connected subgraphs of $\Gamma$ and $\mathcal{P}\Gamma$ is the polytope of this toric variety. In the paper~\cite{FJR} it is obtained complete classification of graphs $\Gamma$ such that the toric variety $\chi (\mathcal{P}\Gamma)$ is isomorphic to a Hassett space $\overline{\mathcal{M}}_{0, \mathcal{A}}$. More precisely, it is proved that for a graph $\Gamma$ on $n-2$ vertices, the toric variety $\chi (\mathcal{P}\Gamma)$ is isomorphic to a Hassett space $\overline{\mathcal{M}}_{0, \mathcal{A}}$ if and only if $\Gamma$ is an iterated cone over a discrete set, meaning that $\Gamma = \text{Cone}^{n-k-2}(\cup _{i=1}^{k}v_i)$. 
\end{rem}

The classification theorem from~\cite{FJR} implies that there are not so much intersection between the theory of Hassett spaces and the theory of  toric varieties  over graph associahedra. For example,  an  associahedron or  cyclohedron   can not be obtained as iterated cones over discrete sets. 

\begin{rem}
The  complete graph $K_{n-2}$ is obtained as  $(n-3)$-times iterated cone over a single vertex and it produces  the permutohedral variety, that is the Losev-Manin space $\bar{L}_{0,n,2}$.  The star graph  is the cone over the discrete set on $n-2$ vertices and  its  graph associahedron is stellahedron.  In~\cite{FJR}, Corollary 2,  it is shown that the corresponding   toric graph associahedra is  isomorphic to the  moduli space   $\overline{\mathcal{M}}_{0, \mathcal{A}}$, where $\mathcal{A} =(1, \frac{1}{2}, \frac{1}{2}+\varepsilon, \varepsilon, \ldots , \varepsilon) \in \R^{n}$ with $\varepsilon <\frac{1}{n}$. 
\end{rem}

Combining this with our result we deduce:
\begin{cor}
 The graph associahedron toric variety $\chi (\mathcal{P}\Gamma)$ which corresponds to a graph $\Gamma  = \text{Cone}^{n-k-2}(\cup _{i=1}^{k}v_i)$ can be mapped by birational morphism to a variety in the orbit space $G_{n, 2}/T^n$.
\end{cor}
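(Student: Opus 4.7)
The plan is to combine the classification of \cite{FJR} with Corollary~\ref{embedhassett} of the present paper. First, I would invoke the result of \cite{FJR} cited in the preceding remark: for a graph $\Gamma = \text{Cone}^{n-k-2}(\cup_{i=1}^{k} v_i)$ on $n-2$ vertices, the toric graph associahedron variety $\chi(\mathcal{P}\Gamma)$ is isomorphic to a Hassett space $\overline{\mathcal{M}}_{0,\mathcal{A}(\Gamma)}$ for an explicit weight vector $\mathcal{A}(\Gamma)\in \mathcal{D}_{0,n}$. In the extreme cases this is already visible in the paper: the $(n-3)$-fold iterated cone over a single vertex ($k=1$) is the complete graph $K_{n-2}$, giving the permutohedral variety $\bar{L}_{0,n,2}$, while the star graph ($n-k-2 = 1$) gives the stellahedron and is realized by $\mathcal{A} = (1,\tfrac{1}{2},\tfrac{1}{2}+\varepsilon,\varepsilon,\ldots,\varepsilon)$. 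For the general iterated cone I would record the corresponding weight vector, essentially having $n-k-2$ weights close to $1$ (for the cone apices) and $k$ small weights summing to at most $1$ (for the discrete vertices), together with two extra ``white'' weights equal to $1$.

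Second, having $\chi(\mathcal{P}\Gamma) \cong \overline{\mathcal{M}}_{0,\mathcal{A}(\Gamma)}$, I would apply Corollary~\ref{embedhassett}: for any weight vector $\mathcal{A}\in \mathcal{D}_{0,n}$, whenever $\mathcal{A}$ lies in a chamber $D_\omega \in \hat{\mathcal{C}}_\mathcal{H}$ associated via $\xi$ to some chamber $C_\omega \subset \stackrel{\circ}{\Delta}_{n,2}$, the Hassett space $\overline{\mathcal{M}}_{0,\mathcal{A}}$ embeds into $G_{n,2}/T^n$ up to a birational morphism through the commutative diagram \eqref{diag}. Concretely, one factors this birational morphism as the reduction $\rho_{\mathcal{A}_0,\mathcal{A}}\colon \overline{\mathcal{M}}_{0,n}\cong \mathcal{F}_n \to \overline{\mathcal{M}}_{0,\mathcal{A}} \cong F_\omega$ from Theorem~\ref{projred} (or its atypical analogue from Theorem~\ref{omegabirat}), followed by the inclusion $F_\omega \hookrightarrow G_{n,2}/T^n$ provided by Proposition~\ref{ComegaF}.

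The main obstacle I expect is the middle bookkeeping step: to check that the particular weight vector $\mathcal{A}(\Gamma)$ produced by the \cite{FJR} isomorphism can be placed inside some chamber of the fine or coarse decomposition of $\mathcal{D}_{0,n}$ which hits the closure of a chamber $C_\omega \subset \stackrel{\circ}{\Delta}_{n,2}$ arising from the matroidal decomposition of the hypersimplex. For the star graph weights $(1,\tfrac12,\tfrac12+\varepsilon,\varepsilon,\ldots,\varepsilon)$ this is straightforward; for a deeper iterated cone one may need to perturb $\mathcal{A}(\Gamma)$ within its coarse chamber using Proposition~\ref{chambersweight} (so that $\overline{\mathcal{M}}_{0,\mathcal{A}(\Gamma)}$ is unchanged, by Lemma~\ref{aifree}), and then argue, as in the proof of Theorem~\ref{LMChn-1}, that after renormalising to $\sum a_i = 2$ the perturbed vector lies in $\partial \mathcal{D}_{0,n}$ on the closure of some $C_\omega$ of maximal dimension. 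Once that placement is verified, the corollary follows by concatenating the isomorphism $\chi(\mathcal{P}\Gamma)\cong \overline{\mathcal{M}}_{0,\mathcal{A}(\Gamma)}$ with the birational embedding of Corollary~\ref{embedhassett}.
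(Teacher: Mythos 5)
Your proposal is correct and follows essentially the same route as the paper, which gives no explicit proof beyond ``combining this with our result'': one concatenates the isomorphism $\chi(\mathcal{P}\Gamma)\cong \overline{\mathcal{M}}_{0,\mathcal{A}(\Gamma)}$ from \cite{FJR} with the birational embedding of Hassett spaces into $G_{n,2}/T^n$. The ``bookkeeping'' obstacle you flag (the FJR weight vector having $\sum a_i>2$, hence not lying over any chamber of $\Delta_{n,2}$) is real but is resolved exactly as you suggest, by the renormalization-and-reduction argument that the paper formalizes as Theorem~\ref{general}.
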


\subsubsection{Losev-Manin spaces and permutohedral varieties}

We need the following result in order to provide geometric description of permutohedral toric varieties.

\begin{lem}
It is defined the representation  of $(\C ^{\ast})^{n-3}$  in $(\C ^{\ast})^{N}$,  $N = \binom{n-2}{2}$ by
\[
\rho (t_1, \ldots , t_{n-3}) = ( \rho_{ij}(t_1, \ldots ,t_{n-3})), \; 3\leq i <j\leq n,
\]
where $\rho _{3k}(t_1, \ldots , t_{n-3}) = t_{k-3}$ for $4\leq k \leq n$ and 
\[
\rho _{ij}(t_1, \ldots , t_{n-3}) = \frac{t_{j-3}}{t_{i-3}}, \;\; 4\leq i<j\leq n-3.
\]
\end{lem}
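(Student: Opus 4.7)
The plan is to verify the lemma by two elementary checks: that $\rho$ has image in $(\C^*)^N$, and that $\rho$ is a group homomorphism of algebraic tori (which is what ``representation'' means here, since both source and target are abelian).

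First I would confirm the coordinate bookkeeping. The target $(\C^*)^N$ is indexed by pairs $(i,j)$ with $3\leq i<j\leq n$, giving $\binom{n-2}{2}=N$ coordinates. Splitting these into pairs with $i=3$ versus $i\geq 4$ yields $(n-3)+\binom{n-3}{2}=\binom{n-2}{2}$, matching the two cases in the definition. (I would read the upper bound $n-3$ in the second formula as a typo for $n$, so that all pairs $4\leq i<j\leq n$ are covered by $\rho_{ij}=t_{j-3}/t_{i-3}$.)

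Next I would verify that the image of $\rho$ lies in $(\C^*)^N$, which is immediate: for $(t_1,\ldots,t_{n-3})\in (\C^*)^{n-3}$, each component $\rho_{3k}=t_{k-3}$ is nonzero and each $\rho_{ij}=t_{j-3}/t_{i-3}$ is a ratio of nonzero complex numbers, hence nonzero.

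Finally, I would check multiplicativity componentwise. Under the torus multiplication $t\cdot t'=(t_1t_1',\ldots,t_{n-3}t_{n-3}')$, the first block gives $\rho_{3k}(t\cdot t')=t_{k-3}t_{k-3}'=\rho_{3k}(t)\rho_{3k}(t')$, while the second block gives $\rho_{ij}(t\cdot t')=(t_{j-3}t_{j-3}')/(t_{i-3}t_{i-3}')=\rho_{ij}(t)\rho_{ij}(t')$. Thus $\rho(t\cdot t')=\rho(t)\cdot\rho(t')$ coordinate by coordinate, so $\rho$ is a homomorphism of algebraic tori, i.e.\ a representation. There is no substantial obstacle: the lemma is a bookkeeping statement, and since each coordinate of $\rho$ is a Laurent monomial in the source coordinates, the conclusion is automatic once the indexing is correctly interpreted.
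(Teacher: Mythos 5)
Your proof is correct and takes essentially the same route as the paper, which likewise verifies multiplicativity componentwise using the fact that each $\rho_{ij}$ is a Laurent monomial (the paper additionally remarks that $\rho$ is $1$--$1$, which is immediate from the coordinates $\rho_{3k}=t_{k-3}$). Your reading of the upper bound $n-3$ in the second formula as a typo for $n$ is the correct interpretation, as otherwise the coordinate count would not come out to $\binom{n-2}{2}$.
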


\begin{proof} 
We see  that the map $\rho$ is $1-1$ and  
\[
\rho _{ij}( (t_1, \ldots , t_{n-3})\cdot (t_1^{'}, \ldots , t_{n-3}^{'})) = \rho _{ij} (t_1t_1^{'}, \ldots , t_{n-3}t_{n-3}^{'}) =\]
\[ \frac{t_{j-3}t_{j-3}^{'}}{t_{i-3}t_{i-3}^{'}} = \rho _{ij}( (t_1, \ldots , t_{n-3})\cdot \rho _{ij}( (t_1^{'}, \ldots , t_{n-3}^{'}).
\]
\end{proof}

Consider the $(\C^{\ast})^{n-3}$-action on $(\C P^{1})^{N}$ defined by the composition of the canonical action of $\C ^{\ast}$ on $\C P^1$  and the diagonal action of   $(\C^{\ast})^{n-3}$ on  $(\C P^{1})^{N}$  given by the  representation   $\rho ((\C ^{\ast})^{n-3})$, that is 
for ${\bf t} = (t_1, \ldots, t_{n-3})$ we have 
\begin{equation}\label{action}
{\bf t}\cdot ((c_{34}:c_{34}^{'}), \ldots ,(c_{n-1n}:c_{n-1n}^{'})) = ((\rho _{34}({\bf t})c_{34} : c_{34}^{'}), \ldots , (\rho _{n-1n}({\bf t})c_{n-1n}: c_{n-1n}^{'})).
\end{equation}

\begin{lem}
The compactification $\bar{F}_{n}$  of $F_n=\mathcal{M}_{0,n}$ in $(\C P^{1})^{N}$ given by~\eqref{barFn} is invariant under the $(\C ^{\ast})^{n-3}$-action given  by~\eqref{action}.
\end{lem}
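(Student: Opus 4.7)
The plan is to verify invariance by direct calculation: we need to show that each defining equation
\[
c_{ij}c_{ik}'c_{jk} \;=\; c_{ij}'c_{ik}c_{jk}', \qquad 3\le i<j<k\le n,
\]
of $\bar{F}_n\subset (\C P^1)^N$ is preserved under the substitution $c_{ij}\mapsto \rho_{ij}(\mathbf{t})c_{ij}$, $c_{ij}'\mapsto c_{ij}'$ induced by~\eqref{action}.

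Under this substitution, the left-hand side picks up the scalar factor $\rho_{ij}(\mathbf{t})\rho_{jk}(\mathbf{t})$ and the right-hand side picks up $\rho_{ik}(\mathbf{t})$, while the $c$'s and $c'$'s themselves satisfy the original relation on $\bar{F}_n$. Consequently, invariance of $\bar{F}_n$ reduces to the cocycle identity
\[
\rho_{ij}(\mathbf{t})\,\rho_{jk}(\mathbf{t}) \;=\; \rho_{ik}(\mathbf{t}), \qquad 3\le i<j<k\le n.
\]

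The main step is therefore to check this identity, which I would do by splitting on whether $i=3$ or $i\ge 4$. If $i=3$, then $\rho_{3j}(\mathbf{t})\rho_{jk}(\mathbf{t})=t_{j-3}\cdot(t_{k-3}/t_{j-3})=t_{k-3}=\rho_{3k}(\mathbf{t})$; and if $4\le i<j<k\le n$, then $\rho_{ij}(\mathbf{t})\rho_{jk}(\mathbf{t})=(t_{j-3}/t_{i-3})(t_{k-3}/t_{j-3})=t_{k-3}/t_{i-3}=\rho_{ik}(\mathbf{t})$. In both cases the middle variable $t_{j-3}$ cancels, and the identity holds.

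There is no real obstacle here; the computation is a routine telescoping, and the fact that the defining equations of $\bar{F}_n$ are homogeneous of the same multi-degree on both sides makes the verification essentially formal. Once the cocycle identity is in place, each point of $\bar{F}_n$ is mapped into $\bar{F}_n$ by every $\mathbf{t}\in(\C^\ast)^{n-3}$, so $\bar{F}_n$ is invariant under the $(\C^\ast)^{n-3}$-action defined by~\eqref{action}.
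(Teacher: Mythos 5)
Your proposal is correct and follows essentially the same route as the paper: both reduce invariance of the defining relations $c_{ij}c_{ik}'c_{jk}=c_{ij}'c_{ik}c_{jk}'$ to the telescoping identity $\rho_{ij}(\mathbf{t})\rho_{jk}(\mathbf{t})=\rho_{ik}(\mathbf{t})$ and verify it by splitting into the cases $i=3$ and $i>3$. Your write-up merely makes the cocycle identity explicit as a separate step, which the paper leaves implicit in the two displayed computations.
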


\begin{proof}
The points $((c_{ij}: c_{ij}^{'})_{3\leq i<j\leq n})\in \bar{F}_{n}$ are those points from $(\C P^{1})^{N}$ which satisfy equations $c_{ij}c_{ik}^{'}c_{jk} = c_{ij}^{'}c_{ik}c_{jk}^{'}$. Now, the  $(\C ^{\ast})^{n-3}$-orbit of a point from $\bar{F}_{n}$ is $((\rho _{ij}({\bf t})c_{ij}
:c_{ij}^{'})$, so  for $i=3$ 
\[
\rho _{3j}({\bf t})c_{3j}c_{3k}^{'} \rho _{jk}({\bf t})c_{jk} = t_{k-3}c_{3j}c_{3k}^{'}c_{jk}=
 c_{3j}^{'}\rho_{3k}({\bf t})c_{3k} c_{jk}^{'}.
\]
 For $i>3$ we have  
\[
\rho _{ij}({\bf t})c_{ij}c_{ik}^{'} \rho _{jk}({\bf t})c_{jk} = \frac{t_{k-3}}{t_{i-3}}c_{ij}c_{ik}^{'}c_{jk} = c_{ij}^{'}\rho _{ik}({\bf t})c_{ik} c_{jk}^{'}.
\]
\end{proof}
According to the usual convention we say that a ${\bf c}\in \bar{F}_{n}$ is a general point   for the $(\C ^{\ast})^{n-3}$ action if its stability is trivial  and for a such orbit we say to be principal.

\begin{prop}\label{Fnorbit}
 The space $\bar{F}_{n}$ is a smooth toric variety for $(\C ^{\ast})^{n-3}$-action 
 and its principal orbit contains the space  $F_{n}\cong \mathcal{M}_{0, n}$.
\end{prop}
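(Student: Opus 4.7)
The plan is to establish the proposition in three stages: match dimensions, compute stabilizers on $F_n$, and then use irreducibility to pin down the principal orbit.

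First I would identify the dimension of $\bar{F}_n$. By~(\ref{CPAdiagon}) we have $F_n\cong\mathcal{M}_{0,n}$, so $\dim F_n=n-3$. Since $F_n$ is cut out of $\bar{F}_n$ by the open conditions $(c_{ij}:c_{ij}^{'})\neq(1:0),(0:1),(1:1)$ and is nonempty, and since $\bar{F}_n$ is irreducible (as a smooth connected manifold, or equivalently via Theorem~\ref{divisorsFn} which identifies it with $\overline{L}_{0,n,2}$), it follows that $F_n$ is open and dense in $\bar{F}_n$ and $\dim \bar{F}_n=n-3=\dim(\C^{\ast})^{n-3}$. Smoothness of $\bar{F}_n$ was already recorded immediately after~(\ref{jednacinebarFn}); effectivity of the action follows from the injectivity of $\rho$ established in the preceding lemma.

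Second, I would read the stabilizer of a point ${\bf c}\in F_n$ off formula~(\ref{action}). For such ${\bf c}$ each coordinate $(c_{3k}:c_{3k}^{'})$ lies in $\C P^1_A$, so both $c_{3k}$ and $c_{3k}^{'}$ are nonzero. The action ${\bf t}\cdot(c_{3k}:c_{3k}^{'})=(t_{k-3}c_{3k}:c_{3k}^{'})$ therefore coincides with $(c_{3k}:c_{3k}^{'})$ only when $t_{k-3}=1$; ranging $k$ over $4,\dots,n$ forces ${\bf t}$ to be the identity. Hence the stabilizer of ${\bf c}$ is trivial and every point of $F_n$ is general.

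Third, each principal orbit is a locally closed subvariety of the smooth irreducible variety $\bar{F}_n$ of dimension $n-3=\dim\bar{F}_n$, hence is automatically open. Two distinct principal orbits would then be disjoint nonempty open subsets of the irreducible $\bar{F}_n$, which is impossible. So there is a unique principal orbit $\mathcal{O}$, which is open and dense, and which necessarily contains $F_n$ by the previous step. Combined with smoothness of $\bar{F}_n$ and effectivity of the action, this dense open orbit of the expected dimension realises $\bar{F}_n$ as a smooth toric variety for $(\C^{\ast})^{n-3}$, and $F_n\subset\mathcal{O}$.

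The only delicate point is the uniqueness of the principal orbit, which reduces to the observation that in an irreducible variety two nonempty open subsets cannot be disjoint; everything else is a direct reading of the action formula~(\ref{action}) together with the dimension count coming from $F_n\cong\mathcal{M}_{0,n}$.
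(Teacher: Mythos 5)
Your proposal is correct, and its skeleton matches the paper's proof: both arguments rest on reading the trivial stabilizer of a point of $F_n$ directly off the action formula~\eqref{action} and then concluding that the closure of the resulting principal orbit is all of $\bar{F}_{n}$. The one place where you genuinely diverge is the middle step. The paper asserts the concrete inclusion $F_{n}\subset (\C^{\ast})^{n-3}\cdot {\bf c}$ for a single ${\bf c}\in F_n$ (which one can check by noting that a point of $F_n$ is determined by its coordinates $(c_{3k}:c_{3k}^{'})$, $4\leq k\leq n$, on which the torus acts coordinatewise and transitively up to the removed points), and then takes closures. You instead avoid verifying that inclusion and argue abstractly: a principal orbit is locally closed of dimension $n-3=\dim\bar{F}_{n}$, hence open, hence unique by irreducibility, hence it is the dense orbit and contains every point of $F_n$ with trivial stabilizer. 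Both routes are valid; the paper's is a one-line explicit computation in the $(c_{3k}:c_{3k}^{'})$ coordinates, while yours buys independence from that computation at the cost of invoking irreducibility of $\bar{F}_{n}$ (which does hold, since $\bar{F}_{n}$ is by construction the closure of the irreducible set $F_n$). Your dimension count and the effectivity remark are fine and make explicit some points the paper leaves implicit.
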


\begin{proof}
The $(\C ^{\ast})^{n-3}$-orbit of a point ${\bf c}\in F_{n}$ is principal, since its stabilizer is obviously trivial. Moreover, it is straightforward to see that  $F_{n}\subset (\C ^{\ast})^{n-3}\cdot {\bf c}$. Since the closure of $F_{n}$ in $(\C P^{1})^{N}$ is $\bar{F}_{n}$,  it follows  that
$\overline{(\C ^{\ast})^{n-3}\cdot {\bf c}}$ in $(\C P^{1})^{N}$ is $\bar{F}_{n}$.
\end{proof}

Theorem~\ref{divisorsFn} states that the  Losev-Manin compactification of $F_n$ gives the smooth manifold $\bar{F}_{n}$.   From~\eqref{jednacineFn} and~\eqref{jednacinebarFn} it is straightforward to deduce:

\begin{lem}
The outgrows of of the Losev-Manin compactification $L_{0, n, 2}$  for  $\mathcal{M}_{0, n}$  can be stratified as $\mathcal{M}_{0, n_1}\times \ldots \times  \mathcal{M}_{0, n_k}$, where $n_1+\ldots +n_k= n+2k-2$.
\end{lem}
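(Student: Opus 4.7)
By Theorem~\ref{divisorsFn} and Corollary~\ref{barFn}, $L_{0,n,2}\cong \bar{F}_n\subset (\C P^1)^N$, $N=\binom{n-2}{2}$, cut out by the equations~\eqref{jednacinebarFn}, with $F_n=\mathcal{M}_{0,n}$ obtained by additionally requiring each coordinate $(c_{ij}:c_{ij}')$ to avoid the three points $(1:0),(0:1),(1:1)$. Hence the outgrows $\bar{F}_n\setminus F_n$ consist exactly of the points for which at least one $(c_{ij}:c_{ij}')$ attains one of these three special values, and they carry a natural stratification by which coordinates take which special values, subject to the compatibility~\eqref{jednacinebarFn}.

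I would unpack this stratification through the modular interpretation given by Corollary~\ref{LM-weight}: $L_{0,n,2}\cong \overline{\mathcal{M}}_{0,\mathcal{A}}$ with $\mathcal{A}=(1,1,a_3,\ldots,a_n)$ and $\sum_{i\geq 3} a_i\le 1$. A weighted-stable curve for this data is a chain of rational components $C_1,\ldots,C_k$: the two white points $p_1,p_2$ sit on the end components $C_1$ and $C_k$, while the $n-2$ black points are distributed along the chain, with at least one black point on every middle component forced by stability. The interior case $k=1$ gives $\mathcal{M}_{0,n}$ itself, while $k\ge 2$ yields all boundary strata.

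Next, fix a chain type with $m_i$ black points on $C_i$, so $\sum_{i=1}^{k}m_i=n-2$ and $m_i\ge 1$ for $1<i<k$, and count the distinct markings (including nodes) on each component: $C_1$ carries $m_1$ black points, $1$ white point and $1$ node, giving $n_1=m_1+2$; a middle $C_i$ carries $m_i$ black points and $2$ nodes, giving $n_i=m_i+2$; and symmetrically $n_k=m_k+2$. For a generic point of the stratum all marked and node points are distinct on each component, so the moduli of the stratum is the product $\mathcal{M}_{0,n_1}\times\cdots\times\mathcal{M}_{0,n_k}$, and summation yields
\[
\sum_{i=1}^{k} n_i \;=\; \sum_{i=1}^{k} m_i + 2k \;=\; (n-2)+2k \;=\; n+2k-2,
\]
which is the claimed relation.

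The step I expect to be the main obstacle is matching the combinatorial description of the chain to the coordinate description~\eqref{jednacinebarFn}, i.e.\ showing that every pattern of special values compatible with the equations genuinely arises from a chain configuration, and vice versa. The route I plan to take exploits the equations directly: the $(1:1)$-locus in a coordinate $(c_{ij}:c_{ij}')$ forces $(c_{ik}:c_{ik}')=(c_{jk}:c_{jk}')$ for every $k$, so the indices $i,j$ end up on the same chain component, while the $(1:0)$ and $(0:1)$ loci propagate to separate indices across a node. This should let me reconstruct the chain and the distribution $(m_1,\ldots,m_k)$ from the stratum, completing the identification with $\prod_i \mathcal{M}_{0,n_i}$.
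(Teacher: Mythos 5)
There is a genuine gap, and it sits in the sentence ``The interior case $k=1$ gives $\mathcal{M}_{0,n}$ itself.'' In $\bar{L}_{0,n,2}\cong\overline{\mathcal{M}}_{0,\mathcal{A}}$ with $\mathcal{A}=(1,1,a_3,\ldots,a_n)$ and $\sum_{i\geq 3}a_i\leq 1$, the locus of irreducible curves consists of configurations in which the two white points are distinct from each other and from all black points, while the black points may collide with one another arbitrarily and \emph{no bubble forms}, precisely because their total weight is already $\leq 1$. This locus is the principal $(\C^{\ast})^{n-3}$-orbit of Proposition~\ref{Fnorbit} and strictly contains $\mathcal{M}_{0,n}$. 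Consequently the outgrows $\bar{L}_{0,n,2}\setminus\mathcal{M}_{0,n}$ contain, in addition to the reducible-curve boundary that your chains describe, all collision loci inside each chain stratum --- in the coordinates of~\eqref{jednacinebarFn}, the strata where some $(c_{ij}:c_{ij}^{'})=(1:1)$ while no coordinate equals $(1:0)$ or $(0:1)$, i.e.\ the first class of outgrows in Proposition~\ref{outgrows}. These are not covered by your stratification: for $n=5$ your chain types yield six one-dimensional strata and six points, whereas Example~\ref{ex5} lists nine and ten; the three missing curves are the pairwise-collision loci $\{((c:c^{'}),(c:c^{'}),(1:1))\}$ and its permutations, and the four missing points include $((1:1),(1:1),(1:1))$. (A smaller inaccuracy: for $k\geq 2$ stability forces at least one black point on the \emph{end} components as well, since the degree of $K_{C_1}+p_1+\mathrm{node}+\sum a_j b_j$ restricted to $C_1$ equals $\sum a_j$; this is what guarantees $n_i\geq 3$.)

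The count itself survives the repair. If on a component carrying $m_i$ black points a cluster of $r\geq 2$ of them coincide, the number of distinct special points on that component drops by $r-1$ while the cluster contributes an additional factor $\mathcal{M}_{0,r+1}$ (its $r$ points plus the attaching point); thus $k$ increases by one and $\sum n_i$ changes by $(r+1)-(r-1)=2$, so the relation $\sum n_i=n+2k-2$ is preserved. The correct stratification is therefore by chain type \emph{together with} the coincidence partition of the black points on each component. This is exactly what the paper's intended (unwritten) argument produces automatically: reading off from~\eqref{jednacineFn} and~\eqref{jednacinebarFn} which coordinates degenerate to $(1:0)$ or $(0:1)$ (nodes) and which to $(1:1)$ (collisions) treats the two kinds of degeneration uniformly, and each admissible pattern is cut out by equations of the same shape in fewer variables, giving a product of smaller $F_m\cong\mathcal{M}_{0,m}$'s. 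Your modular route is viable and arguably more illuminating, but only after the open stratum is correctly identified and the collision data is added to the bookkeeping.
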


Since $F_n$ is contained in the principal orbit for $(\C ^{\ast})^{n-3}$-action on $\bar{F}_{n}$ it naturally arises the question on  a relation between the outgrows of Losev-Manin compactification of  $F_n$ to $\bar{F_n}$ and and outgrows  of the toric compactification of $(\C ^{\ast})^{n-3}\cdot {\bf c}$ to $\bar{F}_{n}$ for $c\in F_n$

\begin{prop}\label{outgrows}
The outgrows of the Losev-Manin compactification  can be divided into two sets such that:
\begin{itemize}  
\item those from  the first set give the completion that is extension  of $F_n$ to $(\C ^{\ast})^{n-3}\cdot {\bf c}$ for ${\bf c}\in F_n$ and they contain the points having at least one coordinate $(1:1)$ and do not contain  the coordinate $(1:0)$ or $(0:1)$,
\item  those from the second  set give the toric compactification of $(\C ^{\ast})^{n-3}\cdot {\bf c}$ to $\bar{F}_{n}$ and they contain the points having at least one coordinate $(1:0)$ or $(0:1)$.
\end{itemize}
\end{prop}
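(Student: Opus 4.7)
The plan is to identify the principal orbit $(\C ^{\ast})^{n-3}\cdot {\bf c}$ inside $\bar{F}_{n}$ explicitly and then partition the outgrows $\bar{F}_{n}\setminus F_{n}$ according to whether a point lies on this orbit or on its toric boundary in $\bar{F}_{n}$.

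First, I would record the fixed-point structure of the $\C^{\ast}$-action on a single factor $\C P^{1}$ given by $\lambda \cdot (c:c')=(\lambda c : c')$: the fixed points are precisely $(1:0)$ and $(0:1)$, while every other point lies on a free $\C^{\ast}$-orbit that passes through $(1:1)$. Applied coordinatewise via~\eqref{action}, this means that if a coordinate $(c_{ij}:c_{ij}^{'})$ of a point of $\bar{F}_{n}$ equals $(1:0)$ or $(0:1)$, it is fixed under the entire $(\C^{\ast})^{n-3}$-action, whereas any coordinate in $\C^{\ast}\subset \C P^{1}$ can be moved to any other value in $\C^{\ast}$.

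Second, I would show that the principal orbit through a base point ${\bf c}\in F_{n}$ coincides with the subset
\[
\mathcal{O}_{n}=\{((c_{ij}:c_{ij}^{'}))\in \bar{F}_{n}\mid (c_{ij}:c_{ij}^{'})\neq (1:0),(0:1)\text{ for all }i,j\}.
\]
The inclusion $(\C ^{\ast})^{n-3}\cdot {\bf c}\subset \mathcal{O}_{n}$ is immediate from~\eqref{action} since each $\rho_{ij}({\bf t})$ is nonzero. For the reverse inclusion, normalize $c_{ij}^{'}=d_{ij}^{'}=1$ for any target ${\bf d}\in \mathcal{O}_{n}$ and solve $\rho_{ij}({\bf t})=d_{ij}/c_{ij}$ by putting $t_{k-3}=d_{3k}/c_{3k}$ for $4\leq k\leq n$; the defining equations $c_{ij}c_{ik}^{'}c_{jk}=c_{ij}^{'}c_{ik}c_{jk}^{'}$ from~\eqref{jednacinebarFn}, satisfied by both ${\bf c}$ and ${\bf d}$, force $\rho_{ij}({\bf t})=t_{j-3}/t_{i-3}=d_{ij}/c_{ij}$ automatically for $4\leq i<j\leq n$. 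This verifies $\mathcal{O}_{n}=(\C ^{\ast})^{n-3}\cdot {\bf c}$.

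Third, since $F_{n}\subset \mathcal{O}_{n}$ by definition of $\C P^{1}_{A}$, the outgrows split as
\[
\bar{F}_{n}\setminus F_{n}=\bigl(\mathcal{O}_{n}\setminus F_{n}\bigr)\sqcup \bigl(\bar{F}_{n}\setminus \mathcal{O}_{n}\bigr).
\]
A point of $\mathcal{O}_{n}\setminus F_{n}$ has all coordinates in $\C^{\ast}$ with at least one equal to $(1:1)$, yielding the first set; together with $F_{n}$ it reconstructs the whole principal orbit. A point of $\bar{F}_{n}\setminus \mathcal{O}_{n}$ has at least one coordinate equal to $(1:0)$ or $(0:1)$ and is therefore fixed along that coordinate, so it lies outside the principal orbit. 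By Proposition~\ref{Fnorbit} the closure of the principal orbit is all of $\bar{F}_{n}$, and hence the second set constitutes precisely the toric boundary that compactifies $(\C ^{\ast})^{n-3}\cdot {\bf c}$ to $\bar{F}_{n}$.

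The main obstacle is the surjectivity argument in step two: the equations $\rho_{ij}({\bf t})=d_{ij}/c_{ij}$ are overdetermined, with only $n-3$ unknowns $t_{k-3}$ but $N=\binom{n-2}{2}$ prescribed values. The key point is that the $N-(n-3)$ redundancies are precisely the relations~\eqref{jednacinebarFn} defining $\bar{F}_{n}$, so the system is consistent exactly when both ${\bf c}$ and ${\bf d}$ satisfy them. Organizing this compatibility check carefully is the crux, after which the partition of outgrows into the two prescribed classes follows formally.
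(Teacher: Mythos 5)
Your proposal is correct and follows essentially the same route as the paper: both partition the outgrows $\bar{F}_{n}\setminus F_{n}$ according to whether some coordinate equals $(1:0)$ or $(0:1)$ (toric boundary) versus all coordinates lying in $\C P^{1}\setminus\{(1:0),(0:1)\}$ with at least one equal to $(1:1)$ (completion to the principal orbit). The only difference is that the paper simply asserts that the latter locus is the principal orbit because the all-$(1:1)$ point is general, whereas you verify transitivity explicitly by solving $\rho_{ij}({\bf t})=d_{ij}/c_{ij}$ and checking consistency via the relations~\eqref{jednacinebarFn} — a worthwhile added detail, but not a different argument.
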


\begin{proof}
The outgrows in the  toric compactification of $(\C ^{\ast})^{n-3}\cdot {\bf c}$ to $\bar{F}_{n}$ are given by the varieties determined by the condition that at   least one coordinate   $(c_{ij}:c_{ij}^{'})$, and consequently, due to the equation~\eqref{jednacineFn},   more of them, is commonly equal to  $ (1:0)$ or $(0:1)$. The outgrows of the Losev-Manin  compactification of $F_n$ to $\bar{F}_{n}$   are given by the varieties  determined by the condition that the   least one coordinate   $(c_{ij}:c_{ij}^{'})$, and consequently more of  them, is commonly equal to  $ (1:0)$, $(0:1)$ or $(1:1)$. Obviously, the completion of $F_{n}$ by the varieties determined by the condition that coordinates in the defining  system~\eqref{jednacineFn} for $F_n$  all allowed to be form $\C P^{1}\setminus \{(1:0), (0:1)\}$ give the principal orbit, since the point from $\bar{F}_{n}$ whose all coordinates are $(1:1)$ is a general point.
\end{proof} 

The division of the outgrow points given by Proposition~\ref{outgrows} can be reformulated as follows:

\begin{cor}
The  points which belong to the outgrows of the Losev-Manin compactification can be divided into two classes such that one  class consists of the points whose orbits are principal and the other class  consists of the  remaining points. 
\end{cor}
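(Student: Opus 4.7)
The plan is to translate the classification in Proposition~\ref{outgrows} into the language of orbits. By Proposition~\ref{Fnorbit}, the principal $(\C^{\ast})^{n-3}$-orbit on $\bar{F}_{n}$ equals $(\C^{\ast})^{n-3}\cdot {\bf c}$ for any ${\bf c}\in F_{n}$, and by definition of ``principal'' it is exactly the locus in $\bar{F}_{n}$ where the stabilizer is trivial. Proposition~\ref{outgrows} already isolates one class of outgrow points, namely those with no coordinate equal to $(1:0)$ or $(0:1)$, as the points that extend $F_{n}$ to this principal orbit; hence these are precisely the outgrow points with principal orbit.

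The remaining step is to verify that every outgrow point of the second class, that is every point of $\bar{F}_{n}\setminus F_{n}$ having at least one coordinate equal to $(1:0)$ or $(0:1)$, has nontrivial stabilizer. For this I would use the explicit formulas $\rho_{3k}({\bf t})=t_{k-3}$ and $\rho_{ij}({\bf t})=t_{j-3}/t_{i-3}$ together with the action~\eqref{action}. The two points $(1:0)$ and $(0:1)$ are the $\C^{\ast}$-fixed points of each $\C P^{1}$-factor, so if some coordinate $(c_{ij}:c_{ij}')$ of a point ${\bf c}$ belongs to $\{(1:0),(0:1)\}$ then the condition ``${\bf t}$ fixes $(c_{ij}:c_{ij}')$'' imposes no constraint on ${\bf t}$ via that coordinate. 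Combining this with the Plücker-type relations in~\eqref{jednacinebarFn}, which force many of the other coordinates of a boundary point to also lie in $\{(1:0),(0:1)\}$, one sees that the system of equations defining the stabilizer drops rank and therefore admits a positive-dimensional subgroup solution; equivalently, the point lies on a proper torus-invariant subvariety of the smooth toric variety $\bar{F}_{n}$, hence in a strictly smaller orbit.

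The two classes exhaust the outgrow points, since any point of $\bar{F}_{n}\setminus F_{n}$ must by~\eqref{jednacineFn} have at least one coordinate in $\{(1:0),(0:1),(1:1)\}$, and either it has some coordinate in $\{(1:0),(0:1)\}$ (second class) or it lies in the first class. Together with the two steps above this gives the claimed dichotomy. I expect the main (though still essentially routine) obstacle to be the stabilizer computation in the second step, because the coupling of the variables $t_{i}$ across the different coordinates of $(\C P^{1})^{N}$ has to be tracked through the relations~\eqref{jednacinebarFn}; using the toric description of $\bar{F}_{n}$ provided by Proposition~\ref{Fnorbit} circumvents a direct case-by-case computation by identifying the second class with the complement of the open orbit in a smooth toric variety.
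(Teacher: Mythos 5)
Your proposal is correct and follows essentially the same route as the paper, which presents this corollary as a direct reformulation of Proposition~\ref{outgrows}: the first class of outgrows completes $F_{n}$ to the principal orbit (hence consists of points with trivial stabilizer), while the second class lies in the toric boundary of $\bar{F}_{n}$ and so consists of points in lower-dimensional, non-principal orbits. Your added stabilizer verification for the second class is a harmless elaboration of what the paper leaves implicit via the toric structure from Proposition~\ref{Fnorbit}.
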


Using equations~\eqref{jednacinebarFn}, one gets the explicit  description of the outgrows from the both the first and  second set in the Losev-Manin compactification stated by Proposition~\ref{outgrows}.   Then the direct computation of their numbers leads to the proof of the following important result.

\begin{thm}
The smooth toric varieties $\bar{F}_{n}\subset (\C P^{1})^{N}$ are the  permutohedral toric varieties $\chi (P_{e}^{n-2})$.
\end{thm}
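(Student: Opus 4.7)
The plan is to exploit what we already have: by Proposition~\ref{Fnorbit} the variety $\bar{F}_{n}$ is a smooth compact toric variety under the $(\C^{\ast})^{n-3}$-action defined in~\eqref{action}, with principal orbit containing $F_{n}$. To identify it with the permutohedral variety it therefore suffices to compare fans (equivalently, moment polytopes). The dividing line provided by Proposition~\ref{outgrows} is the natural way in: the second class of outgrows, consisting of those points of $\bar{F}_{n}$ with at least one coordinate equal to $(1:0)$ or $(0:1)$, is exactly the complement of the principal orbit, hence coincides with the torus-invariant boundary of the toric variety.

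First I would analyze how the condition ``$(c_{ij}:c_{ij}^{'})=(1:0)$'' (resp.\ $(0:1)$) propagates through the relations $c_{ij}c_{ik}^{'}c_{jk}=c_{ij}^{'}c_{ik}c_{jk}^{'}$ of~\eqref{jednacinebarFn}. A short case analysis on each triple $\{i,j,k\}\subset \{3,\ldots,n\}$ together with the index $i=3$ used in the defining representation $\rho$ shows that the locus cut out by a single such equation on one coordinate forces a uniform rule on all coordinates indexed by pairs drawn from an appropriate subset $S\subset\{3,\ldots,n\}$. This identifies each irreducible torus-invariant divisor of $\bar{F}_{n}$ with a choice of proper nonempty subset $S$ of an $(n-2)$-element index set, together with a $\{(1:0),(0:1)\}$-label, subject to the collapsing caused by the complementary relation (which pairs $S$ with its complement up to a symmetry in the second class of outgrows).

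Second, I would count and intersect. Each proper nonempty $S$ gives exactly one torus-invariant divisor, yielding $2^{n-2}-2$ facets, matching the facet count of the permutohedron on $n-2$ letters. For the intersection pattern, a collection $\{S_{1},\ldots ,S_{r}\}$ of subsets produces a nonempty torus-invariant stratum precisely when the corresponding systems of coordinate conditions are jointly satisfiable under~\eqref{jednacinebarFn}, and a direct check shows this happens if and only if $S_{1}\subsetneq S_{2}\subsetneq\cdots\subsetneq S_{r}$ (up to the same $S\leftrightarrow S^{c}$ symmetry). This is exactly the face poset of $P_{e}^{n-2}$: faces indexed by chains of proper nonempty subsets. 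Combined with Theorem~\ref{divisorsFn}, which identifies $\bar{F}_{n}\cong \bar{L}_{0,n,2}$, and the Losev--Manin/Kapranov description~\cite{LM},~\cite{KAP} of $\bar{L}_{0,n,2}$ as a permutohedral variety, the fans agree.

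The main obstacle I expect is the combinatorial bookkeeping in the second step: the quadratic relations $c_{ij}c_{ik}^{'}c_{jk}=c_{ij}^{'}c_{ik}c_{jk}^{'}$ make the independent ``special value'' choices (between $(1:0)$ and $(0:1)$) not freely prescribable on individual pairs $(i,j)$, but only compatibly across all triples, so one has to carefully argue that the correct parameterisation of torus-invariant subvarieties is by chains of proper nonempty subsets of an $(n-2)$-set, and not, say, by the larger poset of all pairs of complementary subset pairs. Once that chain structure is established, matching with the face lattice of the permutohedron, and hence with the fan of $\chi(P_{e}^{n-2})$, is automatic.
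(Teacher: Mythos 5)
Your route is essentially the paper's own: the proof given there is only a sketch --- ``using equations~\eqref{jednacinebarFn} one gets the explicit description of the outgrows \ldots the direct computation of their numbers leads to the proof'' --- that is, one reads off the torus-invariant boundary strata of $\bar{F}_{n}$ from the defining relations, exactly as in your first two steps, and matches them against the faces of the permutohedron. You go further than the paper by recording the incidence structure (chains of proper nonempty subsets of an $(n-2)$-set) rather than just the numbers of strata. Your bookkeeping of divisors by proper nonempty subsets $S\subset\{3,\ldots ,n\}$, with the label $(1:0)$ versus $(0:1)$ absorbed by the complementation $S\leftrightarrow S^{c}$, is correct, but be careful that the complementation acts only on the labels: $S$ and $S^{c}$ must remain \emph{distinct and disjoint} divisors, or else the count $2^{n-2}-2$ and the chain condition for nonempty intersections both get corrupted.

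The one genuine logical gap --- present in the paper's sketch as well --- is that matching the face poset of the invariant stratification with the face lattice of $P_{e}^{n-2}$ only shows that the fan of $\bar{F}_{n}$ is \emph{combinatorially} equivalent to the normal fan of the permutohedron, and combinatorial equivalence of fans does not determine a smooth toric variety up to isomorphism ($\C P^{1}\times \C P^{1}$ and the second Hirzebruch surface already have combinatorially identical fans). To literally ``compare fans'' you need the ray generators. These are in fact available from your own degeneration analysis: the divisor attached to $S$ arises from the limit $t_{i}\to 0$ at a common rate for $i\in S$, i.e.\ from the ray spanned by $-\sum_{i\in S}e_{i}$ in the cocharacter lattice, and these rays together with the chain condition constitute precisely the braid fan, the normal fan of $P_{e}^{n-2}$. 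Alternatively, and most economically, Proposition~\ref{Fnorbit} exhibits $\bar{F}_{n}$ as the closure of a principal $(\C ^{\ast})^{n-3}$-orbit in $(\C P^{1})^{N}$ for the action~\eqref{action}, whose character on the $(i,j)$-th factor is $e_{j}-e_{i}$ (indices in $\{3,\ldots ,n\}$ modulo the diagonal); the moment polytope of such an orbit closure is the Minkowski sum of the segments $[e_{i},e_{j}]$, $3\leq i<j\leq n$, which \emph{is} the permutohedron on $n-2$ letters with its correct normal fan. Your fallback through Theorem~\ref{divisorsFn} and \cite{LM},~\cite{KAP} also closes the gap, but it makes the direct combinatorial analysis redundant and is not the argument the paper intends.
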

 
Our description of the space $\mathcal{F}_{n}$, which we proved to be diffeomorphic to $\overline{\mathcal{M}}_{0, n}$, as the wonderful compactification of $\bar{F}_{n}$ can be now reformulated as:

\begin{cor}\label{DMLM}
The Deligne-Mumford compactification $\overline{\mathcal{M}}_{0, n}$ can be obtained by the wonderful compactification of the Losev-Manin spaces $\bar{L}_{0,n, 2}$.
\end{cor}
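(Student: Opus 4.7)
The plan is to obtain this statement as a direct consequence of the structural results assembled earlier in the paper, without further geometric input. Specifically, I would assemble three facts already in hand: (i) the diffeomorphism $\mathcal{F}_n\cong\overline{\mathcal{M}}_{0,n}$ of Theorem~\ref{Keel-univ}; (ii) the identification $\bar{L}_{0,n,2}\cong\bar{F}_n$ of Theorem~\ref{divisorsFn}; and (iii) the description of $\mathcal{F}_n$ as the wonderful compactification of $\bar{F}_n$ with respect to the building set $\mathcal{G}_n$, which is the way $\mathcal{F}_n$ was constructed in~\cite{BT2}.

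First I would recall the construction of $\mathcal{F}_n$: it is obtained as the iterated blow-up of the smooth compact manifold $\bar{F}_n\subset(\C P^1)^N$, $N=\binom{n-2}{2}$, defined by~\eqref{jednacinebarFn}, along the building set $\mathcal{G}_n$ consisting of all nonempty intersections of the subvarieties $\hat{F}_I$ with $I\subset\{1,\ldots,n\}$, $|I|=3$. Then Theorem~\ref{Keel-univ} identifies this wonderful compactification with the Deligne--Mumford space $\overline{\mathcal{M}}_{0,n}$.

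Next I would invoke Theorem~\ref{divisorsFn}, which identifies $\bar{L}_{0,n,2}$ with $\bar{F}_n\subset(\C P^1)^N$ by showing that the reduction morphism $\rho_{n,2}\colon\mathcal{F}_n\to\bar{L}_{0,n,2}$ contracts exactly the divisors created by the wonderful blow-up along $\mathcal{G}_n$. Combining these, the variety $\bar{F}_n$ appearing as the starting space of the wonderful compactification producing $\mathcal{F}_n\cong\overline{\mathcal{M}}_{0,n}$ is precisely the Losev--Manin space $\bar{L}_{0,n,2}$, which gives the claim.

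The only potentially delicate point is to make sure that the building set $\mathcal{G}_n\subset\bar{F}_n$ transports, under the isomorphism $\bar{F}_n\cong\bar{L}_{0,n,2}$, to a building set of smooth subvarieties on $\bar{L}_{0,n,2}$ in the sense of De~Concini--Processi~\cite{DCP} and Li~\cite{LILI}; but this is automatic since the isomorphism is an isomorphism of smooth complex varieties, and the $\hat{F}_I$ together with their intersections already form a building set of $\bar{F}_n$ by the construction in~\cite{BT2}. I do not foresee a genuine obstacle here—the corollary is essentially a repackaging of Theorem~\ref{Keel-univ} and Theorem~\ref{divisorsFn} once the wonderful-compactification description of $\mathcal{F}_n$ is recalled.
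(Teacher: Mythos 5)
Your proposal is correct and follows exactly the route the paper takes: the corollary is stated there as a direct reformulation of the facts that $\mathcal{F}_n$ is the wonderful compactification of $\bar{F}_n$ with building set $\mathcal{G}_n$, that $\mathcal{F}_n\cong\overline{\mathcal{M}}_{0,n}$ (Theorem~\ref{Keel-univ}), and that $\bar{F}_n\cong\bar{L}_{0,n,2}$ (Theorem~\ref{divisorsFn}). Your extra remark about transporting the building set under the isomorphism is a reasonable precision that the paper leaves implicit.
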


\begin{rem}
The outgrows of the  Deligne-Mumford compactification  for  $F_n$ are also  given by   $\mathcal{M}_{0, n_1}\times \ldots \times  \mathcal{M}_{0, n_k}$ but they do not coincide with those given by Losev-Manin compactification neither in number neither in form.  Using Corollary~\ref{DMLM},  we can observe that those outgrows in the Deligne-Mumford compactification which map $1-1$ by the natural blow-down projection are the outgrows in the Losev-Manin compactification as well. We demonstrate it for $n=5$ in Example~\ref{ex5}.
\end{rem}

\subsection{Inversion formula and Losev-Manin  spaces}

\subsubsection{Differential algebra of smooth toric varieties}
In~\cite{BDGP},  it is introduced the differential algebra  $\mathcal{P}$ of simple polytopes graded by the dimension of the polytopes. The differential of a simple polytope $P$ is given as the sum, that is disjoint union,   of all facets of $P$, and the Liebnitz  formula holds $d(P_1P_2) = d(P_1)P_2+P_1d(P_2)$.

We introduce here the differential  algebra   $\mathcal{T}$ of smooth toric varieties. Denote by $\mathcal{T}_{2n}$ abelian group which is generated by   equivariantly diffeomorphic  smooth toric varieties and the sum is given by the  formal union of these varieties, while the identity is an empty set. By $\mathcal{T} =\sum _{n\geq 1}\mathcal{T}_{2n}$  we denote the differential graded associative and commutative algebra, where the product is given by the direct product of varieties and the graduation is given by the dimension.  

The differential is given as follows. For any smooth toric variety  $M^{2n}$ there is a moment  map $\mu_{M^{2n}} : M^{2n} \to P^{n}$   to a simple polytope $P^{n}$. For any facet $P^{n-1}$ of $P^{n}$
the preimage $\mu _{M^{2n}}^{-1}(P^{n-1})$ is  a smooth toric variety.  Therefore, we define the   differential of a smooth toric variety $M^{2n}$  to be the sum of all $\mu _{M^{2n}} ^{-1}(P^{n-1})$ for all facets $P^{n-1}$ of $P^{n}$.  The following is obvious.

\begin{lem}\label{homomor}
There exists  the natural  differential algebra homomorphism $\mathcal{\mu} : \mathcal{P}\to \mathcal{T}$ given  by
\[
\mathcal{\mu}(M^{2n})  = \mu _{M^{2n}}(M^{2n}).
\]
\end{lem}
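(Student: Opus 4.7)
Note that the displayed formula sends a toric variety to a simple polytope, so I interpret the homomorphism as $\mathcal{\mu}\colon \mathcal{T}\to \mathcal{P}$, $M^{2n}\mapsto P^n=\mu_{M^{2n}}(M^{2n})$ (halving the grading, since $\mathcal{T}_{2n}$ corresponds to $\mathcal{P}_n$). The plan is to verify the three compatibilities required of a differential graded algebra homomorphism --- well-definedness with additivity, multiplicativity, and commutation with the differential --- using only the standard facts that for every smooth compact toric variety the moment image is a simple polytope, and that the moment map restricts naturally to each invariant subvariety.

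First I would check well-definedness on each graded component. If $M_1^{2n}$ and $M_2^{2n}$ are equivariantly diffeomorphic, then their moment polytopes are affinely isomorphic simple polytopes, so the class of $\mu_{M^{2n}}(M^{2n})$ in $\mathcal{P}$ does not depend on the chosen representative. Extending by the rules that a disjoint union of toric varieties maps to the formal sum of polytopes and that the empty variety maps to the empty polytope (the identity of $\mathcal{P}$), one obtains an additive group homomorphism $\mathcal{T}_{2n}\to \mathcal{P}_n$.

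For multiplicativity, given $M_1^{2m}$ and $M_2^{2n}$ with moment polytopes $P_1^m$ and $P_2^n$, the product $M_1\times M_2$ is a smooth toric variety for the product torus action, whose moment map is $\mu_{M_1}\times \mu_{M_2}$ with image $P_1^m\times P_2^n$. Hence
\[
\mathcal{\mu}(M_1\times M_2)=P_1^m\times P_2^n=\mathcal{\mu}(M_1^{2m})\cdot \mathcal{\mu}(M_2^{2n}).
\]
For commutation with the differential, the key point is the facet-to-invariant-submanifold correspondence: each facet $P^{n-1}\prec P^n$ is the moment image of a unique codimension-two invariant toric subvariety, namely $\mu_{M^{2n}}^{-1}(P^{n-1})$, which is itself a smooth toric variety with moment polytope exactly $P^{n-1}$. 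Combining this with the two definitions of the differential yields
\[
\mathcal{\mu}(dM^{2n})=\sum_{P^{n-1}\prec P^n}\mathcal{\mu}\bigl(\mu_{M^{2n}}^{-1}(P^{n-1})\bigr)=\sum_{P^{n-1}\prec P^n}P^{n-1}=d(P^n)=d\bigl(\mathcal{\mu}(M^{2n})\bigr).
\]

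The only nontrivial input is the facet-submanifold correspondence itself: one must verify that $\mu_{M^{2n}}^{-1}(P^{n-1})$ is indeed smooth, invariant, of codimension two, and a toric variety whose moment image is the facet $P^{n-1}$, and that simplicity of $P^n$ descends to its facets. This is the expected main obstacle, although it is standard in symplectic/toric geometry; once it is in place, all other compatibilities reduce to straightforward bookkeeping, and the Leibniz rule on the $\mathcal{T}$-side follows automatically from the product and differential compatibilities already established on the generators.
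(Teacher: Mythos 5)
Your proposal is correct, and it follows the only natural route: the paper itself offers no argument (the lemma is introduced with ``the following is obvious''), so your explicit verification of well-definedness on equivariant diffeomorphism classes, multiplicativity via $\mu_{M_1\times M_2}=\mu_{M_1}\times\mu_{M_2}$, and compatibility with the differential via the facet--invariant-subvariety correspondence is exactly the intended content. Your reading of the map as going from $\mathcal{T}$ to $\mathcal{P}$ (with the grading halved) is also the right correction of what is evidently a typo in the statement, since the displayed formula assigns to a toric variety its moment polytope.
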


This  homomorphism is not surjective.  We emphasize that  there are subalgebras  in the algebra $\mathcal{P}$, consisting of graph associahedra or    truncated cubes,  which belong to the image of this homomorphism.

In analogy to an algebra $\mathcal{P}$,  it can be defined the generating function $F_{M^{2n}}(t)\in \mathcal{T}[t]$ for a smooth toric manifold $M^{2n}$ by
\[
F_{M^{2n}}(t) = \sum _{M^{2k} \subset \partial M^{2n}}M^{2k} t^{n-k},
\]
where the sum goes over all smooth toric varieties $M^{2k} = \mu ^{-1}_{M^{2n}}(P^k)$ for a face $P^k$ of $P^n$.  According to~\cite{BV}, the value $F_{M^{2n}}(-1)\in \mathcal{T}$ we call  the Euler interior of $M^{2n}$ and denote by $\stackrel{\circ}{M}^{2n}$.

\subsubsection{Inversion formulas}
It is the well known classical question:   for a given power series $f(x) = x+ \sum _{n\geq 2}a_nx^n$  find its compositionally inverse $g(y)$, that is    $f(g(y)) = y$ and $g(f(x)) = x$ or,   for a given power series  $f(x) = 1+ \sum _{n\geq 1} a_nx^n$ find its multiplicative inverse $g(x)$, that is $f(x)\cdot g(x)=1$.  

 It is that McMullen~\cite{McM}  proved that the compositionally inverse of the exponential power series can be linked to the geometry  of the moduli space $\overline{\mathcal{M}}_{0,  n}$. More precisely, he proved that the compositionally inverse of the formal series $f(x) = x-\sum _{n\geq 2}a_n\frac{x^{n}}{n!}$ is given by 
\[
g(x) = x+\sum _{n\geq 2}b_n\frac{x^{n}}{n!}, \;\; b_{n} = \sum N_{n_1\ldots n_s}a_{n_1}\cdots a_{n_s},
\]
where $N_{n_1\ldots n_s}$ is the number of strata $S\subset \overline{\mathcal{M}}_{0, n+1}$ isomorphic to $\mathcal{M}_{0, n_1+1}\times \cdots \times \mathcal{M}_{0, n_s+1}$ and $n_1+\ldots +n_s = n+s-1$.  

\begin{rem}
In the paper~\cite{BV} it is proved by Corollary 3.4 that the coefficients $b_n$ of the compositionally  inverse $g(x) = x+\sum _{n\geq 2}b_nx^n$ to a formal power series $f(x) = x +\sum _{n\geq 2}a_nx_n$ can be described  in terms  of the coefficients $a_n$ and the combinatorics of faces of associahedron.  We want to emphasize  that  if  we consider the corresponding  associahedral toric variety, using Lemma~\ref{homomor},   we can reformulate this result in terms of the strata of the toric compactification that produces associahedral variety in the similar  way as we do below for the  Losev-Manin, that is permutohedral varieties. Note that if we put $a_n = \frac{\hat{a}_{n}}{n!}$ and $b_n=\frac{\hat{b}_{n}}{n!}$,  the expressions  for  $b_3$ and $ b_4$  from~\cite{BV} right after Theorem 3.3 lead to the following expressions:
\[
\hat{b}_{3} = -\hat{a_3} + 3\hat{a}_{2}^2, \;\; \hat{b}_{4} = -\hat{a}_{4} +10\hat{a}_{2}\hat{a}_{3} - 15\hat{a}_{2}^{3}.
\]
\end{rem}

We want to show, using the results of~\cite{BV},  that the multiplicative inverse of  the exponential power series can be linked to the geometry of moduli space of stable weighted curves $\overline{\mathcal{M}}_{0, \mathcal{A}}$ for $\mathcal{A} = (1, 1, a_3, \ldots , a_n)$, where $\sum _{i\geq 3}a_i \leq 1$.

More precisely,  in~\cite{BV},  it is discussed the relation  between the combinatorics of associahedra and permutohedra, and compositional and multiplicative inversion of formal power series.  In the case of permutohedra Corollary 3.6  from~\cite{BV} describes   the multiplicative inverse power series of a formal power series $f(x) = 1+\sum _{n\geq 1}a_{n}\frac{x^{n}}{n!}$ in terms of combinatorics of  the  permutohedra $P^{n}_{e}$.  

Since the  Losev-Manin spaces $\bar{L}_{0,  n, 2}\cong \overline{\mathcal{M}}_{0, \mathcal{A}}$ for $\mathcal{A} = (1, 1, a_3, \ldots , a_n)$, where $\sum _{i\geq 3}a_i \leq 1$,  are permutohedral toric varieties,  we reformulate this result in terms of these spaces.   Note that the  Losev-Manin spaces $\bar{L}_{0, n, 2}$ provide compactification of the moduli spaces  $\mathcal{M}_{0, n}$.

The following is obvious:

\begin{lem}
The outgrow  $\partial \bar{L}_{0,  n+2, 2} = \bar{L}_{0,  n+2, 2}\setminus (\C ^{\ast})^{n}\cdot c$ for $c\in \mathcal{M}_{0, n}$  is  the union of  $\bar{L}_{n_1, \ldots, n_k} = \bar{L}_{0, n_1+2, 2}\times \cdots \times \bar{L}_{0, n_k+2, 2}$ for all possible $n_1, \ldots ,n_k\geq 2$ such that  $n_1+\ldots +n_k = n$ and $k\geq 2$.
\end{lem}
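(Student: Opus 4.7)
The plan is to deduce the claim from the fact, established earlier in the excerpt, that $\bar{L}_{0,n+2,2}$ is a smooth projective toric variety (Theorem~\ref{divisorsFn} and Proposition~\ref{Fnorbit}) whose moment polytope is a permutohedron, and then identify the closures of its lower torus orbits with products of smaller Losev--Manin spaces.

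First, I would invoke the standard structure theorem for smooth projective toric varieties: the complement of the open dense principal orbit $(\C^{\ast})^{n}\cdot c$ decomposes as the union of all closed torus-invariant subvarieties, one for each proper face of the moment polytope. Since $\bar{L}_{0,n+2,2}$ is permutohedral, its moment polytope is the permutohedron on the $n$ indices of black points, whose proper faces are in natural bijection with ordered set partitions $(I_{1},\ldots ,I_{k})$ of $\{3,\ldots ,n+2\}$ with $k\geq 2$, and each such face factors as a product of smaller permutohedra of dimensions $|I_{1}|-1,\ldots ,|I_{k}|-1$.

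Second, I would pass from polytopes to toric varieties: the toric variety of a product polytope is the direct product of the corresponding toric varieties, and the toric variety of the permutohedron on $|I_{j}|$ labels is $\bar{L}_{0,|I_{j}|+2,2}$. Setting $n_{j}=|I_{j}|$, the torus-invariant subvariety attached to the face $(I_{1},\ldots ,I_{k})$ is
\[
\bar{L}_{0,n_{1}+2,2}\times \cdots \times \bar{L}_{0,n_{k}+2,2},\qquad n_{1}+\cdots +n_{k}=n,\quad k\geq 2.
\]
For the lower bound $n_{j}\geq 2$ appearing in the statement, I would use that $\bar{L}_{0,3,2}$ is a single point: any face with some $|I_{j}|=1$ yields a subvariety of the above form in which the $j$-th factor is a point, so it is absorbed set-theoretically into a subvariety for which all remaining parts have size at least $2$, and the union as stated still exhausts the complement of the principal orbit. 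As a cross-check I would use Proposition~\ref{outgrows}: the complement of the principal orbit is exactly the locus in $\bar{F}_{n+2}\subset(\C P^{1})^{N}$ on which some coordinate $(c_{ij}:c_{ij}')$ equals $(1:0)$ or $(0:1)$, and from the quadric relations~\eqref{jednacinebarFn} one verifies directly that its irreducible components split as products of smaller $\bar{F}_{n_{j}+2}\cong \bar{L}_{0,n_{j}+2,2}$.

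The main obstacle is this last rigorous identification inside $\bar{F}_{n+2}\subset(\C P^{1})^{N}$: one must check that, on a torus-invariant subvariety cut out by a prescribed vanishing pattern of coordinates, the relations $c_{ij}c_{ik}'c_{jk}=c_{ij}'c_{ik}c_{jk}'$ block-decouple into independent systems on disjoint index sets, so that the subvariety genuinely factors as a direct product of smaller $\bar{F}_{m}$'s. This is a finite but careful case analysis exploiting the block structure of the Pl\"ucker-type relations.
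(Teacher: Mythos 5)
Your proposal takes the same route as the paper's proof, which consists of exactly the three steps you list first: $\bar{L}_{0,n+2,2}$ is the permutohedral toric variety, so the complement of the principal orbit is the union of the toric subvarieties over the proper faces of the permutohedron, and those faces are products of smaller permutohedra indexed by ordered set partitions, whence the boundary strata are products of smaller Losev--Manin spaces. The coordinate cross-check via the relations~\eqref{jednacinebarFn} is additional but consistent with Proposition~\ref{outgrows}.

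The one step you add that the paper does not have --- the ``absorption'' of singleton blocks to account for the constraint $n_j\geq 2$ --- does not work as stated. A facet of the permutohedron on $n$ letters corresponds to an ordered partition into two blocks $(I,I^{c})$; when $|I|=1$ this is a maximal proper face whose only coarsening is the whole polytope, so the corresponding divisor $\bar{L}_{0,3,2}\times\bar{L}_{0,n+1,2}\cong\bar{L}_{0,n+1,2}$ is not contained in the closure of any stratum all of whose blocks have size at least $2$; it is therefore not absorbed, and the union restricted to $n_j\geq 2$ does not exhaust the boundary. This is a defect of the lemma's formulation rather than of your method: the paper's own proof runs over all ordered partitions with blocks of size $\geq 1$ and in effect proves the statement with $n_j\geq 1$, where a block of size $1$ contributes the point $\bar{L}_{0,3,2}$. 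If you allow $n_j\geq 1$ (equivalently, record only the blocks of size at least $2$ and let their sizes sum to at most $n$), your first three steps already constitute a complete proof.
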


\begin{proof}
Since $\bar{L}_{0,  n+2, 2}$ is the  permutohedral toric variety over $P_{e}^{n}$,   it follows that $\partial \bar{L}_{0, n, 2}$ is the union of toric varieties over the faces of the permutohedron $P^{n}_{e}$.  It is known that the $(n-k)$ - dimensional faces of $P^{n}_{e}$  are  given  by $P^{n_1}_{e}\times \ldots \times P^{n_k}_{e}$ for all possible partition $\{n_1, \ldots , n_k\}$ of $n$. It follows   $\partial \bar{L}_{0, 2, n+2}$ is the union of $\bar{L}_{0, n_1+2, 2}\times \cdots \times \bar{L}_{0, n_k+2, 2}$.
\end{proof}

We proceed following~\cite{BV}.   Using an identification of  the permutohedral toric varieties  $P_{e}^{n}$ and the Losev-Manin spaces $\bar{L}_{0,  n+1, 2}$,    the generating function 
for  $\bar{L}_{0,  n+1, 2}$ is given  by
\[
F_{\bar{L}_{0, n+2, 2}} (t) = \sum\limits _{\bar{L}_{n_1, \ldots , n_k}\subset \partial \bar{L}_{0, n+2, 2}}\bar{L}_{n_1,\ldots , n_k}t^{n-n_1-\ldots -n_k}
\] 
and the interior by 
\[
\bar{L}^{0}_{0,  n+1, 2} = F_{\bar{L}_{0, n+2, 2}} (-1).
\]

For any $\bar{L}_{n_1, \ldots , n_k}\subset \partial \bar{L}_{0,  n+2, 2}$ we define its combinatorial type  $(m_{n_1}, \ldots , m_{n_k})$  and  the polynomial $m_{n_1, \ldots , n_k} (a) = a_{n_1}^{m_{n_1}}\cdots a_{n_k}^{m_{n_k}}$,  such that the weight $wt(m_{n_1, \ldots , n_k}(a)) = n$, where by definition the weight $wt(a_k)=k$. It easy to verify that these conditions determine uniquely $(m_{n_1}, \ldots , m_{n_k})$. 

Then Corollary 3.6~\cite{BV} implies:

\begin{cor}\label{invperm}
The coefficients of the multiplicative inverse 
\[
g(x) = 1+ \sum\limits_{n\geq 1}b_n\frac{x^{n}}{n!}
\]
of a formal power series $f(x) = 1+\sum _{n\geq 1}a_n\frac{x^{n}}{n!}$ are  by the  following sum
\[
b_n= -\sum\limits _{\bar{L}_{n_1, \ldots , n_k}\subset \partial \bar{L}_{0, n+2, 2}} (-1)^{n-n_1-\ldots -n_k}m_{n_1, \ldots ,n_k}(p).
\]
\end{cor}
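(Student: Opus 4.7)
The plan is to derive this formula as a direct translation of Corollary 3.6 of~\cite{BV} through the identification of the Losev--Manin spaces with permutohedral toric varieties. First I would invoke the isomorphism $\bar{L}_{0,n+2,2} \cong \chi(P_{e}^{n})$ established earlier (combining Theorem~\ref{divisorsFn} with the result of~\cite{LM} and~\cite{KAP}), so that the boundary stratification of $\bar{L}_{0,n+2,2}$ corresponds under the moment map to the face stratification of the permutohedron $P_{e}^{n}$. Under this correspondence the stratum $\bar{L}_{n_1,\ldots,n_k}=\bar{L}_{0,n_1+2,2}\times\cdots\times\bar{L}_{0,n_k+2,2}$ maps to the face $P_{e}^{n_1}\times\cdots\times P_{e}^{n_k}$ of $P_{e}^{n}$, and the generating function $F_{\bar{L}_{0,n+2,2}}(t)$ becomes (under the homomorphism $\mu:\mathcal{P}\to\mathcal{T}$ of Lemma~\ref{homomor}) the standard generating function of faces of $P_{e}^{n}$ considered in~\cite{BV}.

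Next I would unwind the combinatorial type: given a stratum $\bar{L}_{n_1,\ldots,n_k}$, some indices may repeat, and gathering equal factors gives the multiplicities $(m_{n_1},\ldots,m_{n_k})$ with $\sum_j j\cdot m_{j}=n$. The associated monomial $m_{n_1,\ldots,n_k}(a)=\prod_j a_{j}^{m_{j}}$ has weight exactly $n$, matching the weight condition $wt(a_k)=k$ prescribed in the statement. Specializing the formal variable of the generating function to $t=-1$ produces the Euler interior $\bar{L}^{0}_{0,n+2,2}=F_{\bar{L}_{0,n+2,2}}(-1)$, whose contribution of the stratum $\bar{L}_{n_1,\ldots,n_k}$ carries the sign $(-1)^{n-n_1-\cdots-n_k}$, which is precisely the sign appearing in the desired formula.

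Finally I would quote Corollary~3.6 of~\cite{BV}: for $f(x)=1+\sum_{n\geq 1} a_n x^n/n!$ the coefficients of the multiplicative inverse $g(x)=1+\sum_{n\geq 1}b_n x^n/n!$ are given by the Euler interior of the permutohedron evaluated on the monomials $m_{n_1,\ldots,n_k}(a)$, with an overall minus sign coming from the relation $f(x)g(x)=1$, i.e. $g=1-(f-1)g$ iterated. Substituting the identification of faces with boundary strata of $\bar{L}_{0,n+2,2}$ yields exactly
\[
b_n=-\sum_{\bar{L}_{n_1,\ldots,n_k}\subset\partial\bar{L}_{0,n+2,2}}(-1)^{n-n_1-\cdots-n_k}\,m_{n_1,\ldots,n_k}(a).
\]

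The main obstacle I expect is bookkeeping rather than content: one must verify that the bijection between boundary strata $\bar{L}_{n_1,\ldots,n_k}$ and faces $P_{e}^{n_1}\times\cdots\times P_{e}^{n_k}$ of $P_{e}^{n}$ is compatible with the factorial normalizations $x^n/n!$ used in~\cite{BV}, and that the sign convention in passing from the generating polynomial to the Euler interior matches the sign convention used there for multiplicative (as opposed to compositional) inversion. Once those conventions are aligned, the statement becomes a direct rewriting of~\cite{BV}, Corollary~3.6 in the language of Losev--Manin spaces.
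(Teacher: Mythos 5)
Your proposal is correct and follows essentially the same route as the paper: the paper likewise obtains the formula as a direct translation of Corollary~3.6 of~\cite{BV} through the identification of $\bar{L}_{0,n+2,2}$ with the permutohedral toric variety $\chi(P_e^{n})$, the matching of boundary strata with faces of $P_e^{n}$, and the evaluation of the generating function at $t=-1$. (The argument of $m_{n_1,\ldots,n_k}$ should indeed be the coefficients $a_{n_i}$ as you write, consistent with the paper's definition of the monomial just above the statement.)
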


\begin{rem}
 For a given exponential series $f(x) = x+\sum _{n\geq 2}a_{n}\frac{x^{n}}{n!}$ and its compositionally inverse $g(x) = x+\sum _{n\geq 2}b_n\frac{x^n}{n!}$ it holds $g(f(x))=x$, which implies that $g^{'}(f(x))\cdot  f^{'}(x) = 1$. Therefore, $g^{'}(f(x))$ is multiplicative inverse to $f^{'}(x) = 1+\sum _{n\geq 1}a_{n-1}\frac{x_{n}}{n!}$,  so by Corollary 3.6 from~\cite{BV}, being equivalent to Corollary~\ref{invperm}, its coefficients are determined by $a_n$ and the outgrows  stratification of the permutohedral varieties.  Together with cited McMullen result from the beginning of this paragraph, this establishes relation  between the outgrows of Deligne-Mumford compactification and toric compactification producing permutohedral variety.
\end{rem}

\begin{ex}\label{ex5}
Let us consider the  $\bar{L}_{0, 5, 2} = \bar{F}_{5}$, that is the toric manifold over $P^{3}_{e}$. The outgrows of the corresponding toric compactification for $(\C ^{\ast})^{2}$-orbit  are given by  six  $(\C ^{\ast})^{1}$-orbits and $6$ points.  The outgrows of the Losev-Manin compactification for  $F_5$ are given by nine   $\mathcal{M}_{0, 4}\times \mathcal{M}_{0, 3}$ and ten $\mathcal{M}_{0,3}^{3}$, compare to~\cite{BT0}.   The outgrows of the Deligne-Mumford compactification consists of ten      $\mathcal{M}_{0, 4}\times \mathcal{M}_{0, 3}$ and fifteen  $\mathcal{M}_{0,3}^{3}$, compare to~\cite{lando}.

Since the Deligne-Mumford compactification is obtained as the blow of  of the Losev-Manin compactification at the point $((1:1), (1:1), (1:1))$ we see, using Proposition~\ref{outgrows} and its proof,  that all outgrows of the Losev-Manin compactification being at the same time outgrows in the toric compactification are the outgrows in the Deligne-Mumford compactification as well.  Thus,   all nine exemplars of  $\mathcal{M}_{0, 4}\times \mathcal{M}_{0, 3}$ remain and the additional one  obviously comes from the blow up procedure. As far  as the outgrow points  are concerned, nine of them  remain   in the Deligne-Mumford compactification, while one disappears due to blowing up.  The new six outgrow points arise as follows.
In the neighborhood  $U$ of  $((1:1), (1:1), (1:1))$ of the form $((1:c_{1}^{'}), (1:c_{2}^{'}), (1:c_{3}^{'}))$ the Deligne-Mumford compactification $\overline{\mathcal{M}}_{0,5}$ is an open submanifold of $\bar{F}_{5}\times \C P^1$  given, in local coordinates $(c_1^{'}, c_{2}^{'})$ by $(1-c_1^{'})x_2=(1-c_2^{'})x_1$, that is 
\[
((1:c_1^{'}), (1:c_2^{'}), (1:c_3^{'}), (1-c_1^{'}: 1-c_2^{'})), \;\; c_2^{'}=c_1^{'}c_3^{'}. 
\]
Its closure is stratified by $\mathcal{M}_{0, 4} =\C P^{1}_{A}$ and the six points $((1:1), (1:0), (1:0),(0:1)),  ((1:1), (0:1),  (0:1), (0:1)), ((1:1), (1:1), (1:1), (0:1)), ((1:0), (1:1), (0:1), (1:0)), ((1:1), (1:1), (1:1), (1:0)), ((0:1), (1:1), (1:0), (1:0))$. 

Note that the outgrow point $((1:1), (1:1), (1:1))$  in the Losev-Manin compactification is not an outgrow point in the Deligne-Mumford compactification, and the given six points in the Deligne-Mumford compactification do not belong to the Losev-Manin compactification.
\end{ex}

\section{Conclusions}

We summarize the results presented in  the previous sections and show that  the  spaces of parameters of the chambers in the model for $G_{n,2}$ provide  o geometric realization of the moduli spaces of weighed stable genus zero curves,  up to birational equivalence.

Recall that birational map from a variety $X$ to a variety $Y$  is a rational map $f : X\dasharrow Y$, such that there is a rational map $Y \dasharrow X$ which is inverse to $f$.  In this case varieties $X$ and $Y$ are said to be birationally equivalent.   A birational morphism is a morphism $f: X \to Y$ which is birational.  
A birational morphism is defined everywhere, but its inverse may not be.

We first note :
\begin{lem}\label{mwbe}
For a fixed $n$ all moduli spaced $\overline{\mathcal{M}}_{0, \mathcal{A}}$, $|\mathcal{A}|=n$, are birationally equivalent.
\end{lem}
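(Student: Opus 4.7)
The plan is to reduce everything to the case $\mathcal{A}_{0} = (1,\ldots,1)$, which serves as a common ``top'' for the partial order on weight data, and then invoke \thmref{reduction} directly. Concretely, $\mathcal{A}_{0}$ is admissible in genus $0$ for $n\ge 3$ since $0<1\le 1$ and $\sum_{i=1}^{n}1=n>2$, and $\overline{\mathcal{M}}_{0,\mathcal{A}_{0}}$ coincides with the classical Deligne--Mumford space $\overline{\mathcal{M}}_{0,n}$. For an arbitrary admissible weight datum $\mathcal{B}=(b_{1},\ldots,b_{n})$ we have $b_{i}\le 1=a_{0,i}$ for every $i$, so \thmref{reduction} supplies a birational reduction morphism
\[
\rho_{\mathcal{A}_{0},\mathcal{B}} : \overline{\mathcal{M}}_{0,\mathcal{A}_{0}} \longrightarrow \overline{\mathcal{M}}_{0,\mathcal{B}} .
\]

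In particular $\overline{\mathcal{M}}_{0,\mathcal{A}_{0}}$ and $\overline{\mathcal{M}}_{0,\mathcal{B}}$ are birationally equivalent for every admissible $\mathcal{B}$. Since birational equivalence is an equivalence relation, for any two admissible weight data $\mathcal{A}$ and $\mathcal{B}$ with $|\mathcal{A}|=|\mathcal{B}|=n$ the composition of the birational map $\rho_{\mathcal{A}_{0},\mathcal{A}}^{-1}$ with $\rho_{\mathcal{A}_{0},\mathcal{B}}$ realises a birational equivalence between $\overline{\mathcal{M}}_{0,\mathcal{A}}$ and $\overline{\mathcal{M}}_{0,\mathcal{B}}$, which is what the lemma asserts.

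An alternative way to see the same conclusion, worth noting in the text, is that the open stratum $\mathcal{M}_{0,n}$ of configurations with pairwise distinct marked points is automatically weighted stable for any admissible $\mathcal{A}$ (the stability constraint $\sum_{i\in I}a_{i}\le 1$ is vacuous when no marked points coincide), and it is open and dense in every $\overline{\mathcal{M}}_{0,\mathcal{A}}$; two proper varieties sharing a common dense open subscheme are tautologically birational via the identity on that open subscheme. Both approaches avoid any genuine obstacle, so there is no hard step here; the only point requiring care is checking that $\mathcal{A}_{0}$ is admissible and that $b_{i}\le 1$ holds for any admissible $\mathcal{B}$, both of which are immediate from the definition of the domain $\mathcal{D}_{0,n}$.
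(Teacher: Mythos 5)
Your proposal is correct and follows essentially the same route as the paper's proof: both pass through $\mathcal{A}_{0}=(1,\ldots,1)$, invoke Theorem~\ref{reduction} to obtain birational reduction morphisms $\overline{\mathcal{M}}_{0,\mathcal{A}_{0}}\to\overline{\mathcal{M}}_{0,\mathcal{A}}$ and $\overline{\mathcal{M}}_{0,\mathcal{A}_{0}}\to\overline{\mathcal{M}}_{0,\mathcal{B}}$, and compose one with the rational inverse of the other. Your added remark that all these spaces share the dense open stratum $\mathcal{M}_{0,n}$ is a valid shortcut not present in the paper, but the core argument is the same.
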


\begin{proof}
We provide the proof for the clearness of the exposition. Consider $\overline{\mathcal{M}}_{0, n}= \overline{\mathcal{M}}_{0, \mathcal{A}_{0}}$, where $\mathcal{A}_{0} = (1, \ldots , 1)$ and arbitrary  weight vectors $\mathcal{A} $ and $\mathcal{B}$. Then there exists birational reduction morphisms $\rho _{\mathcal{A}_{0}, \mathcal{A}} : \overline{\mathcal{M}}_{0, \mathcal{A}_{0}} \to \overline{\mathcal{M}}_{0, \mathcal{A}}$ and $\rho _{\mathcal{A}_{0}, \mathcal{B}} : \overline{\mathcal{M}}_{0, \mathcal{A}_{0}} \to \overline{\mathcal{M}}_{0, \mathcal{B}}$. Let $r_{\mathcal{A}}$ and $r_{\mathcal{B}}$ be the corresponding inverse rational maps.  The map $\rho _{\mathcal{A}_{0}, \mathcal{B}}\circ r_{\mathcal {A}} :  \overline{\mathcal{M}}_{0, \mathcal{A}} \to  \overline{\mathcal{M}}_{0, \mathcal{B}}$ is a rational map with the inverse $ \rho _{\mathcal{A}_{0}, \mathcal{B}}\circ r_{\mathcal{B}}$ 
\end{proof}

\begin{rem}
For $n=4$ we have that $\overline{\mathcal{M}}_{0, 4}\cong \C P^1$ and  a  birational morphism $\rho _{\mathcal{A}_{0}, \mathcal{A}}$ being  blown-down is an  identity. Thus,  in this case $\overline{\mathcal{M}}_{0, \mathcal{A}} \cong \C P^1$ for any weight vector $\mathcal{A}$. In particular, it follows that $F_{\omega}\cong \C P^1$  for $C_{\omega}\subset \stackrel{\circ}{\Delta}_{n,2}$, which is showed in~\cite{BT} as well.  Therefore, the first non trivial case may appear  for  $n=5$ and Example~\ref{nohom} below  confirms its non-triviality.
\end{rem}

Note that in general it does not exist any birational   morphism between these  moduli spaces.

Obviously,  such morphisms exist between Losev-Manin spaces:

\begin{lem}\label{LM-birat}
For a fixed $n$ and any $2\leq k_1<k_2\leq n-2$ there exists a  birational morphism $L_{0, n, k_2} \to L_{0, n, k_1}$ between the corresponding Losev-Manin spaces. 
\end{lem}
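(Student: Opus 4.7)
The plan is to realise both Losev–Manin spaces as Hassett spaces with explicit weight vectors that are comparable componentwise, and then to invoke Hassett's reduction morphism.

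First I would apply Corollary~\ref{LM-weight} to represent
\[
\bar{L}_{0,n,k_2}\cong \overline{\mathcal{M}}_{0,\mathcal{A}_2}, \qquad \bar{L}_{0,n,k_1}\cong \overline{\mathcal{M}}_{0,\mathcal{A}_1},
\]
where I am free to pick
\[
\mathcal{A}_2=(\underbrace{1,\ldots,1}_{k_2},a_{k_2+1},\ldots,a_n),\qquad 0<a_i<1,\;\sum_{i=k_2+1}^n a_i\le 1 .
\]
By Lemma~\ref{aifree} the choice of the $a_i$ does not affect $\overline{\mathcal{M}}_{0,\mathcal{A}_2}$, so I can shrink them as convenient.

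Next I would manufacture a weight vector $\mathcal{A}_1$ for $\bar{L}_{0,n,k_1}$ of the form
\[
\mathcal{A}_1=(\underbrace{1,\ldots,1}_{k_1},b_{k_1+1},\ldots,b_n)
\]
with $b_i\le a_i$ for every $i$. For $i\le k_1$ this holds with equality since both entries equal $1$; for $k_1<i\le k_2$ the entry $a_i$ equals $1$, so any $b_i\in(0,1)$ works; for $k_2<i\le n$ I simply choose $b_i\le a_i$. I then arrange, shrinking the $b_i$'s further if needed, that $\sum_{i=k_1+1}^n b_i\le 1$, which is the condition of Corollary~\ref{LM-weight} for $\mathcal{A}_1$ to represent $\bar{L}_{0,n,k_1}$. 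This is clearly feasible because all the $b_i$ for $i>k_1$ can be taken arbitrarily small.

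With $\mathcal{B}=\mathcal{A}_1\le\mathcal{A}=\mathcal{A}_2$ componentwise, Theorem~\ref{reduction} supplies a birational reduction morphism
\[
\rho_{\mathcal{A}_2,\mathcal{A}_1}:\overline{\mathcal{M}}_{0,\mathcal{A}_2}\longrightarrow \overline{\mathcal{M}}_{0,\mathcal{A}_1},
\]
which, after composing with the isomorphisms from Corollary~\ref{LM-weight}, gives the desired birational morphism $\bar{L}_{0,n,k_2}\to \bar{L}_{0,n,k_1}$. There is no genuine obstacle in this argument; the only thing one must be careful about is that the representations provided by Corollary~\ref{LM-weight} can be chosen to lie in the comparability relation required by Theorem~\ref{reduction}, and the freedom afforded by Lemma~\ref{aifree} makes this straightforward.
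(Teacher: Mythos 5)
Your proposal is correct and follows essentially the same route as the paper: both realise the Losev--Manin spaces as Hassett spaces via Corollary~\ref{LM-weight} with componentwise-comparable weight vectors and then invoke the reduction morphism of Theorem~\ref{reduction}. The only cosmetic difference is that the paper chains reduction morphisms through all intermediate values $k_2 > k_2-1 > \cdots > k_1$, whereas you apply the reduction theorem once directly from $\mathcal{A}_2$ to $\mathcal{A}_1$; your explicit verification that the weights can be chosen comparably (using the freedom from Lemma~\ref{aifree}) is in fact slightly more careful than the paper's.
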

\begin{proof}
Denote by $\mathcal{A}_{k} = (1,\ldots, 1, a_{k+1}, \ldots, a_n)$ the weight vector for the space $L_{0, n, k}$. It holds  $\mathcal{A}_{2} <\ldots < \mathcal{A}_{n-2}$, which implies that there exists a sequence  of birational morphisms $L_{0, n, n-2} \stackrel{\rho _{\mathcal{A}_{n-2}, \mathcal{A}_{n-3}}}{\lra} L_{0, n, n-3}   \stackrel{\rho _{\mathcal{A}_{n-3}, \mathcal{A}_{n-4}}}{\lra}\ldots  \stackrel{\rho _{\mathcal{A}_{3}, \mathcal{A}_{2}}}{\lra} L_{0, n, 2}$, which proves the statement.
\end{proof}

Theorem\ref{LMChn-1} and Theorem\label{LMChn-2} give:

\begin{prop}\label{chambirat}
For a fixed $n$, all spaces of parameters $F_{\omega}$ of the chambers $C_{\omega}\subset \stackrel{\circ}{\Delta}_{n,2}$ are birationally equivalent.
\end{prop}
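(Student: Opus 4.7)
The plan is to reduce the statement to the birational equivalence of the Hassett moduli spaces $\overline{\mathcal{M}}_{0,\mathcal{A}}$ of a fixed cardinality $n$, which is already established in Lemma~\ref{mwbe}. The key observation is that each space of parameters $F_{\omega}$, regardless of the dimension of the chamber $C_{\omega}$, is birationally equivalent to some Hassett space with $n$ marked points.

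The first step is to exhibit, for each chamber $C_{\omega}\subset \stackrel{\circ}{\Delta}_{n,2}$, a weight vector $\mathcal{A}$ so that $F_{\omega}$ and $\overline{\mathcal{M}}_{0,\mathcal{A}}$ are birationally equivalent. For chambers of maximal dimension $n-1$, Theorem~\ref{omegaweight} provides a natural isomorphism $\overline{\mathcal{M}}_{0,\mathcal{A}}\cong F_{\omega}$ for a suitable $\mathcal{A}\in U\cap \mathcal{D}_{0,n}$. For chambers of lower dimension, $\dim C_{\omega}\leq n-2$, Theorem~\ref{omegabirat} produces a natural birational morphism $\hat{\rho}: \overline{\mathcal{M}}_{0,\mathcal{A}}\to F_{\omega}$ once we choose $\mathcal{A}$ in the coarse chamber whose closure contains a point of $C_{\omega}$. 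In both cases $F_{\omega}$ is birationally equivalent to a Hassett space $\overline{\mathcal{M}}_{0,\mathcal{A}(\omega)}$ with $n$ marked points.

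The second step is to invoke Lemma~\ref{mwbe} to conclude that $\overline{\mathcal{M}}_{0,\mathcal{A}(\omega_1)}$ and $\overline{\mathcal{M}}_{0,\mathcal{A}(\omega_2)}$ are birationally equivalent for any two chambers $C_{\omega_1}, C_{\omega_2}\subset \stackrel{\circ}{\Delta}_{n,2}$. Concretely, one composes the reduction morphisms to and from the Deligne--Mumford space $\overline{\mathcal{M}}_{0,n}=\overline{\mathcal{M}}_{0,\mathcal{A}_0}$ (with $\mathcal{A}_0=(1,\ldots,1)$) with their rational inverses, exactly as in the proof of that lemma. Finally, chaining these birational maps via the transitivity of birational equivalence yields the sequence
\[
F_{\omega_1}\;\dashleftarrow\dashrightarrow\; \overline{\mathcal{M}}_{0,\mathcal{A}(\omega_1)}\;\dashleftarrow\dashrightarrow\; \overline{\mathcal{M}}_{0,\mathcal{A}(\omega_2)}\;\dashleftarrow\dashrightarrow\; F_{\omega_2},
\]
which establishes the desired birational equivalence.

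There is essentially no serious obstacle here: the statement is an assembly of Theorem~\ref{omegaweight}, Theorem~\ref{omegabirat}, and Lemma~\ref{mwbe}, with the only minor point being to verify that a birational morphism automatically gives birational equivalence (immediate from the definition, since the rational inverse exists). If one wants an explicit representative, one can take the Losev--Manin space $\overline{L}_{0,n,k}$ as an intermediate object by Theorems~\ref{LMChn-1} and~\ref{LMChn-2}, and use Lemma~\ref{LM-birat} in place of Lemma~\ref{mwbe}; this yields the same conclusion and makes the geometric content (a cascade of blow-downs of boundary divisors) more transparent.
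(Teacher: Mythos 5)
Your proposal is correct, and its skeleton is the same as the paper's: exhibit a common moduli-theoretic birational model for every $F_{\omega}$ and chain birational maps through it. The difference is in the choice of intermediate object. You go through general Hassett spaces, invoking Theorem~\ref{omegaweight} (for $\dim C_{\omega}=n-1$) and Theorem~\ref{omegabirat} (for $\dim C_{\omega}\leq n-2$, which applies because any such chamber in $\stackrel{\circ}{\Delta}_{n,2}$ lies in the relative interior of an $(n-2)$-dimensional admissible polytope) and then apply Lemma~\ref{mwbe} to identify all weight-$n$ Hassett spaces up to birational equivalence. The paper instead routes everything through the Losev--Manin spaces: Theorem~\ref{LMChn-2} handles the low-dimensional chambers directly, and Theorem~\ref{LMChn-1} together with the $S_n$-action handles the top-dimensional ones via $\bar{L}_{0,n,n-2}$, after which the same composition-of-rational-inverses argument finishes. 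Your route is marginally more economical in that it does not need the Losev--Manin machinery at all and isolates the top-dimensional case as an honest isomorphism rather than merely a birational morphism; the paper's route has the advantage of naming a single concrete toric representative ($\bar{L}_{0,n,k}$, hence a permutohedral variety) in the birational class, which is the geometric picture the authors want to emphasize. You correctly flag this alternative yourself at the end, so the two arguments coincide up to which of the paper's earlier theorems one chooses to cite; there is no gap in either version.
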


\begin{proof}
The proof is analogous to the proof of Lemma~\ref{mwbe}. It follows  from Theorem~\ref{LMChn-2} that  for $\dim C_{\omega _1}, \dim C_{\omega _2}\leq n-2$, we have a  rational map $f : F_{\omega _1}\to L_{0, n,k}$,  which is  inverse to birational morphism $h: L_{0, n, k}\to F_{\omega _1}$  and the  birational morphism $ g : L_{0, n, k}\to F_{\omega _2}$, whose inverse we denote by  $l$.    Thus, it is defined the rational map $g\circ f$, whose inverse is $h\circ l$, so $F_{\omega _1}$ and $F_{\omega _2}$ are birationally equivalent. If $\dim C_{\omega} =n-1$, then  $C_{\omega}$ belongs to a hyperplane $x_{i_1}+ x_{i_2}<1$, since $x_1+\ldots +x_n=2$ and $n\geq 5$. Theorem~\ref{LMChn-1} implies, due to the action of the symmetric group $S_n$, that $F_{\omega}$ is birationally equivalent to $L_{0, n, n-2}$. Then we proceed as in the previous case. 
\end{proof}

We point here as well that in general there is no birational morphism between  spaces of parameters.

\begin{ex}\label{nohom}
We provide an example of two spaces of parameters $F_{\omega _1}$ and $F_{\omega _2}$ which are not homeomorphic. For simplicity, take $n=5$ and a chamber $C_{\omega _1}\subset \stackrel{\circ}{\Delta}_{5,2}$  with a facet on the hyperplane $x_1+x_3=1$ and  such that  $C_{\omega _1}$ belongs to the halfspace $x_1+x_3<1$. Let $C_{\omega _2}$ be a chamber,  which has the common facet with $C_{\omega _1}$ on the hyperplane $x_1+x_3=1$. Then $C_{\omega _2}$ belongs to the half space $x_1+x_3>1$. All admissible polytopes, except that one given by $x_1+x_3\leq 1$,  which contribute to $C_{\omega _1}$ contribute to $C_{\omega _2}$ as well and vice versa, all admissible polytopes, except that one given by $x_1+x_3\geq  1$,  which contribute to $C_{\omega _2}$ contribute to $C_{\omega _1}$.  Thus, $F_{\omega _1} = \cup F_{\sigma}  \cup F_{x_1+x_3\leq 1}$   and $F_{\omega _2} = \cup F_{\sigma}  \cup F_{x_1+x_3\geq 1}$,
Since, it can be easily seen from~\cite{BT1} that  $ F_{x_1+x_3\leq 1}\cong \C P_{1}\setminus \{0, 1, \infty\}$ and  $F_{x_1+x_3\geq 1}$ is a point, it follows that $F_{\omega _1}$ and $F_{\omega _1}$ are not homeomorphic.
\end{ex}

As far as the moduli spaces of stable  weighted  genus zero curves are concerned we prove  that all of them  can be geometrically realized via the  spaces of parameters $F_{\omega}$ generalizing  Corollary~\ref{embedhassett}:


\begin{thm}\label{general}
For any moduli space  $\overline{\mathcal{M}}_{0, \mathcal{A}}$ of stable  $\mathcal{A}$  -  weighted  genus zero curves  there exists a birational morphism  to the  space of parameters $F_{\omega}$   of a chamber $C_{\omega}\subset \stackrel{\circ}{\Delta}_{n,2}$.
\end{thm}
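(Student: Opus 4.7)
The plan is to pass through the reduction morphism of Theorem~\ref{reduction} in order to apply either Theorem~\ref{omegaweight} or Theorem~\ref{omegabirat}, exploiting the identification $\partial\mathcal{D}_{0,n}=\stackrel{\circ}{\Delta}_{n,2}$ so that any admissible weight can be scaled to a linearisation sitting in a chamber of the admissible decomposition.

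Given $\mathcal{A}=(a_1,\ldots,a_n)\in\mathcal{D}_{0,n}$, I first set $\lambda=2/(a_1+\ldots+a_n)$ and $\mathcal{L}=\lambda\mathcal{A}$. Since $\sum a_i>2$ and $a_i\leq 1$, one gets $\lambda\in(0,1)$ and $0<\lambda a_i<1$ for each $i$, hence $\mathcal{L}\in\stackrel{\circ}{\Delta}_{n,2}$, and $\mathcal{L}$ lies in a unique chamber $C_\omega$ of the admissible decomposition. This $C_\omega$ will be the chamber appearing in the statement. Next, I fix $\lambda'\in(\lambda,1]$ sufficiently close to $\lambda$ and set $\mathcal{A}'=\lambda'\mathcal{A}$; then $\mathcal{A}'\in\mathcal{D}_{0,n}$ and $\mathcal{A}'\leq\mathcal{A}$ coordinate-wise, so Theorem~\ref{reduction} supplies a birational reduction morphism
\[
\rho_{\mathcal{A},\mathcal{A}'}:\overline{\mathcal{M}}_{0,\mathcal{A}}\to\overline{\mathcal{M}}_{0,\mathcal{A}'}.
\]

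I then split into two cases. If $\dim C_\omega=n-1$, the linearisation $\mathcal{L}$ satisfies $\sum_{i\in S}t_i\neq 1$ for every $S\subset\{1,\ldots,n\}$, so $\mathcal{L}$ is typical; for $\lambda'$ close enough to $\lambda$ the weight $\mathcal{A}'$ falls inside the neighborhood of $\mathcal{L}$ provided by Theorem~\ref{omegaweight}, giving an isomorphism $\overline{\mathcal{M}}_{0,\mathcal{A}'}\stackrel{\cong}{\to}F_\omega$. If $\dim C_\omega\leq n-2$, then $C_\omega$ lies on some hyperplane $\sum_{i\in S}t_i=1$ with $2\leq|S|\leq n-2$, the admissible polytope $P_\sigma=\Delta_{n,2}\cap\{\sum_{i\in S}t_i=1\}$ has dimension $n-2$, and $C_\omega\subset\stackrel{\circ}{P}_\sigma$ because $C_\omega\subset\stackrel{\circ}{\Delta}_{n,2}$. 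On every hyperplane $\sum_{i\in S}t_i=1$ passing through $\mathcal{L}$ the scaling factor $\lambda'/\lambda>1$ forces $\sum_{i\in S}a'_i=\lambda'/\lambda>1$, while on the remaining hyperplanes $\mathcal{A}'$ stays on the same side as $\mathcal{L}$; hence $\mathcal{A}'$ sits in a uniquely determined coarse chamber of $\mathcal{D}_{0,n}$ whose closure contains $\mathcal{L}$, and Theorem~\ref{omegabirat} yields a birational morphism $\overline{\mathcal{M}}_{0,\mathcal{A}'}\to F_\omega$.

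In both cases, composing $\rho_{\mathcal{A},\mathcal{A}'}$ with the morphism just obtained gives the desired birational morphism $\overline{\mathcal{M}}_{0,\mathcal{A}}\to F_\omega$. I expect the main obstacle to be the atypical case, specifically checking that the perturbation $\mathcal{A}'=\lambda'\mathcal{A}$ sits in exactly the coarse chamber of $\mathcal{D}_{0,n}$ whose closure contains $\mathcal{L}$, with no unwanted jump across hyperplanes of the fine arrangement through $\mathcal{L}$; the signed computation above is intended to control this, but a careful enumeration of all fine hyperplanes meeting $\mathcal{L}$ will be needed to make the argument fully rigorous.
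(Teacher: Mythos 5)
Your proposal is correct and follows essentially the same route as the paper's proof: rescale $\mathcal{A}$ onto the hyperplane $\sum x_i=2$ to locate the chamber $C_{\omega}$, perturb back into $\mathcal{D}_{0,n}$ by a coordinatewise smaller weight, and compose the reduction morphism of Theorem~\ref{reduction} with the morphism supplied by Theorem~\ref{omegaweight} or Theorem~\ref{omegabirat}. Your one-parameter perturbation $\mathcal{A}'=\lambda'\mathcal{A}$ is simply a concrete instance of the paper's choice of a weight $\mathcal{E}$ near $\mathcal{B}=\tfrac{2}{A}\mathcal{A}$ with $e_i<a_i$, and the wall-crossing check you flag as the main obstacle does go through, since each $\sum_{i\in S}\lambda''a_i$ is strictly monotone in $\lambda''$ and there are only finitely many walls.
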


\begin{proof}
Let $D_{\omega}$ be a chamber in $\mathcal{D}_{0, n}$ such that $\mathcal{A}\in D_{\omega}$. If there exists a chamber $C_{\omega}\in \mathcal{C}_{n,2}$ such that $C_{\omega}\subset \overline{D}_{\omega}$, that is $D_{\omega}\in \xi (C_{\omega})$  the statement follows directly from Theorem~\ref{omegaweight} or Theorem~\ref{omegabirat}.

If this is not the case, let $\mathcal{A} = (a_1, \ldots , a_n)$ and $A = \sum _{i=1}^{n}a_i$. Then  $A>2$ and let $\mathcal{B}  =(b_1, \ldots , b_n)$, where $b_i = \frac{2a_i}{A}$. Clearly $b_i<a_i$ and $\sum _{i=1}^{n} b_i=2$. Thus, $\mathcal{B}$ belongs to some chamber $C_{\omega}\subset \stackrel{\circ}{\Delta}_{n,2}$. Let $D_{\omega}\in \mathcal{C}_{\mathcal{H}}$ such that ${D}_{\omega}\in \xi (C_{\omega})$. Then by Theorem~\ref{omegaweight} or Theorem~\ref{omegabirat} there exists a birational morphism $\overline{\mathcal{M}}_{0, \mathcal{E}} \to F_{\omega}$, where $\mathcal{E}\in D_{\omega}$ and $F_{\omega}$ is the space of parameters of $C_{\omega}$. Obviously, $\mathcal{E}$ can be chosen arbitrarily close to $\mathcal{B}$, so we can assume that $e_{i}< a_i$ for $\mathcal{E}= (e_1, \ldots, e_n)$. It follows that there exists a birational reduction morphism $\rho _{\mathcal{A}, \mathcal{E}} : \overline{\mathcal{M}}_{0, \mathcal{A}} \to \overline{\mathcal{M}}_{0, \mathcal{E}}$, which proves the statement.
\end{proof}



\begin{rem}
It follows from Example~\ref{nohom} that there exists  moduli spaces $\overline{\mathcal{M}}_{0, \mathcal{A}}$ with    $\mathcal{A}$ belonging  to the  chambers of maximal dimension in $\mathcal{D}_{0, n}$, such that they  are not homeomorphic.  Precisely,  following notations of Example~\ref{nohom},  since $C_{\omega _1}$ and $C_{\omega _2}$ are of maximal dimension we have that $\overline{\mathcal{M}}_{0, \mathcal{A}_{1}} \cong F_{\omega _1}$ and $\overline{\mathcal{M}}_{0, \mathcal{A}_{2}} \cong F_{\omega _2}$ for $\mathcal{A}_{1}\in D_{\omega _1}$  and $\mathcal{A}_{2}\in D_{\omega _2}$ such that $C_{\omega _1}\subset \overline{D}_{\omega _1}$ and $C_{\omega _2}\subset \overline{D}_{\omega _2}$. It follows that $\overline{\mathcal{M}}_{0, \mathcal{A}_{2}}$ and $\overline{\mathcal{M}}_{0, \mathcal{A}_{2}}$ are not homeomorphic.
\end{rem}

\subsection{Hassett category and Losev-Manin category}

We say that a category $\mathcal{C}_{1}$ can be modeled by a category $\mathcal{C}_{2}$ if there exist birational morphisms from  the objects of  $\mathcal{C}_{1}$ to the objects   of $\mathcal{C}_{2}$, these birational morphisms cover all object from $\mathcal{C}_{2}$  and   they  commute with the morphisms of these categories.   

\subsubsection{Hassett category} Let $\mathcal{H}_{0,n}$ be the category whose objects are  all moduli spaces of weighted stable genus zero curves and the morphisms   are the reduction morphisms $\rho _{\mathcal{A}_{0}, \mathcal{A}}$ .  

We consider the category, which we denote by $\mathcal{U}_{n}$,  whose objects are the universal space of parameters $\mathcal{F}_{n}$ and all spaces of parameters $F_{\omega}$. The morphisms of this category we assume to be the projections $p_{\omega} : \mathcal{F}_{n}\to F_{\omega}$ which are by Proposition~\ref{projred}  defined for any $F_{\omega}$.

Theorems~\ref{general} and  ~\ref{projred} imply:

\begin{thm}
 The  category $\mathcal{H}_{0, n}$ can be modeled by the topological  category $\mathcal{U}_{n}$.
\end{thm}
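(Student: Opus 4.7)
The plan is to produce, for every object $\overline{\mathcal{M}}_{0,\mathcal{A}}$ of $\mathcal{H}_{0,n}$, a canonical birational morphism to an object of $\mathcal{U}_n$; to show that every object of $\mathcal{U}_n$ lies in the image of this assignment; and finally to verify that the reduction morphisms $\rho_{\mathcal{A}_0,\mathcal{A}}$ of $\mathcal{H}_{0,n}$ intertwine with the projections $p_\omega$ of $\mathcal{U}_n$.

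First I would define the assignment $\Phi$ on objects. Set $\Phi(\overline{\mathcal{M}}_{0,n}) = \mathcal{F}_n$ with the birational (indeed diffeomorphic) identification supplied by \thmref{Keel-univ}. For an arbitrary weight vector $\mathcal{A}$, \thmref{general} yields a chamber $C_\omega \subset \stackrel{\circ}{\Delta}_{n,2}$ and a birational morphism $\varphi_\mathcal{A} : \overline{\mathcal{M}}_{0,\mathcal{A}} \to F_\omega$; set $\Phi(\overline{\mathcal{M}}_{0,\mathcal{A}}) = F_\omega$ and carry $\varphi_\mathcal{A}$ as the required birational map. A choice is involved when $\dim C_\omega \leq n-2$ (since $\xi(C_\omega)$ is not a singleton), but since only \emph{existence} of such a birational morphism is required by the definition of modeling, any coherent choice will do. On the morphism side, the generators of $\mathcal{H}_{0,n}$ are the reductions $\rho_{\mathcal{A}_0,\mathcal{A}}$, which I send to the projections $p_\omega : \mathcal{F}_n \to F_\omega$.

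Coverage is then immediate: $\mathcal{F}_n$ is hit by Keel's identification, while for an arbitrary space of parameters $F_\omega$, if $\dim C_\omega = n-1$ then \thmref{omegaweight} provides an $\mathcal{A}$ with $\overline{\mathcal{M}}_{0,\mathcal{A}} \cong F_\omega$, and if $\dim C_\omega \leq n-2$ then \thmref{omegabirat} furnishes an $\mathcal{A}$ in the closure of the appropriate coarse chamber with a birational morphism onto $F_\omega$.

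The main obstacle is commutativity of the square
\[
\begin{CD}
\overline{\mathcal{M}}_{0,n} @>{\rho_{\mathcal{A}_0,\mathcal{A}}}>> \overline{\mathcal{M}}_{0,\mathcal{A}} \\
@VV \cong V @VV \varphi_\mathcal{A} V \\
\mathcal{F}_n @>{p_\omega}>> F_\omega
\end{CD}
\]
for each reduction $\rho_{\mathcal{A}_0,\mathcal{A}}$. When $C_\omega$ has maximal dimension $n-1$, this is exactly the content of \thmref{projred}. For chambers of smaller dimension I would imitate the factorization used in the proof of \thmref{general}: pick a weight $\mathcal{E}$ in a chamber $D_\omega \in \xi(C_\omega)$ of maximal dimension in $\mathcal{D}_{0,n}$ with $e_i \leq 1$ and $e_i$ close enough to the rescaling of $\mathcal{A}$ so that $e_i \geq a_i$ (as in the proof of \thmref{general}), and factor $\rho_{\mathcal{A}_0,\mathcal{A}} = \rho_{\mathcal{E},\mathcal{A}} \circ \rho_{\mathcal{A}_0,\mathcal{E}}$ via functoriality of Hassett reductions (\thmref{reduction}). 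The outer factor is controlled by \thmref{projred} applied to $\mathcal{E}$, and the inner factor descends, under $\varphi_\mathcal{E}$, to the contraction of the exceptional locus described in \thmref{exceptlocus} and Proposition~\ref{proj}. Comparing the two resulting compositions on the dense open subset $\mathcal{M}_{0,n} \cong F_n$, where all the reductions and projections act as the identity, forces agreement on a dense open set; since $\mathcal{F}_n$ is a smooth compact manifold and the maps involved are continuous, agreement on a dense open subset yields agreement everywhere, establishing the required commutativity and completing the verification that $\mathcal{H}_{0,n}$ is modeled by $\mathcal{U}_n$.
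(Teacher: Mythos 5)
Your proposal is correct and follows essentially the same route as the paper, which simply invokes Theorem~\ref{general} (existence of birational morphisms to the spaces $F_{\omega}$) together with Theorem~\ref{projred} (identification of $p_{\omega}$ with a reduction morphism) and leaves the remaining verifications implicit. You in fact supply more than the paper does -- the coverage check via Theorems~\ref{omegaweight} and~\ref{omegabirat} and, for chambers of dimension $\leq n-2$ where Theorem~\ref{projred} does not directly apply, the density-plus-continuity argument for commutativity of the square -- and these additions are sound (only note that in the proof of Theorem~\ref{general} the auxiliary weight satisfies $e_i<a_i$, whereas your factorization correctly requires $a_i\leq e_i\leq 1$, so the parenthetical reference should not suggest the inequalities match).
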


Let $Y_n= \bigcup \limits_{\dim C_{\omega} =n -1}C_{\omega}$ and $\mathcal{Y}_{n} = Y_n\times \mathcal{F}_{n}\subset U_n$ and $\hat{G}_{n}$ the restriction of the map $G_n$ from $U_n$ to $\mathcal{Y}_{n}$.

Let $\stackrel{\circ}{\mathcal{H}}_{0, n}\subset \mathcal{H}_{0, n}$ be a subcategory whose objects  are   $\overline{\mathcal{M}}_{0, \mathcal{A}_{0}}$ and  those  $\overline{\mathcal{M}}_{0, \mathcal{A}}$ such that $\mathcal{A} \in w_{c}\subset \mathcal{D}_{0,n}$ and $w_{c}\cap \partial \mathcal{D}_{0, n} $ is a chamber of maximal dimension, that is $n-1$. The morphisms in  $\stackrel{\circ}{\mathcal{H}}_{0, n}$ are those induced from  $\mathcal{H}_{0, n}$.
 
Then Theorem~\ref{omegaweight} in particular gives:

\begin{cor}
The  category $\stackrel{\circ}{\mathcal{H}}_{0, n}$ can be isomorphically modeled by the topological category $(\mathcal{Y}_n, \hat{G}_{n})$
\end{cor}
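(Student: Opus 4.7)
The plan is to construct an isomorphism of topological categories $\Psi : \stackrel{\circ}{\mathcal{H}}_{0,n} \to (\mathcal{Y}_n, \hat{G}_n)$ in which the connecting maps between objects are genuine homeomorphisms (not merely birational morphisms as in Theorem~\ref{general}). On objects I set $\Psi(\overline{\mathcal{M}}_{0,\mathcal{A}_0}) = \mathcal{F}_n$ and, for each $\overline{\mathcal{M}}_{0,\mathcal{A}}$ with $\mathcal{A}$ lying in a coarse chamber $w_c \subset \mathcal{D}_{0,n}$ whose intersection with $\partial \mathcal{D}_{0,n}$ is $(n-1)$-dimensional, $\Psi(\overline{\mathcal{M}}_{0,\mathcal{A}}) = F_\omega$, where $C_\omega \subset \stackrel{\circ}{\Delta}_{n,2}$ is the unique maximal-dimensional chamber associated to $\mathcal{A}$. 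On morphisms, every reduction morphism $\rho_{\mathcal{A}_0,\mathcal{A}}$ is sent to the projection $p_\omega : \mathcal{F}_n \to F_\omega$.

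First I verify that the assignment on objects is well-defined and bijective. Well-definedness in the $\mathcal{A}$-direction is Proposition~\ref{chambersweight}: $\overline{\mathcal{M}}_{0,\mathcal{A}}$ depends only on the coarse chamber of $\mathcal{A}$. Well-definedness in the chamber direction: the slice $w_c \cap \{\sum_i x_i = 2\}$ lies in $\partial \mathcal{D}_{0,n}$ and, by hypothesis, has dimension $n-1$, so it sits inside a typical linearisation region and by the injective map $\xi$ (combined with Theorem~\ref{typical}) determines a unique top-dimensional $C_\omega \in \mathcal{C}_{n,2}$. By Theorem~\ref{Keel-univ}, $\Psi(\overline{\mathcal{M}}_{0,\mathcal{A}_0})$ is a diffeomorphism, and by Theorem~\ref{omegaweight} the identification $\overline{\mathcal{M}}_{0,\mathcal{A}} \cong F_\omega$ is an isomorphism, so $\Psi$ is a homeomorphism on each object and is surjective on the objects of $(\mathcal{Y}_n,\hat{G}_n)$.

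For morphisms, Theorem~\ref{projred} yields that when $\dim C_\omega = n-1$ the projection $p_\omega$ coincides with a birational reduction morphism $\rho_{\mathcal{A}_0,\mathcal{B}}$ for any $\mathcal{B}$ in the fine chamber $D_\omega$ with $C_\omega \subset \overline{D}_\omega$; under the identifications above this is exactly $\Psi(\rho_{\mathcal{A}_0,\mathcal{A}})$. Functoriality then follows from Theorem~\ref{reduction} (compositions of reductions are reductions), together with the decomposition of boundary divisors in Proposition~\ref{reductionboundarydivisors}, which matches the stratification $\mathcal{F}_n = \cup_{\sigma \in \omega} \tilde{F}_\sigma$ from Theorem~\ref{univ-virt} via Proposition~\ref{proj}. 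Hence $\Psi$ is a bijection on Hom-sets that respects composition.

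Finally I check compatibility with $\hat{G}_n$. By~\eqref{projection} and Proposition~\ref{ComegaF}, for each $x \in C_\omega \subset Y_n$ the slice $\hat{G}_n|_{\{x\} \times \mathcal{F}_n}$ factorises as $h_\omega^{-1}(x, p_\omega(\cdot))$, which equals $p_\omega$ up to the homeomorphism $h_\omega$; hence every morphism in $\Psi(\stackrel{\circ}{\mathcal{H}}_{0,n})$ is realised as a slice of $\hat{G}_n$ over $Y_n$, exhibiting $(\mathcal{Y}_n, \hat{G}_n)$ as an isomorphic model. The main obstacle is the bijection of chambers: the coarse chambers of $\mathcal{D}_{0,n}$ live in the halfspace $\sum_i x_i > 2$ while the chambers $C_\omega$ sit in the hyperplane $\sum_i x_i = 2$, so the matching of objects requires the boundary analysis underlying Theorem~\ref{typical} together with the injectivity of $\xi$; once this step is secured, the remaining verifications are immediate consequences of the theorems already assembled.
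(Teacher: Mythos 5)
Your proposal is correct and follows essentially the same route as the paper: the paper obtains this corollary directly from Theorem~\ref{omegaweight} (combined implicitly with Theorem~\ref{Keel-univ} for the identification $\mathcal{F}_n \cong \overline{\mathcal{M}}_{0,n}$ and Theorem~\ref{projred} for matching the reduction morphisms with the projections $p_\omega$), which is exactly the chain of identifications you assemble. Your write-up merely makes explicit the object/morphism bookkeeping and the compatibility with $\hat{G}_n$ that the paper leaves implicit.
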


We call  $\stackrel{\circ}{\mathcal{H}}_{0, n}$ the Hassett category.  

Let   $N(T^n)$ be the normalizer of the maximal torus $T^n\subset U(n)$.  Recall that the Grassmannian $G_{n,2}$ is the disjoint union of the strata for the canonical $(\C ^{\ast})^{n}$-action of $G_{n,2}$.  According to~\cite{BT0}, the action of $N(T^n)$  on $G_{n,2}$ permutes the strata  and  induces the action of the symmetric group $S_n$ on the orbit space $G_{n,2}/T^n$. The induced moment map $\hat{\mu} : G_{n,2}/T^n \to \Delta _{n,2}$ is $S_n$ equivariant where $\Delta _{n,2}$ is considered with the standard action of the group $S_n$ permuting the coordinates of points. Thus, the action of $S_n$  on permutes the admissible polytopes  $P_{\sigma}$ in $\Delta_{n, 2}$ as well as their spaces of parameters $F_{\sigma}$ in $G_{n,2}/T^n$. It follows from~\eqref{chdef} and Proposition~\ref{ComegaF} that $S_n$-action on $\Delta _{n,2}$ permutes   chambers $C_{\omega}$ in $\Delta_{n,2}$, while its action on $G_{n,2}/T^n$ permutes the spaces of parameters $F_{\omega}$. 

On the other hand the group $S_n$ acts on the domain $\mathcal{D}_{0,n}$ as well as on its chambers defined by~\eqref{chcoarse}. Thus, $S_n$ acts on the family of moduli spaces $\overline{\mathcal{M}}_{0, \mathcal{A}}$ by permuting the weights.  Being continuous,  this action preserves the family of objects which define  category $\mathcal{H}_{0, n}$. 

Theorem~\ref{omegaweight} and Theorem~\ref{omegabirat} and Theorem~\ref{general} below give:

\begin{prop}
The embedding of  the objects from  $\mathcal{H}_{0,n}$  in $G_{n,2}/T^n$ is equivariant for $S_n$-actions on $\mathcal{H}_{0,n}$ and $G_{n,2}/T^n$. 
\end{prop}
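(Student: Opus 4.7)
The plan is to unpack the two $S_n$-actions concretely and then verify that the birational morphisms constructed in Theorem~\ref{omegaweight}, Theorem~\ref{omegabirat} and Theorem~\ref{general} are natural in the index $i \in \{1,\ldots,n\}$. First I would record that the defining hyperplane arrangements on both sides, namely $\mathcal{W}_f$ from~\eqref{chcoarse} for $\mathcal{D}_{0,n}$ and the arrangement $\mathcal{R}_n$ from~\eqref{hyparrang} for $\Delta_{n,2}$, consist of affine equations of the form $\sum_{i\in S}x_i=1$ whose coefficient system is invariant under coordinate permutations. Consequently the action of $\tau\in S_n$ on $\R^n$ by $\tau\cdot(x_1,\ldots,x_n)=(x_{\tau^{-1}(1)},\ldots,x_{\tau^{-1}(n)})$ permutes the chambers of $\mathcal{C}_{n,2}$ and of $\mathcal{C}_{\mathcal H}$, and the assignment $\xi:\mathcal{C}_{n,2}\to S(\mathcal{C}_{\mathcal H})$ is $S_n$-equivariant, i.e. $\xi(\tau C_\omega)=\tau\,\xi(C_\omega)$.

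Next I would verify that the key identifications used in the embedding construction are $S_n$-equivariant. The action of $N(T^n)$ on $G_{n,2}$ permutes the Pl\"ucker coordinates $P^{ij}$ by the formula $\tau\cdot P^{ij}=P^{\tau^{-1}(i)\tau^{-1}(j)}$; this permutes the strata $W_\sigma\mapsto W_{\tau\sigma}$, the admissible polytopes $P_\sigma\mapsto P_{\tau\sigma}$, and therefore the chambers and the spaces of parameters, giving $\tau\cdot F_\omega=F_{\tau\omega}$. On the moduli side, if $\mathcal{A}=(a_1,\ldots,a_n)$ and $\tau\mathcal{A}=(a_{\tau^{-1}(1)},\ldots,a_{\tau^{-1}(n)})$, the relabeling of the marked points gives a canonical isomorphism $\overline{\mathcal{M}}_{0,\mathcal A}\cong\overline{\mathcal{M}}_{0,\tau\mathcal A}$ which intertwines the reduction morphisms $\rho_{\mathcal A,\mathcal B}$ with $\rho_{\tau\mathcal A,\tau\mathcal B}$. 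This is visible from the GIT description: the diagonal $PGL_2(\C)$-action on $(\C P^1)^n$ commutes with coordinate permutations, so the GIT quotient $\mathcal{G}(\mathcal{L})$ is canonically identified with $\mathcal{G}(\tau\mathcal{L})$ under the induced permutation.

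Assembling these two equivariances, for any $\mathcal A\in D_\omega\in\xi(C_\omega)$ one gets a commutative square
\begin{equation*}
\begin{CD}
\overline{\mathcal{M}}_{0,\mathcal A} @>{\tau}>> \overline{\mathcal{M}}_{0,\tau\mathcal A} \\
@VVV @VVV \\
F_\omega @>{\tau}>> F_{\tau\omega}
\end{CD}
\end{equation*}
where the vertical arrows are the natural isomorphism of Theorem~\ref{omegaweight} or the birational morphism of Theorem~\ref{omegabirat}. Combined with the $S_n$-equivariance of $\mathcal{F}_n\cong\overline{\mathcal{M}}_{0,n}$ established in~\cite{BT2} and of the projections $p_\omega$ (which follows because the building set $\mathcal{G}_n$ defining the wonderful compactification is permutation-symmetric, and the projection $p_\omega$ is built from $\hat F_I$'s indexed by three-element subsets), one concludes that the diagram~\eqref{diag} is compatible with the $S_n$-actions on all four spaces, which is precisely the asserted equivariance.

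The step I expect to require the most care is the general case of Theorem~\ref{general}, where one does not directly have $\mathcal A\in D_\omega$ with $D_\omega\in\xi(C_\omega)$ and must use the auxiliary rescaling $\mathcal B=\tfrac{2}{A}\mathcal A$ together with a further reduction $\rho_{\mathcal A,\mathcal E}$. The issue is that this rescaling and the choice of $\mathcal E$ a priori depend on arbitrary choices, so the resulting birational morphism is not literally canonical. However, rescaling by the positive scalar $2/A$ and replacing $\mathcal B$ by a nearby $\mathcal E$ in the same chamber both commute with the $S_n$-action on $\R^n$, and any two choices differ by a further reduction morphism which is itself equivariant. Thus, after fixing an $S_n$-equivariant choice of representatives (e.g.\ via a section of the chamber assignment that respects the symmetric group action, which exists because chambers form $S_n$-orbits of uniform type), equivariance of the embedding follows, completing the proof.
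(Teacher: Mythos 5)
Your proposal is correct and follows essentially the same route as the paper: the paper derives the proposition from the $N(T^n)$-induced $S_n$-action permuting strata, admissible polytopes, chambers and spaces of parameters on the Grassmannian side, the weight-permutation action on $\mathcal{D}_{0,n}$ on the moduli side, and Theorems~\ref{omegaweight}, \ref{omegabirat} and \ref{general}, which is exactly the assembly you carry out. Your extra discussion of the choice-dependence in the rescaling step of Theorem~\ref{general} is more careful than the paper, which leaves that point implicit.
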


\subsubsection{Losev-Manin category}
Let $\mathcal{L}\mathcal{M}_{0, n}$ be the category whose object are the space $\overline{\mathcal{M}}_{0. n}$ and the   Losev-Manin spaces $\bar{L}_{0, k, n}$, where  $n\geq 5$ and $2\leq k\leq n-2$,  and the morphisms are the reduction morphisms $\overline{\mathcal{M}}_{0,n}\to \bar{L}_{0, n, k}$. We say this category to be the  Losev-Manin category.

 Let $\mathcal{Z}_{n} \subset U_n$ is defined by $\mathcal{Z}_{n}= (\Delta _{n,2}\cap t_3+\ldots +t_n\leq 1) \times \mathcal{F}_{n}$ and $\tilde{G}_{n}$ is the restriction of the map $G_n$ from $U_n$  to $\mathcal{Z}_{n}$.  The objects of the category which we denote by $(\mathcal{Z}_{n}, \tilde{G}_{n})$ are the universal space of parameters $\mathcal{F}_{n}$  and the  spaces of parameters $F_{\omega}$ such that $C_{\omega}\subset (\Delta _{n,2}\cap t_3+\ldots +t_n\leq 1)$,  and the morphisms are the projections $p_{\omega} : \mathcal{F}_{n}\to F_{\omega}$.

Theorem~\ref{LMChn-1}  and Theorem~\ref{LMChn-2} imply:

\begin{thm}
 The Losev-Manin category  $\mathcal{L}\mathcal{M}_{0,n}$ can be modeled by the topological category  $(\mathcal{Z}_{n}, \tilde{G}_n)$.
\end{thm}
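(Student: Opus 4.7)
The plan is to exhibit birational morphisms from each object of $\mathcal{LM}_{0,n}$ to an object of $(\mathcal{Z}_n,\tilde{G}_n)$ that cover the latter and commute with the morphisms of both categories. The universal object is handled first: by Theorem~\ref{Keel-univ} we have a diffeomorphism $\overline{\mathcal{M}}_{0,n}\cong \mathcal{F}_n$, which serves as the required birational identification on the universal parameter space.

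Next, I would address the spaces of parameters $F_\omega$ with $C_\omega\subset \Delta_{n,2}\cap\{t_3+\ldots+t_n\leq 1\}$ by splitting on $\dim C_\omega$. When $\dim C_\omega=n-1$ the chamber lies in $\stackrel{\circ}{P}_{3,\ldots,n}^{\leq 1}$, so Theorem~\ref{LMChn-1} (with $k=2$) supplies a birational morphism $\bar{L}_{0,n,2}\to F_\omega$; when $\dim C_\omega\leq n-2$, Theorem~\ref{LMChn-2} yields a birational morphism $\bar{L}_{0,n,k}\to F_\omega$ for a suitable $k$. For any other Losev-Manin space $\bar{L}_{0,n,k'}$, composing the canonical birational reduction $\bar{L}_{0,n,k'}\to \bar{L}_{0,n,k}$ from Lemma~\ref{LM-birat} with the morphism constructed above produces a birational morphism $\bar{L}_{0,n,k'}\to F_\omega$. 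This shows every object of $(\mathcal{Z}_n,\tilde{G}_n)$ lies in the image.

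For compatibility with morphisms, the morphisms in $\mathcal{LM}_{0,n}$ are the reduction morphisms $\rho_{\mathcal{A}_0,\mathcal{A}}:\overline{\mathcal{M}}_{0,n}\to \bar{L}_{0,n,k}$, those in $(\mathcal{Z}_n,\tilde{G}_n)$ are the projections $p_\omega:\mathcal{F}_n\to F_\omega$, and one needs the square
\[
\begin{CD}
\overline{\mathcal{M}}_{0,n} @>{\cong}>> \mathcal{F}_n \\
@VV{\rho_{\mathcal{A}_0,\mathcal{A}}}V @VV{p_\omega}V \\
\bar{L}_{0,n,k} @>>> F_\omega
\end{CD}
\]
to commute. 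In the typical case $\dim C_\omega=n-1$ this is exactly Theorem~\ref{projred}, which identifies $p_\omega$ with a Hassett reduction morphism; the passage through the intermediate $\bar{L}_{0,n,k'}$ is formal, because all arrows in sight are reductions of Hassett type and their composition law is compatible with Proposition~\ref{reductionboundarydivisors}.

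The main obstacle will be the atypical case $\dim C_\omega\leq n-2$, where both $p_\omega$ and the comparison morphism have nontrivial exceptional loci. The plan there is to match the exceptional locus of $p_\omega$, described in Theorem~\ref{exceptlocus} and Proposition~\ref{virtual} as a union of virtual spaces $\tilde{F}_\sigma\simeq \mathcal{F}_s\times \mathcal{F}_l\times \mathcal{F}_q$, with the boundary divisors $D_{I,J}=\overline{\mathcal{M}}_{0,\mathcal{A}'_I}\times \overline{\mathcal{M}}_{0,\mathcal{A}'_J}$ of the reduction morphism contracted by Proposition~\ref{reductionboundarydivisors}. The required identification is read off from the combinatorics of the chamber: the partitions $\{i_1,\ldots,i_{s-1}\}\sqcup\{j_1,\ldots,j_{l-1}\}\sqcup\{\cdots\}$ determined by the walls of~\eqref{hyparrang} containing $C_\omega$ are exactly the partitions determined by the Losev-Manin weight conditions~\eqref{losevmaninweight}, so $p_\omega$ and $\rho_{\mathcal{A}_0,\mathcal{A}}$ contract the same boundary data by the same block structure. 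Once this bookkeeping is done, commutativity of the square up to birational equivalence follows, completing the proof.
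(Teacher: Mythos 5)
Your proposal is correct and follows essentially the same route as the paper, which derives the theorem directly from Theorem~\ref{LMChn-1} (for chambers of dimension $n-1$ inside $\stackrel{\circ}{P}_{3,\ldots,n}^{\leq 1}$) and Theorem~\ref{LMChn-2} (for lower-dimensional chambers), with Lemma~\ref{LM-birat} supplying the reductions between the various $\bar{L}_{0,n,k}$. You are in fact more explicit than the paper about verifying the commutativity of the birational morphisms with the reduction morphisms and the projections $p_\omega$, a point the paper leaves implicit after Theorem~\ref{projred}.
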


Note that categories   $\stackrel{\circ}{\mathcal{H}}_{0, n}$  and $\mathcal{L}\mathcal{M}_{0,n}$ have one common object, that is  $\overline{\mathcal{M}}_{0, n}$.

\subsection{Further investigation}

In general, we can consider   $(2n, k)$ - manifolds~\cite{BT2nk}, that is   smooth, compact manifolds $M^{2n}$ with an effective actions of a torus $T^k$ and  a moment maps 
$\mu :  M^{2n}\to P^{k}$. This moment map gives rise to induced moment map $\hat{\mu} : M^{2n}/T^k \to P^{k}$. According to the existing results in the literature, the following cases differentiate:
\begin{enumerate}
\item $k=n$ and  $\hat{\mu}^{-1}(x)$ is a point for $x\in P^{k}$. Examples of such manifolds are toric and quasitoric manifolds, see~\cite{BP}.
\item  $k=n-1$ and $\hat{\mu}^{-1}(x)\cong \hat{\mu}^{-1}(y)$ for all $x, y \in \stackrel{\circ}{P^{k}}$. Among examples for this case are, see~\cite{BT},~\cite{BT2nk} ~\cite{AY},~\cite{AY1}, \cite{YK}: the canonical $T^4$ - action on $G_{4,2}$, the canonical   $T^3$ - action on the complete flag $F_{3} = U(3)/T^3$,  the canonical $T^3$-action on the quaternionic projective plane, $T^2$-action on the sphere $S^6$,  Hamiltonian $T^{n-1}$ - action on a symplectic manifold $M^{2n}$,
\item $k\leq n-2$ and there exist $x, y\in \stackrel{\circ}{P^{k}}$ such that $\hat{\mu}^{-1}(x)\not\cong \hat{\mu}^{-1}(y)$. Examples for this case are provided by the canonical $T^n$-action on $G_{n,2}$ for $n\geq 5$, see Example~\ref{nohom} and~\cite{BT0}.
\end{enumerate}

We already emphasized  by  Example~\ref{nohom}  that the spaces $F_{\omega}$, $C_{\omega} \subset \stackrel{\circ}{\Delta}_{n,2}$  are not all  homeomorphic for $n\geq 5$. 
It naturally arises the following problem, which can be considered as geometric complexity of a torus action.

\begin{pr}  
Classify the spaces $\hat{\mu}^{-1}(x)$, $x\in \stackrel{\circ}{P^{k}}$ up to homeomorphism or birational equivalence,
\end{pr}

According to our results, for  the Grassmannians $G_{n,2}$ this problem is equivalent to:

\begin{pr}
 Formulate classification criterion for $F_{\omega}$ in terms of Hassett and Losev-Manin theory. 
\end{pr}

\begin{rem} We want to point out  that for an effective  algebraic torus $(\C ^{\ast})^{k}$-action on a smooth, compact manifold $M^{2m}$, $k\leq m$, it is defined the notion of  complexity   to be the number $d=m-k$, see for example~\cite{tim}.  The complexity of  the canonical $(\C ^{\ast})^{n}$ -action  on  $M^{2m}=G_{n,2}$, $m=2(n-2)$  is $d=n-1$.  Note that the spaces of parameter over the chambers are $(\C ^{\ast})^{n}$ - quotients, that is 
\begin{equation}\label{omebaconcl}
F_{\omega} = \cup _{\sigma \in \omega}W_{\sigma}/(\C ^{\ast})^{n})  \cong \hat{\mu}^{-1}(x), \; x\in C_{\omega},
\end{equation}
and $\dim  \cup _{\sigma \in \omega}W_{\sigma} = \dim G_{n,2}$.
In this way the research of this paper is related  to developing of toric topology of positive complexity.
\end{rem}

\bibliographystyle{amsplain}

\begin{thebibliography}{10}

\bibitem{A}
Valery Alexeev, {\em Moduli of Weighted  Hyperplane 
Arrangements},   Advanced Courses in Mathematics - CRM Barcelona, Birkh\"auser, 2015.


\bibitem{AT} Michael F.~ Atiyah,  {\em Angular momentum, convex polyhedra, and
algebraic geometry},  Proc.~ Edinburgh Math.~ Soc.~{\bf 26}  (1983), 121-138. 


\bibitem{AY} Anton  Ayzenberg, {\sl Torus Action on Quaternionic Projective Plane and Related Spaces},  Arnold Math.~J.~{\bf 7} (2021), no.~2, 243--266.

\bibitem{AY1} Anton Ayzenberg and Vladimir Cherepanov, {\sl Torus actions of complexity one in non-general position}, Osaka Journal of Math.~{\bf  58}, Iss.~ 4, ( 2021) , 839-853.

\bibitem{BM}
Arend Bayer, Yuri I.~ Manin, {\em  Stability Conditions, Wall-Crossing and Weighted Gromov–Witten Invariants}, Mosc.~Math.~ Journal, {\bf 9}, Iss.~1, (2009),  3-32.

\bibitem{BC}
Vance Blankers, Renzo Cavalieri, {\em Wall-Crossings for Hassett Descendant Potentials}, Intern.~Math.~Reser.~Notices, {\bf 2022}, Iss.~ 2, (2022),  898-927.

\bibitem{BDGP} 
Victor M.~Buchstaber, {\em  Ring of simple polyhedra and differential equation},  Trudy Mat.~ Inst.~ Steklova,  {\bf 263},  (2008), 18-43 (in Russian);  Proc.~Steklov Inst.~Math., {\bf 263}, (2008), 13–37 (English translation).


\bibitem{BP} Victor M.~ Buchstaber and Taras E.~ Panov, {\sl Toric topology},  Mathematical Surveys and Monographs, 204. American Mathematical Society,  2015.

\bibitem{BT} Victor  M.~ Buchstaber, Svjetlana  Terzi\'c, {\em  Topology and geometry of the canonical action of $T^4$ on the complex
Grassmannian $G_{4,2}$  and the complex projective space $\C P^{5}$}, Moscow  Math.~ Journal,  {\bf 16}, (2016), no. 2,  237–273.


\bibitem{BT0}
Victor M.~Buchtaber, Svjetlana Terzi\'c, {\em  Toric topology of the complex Grassmann manifolds}, Moscow Math.~Journal,  {\bf 19},
(2019), no. 3, 397–463.


\bibitem{BT2nk} Victor M.~Buchstaber, Svjetlana Terzi\'c, {\sl  The foundations of $(2n, k)$-manifolds},  Mat.~Sb.,~{\bf 210}, (2019), no.~4, 41--86 (in Russian); Sb.~Math.,~{\bf 210}, (2019), Iss.~4, 508 - 549 (English translation),

\bibitem{BT1}
Victor M.~Buchtaber, Svjetlana Terzi\'c, {\em Resolution of Singularities of the Orbit Spaces $G_{n,2}/T^n$}, Trudy Mat.~ Inst.~ Steklova, {\bf  317}, (2022),  27–63 (in Russian); Proc.~Steklov Inst.~Math., {\bf 317}, (2022), 21–54 (English translation).

\bibitem{BT2}
Victor M.~Buchstaber, Svjetlana Terzi\'c, {\em  The orbit spaces $G_{n,2}/T^n$ and the Chow quotients $G_{n,2}/\!/\! (\C ^{\ast})^{n}$} of the Grassmann manifolds $G_{n,2}$, Mat.~Sb., {\bf 214},  no.~12, (2023),  46–75 (in Russian); Sb.~Math.,~{\bf 214}, (2023), Iss.~12, 1694-1720 (English translation).

\bibitem{BV} 
Victor M.~Buchstaber, Alexander P.~Veselov, {\em Differential algebra of polytopes and inversion formula},  https://arxiv.org/pdf/2402.07168.

\bibitem{C}
\"Ozg\"ur Ceyhan, {\em Chow groups of the moduli spaces of weighted pointed
stable curves of genus zero}, Advances in Mathematics, {\bf  221},  (2009), 1964–1978.

\bibitem{DCP}
Corado  De Concini and C. Procesi, {\em Wonderful models of subspace arrangements},
Selecta Math., New Series, 1, {bf 3}, (1995), 459–494.

\bibitem{DM}
Pierre Deligne, David Mumford, {\em  The irreducibility of the space of curves of given genus}, Publications Mathématiques de l’IHÉS (Paris),  
{\bf 36}, (1969), 75-109.

\bibitem{dolg}
Igor Dolgachev, {\em Lectures on Invariant Theory}, Cambridge University Press, 2003.


\bibitem{FJR}
Rodrigo Ferreira da Rosa, David Jensen,  Dhruv Ranganathan, {\em Toric Graph Associahedra and Compactifications of $\mathcal{M}_{0,n}$}, Jour.~ of Algeb.~ Combinatorics, {\bf 43}, Iss.~1, (2016), 139-151.

\bibitem{FMP}
Robert Fulton and Robert  MacPherson, {\em A compactification of configuration space},
Annals of Math., {\bf 139}, (1994), 183–225.

\bibitem{GS}
Israel M.~Gelfand,  Vera~V.~Serganova, {\em Combinatoric geometry and torus strata on compact homogeneous spaces}, Russ.~Math.~Survey~{\bf 42}, no.2, (254), (1987), 108--134.


\bibitem{GMP}
Marc Goresky and Robert MacPherson, {\em On the topology of algebraic torus actions}, Algebraic Groups, Proc.~ Symp., Utrecht/Neth., 1986, Lecture Notes
in Math.~ 1271, 73–90 (1987).

\bibitem{HKT}
Paul Hacking, Sean Keel, Jenia  Tevelev, {\em Compactification of the moduli space of hyperplane arrangements}, J.~ Algebraic Geom.,~{\bf  15}, Iss.~ 4, (2006), 657–680.

\bibitem{H} 
Brendan Hassett, {\em Moduli spaces of weighted pointed stable curves},  Advances in Mathematics, {\bf 173}, Iss.~2, (2003), 316-352.

\bibitem{IT}
Vladimir Ivanovi\'c, Svjetlana Terzi\'c, {\em $\Z_2$ - homology of the orbit space $G_{n,2}/T^n$}, Trudy Mat. Inst. Steklova, {\bf  326}, (2024), in print.

\bibitem{KAP}
Mikhail M.~ Kapranov, {\sl  Chow quotients of Grassmannians I}, I.~M.~Gel'fand Seminar,  Adv.~in Soviet Math.,{\bf 16}, part 2, Amer.~Math.~Soc.,~(1993), 29--110.

\bibitem{YK}
Yael Karshon and Susan Tolman, {\sl Topology of complexity one quotients}, Pacific J.~Math.,~{\bf 308}, (2020), no.~2, 333--346.

\bibitem{lando}
Maxim E.~Kazaryan, Sergei K.~Lando and Victor V.~Prasolov, {\em Algebraic curves}, Towards  Moduli Spaces, Moscow Lectures, Springer, 2018.

\bibitem{K}
Sean Keel, {\em Intersection theory of moduli space of stable N-pointed curves of genus zero}, Trans.~ Amer.~ Math.~
Soc.,~{\bf  330}, Iss.~2,  (1992), 545–574.

\bibitem{KM}
Young-Hoon Kiem, Han-Bom Moon, {\em Moduli spaces of weighted pointed stable rational curves via GIT}, Osaka J.~Math.,~{\bf 48}, Iss.~4, (2011),  1115-1140.

\bibitem{K} 
Francis C.~ Kirwan, {\em Cohomology of Quotients in Symplectic and
Alqebraic Geometry},  Mathematical Notes, {\bf  31}, Princeton University
Press, Princeton N.~J.~ (1984).

\bibitem{LILI}
Li Li, {\em Wonderful compactification of an arrangement of subvarieties} Michigan Math.~ J.,~{\bf 58}, no. 2, (2009), 535–563.

\bibitem{LM}
Andrey Losev, Yuri Manin, {\em  New moduli spaces of pointed curves and pencils
of flat connections},  Michigan Journ.~of Math., {\bf 48}, (Fulton’s Festschrift), (2000),
443–472.

\bibitem{M}
Yuri Manin, {\em  Moduli stacks $\overline{L}_{g,S}$},  Mosc.~Math.~ Journal, {\bf 4}, Iss.~1, (2004),  181–198.

\bibitem{MM}
Alex Massarenti,  Massimiliano Mella, {\em On the automorphisms of Hassett’s moduli spaces}, Trans.~Amer.~ Math.~ Soc.,~{\bf 369},  (2017), 8879-8902.


\bibitem{McM}
Curtis T.~ McMullen, {\em  Moduli spaces in genus zero and inversion of power series},  L’Enseign.~Math., ({\bf 2}), {\bf 60}, (2014), 25–30.

\bibitem{M}
David Mumford, John Fogarty, Francis Kirwan, {\em Geometric Invariant Theory}, 3rd Edition, Springer,
Berlin, Heidelberg, New York, 1994.


\bibitem{tim} Dmitry~A.~Timash\"ev, {\sl Classification of $G$-varieties of complexity 1}, (Russian) Izv.~Ross.~Akad.~Nauk Ser.~Mat.,~{\bf 61}, (1997), 127-162; English translation: Izv.~Math.,~{\bf 61}, (1997), 363-397.


\bibitem{Th}
Michael Thaddeus, {\em Geometric invariant theory and flips}, J.~Amer.~Math.~Soc.,~{\bf 9}, Iss.~3, (1996),  691–723.

\bibitem{U}
Martin Ulirsch, {\em Tropical geometry of moduli spaces of weighted stable curves}, Transact.~of the American Math.~Society, {\bf 369}, Iss.~ 11,  (2017), 
8879-8902.

\end{thebibliography}

 Victor M.~Buchstaber\\
Steklov Mathematical Institute, Russian Academy of Sciences\\ 
Gubkina Street 8, 119991 Moscow, Russia\\
E-mail: buchstab@mi.ras.ru
\\ \\ 

Svjetlana Terzi\'c \\
Faculty of Science and Mathematics, University of Montenegro\\
Dzordza Vasingtona bb, 81000 Podgorica, Montenegro\\
E-mail: sterzic@ucg.ac.me 

\end{document}